\crefname{equation}{}{}
\definecolor{myGreen}{rgb}{0.87, 1, 0.87}
\definecolor{myLighterGreen}{rgb}{0.94, 1, 0.94}
\newtheorem{theorem}{Theorem}[section]
\newtheorem{proposition}[theorem]{Proposition}
\newtheorem{lemma}[theorem]{Lemma}
\newtheorem{corollary}[theorem]{Corollary}
\newtheorem*{question*}{Question}
\Crefname{question}{Question}{Questions}
\theoremstyle{definition}
\newtheorem{definition}[theorem]{Definition}
\newtheorem{notation}[theorem]{Notation}
\newtheorem{question}{Question}
\newtheorem{example}[theorem]{Example}
\newtheorem*{example*}{Example}
\theoremstyle{remark}
\newtheorem*{remark}{Remark}
\numberwithin{equation}{section}
\renewcommand{\arraystretch}{1.2}
\newcommand{\Z}{\mathbb{Z}}
\newcommand{\R}{\mathbb{R}}
\newcommand{\C}{\mathbb{C}}
\renewcommand{\H}{{\mathbb{H}}}
\newcommand{\Hplus}{{\mathbb{H}^{\textnormal{+}\hspace{-0.03cm}}}}
\newcommand{\Hminus}{{\mathbb{H}^{\textnormal{-}\hspace{-0.03cm}}}}
\newcommand{\E}{\textnormal E}
\newcommand{\W}{\textnormal W}
\newcommand{\mT}{\mathcal{T}}
\newcommand{\mS}{\mathcal{S}}
\newcommand{\one}{\mathbbm{1}}
\renewcommand{\Re}{\textnormal{Re}\ }
\renewcommand{\Im}{\textnormal{Im}\ }
\newcommand{\Hol}{\textnormal{Hol}}
\newcommand{\la}{\left\langle}
\newcommand{\ra}{\right\rangle}
\newcommand{\vect}[2]{\big[{#1};{#2}\big]}
\newcommand{\twoline}[2]{$\substack{\text{#1} \\ \text{#2}}$}
\renewcommand{\hat}{\widehat}
\renewcommand{\emptyset}{\varnothing}
\newcommand{\txt}{\textnormal}
\newenvironment{psmall}
  {\left(\begin{smallmatrix}}
  {\end{smallmatrix}\right)}
\newcommand{\fix}[1]{\textcolor{blue}{\large (#1)\normalsize}}
\title[Several new product identities and  Rogers--Ramanujan type sums]{Several new product identities in relation to Rogers--Ramanujan type sums and mock theta functions}
\author[Alexandru Pascadi]{Alexandru Pascadi}
\address{University of California, Los Angeles, CA 90095, USA}
\email{alexpascadi@gmail.com}
\begin{document}

\begin{abstract}
Product identities in two variables $x, q$ expand infinite products as infinite sums, which are linear combinations of theta functions; famous examples include Jacobi's triple product identity, Watson's quintuple identity, and Hirschhorn's septuple identity. We view these series expansions as representations in canonical bases of certain vector spaces of quasiperiodic meromorphic functions (related to sections of line and vector bundles), and find new identities for two nonuple products, an undecuple product, and several two-variable Rogers--Ramanujan type sums. Our main theorem explains a correspondence between the septuple product identity and the two original Rogers--Ramanujan identities, involving two-variable analogues of fifth-order mock theta functions. We also prove a similar correspondence between an octuple product identity of Ewell and two simpler variations of the Rogers--Ramanujan identities, which is related to third-order mock theta functions, and conjecture other occurrences of this phenomenon. As applications, we specialize our results to obtain identities for quotients of generalized eta functions and mock theta functions.
\end{abstract}

\maketitle

\vspace{-0.4cm}

\section{Introduction} \label{sec:intro}
Fix $\tau \in \C$ with $\Im \tau > 0$, and let $z \in \C$. Denote $q := e^{2\pi i \tau}$, $x := e^{2\pi i z}$, $(x; q) := \prod_{n = 0}^\infty \left(1 - q^n x\right)$, and $\la x; q \ra := (x; q)(q/x; q)$ (the standard notations have a subscript of $\infty$, which we drop for brevity; this will be helpful in identities like \cref{eq:nonuple}). Then Jacobi's triple product identity \cite[p.~10]{berndt2006number} reads
\begin{equation}\label{eq:triple}
    (q; q)\la x; q \ra =
    \sum_{n \in \Z} (-1)^n q^{\binom{n}{2}} x^n.
\end{equation}
As functions of $z$, both sides of \cref{eq:triple} converge absolutely and locally uniformly to entire $1$-periodic functions, and satisfy the $\tau$-quasiperiodicity $F(z) = -xF(z+\tau)$; in geometric language, $F$ is a section of a certain holomorphic line bundle over the complex torus $\C/(\Z + \Z\tau)$. This observation already proves \cref{eq:triple} up to a factor depending only on $q$, since the space $\mT_\C(-x) := \{F \in \Hol(\C/\Z) : F(z) = -x F(z+\tau)\}$ can be shown to be one-dimensional over $\C$ by identifying Fourier coefficients (here, $\Hol(\C/\Z)$ denotes the space of entire functions with period $1$). More generally, if $D \subset \C$ is open, connected and closed under $\Z$-translation, and $f \in \Hol(\C/\Z) \setminus \{0\}$, then the space
\[
\mT_D(f) := \left\{F \in \Hol(D/\Z)\ :\ F(z) = f(z) F(z+\tau)\ \text{ when } z, z + \tau \in D\right\}
\]
can be shown to be finite-dimensional over $\C$ for various choices of $D$ and $f$. In particular, given a positive integer $d$ and a nonzero $\alpha \in \C$, the space $\mT_\C\left(\alpha x^d\right)$ has a canonical basis consisting of the theta functions
\[
    \vect{\alpha x^d}{k} = \vect{\alpha x^d}{k}(z) := \sum_{n \in \Z} \alpha^n q^{d\binom{n}{2} + kn} x^{dn+k}, 
\]
where $k \in \Z$ varies in any complete residue system modulo $d$ (we introduce this notation because it will admit a natural generalization to other functions of the form $\vect{f}{k}$). Many infinite products can be designed to live in such spaces $\mT_D(f)$ (see \cref{tbl:spaces}, $\S \ref{subsec:notation}$), so it is natural to look for their representations in a canonical basis thereof; e.g., \cref{eq:triple} states that $(q; q)\la x; q \ra = [-x; 0] \in \mT_\C(-x)$. Similarly, the \emph{quintuple} \cite{watson1938ramanujans} and \emph{septuple} \cite{hirschhorn1983simple,chu2007unification,garvan2001generalization,foata2001triple} product identities respectively state that
\begin{align}
    (q; q)\la x; q \ra \la qx^2; q^2 \ra &=  
    \vect{qx^3}{0} - \vect{qx^3}{1}
    \label{eq:quintuple}, \\[4pt]
    (q; q) \la x; q \ra \la qx^2; q^2 \ra \la x^2; q^2 \ra &= - \frac{\vect{-qx^5}{1} + \vect{-qx^5}{2}}{\la q^2; q^5 \ra} + \frac{\vect{-qx^5}{0} + \vect{-qx^5}{3}}{\la q; q^5 \ra}. 
    \label{eq:septuple}
\end{align}

In this paper, we prove several new product identities pertaining to more general spaces $\mT_D(f)$, and study their connection to two-variable Rogers--Ramanujan type sums \cite{sills2007identities}; as applications, we deduce a few one-variable identities of generalized eta functions \cite{yang2004transformation} and mock theta functions \cite{ramanujan1988lost,andrews1989ramanujan}. The latter were introduced by Ramanujan in his last letter to Hardy \cite{ramanujan1988lost}, and are modernly understood as holomorphic parts of harmonic Maass forms \cite{bringmann2017harmonic}. 
We will find that the fifth-order mock theta functions are intimately related to the canonical basis vectors of $\mT_\Hminus\left(qx^2 - x\right)$ (where $\H^- := \{\Im z < 0\}$), while the third-order ones are similarly connected to the space $\mT_\Hminus\left(qx^2 - q^{-1}\right)$.

For a start, in \cref{subsec:higher-order} we continue the sequence of identities \cref{eq:triple}, \cref{eq:quintuple}, \cref{eq:septuple} in a natural way, with two \emph{nonuple} product identities and an \emph{undecuple} identity; we state the first of these below.

\begin{proposition}[First nonuple product identity] \label{prop:nonuple}
As an identity of functions in $\mT_\C\left(q^2x^7\right)$,
\begin{equation} \label{eq:nonuple}
\begin{split}
    (q;q) \la x; q \ra \la qx^2; q^2 \ra^2 \la x^2; q^2 \ra
    &=
    - \frac{\vect{q^2x^7}{1} + \vect{q^2x^7}{2}}{\la q^2 ; q^7 \ra^2 \la q; q^7\ra}
    +
    \frac{\vect{q^2x^7}{0} + \vect{q^2x^7}{3}}{\la q ; q^7 \ra^2 \la q^3; q^7\ra}
    \\[4pt]
    &
    + 
    q\frac{\vect{q^2x^7}{-1} + \vect{q^2x^7}{4}}{\la q^3 ; q^7 \ra^2 \la q^2; q^7\ra} 
    - 
    q\frac{2\vect{q^2x^7}{5}}{\la q; q^7\ra \la q^2; q^7 \ra \la q^3; q^7 \ra }.
\end{split}
\end{equation}
\end{proposition}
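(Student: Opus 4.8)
The plan is to recognize both sides as elements of the $7$-dimensional space $\mT_\C(q^2x^7)$ and then match coefficients in its canonical theta basis. First I would verify that the left-hand side $F(z) := (q;q)\la x;q\ra\la qx^2;q^2\ra^2\la x^2;q^2\ra$ genuinely lies in $\mT_\C(q^2x^7)$. As a locally uniformly convergent product of entire $1$-periodic functions of $z$, $F$ is entire and $1$-periodic; for the quasiperiodicity, the triple product gives the elementary relations $\la x;q\ra(z) = -x\,\la x;q\ra(z+\tau)$, $\la qx^2;q^2\ra(z) = -qx^2\,\la qx^2;q^2\ra(z+\tau)$, and $\la x^2;q^2\ra(z) = -x^2\,\la x^2;q^2\ra(z+\tau)$. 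Multiplying these (the middle one squared) yields the multiplier $(-x)(-qx^2)^2(-x^2) = q^2x^7$, so $F(z) = q^2x^7\,F(z+\tau)$ and $F \in \mT_\C(q^2x^7)$. Since this space is $7$-dimensional with basis $\{\vect{q^2x^7}{k}\}$ for $k$ in any complete residue system modulo $7$, I may write $F = \sum_{k=-1}^{5} c_k\vect{q^2x^7}{k}$ with the $c_k$ depending only on $q$, and the whole problem reduces to pinning down these seven scalars.

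Next I would extract the $c_k$ by comparing Laurent coefficients in $x$. Because $\vect{q^2x^7}{k} = \sum_{n}q^{7\binom n2 + (k+2)n}x^{7n+k}$ is supported on exponents $\equiv k \pmod 7$, and the residues $\{-1,0,1,2,3,4,5\}$ span a range in which $j - k'$ lies in $[-6,6]$, the only basis vector contributing the power $x^{j}$ for $j$ in this range is $\vect{q^2x^7}{j}$ (through its $n=0$ term, of coefficient $1$). Hence $c_j = [x^j]\,F$ for each such $j$. To roughly halve the work I would exploit the reflection $z \mapsto -z$: using $\la 1/x;q\ra = -x^{-1}\la x;q\ra$, the invariance $\la q/x^2;q^2\ra = \la qx^2;q^2\ra$, and $\la 1/x^2;q^2\ra = -x^{-2}\la x^2;q^2\ra$, one finds $F(-z) = x^{-3}F(z)$, so the Laurent coefficients satisfy $[x^j]F = [x^{3-j}]F$. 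This forces $c_{-1} = c_4$, $c_0 = c_3$, $c_1 = c_2$, exactly the paired structure visible on the right of \cref{eq:nonuple}, and leaves only $c_{-1}, c_0, c_1$ and the self-paired $c_5$ (the fixed point $5 \equiv 3-5 \pmod 7$, where moreover $\vect{q^2x^7}{5} = \vect{q^2x^7}{-2}$) to be determined.

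The main obstacle is then to show that these four Laurent coefficients of the triple product equal the stated reciprocals of $q^7$-products, e.g.\ $[x^0]F = 1/(\la q;q^7\ra^2\la q^3;q^7\ra)$. A priori each $[x^j]F$ is an infinite $q$-series (a convolution of the triple-product expansions of the four factors), and the nontrivial content is precisely that it collapses into a single infinite product — the Rogers--Ramanujan-type phenomenon highlighted in the abstract, which I expect to be the crux. I would approach it through the already-established septuple identity \cref{eq:septuple}: since $F$ equals the left-hand side of \cref{eq:septuple} times the extra factor $\la qx^2;q^2\ra$, multiplying \cref{eq:septuple} by $\la qx^2;q^2\ra = (q^2;q^2)^{-1}\sum_m (-1)^m q^{m^2}x^{2m}$ expresses $F$ as a finite combination of products $\vect{-qx^5}{i}\cdot(\text{a base-}q^2\text{ theta})$. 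Each such product of two theta functions can be re-expanded in the basis of $\mT_\C(q^2x^7)$ by a Hecke-type (theta-multiplication) splitting of the summation lattice into residue classes modulo $7$, after which Jacobi's triple product in base $q^7$ should collapse each class-sum into the claimed product.

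Finally, having obtained the four independent $c_k$ in product form, I would fix any residual overall normalization by comparing the leading ($q^0$) term of one coefficient — for instance $[x^0]F = 1 + O(q)$ against $1/(\la q;q^7\ra^2\la q^3;q^7\ra) = 1 + O(q)$ — and then assemble all seven coefficients, via the symmetry relations, into \cref{eq:nonuple}. The two genuinely delicate points to watch are the bookkeeping in the theta-multiplication step (tracking which residue class modulo $7$ produces which $\vect{q^2x^7}{k}$, and how the self-paired index $5$ stands alone, carrying the factor $2$ in the last summand) and the verification that the convolution sums really do reduce to honest products rather than to genuine false-theta series.
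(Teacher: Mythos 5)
Your setup is sound and matches the paper's general machinery: the left-hand side does lie in the $7$-dimensional space $\mT_\C\left(q^2x^7\right)$, the coefficients are indeed the Fourier coefficients $c_j = \hat F(j)$ for $j \in \{-1,\ldots,5\}$, and your reflection argument $F(-z) = x^{-3}F(z)$ correctly recovers the $\mS_\C\left(x^3\right)$ pairing $(-1,4)$, $(0,3)$, $(1,2)$ with the self-paired index $5$ (including the factor $2$). All of this agrees with the paper's framework (\cref{prop:monomial-spaces}, \cref{tbl:spaces}, \cref{lem:proofs-by-fourier}).

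The genuine gap is at your crux step, and it is exactly where you yourself expressed doubt. Decomposing the nonuple as (septuple) $\times\ \la qx^2;q^2\ra$ leads to products $\vect{-qx^5}{i}\cdot\vect{-qx^2}{0}$, and by \cref{lem:m-identities} with $a=5$, $b=2$, $u=v=1$ the resulting M-coefficients are theta series with quadratic exponent $\tfrac{5\cdot 2^2 + 2\cdot 5^2}{2}n^2 = 35n^2$; e.g.\ a direct computation gives $\hat{}(0)\big[(\vect{-qx^5}{0}+\vect{-qx^5}{3})\vect{-qx^2}{0}\big] = \left(q^{70};q^{70}\right)\left(\la q^{32};q^{70}\ra + q^2\la q^{18};q^{70}\ra\right)$. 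These are alternating-sign series of modulus $70$, so the triple product collapses each one only into a base-$q^{70}$ product, \emph{not} a base-$q^7$ product as you claim; and no quintuple collapse is available either, since a quintuple of base $q^a$ produces modulus $3a$ and $3a = 70$ has no integer solution. You are then left needing identities of the shape $\frac{(q^{70};q^{70})}{(q^2;q^2)}\Big[\frac{\la q^{32};q^{70}\ra + q^2\la q^{18};q^{70}\ra}{\la q;q^5\ra} - \frac{q^5\la q^6;q^{70}\ra - q\la q^{24};q^{70}\ra}{\la q^2;q^5\ra}\Big] = \frac{1}{\la q;q^7\ra^2\la q^3;q^7\ra}$, which mix levels $5$, $7$ and $70$ and are essentially as deep as the nonuple identity itself — compare the structurally similar cross-product identity \cref{eq:cross-product-1}, which the paper flags as unproven and ``more difficult.'' So your plan reduces the problem to statements you have no method to prove.

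The fix is to choose the other factorization. The paper writes the nonuple product as $\la x;q\ra\la qx^2;q^2\ra \cdot \la x^2;q\ra$, i.e.\ (quintuple in $x$, base $q$) times (triple in $x^2$, base $q$), via \cref{eq:substitutions-triple,eq:substitutions-quintuple}, and then applies \cref{lem:m-identities} with $a=3$, $b=1$, $u=1$, $v=2$. This yields $au^2+bv^2 = 7$ as desired, but crucially $ab'^2+ba'^2 = 21 = 3\cdot 7$, so the M-coefficients are modulus-$21$ theta series: after the $\mS_\C\left(x^3\right)$ grouping, the three paired coefficients collapse by the quintuple identity at base $q^7$ (modulus $3\cdot 7 = 21$), and the lone coefficient of $\vect{q^2x^7}{5}$ collapses by the triple product at base $q^{21}$ — producing exactly the claimed reciprocals of $q^7$-products, with no extraneous level-$5$ factors ever entering. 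In short: the divisibility $7 \mid 21$ is what makes the convolution sums ``honest products,'' and your decomposition destroys it ($7 \nmid 35$ in the relevant sense), which is why the collapse you hoped for does not occur.
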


While $\mT_\C\left(\alpha x^d\right)$ is $d$-dimensional for $d \ge 1$, we have $\mT_\C\left(\alpha x^{-d}\right) = \mT_\C(y) = \{0\}$ for $y \not\in \left\{q^n : n \in \Z\right\}$; this will lead to identities for quotients of double infinite products $\la x; q\ra$, in \cref{prop:fractional}. Passing to spaces such as $\mT_\Hminus\left(qx^2-x\right)$ (where $\H^- := \{\Im z < 0\}$), which is two-dimensional and corresponds to a meromorphic line bundle, we begin to encounter Rogers--Ramanujan type sums. In fact, the two renowned one-variable Rogers--Ramanujan identities \cite[p.~290]{hardy1979introduction},
\begin{equation} \label{eq:rog-ram}
    \sum_{n \geq 0} \frac{q^{n^2}}{(q; q)_n} = \frac{1}{\la q; q^5 \ra} \qquad\quad \text{ and }\qquad\quad 
    \sum_{n \geq 0} \frac{q^{n^2+n}}{(q; q)_n} = \frac{1}{\la q^2; q^5 \ra},
\end{equation}
where $(x; q)_n := \prod_{k = 0}^{n-1} \left(1 - xq^k\right)$, are together equivalent to the following new product identity.

\begin{proposition}[Two-variable statement of \cref{eq:rog-ram}]\label{prop:2var-rog-ram}
In $\mT_\Hminus\left(qx^2-x\right)$, one has 
\begin{equation}\label{eq:2var-rog-ram}
    (q; q) \la x; q \ra (qx; q)\ =  
    \sum_{m \in \Z, n \ge 0} \frac{q^{\binom{m}{2} + \binom{n+1}{2}}}{(q; q)_n} (-x)^{m+n}
    \ =\ 
    \frac{\vect{qx^2 - x}{0}}{\la q^2; q^5 \ra} - \frac{\vect{qx^2 - x}{1}}{\la q; q^5 \ra},
\end{equation}
where for $k \in \{0, 1\}$, $\vect{qx^2 - x}{k}$ is the unique function in $\mT_\Hminus\left(qx^2-x\right)$ whose Fourier expansion in $z \in \H^-$ has coefficient of $x^j$ equal to $\one_{j = k}$, for $j \in \{0, 1\}$. 
\end{proposition}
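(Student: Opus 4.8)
The plan is to establish the two equalities separately, deducing the second from the Rogers--Ramanujan identities \cref{eq:rog-ram} (which, conversely, will be forced by reading off two Fourier coefficients, yielding the asserted equivalence). For the first equality I would observe that the summand factors as a product of a function of $m$ and a function of $n$, so that
\[
\sum_{m \in \Z,\, n \ge 0} \frac{q^{\binom{m}{2} + \binom{n+1}{2}}}{(q; q)_n} (-x)^{m+n}
= \left(\sum_{m \in \Z} (-1)^m q^{\binom{m}{2}} x^m\right)\left(\sum_{n \ge 0} \frac{q^{\binom{n}{2}}(-qx)^n}{(q; q)_n}\right),
\]
using $\binom{n+1}{2} = \binom{n}{2} + n$ in the second factor. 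The first factor is $(q;q)\la x; q\ra$ by Jacobi's triple product \cref{eq:triple}, and the second is $(qx; q)$ by Euler's identity $\sum_{n \ge 0} q^{\binom{n}{2}} z^n/(q;q)_n = (-z; q)$ applied with $z = -qx$; multiplying the two gives the left-hand product. Since the double series converges absolutely for every $x \ne 0$ (the $m$-sum is a theta function and the $n$-sum is entire), this factorization and all the rearrangements below are legitimate.

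Next I would check that $F(z) := (q;q)\la x; q\ra (qx; q)$, with $x = e^{2\pi i z}$ so that $x \mapsto qx$ under $z \mapsto z + \tau$, satisfies the quasiperiodicity defining the space. Using the elementary shifts $(x; q) = (1 - x)(qx; q)$, $(1/x; q) = (1 - 1/x)(q/x; q)$, and $(qx;q) = (1-qx)(q^2x;q)$, the ratio $F(z)/F(z+\tau)$ telescopes as $\tfrac{\la x; q\ra}{\la qx; q\ra}\cdot\tfrac{(qx;q)}{(q^2x;q)} = (-x)(1 - qx) = qx^2 - x$, so indeed $F \in \mT_\Hminus\left(qx^2 - x\right)$. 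Since this space is two-dimensional with the stated canonical basis, we may write $F = c_0 \vect{qx^2-x}{0} + c_1\vect{qx^2-x}{1}$, where $c_k$ is precisely the coefficient of $x^k$ in the $\Hminus$-Fourier expansion of $F$, for $k \in \{0,1\}$. The second equality thus reduces to computing these two coefficients.

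Finally, grouping the double series by powers of $x$, the coefficient of $x^k$ equals $(-1)^k \sum_{n \ge 0} q^{\binom{k-n}{2} + \binom{n+1}{2}}/(q;q)_n$ (collecting the terms with $m = k-n$). For $k = 0$, the identity $\binom{-n}{2} = \binom{n+1}{2}$ collapses the exponent to $n^2 + n$, giving $c_0 = \sum_{n \ge 0} q^{n^2+n}/(q;q)_n$; for $k = 1$, the identity $\binom{1-n}{2} = \binom{n}{2}$ collapses it to $n^2$, giving $c_1 = -\sum_{n \ge 0} q^{n^2}/(q;q)_n$. By the two Rogers--Ramanujan identities \cref{eq:rog-ram}, $c_0 = 1/\la q^2; q^5\ra$ and $c_1 = -1/\la q; q^5\ra$, which is exactly the claimed expansion; conversely, equating $c_0,c_1$ with the basis coordinates recovers \cref{eq:rog-ram}. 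The one point genuinely requiring care is this coefficient extraction: I must confirm that grouping the absolutely convergent double series by powers of $x$ reproduces the genuine $\Hminus$-Fourier expansion that the canonical basis reads off (the individual basis functions are only meromorphic sections converging on $\Hminus$, even though their combination $F$ is holomorphic). Once that bookkeeping is in place, the reduction to the Rogers--Ramanujan sums is immediate from the two binomial simplifications, and there is no deeper obstacle — the content of the proposition is precisely this repackaging.
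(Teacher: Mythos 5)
Your proof is correct and follows essentially the same route as the paper's: both factor the double sum as $(q;q)\la x; q\ra \cdot (qx;q)$ via the triple product identity \cref{eq:triple} and Euler's identity \cref{eq:basic1}, verify the $\mT_\Hminus\left(qx^2-x\right)$ quasiperiodicity (you by direct telescoping, the paper by multiplying the memberships $(q;q)\la x;q\ra \in \mT_\C(-x)$ and $(qx;q) \in \mT_\C(1-qx)$ --- a cosmetic difference), and then read off the coefficients of $x^0$ and $x^1$ as the two Rogers--Ramanujan sums, which the canonical-basis representation \cref{eq:fourier-coeffs-basis} converts into the stated expansion. Your coefficient computations ($c_0 = \sum_{n\ge0} q^{n^2+n}/(q;q)_n$, $c_1 = -\sum_{n\ge0} q^{n^2}/(q;q)_n$ via $\binom{-n}{2}=\binom{n+1}{2}$ and $\binom{1-n}{2}=\binom{n}{2}$) and your closing remark on the equivalence with \cref{eq:rog-ram} match the paper exactly.
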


\begin{remark}
The functions $\vect{qx^2-x}{k}$ can be expressed as explicit series in $\H^-$ (see \cref{cor:kind2}), and can then be meromorphically continued to $\C$; their poles must cancel out in the right-hand side of \cref{eq:2var-rog-ram} since the left-hand side is entire. In fact, $\dim \mT_\C\left(qx^2 - x\right) = 1$ while $\dim \mT_\Hminus\left(qx^2 - x\right) = 2$. Also, the Rogers--Ramanujan identities in \cref{eq:rog-ram} follow from the second equality in \cref{eq:2var-rog-ram} by identifying coefficients of $x^0$ and $x^1$. In fact, identifying coefficients of $x^{1-m}$ for $m \in \Z$ leads to the so-called $m$-versions of the Rogers--Ramanujan identities, involving $\sum_{n \ge 0} q^{n^2 + mn}/(q; q)_n$ (see, e.g., \cite[(3.5)]{garrett1999variants}).
\end{remark}

Much of the work in this paper is motivated by a visible connection between \cref{prop:2var-rog-ram} and the septuple identity: the basis representations in \cref{eq:septuple} and \cref{eq:2var-rog-ram} have the same $q$-coefficients, up to a sign. This suggests the more difficult result that sums of canonical basis vectors of $\mT_\C\left(-qx^5\right)$ are proportional to the canonical basis vectors of $\mT_\Hminus\left(qx^2-x\right)$, which is our main theorem.
\begin{theorem}[Septuple identity vs.\ Rogers--Ramanujan] \label{thm:bases-proportional}
For $0 < |q| < 1$, one has 
\[
    -\frac{\la x^2; q\ra}{(qx; q)} 
    \ =\  
    \frac{\vect{-qx^5}{1} + \vect{-qx^5}{2}}{\vect{qx^2 - x}{0}} \ =\  \frac{\vect{-qx^5}{0} + \vect{-qx^5}{3}}{\vect{qx^2 - x}{1}},
\]
as an identity of meromorphic functions of $z \in \C$ (where $x = e^{2\pi i z}$). The left-hand side above equals minus the ratio of the septuple product in \cref{eq:septuple} to the product in \cref{prop:2var-rog-ram}. 
\end{theorem}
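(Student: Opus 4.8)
The plan is to prove that the two theta-like ratios agree with the explicit product $-\la x^2; q\ra/(qx; q)$ by exploiting quasiperiodicity, just as the one-dimensionality of $\mT_\C(-x)$ proves \cref{eq:triple} up to a $q$-factor. First I would identify the transformation law of each candidate ratio. By definition, the numerators $\vect{-qx^5}{k}$ lie in $\mT_\C(-qx^5)$ and so satisfy $F(z) = -qx^5 F(z+\tau)$, while the denominators $\vect{qx^2-x}{k}$ lie in $\mT_\Hminus(qx^2-x)$ and satisfy $G(z) = (qx^2-x)G(z+\tau)$. Hence each ratio $R_k(z) := \big(\vect{-qx^5}{a}+\vect{-qx^5}{b}\big)/\vect{qx^2-x}{k}$ (with $(a,b)=(1,2)$ or $(0,3)$) transforms by
\[
    R_k(z) = \frac{-qx^5}{qx^2 - x}\, R_k(z+\tau) = \frac{-qx^5}{x(qx-1)}\, R_k(z+\tau) = \frac{qx^4}{1-qx}\, R_k(z+\tau).
\]

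Next I would check that the explicit product $P(z) := -\la x^2; q\ra/(qx; q)$ obeys this same functional equation. Using $\la x^2; q\ra = (x^2; q)(q/x^2; q)$ and the elementary shift relations $(x; q)(z+\tau) = (qx; q)(z) = (x;q)(z)/(1-x)$ — equivalently $(qx;q)/(x;q) = 1/(1-x)$ — together with the corresponding shifts for $(q/x^2;q)$ and $(x^2;q)$, a short computation should show that $P$ satisfies exactly $P(z) = \tfrac{qx^4}{1-qx} P(z+\tau)$, matching the multiplier above; I would also verify $P$ is $1$-periodic, so both $P$ and the $R_k$ live in the same quasiperiodicity class. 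Crucially, since the septuple product identity \cref{eq:septuple} and the Rogers--Ramanujan statement \cref{eq:2var-rog-ram} exhibit the numerator and denominator with matching $q$-coefficient structure, the combination $\vect{-qx^5}{1}+\vect{-qx^5}{2}$ is, up to the scalar $\la q^2;q^5\ra$, precisely the image of $\vect{qx^2-x}{0}$ under multiplication by the septuple-to-Rogers--Ramanujan product ratio; this is the structural heart of the claim.

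Having reduced both sides to the same one-dimensional quasiperiodicity class, I would argue that $R_k/P$ is a $1$-periodic, $\tau$-periodic (i.e. doubly periodic) meromorphic function, hence an elliptic function, and then pin down its value. The cleanest route is to combine \cref{eq:septuple} and \cref{eq:2var-rog-ram} directly: dividing the septuple product by the Rogers--Ramanujan product gives $-\la x^2; q\ra/(qx;q)$ as the ratio of the two left-hand sides, while on the right-hand sides the basis coefficients match term-by-term (the $1/\la q^2; q^5\ra$ and $1/\la q; q^5\ra$ weights are common to both), forcing
\[
    -\frac{\la x^2; q\ra}{(qx; q)}\big(\vect{qx^2-x}{0}/\la q^2;q^5\ra - \vect{qx^2-x}{1}/\la q;q^5\ra\big)
    = -\frac{\vect{-qx^5}{1}+\vect{-qx^5}{2}}{\la q^2;q^5\ra} + \frac{\vect{-qx^5}{0}+\vect{-qx^5}{3}}{\la q;q^5\ra}.
\]
Matching the two independent basis components (the $\vect{qx^2-x}{0}$ and $\vect{qx^2-x}{1}$ parts, which are linearly independent in $\mT_\Hminus(qx^2-x)$) then yields both displayed equalities simultaneously.

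The main obstacle I anticipate is justifying the term-by-term matching rigorously: a priori, dividing the product identities only shows that a single linear combination of the ratios holds, not that each basis component matches separately. To upgrade this to the two-fold equality claimed, I would use the linear independence of $\vect{qx^2-x}{0}$ and $\vect{qx^2-x}{1}$ in the two-dimensional space $\mT_\Hminus(qx^2-x)$, combined with the fact that $\la x^2;q\ra/(qx;q)$ is a fixed scalar-valued meromorphic function (independent of the basis index $k$). The subtlety is that $\vect{-qx^5}{1}+\vect{-qx^5}{2}$ is entire in $z\in\C$ whereas $\vect{qx^2-x}{0}$ is only defined on $\H^-$ and has poles upon meromorphic continuation (as noted in the Remark); I must confirm that the claimed ratio is genuinely meromorphic on all of $\C$, which requires checking that the poles of $\vect{qx^2-x}{k}$ align with the zeros of $\la x^2;q\ra$ (or are cancelled appropriately), a verification I expect to carry out via the explicit series representation promised in \cref{cor:kind2}.
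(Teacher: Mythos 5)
Your multiplier bookkeeping in the first two paragraphs is correct (both $P(z):=-\la x^2;q\ra/(qx;q)$ and the two ratios transform by $qx^4/(1-qx)$ under $z\mapsto z+\tau$), but the central deduction in your third paragraph fails, and your own "fix" does not repair it. Dividing \cref{eq:septuple} by \cref{prop:2var-rog-ram} yields only the \emph{single} scalar relation
\[
    \frac{P\,\vect{qx^2-x}{0} - \left(\vect{-qx^5}{1}+\vect{-qx^5}{2}\right)}{\la q^2; q^5\ra}
    \;=\;
    \frac{P\,\vect{qx^2-x}{1} - \left(\vect{-qx^5}{0}+\vect{-qx^5}{3}\right)}{\la q; q^5\ra},
\]
whereas the theorem asserts that \emph{both} numerators vanish; one equation cannot separate into two. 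Linear independence of $\vect{qx^2-x}{0}$ and $\vect{qx^2-x}{1}$ in $\mT_\Hminus\left(qx^2-x\right)$ is of no use here, because the divided identity is not a $\C$-linear relation among these basis vectors: the "coefficient" $P(z)$ is a non-constant meromorphic function, and in particular $P\cdot\vect{qx^2-x}{k}$ does not even lie in $\mT_\Hminus\left(qx^2-x\right)$ (it lies in a space with factor $-qx^5$). Component identification in a canonical basis is legitimate only for scalar coefficients. Indeed the theorem is \emph{strictly stronger} than the conjunction of the two identities you divide — the paper states that it is \cref{thm:bases-proportional} which renders \cref{eq:septuple} and \cref{eq:2var-rog-ram} equivalent, not conversely. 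Your fallback route (show $R_k/P$ is elliptic, hence constant) stalls for a related reason: to conclude constancy you must show this elliptic function has no poles, i.e.\ that the two non-explicit zeros per fundamental domain of $\vect{qx^2-x}{k}$ (cf.\ the discussion around \cref{fig:plots}) coincide with zeros of $\vect{-qx^5}{1}+\vect{-qx^5}{2}$ — and that coincidence is precisely the content of the theorem, so the argument is circular.

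For comparison, the paper does not attempt any such division. It first reformulates \cref{thm:bases-proportional}, using the explicit series for $\vect{qx^2-x}{k}$ from \cref{cor:kind2}, the symmetry $\mT_\C\left(qx^2\right)\subset\mS_\C(1)$, and the multiplication identities of \cref{lem:m-identities}, as the matrix identity \cref{thm:twisted-rog-ram} in the two-dimensional space $\mT_\C\left(qx^2\right)$, where the coefficients \cref{eq:explicit-matrix} \emph{are} scalars; it then verifies that identity at the two values $x=\pm 1$ via \cref{lem:proofs-by-value}, invoking the Rogers--Ramanujan identities \cref{eq:rog-ram} and Watson's fifth-order mock theta identities \cref{eq:watson} (a second proof instead expands the twisted sums via \cref{lem:t-identities} and reduces to Slater's identities \cref{eq:slater}). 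The lesson is that genuinely new analytic input beyond \cref{eq:septuple} and \cref{prop:2var-rog-ram} is required; your proposal as written contains no substitute for it.
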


In particular, under \cref{thm:bases-proportional}, the septuple identity and \cref{eq:2var-rog-ram} are equivalent. Remarkably, an analogous correspondence arises between the \emph{octuple} product identity of Ewell \cite{ewell1982octuple} (given in an equivalent form in this paper's \cref{prop:octuple}) and a variation of \cref{eq:2var-rog-ram}, given in \cref{prop:2var-rog-ram-var}.

\begin{theorem}[Octuple identity vs.\ Rogers--Ramanujan variation] \label{thm:bases-proportional-2}
For $0 < |q| < 1$, one has
\[
    -\frac{\left(-x^{-1}; q\right) \la x; q\ra \la qx^2; q^2 \ra }{(q; q^2)(qx; q)} 
    \ =\ \frac{\vect{-qx^4}{1}}{\vect{qx^2 - q^{-1}}{0}} \ =\  \frac{\vect{-qx^4}{0} + \vect{-qx^4}{2}}{\vect{qx^2 - q^{-1}}{1}},
\]
as an identity of meromorphic functions of $z \in \C$ (where $x = e^{2\pi i z}$). The left-hand side above equals minus the ratio of the octuple product in \cref{eq:octuple} to the product in \cref{prop:2var-rog-ram-var}. Here, $\vect{qx^2 - q^{-1}}{k} \in \mT_\Hminus\left( qx^2 - q^{-1}\right)$ are defined analogously to $\vect{qx^2 - x}{k}$ in \cref{prop:2var-rog-ram}. 
\end{theorem}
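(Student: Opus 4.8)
The two displayed equalities assert precisely that the explicit product on the left,
\[
    M(z) \ :=\ -\frac{(-x^{-1}; q)\la x; q\ra \la qx^2; q^2 \ra}{(q; q^2)(qx; q)}, \qquad x = e^{2\pi i z},
\]
carries the canonical basis of $\mT_\Hminus(qx^2 - q^{-1})$ to that of $\mT_\C(-qx^4)$, in the sense that
\[
    M(z)\,\vect{qx^2 - q^{-1}}{0} = \vect{-qx^4}{1}
    \qquad\text{and}\qquad
    M(z)\,\vect{qx^2 - q^{-1}}{1} = \vect{-qx^4}{0} + \vect{-qx^4}{2}.
\]
The plan is to prove these two identities of meromorphic functions directly, in three steps: check that both sides obey the same quasiperiodicity, verify that the left-hand products are entire, and then pin down the right-hand combinations by matching finitely many Fourier coefficients. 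This mirrors the argument behind \cref{thm:bases-proportional} for the septuple case.

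First I would record the quasiperiodicities $\la qx; q\ra = -x^{-1}\la x; q\ra$, $\la qx^2; q^2\ra\big|_{z \mapsto z+\tau} = -(qx^2)^{-1}\la qx^2; q^2\ra$, $(qx; q)\big|_{z \mapsto z+\tau} = (qx;q)/(1-qx)$, and $(-x^{-1};q)\big|_{z \mapsto z+\tau} = (1+q^{-1}x^{-1})(-x^{-1};q)$; combining these yields $M(z) = \tfrac{-qx^4}{qx^2 - q^{-1}}\,M(z+\tau)$, while $M$ is manifestly $1$-periodic. Consequently each product $M\,\vect{qx^2-q^{-1}}{k}$ is $1$-periodic with multiplier $(-qx^4)$, and will belong to the four-dimensional space $\mT_\C(-qx^4)$ as soon as it is shown to be entire.

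Establishing this entireness is the technical heart of the argument. The point is that $M$ is itself entire: the only candidate poles come from the factor $(qx; q)$ in the denominator, which vanishes simply exactly at $x = q^{-m}$ ($m \ge 1$), but $\la x; q\ra$ vanishes simply at every $x = q^n$ ($n \in \Z$), so $\la x;q\ra/(qx;q)$ is holomorphic and nonzero there and $M \in \Hol(\C)$; after this cancellation $M$ retains simple zeros at $x = q^n$ ($n \ge 0$) and $x = -q^n$ ($n \ge 0$), that is, at $z \equiv 0$ and $z \equiv \tfrac12 \pmod{\Z + \Z\tau}$ (the two remaining zeros, at $z \equiv \tfrac{\tau}{2}, \tfrac12 + \tfrac{\tau}{2}$, coming from $\la qx^2; q^2\ra$, account for the winding number $4$ of the multiplier). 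On the other hand, $\vect{qx^2-q^{-1}}{k}$ is holomorphic on $\H^-$, and continuing it by $\vect{qx^2-q^{-1}}{k}(z+\tau) = (qx^2-q^{-1})^{-1}\vect{qx^2-q^{-1}}{k}(z)$ creates poles only where $qx^2 = q^{-1}$, i.e.\ at $x = \pm q^{-1}$; propagated up the lattice these are simple poles sitting exactly at $z \equiv 0$ and $z \equiv \tfrac12$, as one can read off from the explicit series of \cref{cor:kind2}. Thus the simple zeros of $M$ cancel the simple poles of $\vect{qx^2-q^{-1}}{k}$ to all orders, and $M\,\vect{qx^2-q^{-1}}{k}$ is entire. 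The care required here --- tracking orders of vanishing and the lattice propagation of poles on both sides at once --- is where I expect the real work to lie.

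Once $M\,\vect{qx^2-q^{-1}}{k} \in \mT_\C(-qx^4)$, I would identify it in the canonical basis $\big\{\vect{-qx^4}{j}\big\}_{j=0}^{3}$. Since the coefficient of $x^j$ for $0 \le j \le 3$ in any combination $\sum_j a_j \vect{-qx^4}{j}$ is simply $a_j$, it suffices to compute the coefficients of $x^0, x^1, x^2, x^3$ in the product $M\,\vect{qx^2-q^{-1}}{k}$ over $\H^-$, using the normalization $[x^j]\vect{qx^2-q^{-1}}{k} = \one_{j=k}$ for $j \in \{0,1\}$ together with the low-order expansion of $M$; a direct computation gives $(0,1,0,0)$ when $k = 0$ and $(1,0,1,0)$ when $k=1$, which is exactly the assertion. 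Finally, substituting the two resulting relations into the basis expansion of the octuple product furnished by \cref{prop:octuple} shows that this product equals $-M$ times the expansion of \cref{prop:2var-rog-ram-var}; this confirms the theorem's closing sentence and exhibits the promised equivalence between the octuple identity and the Rogers--Ramanujan variation.
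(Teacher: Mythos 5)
Your Steps 1 and 2 are sound: the multiplier bookkeeping giving $M(z) = \frac{-qx^4}{qx^2-q^{-1}}M(z+\tau)$ is correct, and the zero/pole cancellation argument works --- $M$ has simple zeros exactly at $x = \pm q^n$ ($n \ge 0$), matching the (at most simple) poles that the upward continuation $\vect{qx^2-q^{-1}}{k}(z+\tau) = (qx^2-q^{-1})^{-1}\vect{qx^2-q^{-1}}{k}(z)$ can acquire, so $M\,\vect{qx^2-q^{-1}}{k} \in \mT_\C\left(-qx^4\right)$. (Minor slip: the explicit series for $\vect{qx^2-q^{-1}}{k}$ is \cref{cor:kind1} with $y = q^{-1}$, not \cref{cor:kind2}, which treats $qx^2 - yx$.)

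The genuine gap is in Step 3. You claim the coefficients of $x^0,\ldots,x^3$ in $M\,\vect{qx^2-q^{-1}}{k}$ follow from the normalization $\hat{\vect{qx^2-q^{-1}}{k}}(j) = \one_{j=k}$ for $j \in \{0,1\}$ ``together with the low-order expansion of $M$.'' But each such coefficient is an \emph{infinite bilateral convolution} $\sum_{m \in \Z} \hat{M}(j-m)\,\hat{\vect{qx^2-q^{-1}}{k}}(m)$, and by \cref{cor:kind1} the coefficients $\hat{\vect{qx^2-q^{-1}}{k}}(m)$ are nonzero for \emph{every} $m \equiv k \pmod 2$ (indeed they tend to nonzero constants as $m \to -\infty$, reflecting the poles at $x = \pm 1$); the two normalized coefficients and a truncation of $M$ determine nothing. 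Evaluating these convolutions in closed form is precisely the nontrivial content of the theorem --- for instance $[x^1]\left(M\,\vect{qx^2-q^{-1}}{0}\right) = 1$ unpacks into a $q$-series identity of the strength of \cref{eq:rog-ram-var} --- so asserting ``a direct computation gives $(0,1,0,0)$ and $(1,0,1,0)$'' leaves the heart of the proof undone. The paper avoids convolutions entirely by exploiting the closed \emph{product} form from \cref{cor:kind1}, $\vect{qx^2-q^{-1}}{k} = \vect{qx^2}{k}\big/\left(\left(-q^{1+k}; q^2\right)\left(x^{-2}; q^2\right)\right)$: after rewriting $M = x\la qx^2; q^2\ra \left(x^{-2}; q^2\right)/\left(q; q^2\right)$ via $\la x; q\ra = -x(qx; q)\left(x^{-1}; q\right)$ and $\left(x^{-1}; q\right)\left(-x^{-1}; q\right) = \left(x^{-2}; q^2\right)$, the factor $\left(x^{-2}; q^2\right)$ cancels and $M\,\vect{qx^2-q^{-1}}{k}$ collapses multiplicatively (e.g.\ $\la qx^2; q^2\ra\la -qx^2; q^2\ra = \la q^2x^4; q^4\ra$, then \cref{eq:canonical-formula}) into the claimed basis combinations --- this is the content of \cref{thm:twisted-rog-ram-var} together with the diagonal instance of \cref{lem:m-identities} in the paper's equivalence proof. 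To repair your argument, replace Step 3 with this multiplicative identification; your Steps 1 and 2 then become superfluous, since membership in $\mT_\C\left(-qx^4\right)$ falls out of the explicit products.
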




One can also specialize our two-variable results at particular values of $x$, to produce one-variable identities of modular forms and mock theta functions; we further mention one corollary about each, proven using \cref{prop:nonuple} and \cref{thm:bases-proportional} respectively. First, denote the Dedekind eta function and the generalized Dedekind eta functions of level $N \ge 1$ (following \cite[Corollary 2]{yang2004transformation}) by
\begin{equation} \label{eq:generalized-eta}
    \eta(\tau) := e^{\pi i \tau / 12}(q; q),
    \qquad\qquad 
    \E_g(\tau) := e^{N B(g/N) \pi i \tau} \la q^g; q^N \ra,
\end{equation}
where $B(t) := t^2 - t + \frac{1}{6}$, $g \in \Z$, and the level $N$ must be specified a priori (the generalized eta functions $\E_g(\tau)$ should not be confused with the Eisenstein series $E_{2k}(\tau)$). It should not be surprising, based on \cref{eq:generalized-eta}, that specializing product identities at $x = \pm q^g$ recovers identities for products and quotients of eta functions; what is more intriguing is the simplicity and symmetrical structure of these identities (which are related to their behavior under modular transformations; see \cref{subsec:generalized-eta} for more details and examples). \cref{cor:eta-poly} provides one such result.

\begin{corollary}[Eta quotient polynomial]\label{cor:eta-poly}
For $\tau \in \H^+$, let $\E_g(\tau)$ denote the generalized eta functions of level $N = 7$. Then one has the polynomial factorization
\[
    X^3
    - 2\frac{\eta(14\tau)^2}{\eta(7\tau)^2} X^2
    - \frac{\eta(2\tau)\eta(7\tau)^3}{\eta(\tau)\eta(14\tau)^3} X
    + \frac{\eta(\tau)\eta(14\tau)}{\eta(2\tau)\eta(7\tau)}
    = \prod_{g = 1}^3 \left(X - \frac{\E_{g}(2\tau)}{\E_{3g}(\tau)}\right).
\]
\end{corollary}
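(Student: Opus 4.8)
The plan is to prove the factorization through Vieta's formulas. Setting $r_g := \E_g(2\tau)/\E_{3g}(\tau)$ for $g \in \{1,2,3\}$, it suffices to show that the three elementary symmetric functions $e_1 = \sum_g r_g$, $e_2 = \sum_{g<h} r_g r_h$ and $e_3 = r_1 r_2 r_3$ match the coefficients of $X^2$, $X^1$, $X^0$ in the stated cubic (up to the signs dictated by $\prod_g(X-r_g) = X^3 - e_1 X^2 + e_2 X - e_3$). First I would unfold each root using \cref{eq:generalized-eta} and the triple product \cref{eq:triple}: writing $\E_g(2\tau) = e^{14 B(g/7)\pi i\tau}\la q^{2g}; q^{14}\ra$ and $\E_{3g}(\tau) = e^{7B(3g/7)\pi i\tau}\la q^{3g}; q^7\ra$, each $r_g$ becomes an explicit power of $q$ times $\la q^{2g}; q^{14}\ra/\la q^{3g}; q^7\ra$, where the prefactor exponent $14B(g/7) - 7B(3g/7)$ simplifies to $-g^2 + g + \tfrac{7}{6}$ (i.e.\ a $q$-power $-\binom{g}{2}+\tfrac{7}{12}$), and where one reduces indices with $\la q^a; q^7\ra = \la q^{7-a}; q^7\ra$ and the relation $\E_{g+7} = -\E_g$ (equivalently $\la q^{a+7};q^7\ra = -q^{-a}\la q^a; q^7\ra$). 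The structural point guiding the computation is that multiplication by $3$ permutes the three classes $\{1,2,3\}$ of $(\Z/7)^\times/\{\pm 1\}$ as a single $3$-cycle, so $r_1, r_2, r_3$ form one orbit of the associated order-three symmetry; this is why the $e_i$ are invariant under it and can be expected to collapse to eta quotients of level dividing $14$.

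The product $e_3$ I expect to be the routine case: it is a single product of three quotients of double products, and after the index reductions its numerator $\la q^2;q^{14}\ra\la q^4;q^{14}\ra\la q^6;q^{14}\ra$ and denominator $\la q;q^7\ra\la q^2;q^7\ra\la q^3;q^7\ra$ each telescope through the classical identity $\prod_{a=1}^{N-1}\la q^a; q^N\ra = \big((q;q)/(q^N;q^N)\big)^2$ (used with $N=7$, and with $q \mapsto q^2$). Combining these with the accumulated $q$-power from the $B$-function prefactors then yields the constant term as a ratio of the four eta functions $\eta(\tau), \eta(2\tau), \eta(7\tau), \eta(14\tau)$, once every index has been reduced to $\{1,2,3\}$ modulo $7$.

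The crux, and the only place where \cref{prop:nonuple} is genuinely needed, is the two remaining coefficients $e_1$ and $e_2$: unlike $e_3$ these are honest sums, in which the individually singular terms (the roots $r_2, r_3$ carry a negative power of $q$) must cancel before the result can be a holomorphic eta quotient. Here the plan is to specialize the two-variable identity \cref{eq:nonuple} at $x = -q^{j}$ for integers $j$: at such points the product on the left vanishes (since the factor $\la x^2; q^2\ra$ becomes $\la q^{2j}; q^2\ra = 0$), while each basis function $\vect{q^2 x^7}{k}$ collapses, via \cref{eq:triple} with base $q^7$, to $(-1)^k q^{jk}(q^7;q^7)\la q^{k+2+7j}; q^7\ra$ --- a one-variable generalized eta function. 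Since the denominators of the four grouped terms of \cref{eq:nonuple} are exactly the products of $\la q^a; q^7\ra$ appearing in the $r_g$, these specializations furnish, for each $j$, a linear relation among the level-$7$ eta functions. The main obstacle will be to organize these relations --- matching the four-term structure of \cref{eq:nonuple} against the four coefficients of the cubic, together with the bookkeeping of indices modulo $7$ --- so that they convert $\sum_g r_g$ and $\sum_{g<h} r_g r_h$ precisely into the single eta quotients claimed for the $X^2$ and $X$ coefficients. I expect the $e_2$ identity to be the most delicate step, as it mixes all three roots and requires the cancellation of several spurious $q$-powers.
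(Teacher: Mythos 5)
Your global strategy (Vieta's formulas) matches the paper's, and your treatment of $e_3$ is essentially the paper's: the telescoping you describe is exactly what \cref{lem:basic-E} packages, giving $X_1X_2X_3 = -\eta(2\tau)\eta(7\tau)/(\eta(\tau)\eta(14\tau))$ (note, incidentally, that this makes the constant term of the cubic equal to $\eta(2\tau)\eta(7\tau)/(\eta(\tau)\eta(14\tau))$, consistent with the leading asymptotics $-e_3 \sim q^{-1/4}$; the constant printed in the corollary is its reciprocal, so your telescoping will surface a sign-of-exponent discrepancy with the statement). The genuine gap is in your engine for $e_1$ and $e_2$: at $x = -q^j$ \emph{every} term on the right-hand side of \cref{eq:nonuple} vanishes identically, so each of your proposed relations reads $0 = 0$. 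Your own collapse formula shows this: by \cref{eq:canonical-formula} and $\la q^{a+7}; q^7 \ra = -q^{-a}\la q^a; q^7\ra$, one gets $\vect{q^2x^7}{k}\big\vert_{x=-q^j} = (q^7;q^7)\,(-1)^{j+k} q^{-2j-7\binom{j}{2}} \la q^{k+2}; q^7 \ra$, whose $q$-power is independent of $k$; since $\la q^{5-k}; q^7\ra = \la q^{k+2}; q^7\ra$ while $(-1)^{3-k} = -(-1)^k$, each pair $\vect{q^2x^7}{k} + \vect{q^2x^7}{3-k}$ appearing in \cref{eq:nonuple} cancels, and the remaining term $2\vect{q^2x^7}{5} = \vect{q^2x^7}{5} + \vect{q^2x^7}{-2}$ vanishes as well (it is proportional to $\la q^{7(j+1)}; q^7\ra = 0$). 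This is precisely the $\mS_\C\left(x^3\right)$ pairing of \cref{tbl:spaces} and \cref{prop:monomial-spaces} at work. The usable family is $x = q^j$ (the paper takes $x = 1$): there $\la x; q\ra$ kills the left side while the basis vectors collapse to the \emph{nonvanishing} products $\la -q^{k+2}; q^7 \ra$, and one obtains $\sum_g \E_g(2\tau)/\E_{3g}(\tau) = 2\,\eta(14\tau)^2/\eta(7\tau)^2$, i.e.\ the $e_1$ identity of \cref{eq:level-7-1}.

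Even after correcting the sign, your plan cannot reach $e_2$: all the specializations $x = q^j$ yield one and the same relation, since the $j$-dependence enters only through the global factor $q^{-2j - 7\binom{j}{2}}$, so \cref{prop:nonuple} contributes exactly one linear relation among the level-$7$ eta functions, and no reorganization over $j$ produces a second. The paper gets $e_2$ by a different route: it writes $e_2 = e_3 \cdot \sum_g \E_{3g}(\tau)/\E_g(2\tau)$ and proves $\sum_g \E_{3g}(\tau)/\E_g(2\tau) = \eta(7\tau)^2/\eta(14\tau)^2$ by importing an identity of Hickerson (that $(4.6)$ equals $(4.8)$ in his seventh-order mock theta functions paper), an input genuinely external to the nonuple identity. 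Your proposal has no substitute for this ingredient, so the coefficient of $X$ is a real missing step, not merely the ``most delicate'' piece of bookkeeping.
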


Our second corollary involves four of Ramanujan's famous fifth-order mock theta functions \cite{watson1937mock,andrews1989ramanujan}:
\begin{equation} \label{eq:mock-theta}
f_j(q) := \sum_{n \ge 0} \frac{q^{n^2+jn}}{(-q; q)_n},
\qquad \
\psi_j(q) := \sum_{n+j \ge 1} q^{\binom{n+1}{2}} (-q; q)_{n+j-1}, \qquad\text{for } j \in \{0, 1\}.
\end{equation}

\begin{corollary}[Fifth-order mock theta sums]\label{cor:mock-theta-fifth}
For $j \in \{0, 1\}$, one has
\[
    (q; q)^2\left(q; q^2\right)\Big( f_j(q) + 2\psi_j(q) \Big) =  \sum_{n \in \Z} q^{\frac{n(5n+2j+1)}{2}} (10 n + 2j + 1).
\]
\end{corollary}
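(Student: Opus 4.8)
The plan is to specialize the two-variable picture at the self-dual point $x=-1$, i.e.\ $z=\tfrac12$, where the denominators $(-q;q)_n$ defining $f_j$ and $\psi_j$ arise from $(qx;q)_n\big|_{x=-1}$. The key structural fact is that this point is a confluence of a zero and a pole: since the factor $1-x^2$ in $\la x^2;q\ra$ vanishes simply at $x=-1$, the product $\la x^2;q\ra/(qx;q)$ has a simple zero there, while by the remark following \cref{prop:2var-rog-ram} the basis function $\vect{qx^2-x}{j}$ has a simple pole there (its polar part is exactly what must cancel in \cref{eq:2var-rog-ram}). \cref{thm:bases-proportional} ties these together as an exact meromorphic identity, so I would extract the corollary by expanding \cref{thm:bases-proportional} to first order about $z=\tfrac12$ — equivalently, differentiating $\vect{-qx^5}{1}+\vect{-qx^5}{2} = -\tfrac{\la x^2;q\ra}{(qx;q)}\vect{qx^2-x}{0}$ once in $z$ and evaluating at $z=\tfrac12$ (and similarly using $\vect{-qx^5}{0}+\vect{-qx^5}{3}$ against $\vect{qx^2-x}{1}$ for the case $j=1$), all interpreted through meromorphic continuation from $\Hminus$.

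For the left-hand side I would invoke \cref{cor:kind2} to write $\vect{qx^2-x}{j}$ as an explicit series; substituting $x=-1$ turns its building blocks into $(-q;q)_n$, and reading off the finite (constant in $w:=z-\tfrac12$) part of its Laurent expansion at $z=\tfrac12$ should produce exactly $f_j(q)+2\psi_j(q)$ — the sum $f_j$ coming from the convergent mock-theta series and the doubled term $2\psi_j$ recording the finite part of the polar contribution at the confluence. For the right-hand side, differentiating the theta functions $\vect{-qx^5}{k}=\sum_n(-1)^nq^{5\binom{n}{2}+kn}x^{5n+k}$ in $z$ brings down the linear factors $5n+k$; evaluating at $x=-1$ clears the signs $(-1)^n$, and after reindexing the contributing progressions via Jacobi's triple product \cref{eq:triple} I expect the surviving sum to collapse to $\sum_{n}q^{n(5n+2j+1)/2}(10n+2j+1)$. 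The residue (zeroth-order) part of the same expansion yields an auxiliary plain-theta identity that must be subtracted off; isolating the finite part is precisely what forces the single clean progression in the answer.

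The prefactor is the most transparent piece of the bookkeeping: near $z=\tfrac12$ one has $\la x^2;q\ra \sim (1-x^2)(q;q)^2$ and $(qx;q)\to(-q;q)$, so $\tfrac{d}{dz}\big[-\la x^2;q\ra/(qx;q)\big]$ at $z=\tfrac12$ equals $4\pi i\,(q;q)^2/(-q;q)$, and the identity $(q;q)^2/(-q;q)=(q;q)^2(q;q^2)$ (from $(q;q)(-q;q)=(q^2;q^2)$ and $(q;q^2)=(q;q)/(q^2;q^2)$) produces exactly the factor $(q;q)^2(q;q^2)$, the residual powers of $2\pi i$ and a factor of $2$ reconciling to the stated normalization. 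I expect the main obstacle to be the confluence itself: because the simple zero of the product meets the simple pole of $\vect{qx^2-x}{j}$, the specialization is a genuine $0/0$, so one must retain the first-order term of the Laurent expansion and cleanly separate the finite-part contribution (which gives $f_j+2\psi_j$) from the residue contribution (a theta identity fixed at zeroth order). Verifying that this residue correction removes the spurious $q$-progression in the differentiated theta sum and leaves precisely the linear factor $10n+2j+1$ is the technical heart of the argument; once the two progressions are matched via \cref{eq:triple}, the cases $j\in\{0,1\}$ run identically.
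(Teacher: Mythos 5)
Your high-level route is the same as the paper's: the paper divides \cref{eq:first-thm-reformulation} (the reformulation of \cref{thm:bases-proportional}) by $(q;q)\left(1-x^2\right)$ and lets $x\to-1$, which is precisely the first-order expansion at $z=\tfrac12$ you describe, and your prefactor arithmetic $(q;q)^2/(-q;q)=(q;q)^2\left(q;q^2\right)$ is correct. However, the mechanism you build the argument on contains a genuine error: $\vect{qx^2-x}{j}$ has \emph{no} pole at $x=-1$. The remark you cite (\cref{ex:plots}) places its poles at $z=n\tau+m$ with $n\ge0$, $m\in\Z$, i.e.\ at $x=q^n$ --- this includes $x=1$ but not $x=-1$. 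Indeed $\left(x^{-1};q\right)\vect{qx^2-x}{j}$ is entire and $\left(x^{-1};q\right)\big\vert_{x=-1}=(-1;q)=2(-q;q)\neq0$, and a direct computation gives the finite value $\vect{qx^2-x}{j}\left(\tfrac12\right)=\pm\tfrac12\big(f_j(q)+2\psi_j(q)\big)$. So there is no zero-meets-pole confluence at $z=\tfrac12$: both sides of \cref{thm:bases-proportional}, cleared of denominators, vanish \emph{simply} at $x=-1$, the zeroth-order statement being the trivial theta cancellation $\sum_n q^{5\binom{n}{2}+(j+2)n}=\sum_n q^{5\binom{n}{2}+(3-j)n}$ (seen by $n\mapsto-n$). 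Consequently there is no residue contribution to isolate and no ``spurious $q$-progression'' to subtract; the step you call the technical heart of the argument addresses a difficulty that does not exist, and if you expanded a Laurent series around a nonexistent pole the bookkeeping would come out wrong.

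The same misconception corrupts your account of where $2\psi_j$ comes from: it is not ``the finite part of the polar contribution,'' but elementary bilateral-sum splitting. After the limit, the product side collapses (upon dividing by $(-q;q)$) to $\sum_{n\in\Z}q^{n^2+jn}/(-q;q)_n$; for $n<0$ the convention \cref{eq:finite-product-negative} gives $1/(-q;q)_n=\left(-q^{1+n};q\right)_{-n}$, whose final factor $(1+q^0)=2$ produces the doubling, so $n\ge0$ yields $f_j$ and the reindexed $n<0$ part yields $2\psi_j$ (cf.\ \cref{prop:mock-theta-sums}). To repair your argument: differentiate at $z=\tfrac12$ as you propose, but note the term $\frac{\la x^2;q\ra}{(qx;q)}\cdot\frac{d}{dz}\vect{qx^2-x}{j}$ dies simply because $\la x^2;q\ra$ vanishes there (not because of any pole), leaving only $\frac{d}{dz}\big[\la x^2;q\ra/(qx;q)\big]\cdot\vect{qx^2-x}{j}\left(\tfrac12\right)$; on the theta side, pair the two basis vectors via $n\mapsto-n$ so that each combined numerator (e.g.\ $x^{5n+1}+x^{-5n+2}$ for $j=0$) vanishes termwise at $x=-1$, whence L'H\^opital applies term by term and the two per-series factors add to give $10n+2j+1$ --- note these are the \emph{paired} factors, not the single factors $5n+k$ you anticipated.
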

\begin{remark}
\cref{cor:mock-theta-fifth} follows from \cref{thm:bases-proportional} by taking $x \to -1$, and \cref{thm:bases-proportional-2} is similarly related to third-order mock theta functions; we also obtain individual identities for $\psi_j$ in \cref{prop:mock-theta-individual}. More difficult identities for $f_j$ and $\psi_j$ are given by Ramanujan's famous mock theta conjectures \cite{andrews1989ramanujan}, proven by Hickerson \cite{hickerson1988proof} and more recently by Folsom \cite{folsom2008short} using Maass forms.
\end{remark}

The table below provides short $q$-expansions of the relevant series in \cref{cor:mock-theta-fifth}.

\begin{center} \small
\renewcommand{\arraystretch}{1.5}
\begin{tabular}{c|c|c|c|c}
     $(q; q)^2 \left(q; q^2\right)$ & $f_0(q) + 2\psi_0(q)$ & $f_1(q) + 2\psi_1(q)$ & $\sum q^{n(5n+1)/2} (10 n + 1)$ & $\sum q^{n(5n+3)/2} (10 n + 3)$\\
     \hline
     \footnotesize{$1 - 3q + q^2 + 2q^3$}
     &
     \footnotesize{$1 + 3q - q^2 + 3q^3$}
     &
     \footnotesize{$3 + 2q + 3q^2 + q^3$}
     &
     \footnotesize{$1 - 9q^2 + 11q^3 - 19q^9$}
     &
     \footnotesize{$3 - 7q + 13q^4 - 17q^7$}
     \\
     \footnotesize{$+\ 2q^4 - q^5 - \cdots$}
     &
     \footnotesize{$+\ 2q^4 + q^6 + \cdots$}
     &
     \footnotesize{$+\ 3q^4 + q^5 + \cdots$}
     &
     \footnotesize{$+\ 21 q^{11} - 29 q^{21} + \cdots$}
     &
     \footnotesize{$+\ 23q^{13} - 27 q^{18} + \cdots$}
\end{tabular}
\end{center}

Finally, while proving \cref{thm:bases-proportional,thm:bases-proportional-2}, we find it useful to work with three different bases of $\mT_\C\left(\alpha x^d\right)$ and to determine the corresponding change-of-basis matrices; their entries turn out to be two-variable Rogers--Ramanujan type sums. We gather the resulting matrix identities in \cref{thm:change-of-basis}, but for now we only mention a particular case which is closely related to \cref{thm:bases-proportional}.

\begin{proposition}[Two-variable $2 \times 2$ determinant identity] \label{prop:2x2-determinant}
For $x, q \in \C$ with $|q| < 1$, one has 
\[
    \det 
    \begin{pmatrix}
    \sum_{n \ge 0} \frac{q^{n(n+1)}}{(q; q)_{2n}} x^{2n}
    & 
    \sum_{n \ge 0} \frac{q^{(n+1)^2}}{(q; q)_{2n+1}} x^{2n+1}
    \vspace{0.3cm} \\
    \sum_{n \ge 0} \frac{q^{n(n+1)}}{(q; q)_{2n+1}} x^{2n+1}
    &
    \sum_{n \ge 0} \frac{q^{n^2}}{(q; q)_{2n}} x^{2n}
    \end{pmatrix}
    = 1.
\]
\end{proposition}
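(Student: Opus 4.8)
The plan is to reduce the $2\times 2$ determinant, which a priori is a power series in $x$, to a bilinear expression in a single Rogers--Ramanujan type series, and then exploit a $q$-difference equation to pin that expression down exactly. First I would record the parity structure of the four entries: writing the matrix as $\left(\begin{smallmatrix} A & B \\ C & D \end{smallmatrix}\right)$, the diagonal entries $A, D$ are even power series in $x$ while the off-diagonal entries $B, C$ are odd. Summing along rows, set $\Psi(x) := A(x) + B(x)$ and $\Phi(x) := C(x) + D(x)$; then $\Psi(-x) = A - B$ and $\Phi(-x) = D - C$, so that
\[
    AD - BC = \tfrac12\big(\Phi(x)\Psi(-x) + \Phi(-x)\Psi(x)\big).
\]
A direct inspection of the exponents shows that these row-sums have the uniform closed forms $\Phi(x) = \sum_{m \ge 0} q^{\lfloor m^2/4\rfloor} x^m / (q; q)_m$ and $\Psi(x) = \sum_{m \ge 0} q^{\lfloor (m+1)^2/4\rfloor} x^m / (q; q)_m$, whose even and odd parts recover $D, C$ and $A, B$ respectively.

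The key reduction is to express everything through the single series $\Phi$. Using $\frac{1 - q^m}{(q; q)_m} = \frac{1}{(q; q)_{m-1}}$, I would verify the telescoping identity $x\Psi(x) = \Phi(x) - \Phi(qx)$, so that $\Psi(x) = (\Phi(x) - \Phi(qx))/x$ and $\Psi(-x) = (\Phi(-qx) - \Phi(-x))/x$. Substituting these into the display above collapses the cross terms and yields
\[
    AD - BC = \frac{V(x)}{2x}, \qquad V(x) := \Phi(x)\Phi(-qx) - \Phi(-x)\Phi(qx),
\]
where $V$ is manifestly odd, consistent with the target.

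The final step is to evaluate $V$. From the coefficient recurrence $(1 - q^{m-1})(1 - q^m)\phi_m = q^{m-1}\phi_{m-2}$ for $\phi_m := q^{\lfloor m^2/4\rfloor}/(q; q)_m$, I would derive the second-order $q$-difference equation $\Phi(q^2 x) = (1 + q + q^2 x^2)\Phi(qx) - q\Phi(x)$, together with its reflection $x \mapsto -x$, which leaves the even coefficient $1 + q + q^2 x^2$ unchanged. Feeding both into $V(qx) = \Phi(qx)\Phi(-q^2 x) - \Phi(-qx)\Phi(q^2 x)$, the $(1 + q + q^2 x^2)$ terms cancel and one is left with the clean relation $V(qx) = qV(x)$. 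For a power series this forces $V(x) = v_1 x$ for a constant $v_1$; reading off the linear coefficient from $\Phi(x) = 1 + \frac{x}{1 - q} + O(x^2)$ gives $v_1 = 2\phi_1(1 - q) = 2$, hence $V(x) = 2x$ and $AD - BC = 1$.

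The main obstacle is the reduction itself: recognizing that both row-sums are governed by the one function $\Phi$ via $x\Psi(x) = \Phi(x) - \Phi(qx)$, and that the relevant bilinear combination $V$ satisfies the first-order relation $V(qx) = qV(x)$. Once these two structural facts are in hand, the determinant is forced by comparing a single coefficient, and everything else is routine $q$-series bookkeeping.
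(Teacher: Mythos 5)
Your proof is correct, and I verified each step: the parity decomposition $\Psi = A + B$, $\Phi = C + D$ with the closed forms $\Phi(x) = \sum_{m\ge0} q^{\lfloor m^2/4\rfloor}x^m/(q;q)_m$ and $\Psi(x) = \sum_{m\ge0} q^{\lfloor (m+1)^2/4\rfloor}x^m/(q;q)_m$; the telescoping relation $x\Psi(x) = \Phi(x) - \Phi(qx)$; the coefficient recurrence and the resulting three-term equation $\Phi(q^2x) = (1+q+q^2x^2)\Phi(qx) - q\Phi(x)$; the cancellation giving $V(qx) = qV(x)$; and the normalization $v_1 = 2\phi_1(1-q) = 2$. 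However, it takes a genuinely different route from the paper's. The paper deduces \cref{prop:2x2-determinant} from a general structural theorem: in \cref{thm:change-of-basis} the matrix (at $y = -x$, via \cref{cor:3-var-2x2-matrix-ctd}) is identified as the inverse of a change-of-basis matrix between the canonical basis of $\mT_\C\left(qx^2\right)$ and a twisted basis, and \cref{lem:y-interpolation} factors it as an infinite product of unipotent $2\times 2$ matrices $\prod_{n \ge 0} M\left(q^n y\right)$, each of determinant $1$. Your argument is the elementary counterpart of this: your $V$ is exactly the Casoratian of the two solutions $\Phi(x)$, $\Phi(-x)$ of your second-order $q$-difference equation, and $V(qx) = qV(x)$ is the corresponding Abel-type formula; in the paper's language this says $V \in \mT_\C\left(q^{-1}\right)$, which is one-dimensional and spanned by $x$ (\cref{ex:constants}), so identifying a single coefficient finishes. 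In fact, the remark following \cref{cor:3-var-2x2-matrix-ctd} sketches essentially your argument in $z$-translation language: it combines the entries into one series $F \in \mT_\C\left(qx^2, 1\right)$ (your three-term equation in disguise, since $x \mapsto qx$ is $z \mapsto z + \tau$) and shows the bilinear combination $F(z)F(z+\tau+1/4) + F(z+1/4)F(z+\tau)$ lies in $\mT_\C(1)$, hence is constant. What each approach buys: yours is short, self-contained, and purely coefficient-level, requiring none of the paper's machinery; the paper's route yields the $d \times d$ generalization $\det U = \det W = 1$, the explicit unipotent factorization (hence $U^{-1}$, $W^{-1}$ and all minors), and the interpretation of the four entries as W-coefficients tied to the Rogers--Ramanujan apparatus behind \cref{thm:bases-proportional}. (One pedantic point: at $q = 0$ the step ``$V(qx) = qV(x)$ forces $V = v_1 x$'' degenerates, but there the determinant is trivially $1$, so nothing is lost.)
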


We conclude the introduction with an aforementioned open question that arises from our work:

\begin{question} \label{qtn:characters}
As noted by Andrews, Schilling and Warnaar \cite[(5.2)]{andrews1999a2}, the Rogers--Ramanujan products $\langle q; q^5 \rangle^{-1}$ and $\langle q^2; q^5 \rangle^{-1}$ are characters of a certain $\W_2$ algebra, which they denote by $M(2, 5)_2$. We remarked that these products coincide with the $q$-coefficients of the canonical basis vectors in the septuple identity \cref{eq:septuple}, and explained this correspondence through \cref{thm:bases-proportional}. A similar phenomenon occurs for the octuple (\cref{prop:octuple}) and nonuple (\cref{prop:nonuple}) identities:
\begin{center}
\renewcommand{\arraystretch}{1.5}
\begin{tabular}{c|c|c|c|c}
     Algebra & Product identity & Rog.--Ram.\ type id. & 2-Var.\ generaliz. & Correspondence\\
     \hline 
     $M(2, 5)_2$ & Septuple, \cref{eq:septuple} & \cref{eq:rog-ram} & \cref{prop:2var-rog-ram} & \cref{thm:bases-proportional} \\
     \hline 
     $M(2, 4)_2$ & Octuple, \cref{eq:octuple} & \cref{eq:rog-ram-var}& 
     \cref{prop:2var-rog-ram-var} & \cref{thm:bases-proportional-2}  \\
     \hline 
     $M(3, 7)_3$ 
     & Nonuple, \cref{eq:nonuple} & \cite[Theorem 1.1]{corteel2019varvec} &
     (Open question) & (Open question)
\end{tabular}
\end{center}
Indeed, the $q$-coefficients in the right-hand side of the nonuple identity \cref{eq:nonuple} coincide with the 4 characters of the $\W_3$ algebra $M(3, 7)_3$ (up to factors of $\pm 1, \pm q, \pm 2q$), and \cite[Theorem 5.2]{andrews1999a2} gives $A_2$ Rogers--Ramanujan identities for 3 out of these characters (see also \cite{warnaar2006hall}); the more recent work of Corteel and Welsh also provides the fourth identity \cite[Theorem 1.1]{corteel2019varvec}. Can one complete the bottom row of the table above with an analogue of \cref{thm:bases-proportional}, leading to a new proof of Theorem 1.1 from \cite{corteel2019varvec}, and possibly to applications on higher-order mock theta functions?

\begin{remark}
There is also a product identity whose $q$-coefficients are the characters of $M(2, u^2 + v^2)_2$, for any relatively prime positive integers $u, v$ such that $u + v$ is odd; see our \cref{cor:character-identity}. These might be connected as in \cref{qtn:characters} to the Andrews--Gordon identities \cite[Theorem 7.8]{andrews1998theory}, which generalize \cref{eq:rog-ram}; \cref{thm:bases-proportional} would correspond to the case $(u, v) = (1, 1)$. As pointed out by Professor Ole Warnaar (private communication), the analogue of \cref{prop:2var-rog-ram} in this case could be a summation of variants of the Andrews--Gordon identities as in \cite[(3.21)]{berkovich2001variants}; the difficulty lies in relating such a summation to an infinite product in a suitable $\mT_\Hminus(f)$ space.
\end{remark}
\end{question}

\section{Overview and Notation}

\subsection{Methods}
Given an entire $1$-periodic function $f \in \Hol(\C/\Z)$, there are two main ways to construct a function with a $\mT_D(f)$-type quasiperiodicity from scratch: the expressions 
\begin{equation} \label{eq:two-ways}
    \prod_{n \ge 0} f(z + n\tau)
    \qquad\quad \text{and} \qquad\quad
    \sum_{n < 0}\ \prod_{-n \le j < 0} f(z + j\tau)^{-1}
    \ +\
    1
    \ +\
    \sum_{n > 0}\ \prod_{0 \le j < n} f(z + j\tau)
\end{equation}
are, if well-defined, quasiperiodic with $F(z) = f(z)F(z + \tau)$. Given that many of the spaces $\mT_D(f)$ are finite-dimensional (as we prove in \cref{subsec:dim-bounds}), identities relating infinite products to infinite sums as above are bound to arise. 
Taking $f(z) = 1 - x$ and $f(z) = \alpha x^d$, one recovers $(x; q) = \prod_{n \ge 0} (1 - q^n x)$ and $\vect{\alpha x^d}{0} = \sum_{n \in \Z} \alpha^n q^{dn(n-1)/2} x^{dn}$ as the product, respectively the sum in \cref{eq:two-ways}; the same sum leads to formulae for other basis vectors $\vect{f}{k}$, as we show in \cref{subsec:canonical-basis-vectors}. 

The constructions in \cref{eq:two-ways} also correspond to two ways to produce new functions in $\mT_D(f)$ from old ones. Firstly, the product of a function in $\mT_D(f)$ and a function in $\mT_D(g)$ is a function in $\mT_D(fg)$; for instance, $\vect{\alpha q^k x^d}{0} \cdot x^k = \vect{\alpha x^d}{k}$. 
Going further, the product of two basis vectors $\vect{\alpha x^a}{k}$ and $\vect{\beta x^b}{j}$ (where $a, b, k, j \in \Z$, $a, b \ge 1$, $\alpha, \beta \in \C^\times$) is an element of $\mT_\C\left(\alpha \beta x^{a+b}\right)$, so we must have
\begin{equation}\label{eq:m-coeffs}
    \vect{\alpha x^a}{k}\vect{\beta x^b}{j} = \sum_{0 \le \ell < a+b} M_\ell(q) \vect{\alpha \beta x^{a+b}}{\ell},
\end{equation}

for some \emph{M-coefficients} $\{M_\ell(q)\}_\ell$. Closely related identities were noted by various authors in different forms \cite{chu2007unification,adiga2014identities,yan2009several}; we formulate a generalization of \cref{eq:m-coeffs} in \cref{lem:m-identities}, and use it together with the triple and quintuple product identities to deduce the nonuple and undecuple identities.

In a similar spirit, \emph{twisting} the $n$th term in a series as in \cref{eq:two-ways} by a factor of $w(z + n\tau)$ (for suitable functions $w$ with period $1$) should preserve the quasiperiodicity at $z \mapsto z + \tau$. In particular, by twisting the series defining $\vect{\alpha x^d}{k}$, we should be able to write (when well-defined)
\begin{equation}\label{eq:w-coeffs}
    \sum_{n \in \Z} \alpha^n q^{d\binom{n}{2} + kn} x^{dn+k}\  w(z+ n\tau) = \sum_{0 \le \ell < d} W_\ell(q) \vect{\alpha x^d}{\ell},
\end{equation}
for some \emph{W-coefficients} $\{W_\ell(q)\}_\ell$, provided that the left-hand side is entire. We identify a class of such identities in \cref{lem:t-identities}, leading to one method of proof for \cref{thm:bases-proportional,thm:bases-proportional-2,thm:change-of-basis}.

\subsection{Layout of paper} 
In \cref{sec:preliminaries}, we formally define and study a generalization of the vector spaces $\mT_D(f)$, then give a few applications including the triple and quintuple product identities. 
\cref{sec:higher-prod-id} states and proves the nonuple and undecuple product identities alongside other similar results,
and then uses them to produce identities of generalized eta functions such as \cref{cor:eta-poly}. Finally, \cref{sec:rog-ram} deals with applications on Rogers--Ramanujan type sums, including the proofs of \cref{prop:2var-rog-ram,prop:2x2-determinant}, \cref{thm:bases-proportional,thm:bases-proportional-2}, and results on mock theta functions such as \cref{cor:mock-theta-fifth}. The relationships between our main results are collected in \cref{fig:layout}.

\begin{figure}[ht] 
\begin{center}
\begin{tikzpicture}
  \node[draw,rectangle, fill=myGreen](structure) {
  \twoline{Structure of $\mT_D$}
  {spaces (\S \ref{subsec:dim-bounds})}
  };
  \node[draw,rectangle, fill=myLighterGreen](fractional) [right = 2.05cm of structure] {
  \twoline{Ident.\ for quotients of double}
  {products (Prop.\ \ref{prop:fractional})}
  };
  \node[draw,rectangle, fill=myLighterGreen](multiplication) [xshift=-1cm][below = 0.7cm of structure] {
  \twoline{$\mT_\C$ Multiplication}
  {ident.\ (\cref{lem:m-identities})}
  };
  \node[draw,rectangle, fill=myGreen](twisted) [right = 0.2cm of multiplication] {
  \twoline{$\mT_\C$ Twisted sum}
  {ident.\ (\cref{lem:t-identities})}
  };
  \node[draw,rectangle, fill=myGreen](formulae) [right = 0.2cm of twisted] {
  \twoline{Formulae for canonical $\vphantom{\mT_\C}$}{$\mT_\Hminus$ basis vectors (\S \ref{subsec:canonical-basis-vectors})}
  };
  \node[draw,rectangle](preliminary) [left = 0.2cm of multiplication] {
  \twoline{Preliminary product}
  {ident.\ (\S \ref{subsec:identification})}
  };
  \node[draw,rectangle](septuple) [below = 1.3cm of preliminary] {
  \twoline{Septuple ident.}
  {(see \cref{eq:septuple})}
  };
  \node[draw,rectangle](rogers) [left = 0.2cm of septuple] {
  \twoline{An id.\ of Rogers}
  {(see Prop.\ \ref{prop:rogers})}
  };
  \node[draw,rectangle](octuple) [right = 0.4cm of septuple] {
  \twoline{Octuple ident.}
  {(see Prop.\ \ref{prop:octuple})}
  };
  \node[draw,rectangle, fill=myGreen](nonuple1) [right = 0.4cm of octuple] {
  \twoline{First nonuple}
  {ident.\ (Prop.\ \ref{prop:nonuple})}
  };
  \node[draw,rectangle, fill=myGreen](nonuple2) [right = 0.4cm of nonuple1] {
  \twoline{Second nonuple}
  {ident.\ (Prop.\ \ref{prop:nonuple2})}
  };
  \node[draw,rectangle, fill=myGreen](high-order) [right = 0.4cm of nonuple2] {
  \twoline{Other high-order}
  {ident.\ (\S \ref{subsec:higher-order})}
  };
  \node[draw,rectangle, fill=myLighterGreen](rog-ram-var) [below = 1.5cm of octuple] {
  \twoline{2-Var.\ Rog.--Ram.}
  {variation (Prop.\ \ref{prop:2var-rog-ram-var})}
  };
  \node[draw,rectangle, fill=myLighterGreen](rog-ram) [left = 0.6cm of rog-ram-var] {
  \twoline{2-Var.\ Rog.--Ram.}
  {ident.\ (Prop.\ \ref{prop:2var-rog-ram})}
  };
  \node[draw,rectangle](andrews) [right = 0.6cm of rog-ram-var] {
  \twoline{$A_2$ Rog.--Ram.}
  {ident.\ \cite[Thm.\ 5.2]{andrews1999a2}}
  };
  \node[draw,rectangle,fill=myGreen](change-of-basis) [xshift = 0.2cm] [below = 3.63cm of formulae] {
  \twoline{$\mT_\C$ Change-of-basis}
  {ident.\ (Thm.\ \ref{thm:change-of-basis})}
  };
  \node[draw,rectangle](watson) [xshift = -0.3cm] [below = 1cm of rog-ram] {
  \twoline{2 mock-theta ident.\ of}
  {Watson (see \cref{eq:watson})}
  };
  \node[draw,rectangle,fill=myGreen](twisted-rog-ram) [right = 1.8cm of watson] {
  \twoline{Twisted 2-var.\ Rog.--Ram.}
  {ident.\ (Thm.\ \ref{thm:twisted-rog-ram})}
  };
  \node[draw,rectangle](slater) [right = 1.6cm of twisted-rog-ram] {
  \twoline{4 Rog.--Ram.\ type ident.}
  {of Slater (see \cref{eq:slater})}
  };
  \node[draw,rectangle,fill=myGreen](twisted-rog-ram-var) [below = 2.52cm of twisted-rog-ram] {
  \twoline{Twisted 2-var.\ Rog.--Ram.}
  {variation (Thm.\ \ref{thm:twisted-rog-ram-var})}
  };
  \node[draw,rectangle,double, fill=myGreen](proportionality1) [below = 0.3cm of watson] {
  \twoline{Septuple to Rog.--Ram.}
  {proportionality\ (Thm.\ \ref{thm:bases-proportional})}
  };
  \node[draw,rectangle,double,fill=myGreen](proportionality2) [below = 0.3cm of proportionality1] {
  \twoline{Octuple to Rog.--Ram.}
  {proportionality\ (Thm.\ \ref{thm:bases-proportional-2})}
  };
  \node[draw,rectangle,dashed](proportionality3) [below = 0.3cm of proportionality2] {
  \twoline{Nonuple to $A_2$ Rog.--Ram.}
  {proportionality\ (Qtn.\ \ref{qtn:characters})}
  };
  \node[draw,rectangle,fill=myGreen](more-rog-ram) [below = 0.3cm of slater] {
  \twoline{Imaginary Rog.--Ram.}
  {ident.\ (Cor.\ \ref{cor:more-rog-ram})}
  };
  \node[draw,rectangle,fill=myLighterGreen](mock-theta) [below = 0.3cm of more-rog-ram] {
  \twoline{More ident.\ of mock-theta}
  {functions (Cor.\ \ref{cor:mock-theta-fifth}; \S \ref{subsec:mock-theta})}
  };
  \node[draw,rectangle,fill=myGreen](eta-functions) [below = 0.3cm of mock-theta] {
  \twoline{Ident.\ of generalized eta}
  {functions (Cor.\ \ref{cor:eta-poly}; \S \ref{subsec:generalized-eta})}
  };

  \draw [-triangle 45]
  (structure) -- (fractional);
  \draw [-triangle 45]
  (structure) -- (formulae.north);
  \draw [-triangle 45]
  (structure) -- (preliminary.north);
  \draw [-triangle 45]
  (structure) -- (multiplication.north);
  \draw [-triangle 45]
  (structure) -- (twisted.north);
  \draw [-]
  (structure) -| ([yshift=1.85cm]rogers.north);
  \draw [- triangle 45]
  ([yshift=1.58cm]rogers.north) -- (rogers.north);
  \draw [-triangle 45]
  (rogers) |- (rog-ram);
  \draw [-triangle 45]
  ([xshift=1.31cm] preliminary.south) -- ([xshift=1.31cm, yshift=-3.65cm] preliminary.south);
  \draw [-triangle 45]
  ([xshift=0.4cm] formulae.south) -- ([xshift=0.2cm] change-of-basis.north);
  \draw [-]
  (twisted.south) |-
    ([xshift=0.4cm, yshift=-0.35cm] formulae.south);
  \draw [- triangle 45]
  ([xshift=0.35cm]watson.east) -- node[xshift=-0.1cm, yshift = 0.3cm]{{\small \twoline{using}{Lem.\ \ref{lem:m-identities},}}} 
  node[xshift=-0.1cm, yshift = -0.3cm]{{\small \twoline{Lem.\ \ref{lem:proofs-by-value}}{}}}
  (twisted-rog-ram);
  \draw [triangle 45 -]
  (rog-ram) -| 
  ([xshift = 0.35cm] watson.east);
  \draw [triangle 45 -]
  ([yshift=-0.2cm]watson.east) -|
  ([xshift=0.35cm] watson.east);
  \draw [triangle 45 - triangle 45]
  (twisted-rog-ram.east) -- node[yshift = 0.3cm]{{\small \twoline{using}{Lem.\ \ref{lem:t-identities}}}}
  (slater.west);
  \draw [triangle 45 - triangle 45]
  (septuple) -- node[xshift = -0.7cm]{{\small \twoline{by}{Thm.\ \ref{thm:bases-proportional}}}}
  (rog-ram);
  \draw [triangle 45 - triangle 45]
  (octuple) -- node[xshift = 0.65cm]{{\small \twoline{by}{Thm.\ \ref{thm:bases-proportional-2}}}}
  (rog-ram-var);
  \draw [open triangle 45 - open triangle 45, dashed]
  (nonuple1) -- node[xshift = -0.55cm]{\tiny Qtn.\ \ref{qtn:characters}}
  (andrews);
  \draw [triangle 45 - triangle 45]
  ([xshift = -1cm]twisted-rog-ram.south) -- node[yshift = -0.4cm]{\small \twoline{using \S \ref{subsec:canonical-basis-vectors},}{Lem.\ \ref{lem:m-identities}}}
  (proportionality1.east);
  \draw [triangle 45 - triangle 45]
  ([xshift = -1cm]twisted-rog-ram-var.north) -- node[yshift = 0.4cm]{\small \twoline{using \S \ref{subsec:canonical-basis-vectors},}{Lem.\ \ref{lem:m-identities}}}
  (proportionality2.east);
  \draw [- triangle 45]
  (preliminary.west) |-
    ++(-1.18cm,0) -- 
    ++(-1.18cm,0) |-
    ++(0, -10.4cm)
  -|
  node[xshift=-2.4cm, yshift = 0.3cm]{\small \twoline{using}{Lem.\ \ref{lem:t-identities}}}
  ([xshift=-0.2cm] twisted-rog-ram-var.south);
  \draw [-]
  (change-of-basis.east) -|
  ([xshift=0.5cm] slater.east);
  \draw [- triangle 45]
  (slater.east) -|
  ++(0.5cm, 0)
  |-
  ([yshift=0.1cm] eta-functions.east);
  \draw [- triangle 45]
  ([xshift=1cm] high-order.south) |-
  ([yshift=-0.1cm] eta-functions.east);
  \draw [-]
  ([xshift=1cm] nonuple2.south) |-
  ([xshift=-1.65cm, yshift=-0.5cm] high-order.south);
  \draw[-]
  ([xshift=-1.4cm, yshift=-0.5cm] high-order.south) --
  ([xshift=1cm, yshift=-0.5cm] high-order.south);
  \draw[-]
  (preliminary.south) |-
  ([xshift=1.2cm, yshift=-0.35cm] preliminary.south);
  \draw[-]
  ([xshift=1.4cm, yshift=-0.35cm] preliminary.south) -|
  (multiplication.south);
  \draw[-]
  ([xshift=-0.8cm, yshift=-0.35cm] multiplication.south) --
  ([xshift=-0.8cm, yshift=-0.7cm] multiplication.south);
  \draw[triangle 45 -]
  (septuple.north) |-
  ([xshift=1.2cm, yshift=0.6cm] septuple.north);
  \draw[- triangle 45]
  ([xshift=1.4cm, yshift=0.6cm] septuple.north) -|
  (octuple.north);
  \draw[- triangle 45]
  ([yshift=0.6cm] octuple.north) -|
  (nonuple1.north);
  \draw[- triangle 45]
  ([yshift=0.6cm] nonuple1.north) -|
  (nonuple2.north);
  \draw[-]
  ([yshift=0.6cm] nonuple2.north) -|
  ([xshift=1.25cm, yshift=0.6cm] nonuple2.north);
  \draw[- triangle 45]
  ([xshift=1.52cm, yshift=0.6cm] nonuple2.north) -|
  (high-order.north);
  \draw[-]
  ([xshift=1cm] nonuple1.south) |-
  ([xshift=1cm, yshift=-0.5cm] nonuple2.south);
  \draw[-]
  ([xshift=1cm,yshift=-0.7cm] twisted-rog-ram.south)
  --
  ([xshift=1cm,yshift=0.7cm] twisted-rog-ram-var.north);
  \draw[- triangle 45]
  ([xshift=1cm] twisted-rog-ram.south)
  |-
  (more-rog-ram.west);
  \draw[- triangle 45]
  ([xshift=1cm] twisted-rog-ram-var.north)
  |-
  (mock-theta.west);
  \draw[- triangle 45]
  (formulae.east) -|
  ++(1.57cm, 0)
  |-
  ([xshift=0.2cm, yshift=-0.34cm] twisted-rog-ram-var.south)
  --
  ([xshift=0.2cm] twisted-rog-ram-var.south);
  \draw[-]
  ([xshift=-0.4cm] rogers.south)
  |-
  ([xshift=1.5cm, yshift=-0.5cm] rog-ram.south);
  \draw[- triangle 45]
  ([xshift=1.75cm, yshift=-0.5cm] rog-ram.south)
  -|
  (slater.north);
  \draw [- open triangle 45, dashed]
  (fractional.east) -- ++(1.3cm, 0) 
  node[xshift=-0.5cm][yshift=-0.9cm]{\tiny Qtn.\ \ref{qtn:double-twist}}
  |-
  (mock-theta.east);
\end{tikzpicture}
\end{center}
\caption{Relationships between results (\emph{arrows show logical implications or equivalences; green marks new results, to the best of the author's knowledge; `2-var.' $=$ `two-variable'}).}
\label{fig:layout}
\end{figure}
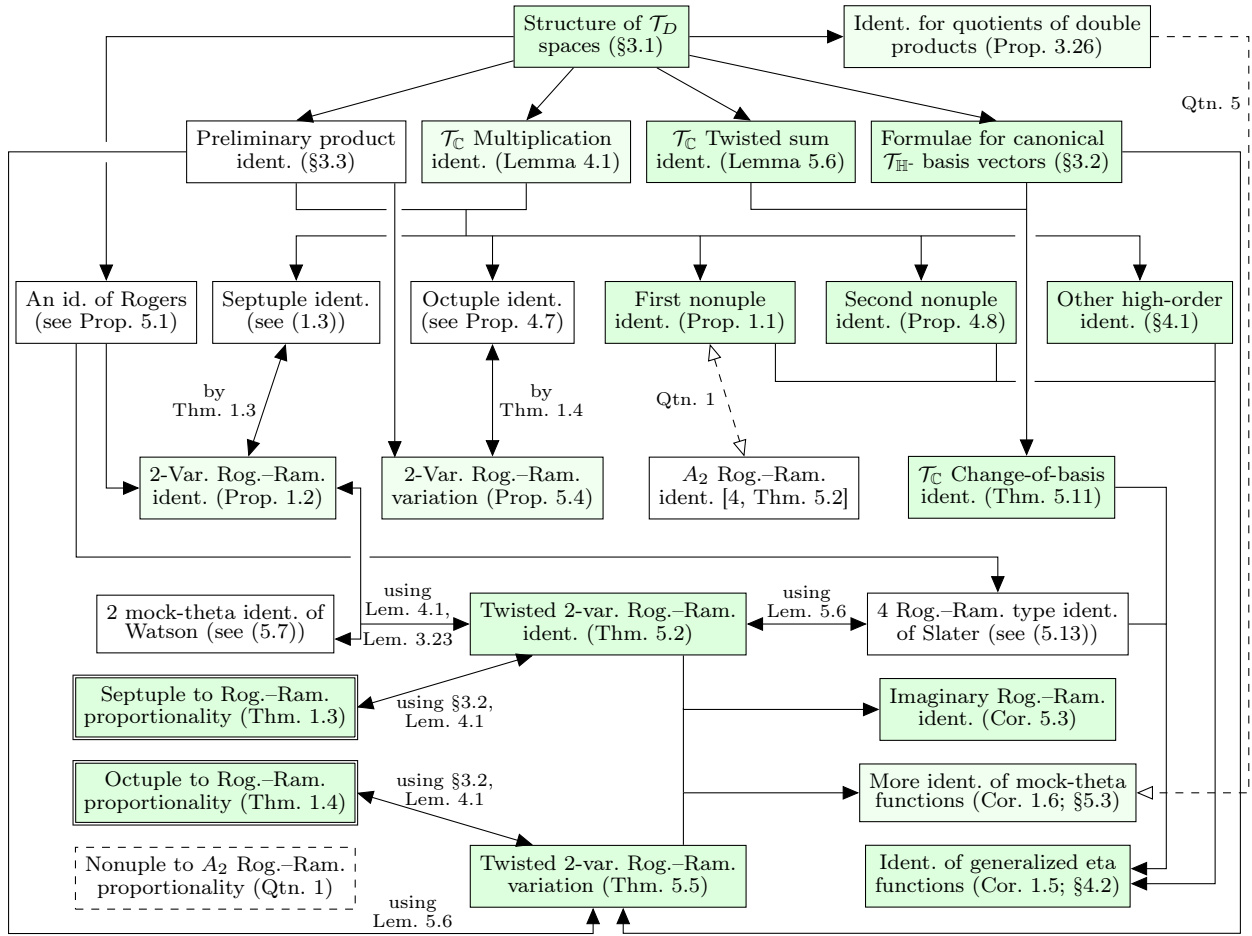
\FloatBarrier

In \cref{fig:layout}, if an arrow has multiple tails, the meaning is that all of the tails collectively imply each of the heads. If an arrow has two-sided heads (and no tails), then the heads facing in one direction are together equivalent to the heads facing in the other direction. So for instance, by going from left to right through \cref{thm:twisted-rog-ram} (which is ultimately equivalent to \cref{thm:bases-proportional}), one obtains a new proof of four Rogers--Ramanujan type identities from Slater's famous list \cite[(94),(96),(98),(99)]{slater1952further}. Going backwards, one has a proof that these four identities of Slater imply the Rogers--Ramanujan identities \cref{eq:rog-ram} and several mock theta identities. Also, some nodes in \cref{fig:layout} are colored with a lighter green to indicate that the results therein are only partly new, or that closely related results have appeared in literature, albeit in different formulations.

\subsection{Notation} \label{subsec:notation}
Denote $\C^\times := \C \setminus \{0\}$, $\H^+ := \{z \in \C : \Im z > 0\}$, $\H^- := \{z \in \C : \Im z < 0\}$. Throughout the paper we keep $\tau \in \H^+$ fixed and let $z \in \C$ vary, and write $x = e^{2\pi i z}$, $q = e^{2\pi i \tau}$ and $q^r = e^{2\pi i r \tau}$ for $r \in \R$. Given a domain $D$ (an open connected subset of $\C$), denote by $\Hol(D)$ the space of all holomorphic functions on $D$; if $D$ is closed under $\Z$-translation, denote by $\Hol(D/\Z)$ the space of all functions in $\Hol(D)$ with period $1$. Given $w \in \C$ and $u \in \Z \setminus \{0\}$, define the linear maps $T_w : \Hol(D) \to \Hol(D - w)$ and $S_u : \Hol(D) \to \Hol(u^{-1} D)$ by
\[
    (T_w f)(z) := f(z+w), \qquad\qquad (S_u f)(z) := f(u z),
\]
which are a translation and a scaling of the function's argument. Since $u \in \Z \setminus \{0\}$, these also induce natural maps $T_w : \Hol(D/\Z) \to \Hol((D-w)/\Z)$ and $S_u : \Hol(D/\Z) \to \Hol(u^{-1}D/\Z)$ when $D$ is $\Z$-invariant. Given such $D$ and a \emph{meromorphic} function $f$ on $\C/\Z$, $f \not\equiv 0$, we denote
\begin{align}
    \mT_D(f) &:= \left\{F \in \Hol(D/\Z)\ :\ F = f \cdot T_\tau F \text{ on } D \cap (D - \tau)\right\}, \label{eq:T-spaces} \\
    \mS_D(f) &:= \left\{F \in \Hol(D/\Z)\ :\ F = f \cdot S_{-1} F \text{ on } D \cap (-D)\right\}, \label{eq:S-spaces}
\end{align}
where $F = f \cdot T_\tau F$ and $F = f \cdot S_{-1}F$ are understood as equalities of meromorphic functions. We regard $\mT_D(f)$ and $\mS_D(f)$ as complex vector spaces; this generalizes the notation used in \cref{sec:intro}. 

Next, we extend the notation $(x; q)_n := \prod_{k=0}^{n-1} \left(1 - xq^k\right)$ from \cref{sec:intro} to all $n \in \Z$ by setting
\begin{equation} \label{eq:finite-product-negative}
    \forall n < 0, \qquad\quad \frac{1}{(x; q)_n} := \frac{(q^n x; q)}{\left(x; q\right)}
    =
    \left(q^n x; q\right)_{-n}.
\end{equation}
The point of this convention is to extend the relation $(x; q)_{n+1} = (1-x)(qx; q)_n$ to all $n \in \Z$. As before, we write $(x; q) := \lim_{n \to \infty} (x; q)_n$, implying that $(x; q) \in \mT_\C(1-x)$. The double product $\la x; q\ra := (x; q) (q/x; q)$ is commonly referred to as a modified theta function, and often found in literature as $\la x; q \ra_\infty$ \cite{chu2007unification} or $\theta(x; q)$ \cite[(11.2.1)]{gasper2004basic}. It shortly follows that $\la \alpha x^d; q^d \ra \in \mT_\C\left(-\alpha x^d\right)$ for any $\alpha \in \C^\times$, a symmetry which is crucial to most proofs of product identities. But more surprisingly, if $\alpha = -sq^b$ for $s \in \{\pm 1\}$ and $b \in \Z$ such that $d \mid 2b$, a short computation left to the reader shows that $\la \alpha x^d; q^d \ra$ also satisfies a simple $\mS$-type symmetry recorded in \cref{tbl:spaces}.
\begin{table}[ht]
\captionsetup{width=\linewidth}
\centering
\renewcommand{\arraystretch}{1.5}
\begin{tabular}{c|c|c|c|c|c}
Function
& $x^d$ 
& $\left(\alpha x^{d}; q^d\right)$ 
& $\left(\beta q^d x^{-d}; q^d\right)^{-1}$ 
& $\la -sq^b x^d; q^d \ra$
& $\vect{sq^cx^d}{k} + t\vect{sq^cx^d}{\ell}$ 
\\\hline
$\mT$-Space 
& $\mT_\C\left(q^{-d} \right)$
& $\mT_\C\left(1 - \alpha x^d\right)$
& $\mT_\Hminus\left(1 - \beta x^{-d}\right)$
& $\mT_\C\left(sq^bx^d \right)$
& $\mT_\C\left(sq^cx^d\right)$ 
\\\hline
$\mS$-Space 
& $\mS_\C\left(x^{2d} \right)$
& --
& --
& $\mS_\C\left(s^{1-2b/d} x^{d-2b} \right)$
& $\mS_\C\left(t x^{d-2c}\right)$
\end{tabular}
\caption{Function spaces ($b, c, d, k, \ell \in \Z$, $1 \le d \mid 2b$, $k + \ell = d - 2c$; $\alpha, \beta \in \C^\times$, $|\beta| \ge |q|^{-d}$; $s, t \in \{\pm 1\}$).}
\label{tbl:spaces}
\end{table}

In taking products of such functions it is fairly easy to bookkeep the spaces $\mT_D(f)$ and $\mS_D(f)$ involved, by simply multiplying the factors $f$ therein.
One can think of these factors as units of measurement, which are multiplied in the left-hand sides of product identities (such as \cref{eq:triple}--\cref{eq:nonuple}), and which must be homogeneous in the sums from the right-hand-sides. Also, the $\mS_\C$-symmetries of the double infinite products $\la sq^b x^d; q^d \ra$ explain why the basis vectors $\vect{\pm q^c x^d}{k}$ pair up in the quintuple, septuple and nonuple identities as in the rightmost column of \cref{tbl:spaces}. For completion, we also state three easy manipulations of infinite products as in \cref{tbl:spaces}, which may be used implicitly throughout the paper: for any positive integer $d$, one has
\[
    \big(x^d; q^d\big) 
    =
    \prod_{\zeta^d = 1} \left(\zeta x; q\right),
    \qquad\qquad 
    \big\langle x^d; q^d\big\rangle 
    =
    \prod_{\zeta^d = 1} \la \zeta x; q\ra,
    \qquad\qquad 
    \big( x; q \big)
    =
    \prod_{j=0}^{d-1} 
    \big(q^j x; q^d \big)
    ,
\]
where $\zeta$ ranges over the $d$th complex roots of unity.

The notation $\vect{f}{k}$ will be used for certain canonical elements of $\mT_\Hminus(f)$, generalizing the functions $\vect{\alpha x^d}{k} := \sum_{n \in \Z} \alpha^n q^{d\binom{n}{2} + kn} x^{dn+k}$ from \cref{sec:intro}; see \cref{prop:canonical-basis-vectors}. By abuse of notation we may write an expression depending on $z$ in place of $f$, e.g.\ $\vect{f}{0} = \vect{qx^2-x}{0}$ when $f(z) = qx^2 - x$. As a word of caution, this notation does \emph{not} obey the rule of substitution in $z$, so for instance $\vect{qx^2-x}{0}(2z) \neq \vect{qx^4-x^2}{0}$; the expression before the semicolon indicates a $\tau$-quasiperiodicity factor (e.g., $\vect{qx^2-x}{0}(z) = (qx^2-x) \cdot \vect{qx^2-x}{0}(z+\tau)$), not an argument of the function.

Given a nonempty open horizontal strip $S \subset \C$, a function $F \in \Hol(S/\Z)$ and $n \in \Z$, write
\[
    \hat{F}(n) := \int_w^{w+1} F(z) e^{-2\pi i n z} dz,
    \qquad\qquad 
    F(z) = \sum_{n \in \Z} \hat{F}(n) x^n,
\] 
for any $w \in S$ (the choice of $w$ is irrelevant by contour-shifting). The Fourier series of $F$ converges absolutely and locally uniformly in $S$, so in particular we can multiply the Fourier series of two such functions using the natural sum rearrangements. If $F(z)$ is a Laurent polynomial in $x$, we write $\deg_x F$ for its degree in $x$, i.e.\ the largest $n$ such that $\hat{F}(n) \neq 0$. 

Finally, we write $\binom{n}{k} := n(n-1)\cdots(n-k+1)/k!$ for $n \in \Z$ and $k \ge 0$, and $\one_{P}$ for the truth value of a proposition $P$ (e.g., $\one_{x < y}$ equals $1$ whenever $x < y$, and $0$ otherwise). We leave the notations concerning generalized eta functions and mock theta functions to \cref{subsec:generalized-eta,subsec:mock-theta}.

\section{Structure of the relevant function spaces and first applications} \label{sec:preliminaries}

\subsection{Dimensionality bounds} \label{subsec:dim-bounds}
We start by generalizing the spaces $\mT_D(f)$.

\begin{definition}[Extended quasiperiodic function spaces] \label{def:T-spaces}
Let $D \subset \C$ be a $\Z$-invariant domain, $m$ a positive integer, and $f_1, \ldots, f_m$ meromorphic functions on $\C/\Z$ (i.e., on $\C$ with period $1$), with $f_m \not\equiv 0$ (i.e., $f_m$ is not the zero function). We define the complex vector space
\[
    \mT_D(f_1, \ldots, f_m) := \left\{F \in \Hol(D/\Z) : F = f_1 \cdot T_\tau F + \cdots + f_m \cdot T_{m\tau} F \right\},
\] 
where the equality holds as an identity of meromorphic functions on $D \cap (D - \tau) \cap \cdots \cap (D - m\tau)$. More generally, given an $m \times m$ matrix $M(z)$ whose entries are meromorphic functions on $\C/\Z$ such that $\det M(z) \not\equiv 0$, let
\[
    \mT_D(M) := 
    \left\{ 
    \vec{F} \in \Hol(D/\Z)^m
    :
    \vec{F} = M \cdot T_\tau \vec{F}
    \right\},
\]
where the equality holds as an identity of $m$ meromorphic functions on $D \cap (D - \tau)$, and $(T_\tau \vec{F})(z) = \vec{F}(z+\tau)$ as before. In particular, we have a linear bijection
\[
    \mT_D\left(f_1, \ldots, f_m\right) 
    \ 
    \cong
    \
    \mT_{D \cap (D - \tau) \cap \cdots \cap (D - (m-1)\tau)}
    \begin{psmall}
    f_1 & f_2 & \cdots & f_{m-1} & f_m
    \\
    1 & 0 & \cdots & 0 & 0 
    \\
     &  & \ddots &  &   
    \\
    0 & 0 & \cdots & 1 & 0
    \end{psmall}
    \qquad 
    \text{by} \quad
    F \mapsto 
    \begin{psmall}
    F \\
    T_\tau F \\
    \vdots \\
    T_{(m-1)\tau} F
    \end{psmall}.
\]
While much of this paper is concerned with the spaces $\mT_D(f)$ (in fact, \cref{sec:higher-prod-id} only works with $\mT_D\left(\alpha x^d\right)$), we will encounter more general spaces $\mT_D(f_1, \ldots, f_m)$ and $\mT_D(M)$ in \cref{sec:rog-ram}.
\end{definition}


\begin{example}[Two-variable Rogers--Ramanujan fraction] \label{ex:two-var-rog-ram}
Consider the entire $1$-periodic function $F(z) := \sum_{n \ge 0} q^{n^2} (q; q)_n^{-1} x^n$ (this is $\chi(x/q)$ from \cite[p.~329]{rogers1893second}). A short computation shows that
\[
    \forall z \in \C : \quad F(z) = F(z+\tau) + qxF(z+2\tau),
\]
so $F = T_\tau F + qxT_{2\tau} F$ and thus $F \in \mT_\C\left(1, qx\right)$. This vector space turns out to be one-dimensional, which can be used to prove an identity due to Rogers generalizing the Rogers--Ramanujan identities in \cref{eq:rog-ram}; see \cref{prop:rogers}. By iterating this functional equation in matrix form, one can also recover the Rogers--Ramanujan continued fraction identity \cite[p.~328 (4)]{rogers1893second},
\[
    \begin{pmatrix}
    F(z)
    \\
    F(z+\tau) 
    \end{pmatrix}
    =
    \prod_{n \ge 1}
    \begin{pmatrix}
    1 & q^n x 
    \\
    1 & 0
    \end{pmatrix}
    \cdot
    \begin{pmatrix}
    1 
    \\
    1
    \end{pmatrix}
    \qquad 
    \Longleftrightarrow 
    \qquad 
    \frac{F(z)}{F(z+\tau)} = 1 + \frac{qx}{1 + \frac{q^2x}{1 + \frac{q^3 x}{1 + \cdots}}}.
\]
where the infinite product of matrices above is computed from the left to the right.
\end{example}

\begin{example}[Line bundles and theta functions]
Take $D = \C$ and $m = 1$ in \cref{def:T-spaces}, and let $f$ be any invertible function in $\Hol(\C/\Z)$. For $m, n \in \Z$ and $z \in \C$, define $\phi_{m\tau + n}(z)$ as $\prod_{k=0}^{m-1} f(z+k\tau)^{-1}$ if $m \ge 0$, and as $\prod_{k=-m}^{-1} f(z+k\tau)$ otherwise. Then $(\phi_\gamma)_{\gamma \in \Gamma}$ form a system of multipliers for the lattice $\Gamma := \Z\tau + \Z$, meaning that they are holomorphic invertible and they satisfy the cocycle condition $\phi_{\gamma+ \delta}(z) = \phi_\gamma(z+\delta) \phi_\delta(z)$ (see, for example, \cite[\S 2.3]{beauville2013theta}). Such a system induces a holomorphic line bundle $L \cong (\C \times \C)/\Gamma$ over $\C/\Gamma$, where $\gamma \in \Gamma$ acts on $\C \times \C$ by $\gamma(z, t) = (z + \gamma, \phi_\gamma(z)t)$; a section of this line bundle has the form \[
    \C/\Gamma \to L \cong \left(\C \times \C\right)/\Gamma, 
    \qquad\qquad 
    z \mapsto (z, F(z)),
\]
where $F \in \Hol(\C)$ satisfies $F(z + \gamma) = \phi_\gamma(z) F(z)$, and this reduces to $F(z) = f(z)F(z+\tau)$ by our choice of $\left\{\phi_\gamma\right\}_{\gamma \in \Gamma}$. Hence the sections of $L \to C$ are canonically identified with the elements $F$ of $\mT_\C(f)$, and these are called the \emph{theta functions} associated to $L$. Canonical examples are Jacobi's theta functions (see \cite{jacobi1829fundamenta} or \cite[p.~464]{whittaker2020course}), and more generally the functions $\vect{\alpha x^d}{k}$.

While the triple, quintuple, septuple, etc.\ products all lie in such vector spaces induced by invertible functions $f$, allowing $f$ to have zeros at the cost of reducing the domain of holomorphicity $D$ of $F$ is essential for stating and proving results like \cref{prop:2var-rog-ram}, \cref{thm:bases-proportional} or \cref{thm:bases-proportional-2}. If $D$ contains a horizontal strip of length $\ge \Im \tau$ (thus a copy of $\C/\Gamma$), the more general spaces $\mT_D(f)$ correspond similarly to \emph{meromorphic} line bundles, while the spaces $\mT_D(M)$ (where $M$ is an $m \times m$ matrix) correspond to \emph{vector} bundles of rank $m$ (of the form $E \to \C/\Gamma$ where $E \cong \left(\C \times \C^m\right)/\Gamma$) and \emph{generalized} (or \emph{non-abelian}) theta functions (see \cite[\S 6.2]{beauville2013theta} or \cite{gunning1982generalized}). It can be helpful, however, to consider smaller domains $D$ not containing any copy of $\C/\Gamma$, as we illustrate \cref{prop:fractional}.
For the reader familiar with this language, the properties in the following lemma will come naturally; e.g., \cref{lem:basic}.(iii) is motivated by the fact that multiplying a vector bundle by a line bundle yields back a vector bundle of the same rank.
\end{example}

\begin{lemma}[Basic facts about $\mT_D$] \label{lem:basic}
Let $F \in \mT_D(f_1, \ldots, f_m)$ and $G \in \mT_D(g)$ be as in \cref{def:T-spaces}.
\begin{enumerate}[label=(\roman*).]
    \item If $E$ is another $Z$-invariant domain and $\emptyset \neq D \subset E$, then $\mT_E(f_1, \ldots, f_m) \subset \mT_D(f_1, \ldots, f_m)$,  where a function in $\Hol(E/\Z)$ is uniquely identified with its restriction to $\Hol(D/\Z)$.
    \item For $w \in \C$, one has $ T_w F \in \mT_{D-w}\left(T_w f_1, \ldots, T_w f_m\right)$.
    \item Letting $g_n := \prod_{j=0}^{m-1} T_{j\tau} g$, one has
    $FG \in \mT_D\left(f_1 g_1, f_2 g_2, \ldots, f_m g_m\right)$;
    so if $m = 1$, then $FG \in \mT_D(f_1g)$. Also, if $G$ has no zeros in $D$, then $G^{-1} \in \mT_D(g^{-1})$.
    \item If $u \in \Z$, $u \ge 1$ and $D \cap T_{-\tau} D \cap \cdots \cap T_{-u\tau} D \neq \emptyset$, then
    \[
        S_u G \in \mT_{u^{-1}D}\Bigg(\prod_{j=0}^{u-1} S_u T_{j\tau}  g\Bigg).
    \]
    \item If $D$ contains an open horizontal strip of width $> m\cdot \Im\tau$, then $F$ has a meromorphic continuation to all of $\C$, which still satisfies $F = F_1 \cdot T_\tau F + \cdots + f_m \cdot T_{m\tau} F$.
    \item If $D$ contains an open horizontal strip of width $> \Im \tau$, and $0 \neq H \in \mT_D(g)$, then $G/H$ extends to an elliptic function on $\C$ with periods $1, \tau$.
\end{enumerate}
\end{lemma}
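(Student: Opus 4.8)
The plan is to obtain (i)--(iv) and (vi) as direct manipulations of the defining functional equations under the operators $T_w$ and $S_u$, tracking the domains on which each identity holds, with essentially all of the content concentrated in the meromorphic-continuation claim (v). For (i), I would simply restrict: a function $F \in \Hol(E/\Z)$ restricts to a holomorphic $1$-periodic function on $D$, and the $\mT_E$-identity, valid on $E \cap (E-\tau) \cap \cdots \cap (E - m\tau)$, holds a fortiori on the smaller set $D \cap (D-\tau) \cap \cdots \cap (D-m\tau)$; the identity theorem (as $E$ is connected and $D$ nonempty open) makes the restriction injective. For (ii), I would apply $T_w$ to $F = \sum_{j} f_j\, T_{j\tau}F$ and use $T_w T_{j\tau} = T_{j\tau} T_w$ together with $T_w(f_j\, T_{j\tau}F) = (T_w f_j)\,(T_{j\tau}T_w F)$, which yields $T_w F = \sum_j (T_w f_j)\,T_{j\tau}(T_w F)$ on the $(-w)$-translate of the original domain.

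For (iii) and (iv) the one identity worth recording is the telescoped form of $G = g\,T_\tau G$, namely $G = g_n\, T_{n\tau}G$ with $g_n := \prod_{j=0}^{n-1} T_{j\tau} g$ (so $g_0 = 1$, $g_1 = g$), proved by induction on $n$. Substituting this into $FG = \sum_n f_n (T_{n\tau}F)\,G$ gives $FG = \sum_n (f_n g_n)\, T_{n\tau}(FG)$, which is (iii); and if $G$ is zero-free on $D$ then inverting $G = g\,T_\tau G$ gives $G^{-1} = g^{-1}\,T_\tau(G^{-1})$. For (iv) I would apply $S_u$ to $G = g_u\,T_{u\tau}G$: since $u \in \Z$ the map $S_u$ preserves $1$-periodicity, and the relations $S_u T_{u\tau} G = T_\tau S_u G$ and $S_u(g_u) = \prod_{j=0}^{u-1} S_u T_{j\tau} g$ turn it into the asserted equation on $u^{-1}D$. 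The hypothesis $D \cap T_{-\tau}D \cap \cdots \cap T_{-u\tau}D \neq \emptyset$ is exactly what ensures (up to a translation by $u\tau$) that the set on which the iterated equation holds is nonempty, so the space is not defined vacuously.

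The crux is (v). Using $f_m \not\equiv 0$, I would rearrange the functional equation into
\[
    T_{m\tau}F = f_m^{-1}\big(F - f_1\, T_\tau F - \cdots - f_{m-1}\, T_{(m-1)\tau}F\big),
\]
which expresses $F(z + m\tau)$ through $F(z), F(z+\tau), \ldots, F(z+(m-1)\tau)$, while the original equation expresses $F(z)$ through $F(z+\tau), \ldots, F(z+m\tau)$. If $D$ contains a strip $\{a < \Im z < b\}$ with $b - a > m\,\Im\tau$, then on the subinterval of imaginary parts of width $(b-a) - (m-1)\Im\tau > \Im\tau$ for which $z, z+\tau, \ldots, z+(m-1)\tau$ all lie in the strip, the displayed formula continues $F$ meromorphically to a strip translated up by $\Im\tau$; the strict width inequality is precisely what forces the new strip to overlap the old one, so the continuation is single-valued. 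Iterating upward with this formula and downward with the original equation extends $F$ to all of $\C$, the equation persists by the identity theorem, and $1$-periodicity is inherited since only $1$-periodic data enter the continuation. The main obstacle I expect is this bookkeeping of overlapping strips --- verifying that $b - a > m\,\Im\tau$ (and not merely $\ge$) yields genuine overlaps, so that the extension is well-defined and consistent at each stage.

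Finally, for (vi), given $G, H \in \mT_D(g)$ with $H \not\equiv 0$, I would divide $G = g\,T_\tau G$ by $H = g\,T_\tau H$ to cancel $g$, obtaining $G/H = T_\tau(G/H)$, so the ratio is $\tau$-periodic; it is also $1$-periodic since $G$ and $H$ are. Applying (v) with $m = 1$ (whose width hypothesis $> \Im\tau$ is exactly the present one) continues $G$ and $H$, hence $G/H$, to a meromorphic function on all of $\C$, and the two independent periods $1$ and $\tau$ make it elliptic.
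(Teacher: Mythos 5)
Your proposal is correct and takes essentially the same route as the paper, which likewise dispatches (i)--(iv) by restriction and repeated translation (the telescoped $G = g_n\,T_{n\tau}G$ is exactly the paper's ``apply (ii) with $w = j\tau$ repeatedly''), and proves (v) by iterating the functional equation from a strip of width $> m \cdot \Im\tau$ --- downward via the equation itself, upward after solving for $T_{m\tau}F$ and dividing by $f_m \not\equiv 0$ --- with uniqueness of meromorphic continuation ensuring consistency on the overlaps. The only cosmetic difference is in (vi): you cancel $g$ directly to get $G/H = T_\tau(G/H)$, whereas the paper routes through (iii) by observing $H^{-1} \in \mT_{D'}\left(g^{-1}\right)$ and hence $G/H \in \mT_{D'}(1)$ on $D' = \C$ minus the poles; the two computations are identical in substance.
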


\begin{proof}
Statements (i) and (ii) are immediate, while (iii) and (iv) follow by repeatedly applying (ii) for $w = j\tau$. For (v), let $S \subset D$ be an open horizontal strip of width $> m \cdot \Im \tau$; then we can iteratively define $F := f_1 \cdot T_\tau F + \cdots + f_m \cdot T_{m\tau} F$ on the strips $S + n\tau$ for $n \in \{1, 2, 3, \ldots\}$, and
\[
    F := T_{-m\tau} \frac{F - f_1 \cdot T_\tau F - \cdots - f_m T_{m\tau} F}{f_m}
\]
on $S - n\tau$ for $n \in \{1, 2, 3, \ldots\}$; note that $F$ is well-defined on $\C$ by the uniqueness of meromorphic continuation. Finally, (vi) follows from (v) applied to $G$ and $H$, and then (iii) applied for $D' = \C \setminus \{\textnormal{poles of $G$, $H$ or $H^{-1}$}\}$ (one has $H^{-1} \in \mT_{D'}(g^{-1})$, and so $G/H \in \mT_{D'}(1)$).
\end{proof}

\begin{remark} 
By \cref{lem:basic}.(v), we can treat the elements of $\mT_D(f_1, \ldots, f_m)$ as meromorphic functions on $\C$ if $D$ is wide enough; the same is true in more general spaces $\mT_D(M)$, where the continuation requires inverting the matrix $M$. The restriction of holomorphicity on $D$, however, will be crucial.
\end{remark}

\begin{example}[Spaces given by constants] \label{ex:constants}
Suppose a domain $D$ contains a horizontal strip $S$ of length $> \Im \tau$. Note that $\mT_D(1) = \left\{ F \in \Hol(D) : F = T_1 F = T_\tau F \text{ in } D \cap (D - \tau)\right\}$.

The proof of \cref{lem:basic}.(ii) then shows that any function $F \in \mT_D(1)$ can be continued to an \emph{entire} elliptic function on $\C$, thus a constant. More generally, if $f_1 \equiv \alpha$ is any nonzero constant, then for $F \in \mT_D(\alpha)$, considering the Fourier series of $F(z) = \alpha F(z+\tau)$ in $S \cap (S - \tau) \neq \emptyset$ yields 
\[
    \hat{F}(n) = \alpha q^n \hat{F}(n)
\] 
for any $n \in \Z$. So if $\alpha = q^{-n}$ for some $n \in \Z$, we find that $\mT_D(\alpha)$ is one-dimensional, spanned by $x^n$; otherwise, $\mT_D(\alpha) = \{0\}$. We give a more general upper bound for $\dim \mT_D$ in \cref{prop:upper-bounds}.
\end{example}


\begin{proposition}[Upper bounds for $\dim \mT_D$] \label{prop:upper-bounds}
Let $P_0(z), P_1(z), \ldots, P_m(z)$ be complex polynomials in $x = e^{2\pi i z}$ with $P_0 \neq 0$ and $\big(\hat{P_0}(0), \ldots, \hat{P_m}(0)\big) \neq (0, \ldots, 0)$, and let $d := \max(\deg_x P_0, \ldots, \deg_x P_m)$. For $1 \le j \le m$, take $f_j := P_j/P_0$. Suppose $D \subset \C$ is a $\Z$-invariant domain, containing an open horizontal strip $S$ of width $> m \cdot \Im \tau$. Letting $V := \mT_D\left(f_1, \ldots, f_m\right)$, the following hold true.
\begin{enumerate}[label=(\roman*).]
    \item
One has $\dim V \le d + m$. 
    \item
For $n \in \Z$, denote $A_n := P_0 - q^n P_1 - \cdots - q^{mn} P_m$. Assume additionally that $\exists n_0 \in \Z$,
\[
\forall n \ge n_0 + d :\  \hat{A_n}(0) \neq 0  \qquad \text{ and } \qquad \forall n < n_0 :\  \hat{A_n}(d) \neq 0.
\]
Then one has $\dim V \le d$, with equality iff $\forall k \in \{0, 1, \ldots, d-1\}$, $\exists F_k \in V$ such that
\[
    \hat{F_k}(j) = \one_{j = n_0 + k} \quad \text{ in } S, \quad\quad\hspace{0.2cm}  \forall j \in \{0, 1, \ldots, d-1\},
\]
In this case, $\{F_0, \ldots, F_{d-1}\}$ is a (uniquely determined) basis of $V$.
\end{enumerate}
\end{proposition}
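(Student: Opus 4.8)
The plan is to pass to Fourier coefficients and recognize the defining relation of $V$ as a banded linear recurrence, whose solution space can then be bounded by elementary linear algebra. Since $D$ contains a strip $S$ of width $> m\cdot\Im\tau$, the strips $S, S-\tau, \dots, S-m\tau$ share a nonempty common substrip $S'$ on which $F, T_\tau F, \dots, T_{m\tau}F$ are all holomorphic and $1$-periodic (this is exactly where the width hypothesis, and \cref{lem:basic}, come in). Writing $F(z)=\sum_n \hat F(n)x^n$ there, clearing the denominator $P_0$ in $F = f_1 T_\tau F + \cdots + f_m T_{m\tau}F$, and comparing coefficients of $x^N$ in $P_0 F = \sum_{j=1}^m P_j\, T_{j\tau}F$ (using $T_{j\tau}F(z)=\sum_n \hat F(n)q^{jn}x^n$ and $A_n = P_0 - q^nP_1 - \cdots - q^{mn}P_m$) would give, for every $N\in\Z$,
\[
    \sum_{n=N-d}^{N}\hat A_n(N-n)\,\hat F(n) = 0,
\]
a recurrence of width $d+1$ whose leading coefficient (at $n=N$) is $\hat A_N(0)$ and whose trailing coefficient (at $n=N-d$) is $\hat A_{N-d}(d)$. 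As $F\mapsto(\hat F(n))_n$ is injective, it suffices to bound the space of coefficient sequences solving this recurrence.

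Next I would record the crucial finiteness input. Viewing $\hat A_n(0) = \hat P_0(0) - \sum_{j\ge 1}\hat P_j(0)q^{jn}$ as a polynomial of degree $\le m$ in the variable $q^n$, the hypothesis $(\hat P_0(0),\dots,\hat P_m(0))\neq (0,\dots,0)$ makes it not identically zero; since $n\mapsto q^n$ is injective for $|q|<1$, it has at most $m$ integer roots, so $B^+ := \{N : \hat A_N(0)=0\}$ satisfies $|B^+|\le m$. The same argument with $\ell=d$ (where some $\hat P_j(d)\neq 0$ because $d=\max_j\deg_x P_j$) gives $|B^-|\le m$ for $B^- := \{N:\hat A_N(d)=0\}$. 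Outside these finite sets the recurrence propagates: for $N\notin B^+$ it determines $\hat F(N)$ from $\hat F(N-d),\dots,\hat F(N-1)$, and for $N\notin B^-$ it determines $\hat F(N)$ from $\hat F(N+1),\dots,\hat F(N+d)$.

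For part (i) I would fix integers $N_-\le N_+$ with $\hat A_N(d)\neq 0$ for $N<N_-$ and $\hat A_N(0)\neq 0$ for $N>N_+$. Forward propagation to the right of $N_+$ and backward propagation to the left of $N_-$ then show that any $F\in V$ is determined by its $L:=N_+-N_-+d$ coefficients on the window $I=\{N_-,\dots,N_++d-1\}$; hence restriction to $I$ embeds $V$ into the space of window-tuples satisfying the \emph{internal} recurrences, those indexed by $N\in\{N_-+d,\dots,N_++d-1\}$ ($N_+-N_-$ of them). Each such row has its rightmost nonzero entry in column $N$, with coefficient $\hat A_N(0)$, so the rows with $N\notin B^+$ have distinct pivot columns and are linearly independent; as at most $m$ of the $N_+-N_-$ rows are lost to $B^+$, the internal system has rank $\ge (N_+-N_-)-m$, whence $\dim V \le L - ((N_+-N_-)-m) = d+m$. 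For part (ii) the extra hypotheses say precisely $B^+\subseteq(-\infty,n_0+d)$ and $B^-\subseteq[n_0,\infty)$, so forward propagation is unobstructed for $N\ge n_0+d$ and backward propagation for $N<n_0$; thus $F$ is determined by the $d$ consecutive coefficients $\hat F(n_0),\dots,\hat F(n_0+d-1)$, i.e.\ the evaluation map $\Psi:V\to\C^d$ reading these is injective and $\dim V\le d$. Equality holds iff $\Psi$ is onto, which is exactly the existence of $F_0,\dots,F_{d-1}\in V$ with $\hat F_k(n_0+j)=\one_{j=k}$ for $j\in\{0,\dots,d-1\}$; in that case $\Psi$ is an isomorphism and $\{F_k\}$ is the unique basis it sends to the standard one.

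I expect the main obstacle to be the rank count in part (i): extracting the sharp $+m$ (rather than the naive $+2m$ one gets by treating the two propagation directions symmetrically) depends on observing that the banded internal system is already in echelon form with respect to its rightmost entries, so only the forward-bad indices $B^+$ can drop the rank. A secondary point of care is making the recurrence rigorous, namely producing a genuine common strip $S'$ on which all $m+1$ Fourier expansions converge simultaneously and the functional equation holds as an identity of convergent series.
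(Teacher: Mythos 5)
Your proof is correct and is essentially the paper's argument: your banded recurrence is exactly \cref{eq:recursion}, your bound $|B^\pm|\le m$ is the paper's observation that $\hat{A_n}(0)$ and $\hat{A_n}(d)$ are nonzero polynomials of degree $\le m$ in $q^n$ (nonzero by the hypothesis on $\big(\hat{P_0}(0),\dots,\hat{P_m}(0)\big)$, resp.\ by the definition of $d$), and your part (ii) — injectivity of $F\mapsto\big(\hat F(n_0),\dots,\hat F(n_0+d-1)\big)$ via two-sided propagation, with equality iff the standard basis vectors are attained — coincides with the paper's, including the uniqueness of $\{F_k\}$. The one place you diverge is the implementation of part (i): you restrict to a finite window $I$ and bound the kernel of the internal banded system by an echelon rank count, while the paper instead augments the coordinate map by the at most $m$ backward-bad indices, sending $F\mapsto\big(\hat F(N),\dots,\hat F(N+d-1),\hat F(n_1),\dots,\hat F(n_m)\big)\in\C^{d+m}$, where $\{n_1,\dots,n_m\}$ contains all zeros of $\hat{A_n}(d)$ and $N$ is chosen so large that $\hat{A_n}(0)\neq 0$ for all $n\ge N+d$; injectivity then follows by unobstructed forward propagation and by backward propagation that skips each $n_j$ using the extra coordinates. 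The paper's variant buys a shorter argument — the "sharp $+m$ versus naive $+2m$" issue you flag never arises, since $B^+$ is avoided outright by pushing $N$ to the right, and only $B^-$ costs coordinates. One genuine (but trivially repairable) wrinkle in your version: when $d=0$ your window $I=\{N_-,\dots,N_++d-1\}$ together with propagation strictly to the right of $N_+$ misses the coefficient $\hat F(N_+)$ entirely if $N_+\in B^+$, so the claim that $F$ is determined by its coefficients on $I$ fails in that degenerate case; enlarge the window by one, or treat $d=0$ separately as the paper does (there the recurrence collapses to $\hat{A_n}(0)\hat F(n)=0$ and one injects directly into $\C^m$ via the $\le m$ bad coordinates).
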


\begin{example} \label{ex:constants-ctd}
For $m = 1$, $P_0(z) = 1$ and $P_1(z) = \alpha \in \C^\times$, part (i) implies that $\dim \mT_D(\alpha) \le 1$ whenever $D$ contains a horizontal strip of width $> \Im \tau$. If $\alpha \not\in \left\{q^n : n \in \Z\right\}$, then in part (ii) we have $A_n(z) = 1 - \alpha q^n$, so any choice of $n_0$ yields $\dim \mT_D(\alpha) = 0$. This recovers \cref{ex:constants}.
\end{example}

\begin{example} \label{ex:two-var-rog-ram-ctd}
For $m = 2$, $P_0(z) = P_1(z) = 1$ and $P_2(z) = qx$, we have $A_n(z) = 1 - q^n - q^{2n+1}x$. Hence using $n_0 = 0$ in \cref{prop:upper-bounds}.(ii), we find that $\dim \mT_\Hplus(1, qx) \le 1$. In \cref{ex:two-var-rog-ram} we found a nonzero element $G$ of this space, so it must be the unique such function up to scalars.
\end{example}

\begin{proof}[Proof of \cref{prop:upper-bounds}]
For $F \in \mT_D(f_1, \ldots, f_m)$, we have $F = f_1 \cdot T_\tau F + \cdots + f_m \cdot T_{m\tau} F$ and thus $P_0 \cdot F = P_1 \cdot T_\tau F + \cdots + P_m \cdot T_{m\tau} F$. Taking the Fourier series in $S \cap (S - m\tau) \neq \emptyset$ then yields 
\[
    \forall n \in \Z : \qquad\quad \sum_{k=0}^d \hat{P_0}(k) \hat{F}(n-k)
    =
    \sum_{j=1}^m \sum_{k=0}^d \hat{P_j}(k) q^{j(n-k)} \hat{F}(n-k),
\]
which rearranges (by subtracting the right side from the left side and swapping sums) to
\begin{equation}\label{eq:recursion}
    \sum_{k=0}^d \hat{A_{n-k}}(k) \hat{F}(n-k) = 0.
\end{equation}
If $d = 0$, we find that $\hat{F}(n) = 0$ whenever $\hat{A_n}(0) \neq 0$. So (ii) holds trivially since if all $\hat{A_n}(0) \neq 0$, we must have $F = 0$. For statement (i), note that one can have $\hat{A_n}(0) = 0$ for at most $m$ values of $n$, say among $\{n_1, \ldots, n_m\}$, since $\hat{A_n}(0)$ is a nonzero polynomial in $q^n$ of degree $\le m$. Hence $V$ injects linearly into a subspace of $\C^m$ by the map $F \mapsto \big(\hat{F}(n_1), \ldots, \hat{F}(n_m) \big)$, proving $\dim V \le m$.

Now assume that $d \ge 1$. When $\hat{A_n}(0) \neq 0$, \cref{eq:recursion} implies that $\hat{F}(n)$ is a linear function of $\hat{F}(n-1), \ldots, \hat{F}(n-d)$. Similarly, when $\hat{A_n}(d) \neq 0$, $\hat{F}(n)$ is a linear function of $\hat{F}(n+1), \ldots, \hat{F}(n+d)$. Hence under the assumption of (ii), $V$ injects linearly into $\C^d$ by $F \mapsto \big(\hat{F}(n_0), \ldots, \hat{F}(n_0 + d-1)\big)$, proving that $\dim V = d$ with equality iff each standard basis vector of $\C^d$ is attained by this injection; the preimages of these vectors correspond to $F_0, \ldots, F_{d-1}$.

To prove (i), note as before that $\hat{A_n}(0)$ and $\hat{A_n}(d)$ can each be zero for at most $m$ values of $n$ (since $\hat{A_n}(d)$ is also a polynomial in $q^n$ of degree $\le m$, which is not constantly $0$ by the definition of $d$). Say that $\hat{A_n}(d)$ has zeros only if $n \in \{n_1, \ldots, n_m\}$, and let $N \in \Z$ be large enough such that $\hat{A_n}(0) \neq 0$ for all $n \ge N+d$. Then by our previous reasoning, $V$ injects linearly into $\C^{d+m}$ by $F \mapsto \big(\hat{F}(N), \ldots, \hat{F}(N+d-1), \hat{F}(n_1), \ldots, \hat{F}(n_m) \big)$, proving $\dim V \le d + m$.
\end{proof}

\begin{remark}
The bound in \cref{prop:upper-bounds}.(i) may be sharpened by determining the zeros of $\hat{A_n}(0)$ and $\hat{A_n}(d)$ in particular cases. Also, the linear relation in $\cref{eq:recursion}$ gives a formula for the Fourier coefficients $\hat{F_k}(n)$ (for $k \in \{0, \ldots, d-1\}$ and $n \in \Z$) in terms of finite matrix products; however, applications will require alternative formulae for $F_k$, which we give in \cref{subsec:canonical-basis-vectors}.
\end{remark}

It is now convenient to briefly study the relationship between the $\mT_D$ and $\mS_D$ spaces; recall \cref{eq:S-spaces}.
\begin{lemma}[Basic facts about $\mS_D$] \label{lem:basic-S}
Let $D$ be a $\Z$-invariant domain and $f, g$ be meromorphic functions on $\C/\Z$.
\begin{enumerate}[label = (\roman*).]
    \item If $F \in \mS_D(f)$ and $G \in \mS_D(g)$, then $FG \in \mS_D(fg)$. 
    \item If $D \cap (-D) \neq \emptyset$ and $\mS_D(g) \neq \{0\}$, then one has $g(z)g(-z) = 1$ wherever defined.
    \item If $D \cap (-D) \cap (D-\tau) \cap (-D-\tau) \neq \emptyset$ and $\mT_D(f) \cap \mS_D(g) \neq \{0\}$, then wherever defined,
    \[
        f(z)f(-z-\tau) = g(z)g(-z-\tau).
    \]
\end{enumerate}
\end{lemma}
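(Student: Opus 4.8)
The plan is to reduce every assertion to the two defining functional equations, treated throughout as identities of meromorphic functions: membership $F \in \mS_D(f)$ means $F(z) = f(z)F(-z)$ on $D \cap (-D)$, while $F \in \mT_D(f)$ means $F(z) = f(z)F(z+\tau)$ on $D \cap (D-\tau)$. Part (i) is then a one-line computation: for $F \in \mS_D(f)$, $G \in \mS_D(g)$ and $z \in D \cap (-D)$,
\[
(FG)(z) = f(z)F(-z)\,g(z)G(-z) = (fg)(z)\,(FG)(-z),
\]
so $FG \in \mS_D(fg)$. For part (ii) I would exploit that $S_{-1}$ is an involution. Fixing a nonzero $G \in \mS_D(g)$ and noting that $z \in D \cap (-D)$ iff $-z \in D \cap (-D)$, the relation $G(z) = g(z)G(-z)$ holds at both $z$ and $-z$; substituting the second into the first gives $G(z) = g(z)g(-z)G(z)$ on the nonempty open set $D \cap (-D)$. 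Since $G \not\equiv 0$ is meromorphic I cancel $G(z)$ to obtain $g(z)g(-z) = 1$, and the identity theorem extends this wherever the left side is defined.

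Part (iii) is the crux, and the key idea is to connect $F(z)$ to $F(-z-\tau)$ along two different routes and then cancel the common nonzero factor $F(-z-\tau)$. Fix $0 \neq F \in \mT_D(f) \cap \mS_D(g)$; the hypothesis that $U := D \cap (-D) \cap (D-\tau) \cap (-D-\tau)$ is nonempty guarantees that for $z \in U$ all four points $z, -z, z+\tau, -z-\tau$ lie in $D$, which is exactly what is needed to apply each equation below. The first route uses the $\mT$-equation at $z$ followed by the $\mS$-equation at $z+\tau$:
\[
F(z) = f(z)F(z+\tau) = f(z)\,g(z+\tau)\,F(-z-\tau).
\]
The second route uses the $\mS$-equation at $z$ together with the $\mT$-equation at $-z-\tau$, which reads $F(-z-\tau) = f(-z-\tau)F(-z)$:
\[
F(z) = g(z)F(-z) = \frac{g(z)}{f(-z-\tau)}\,F(-z-\tau).
\]

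Equating the two and cancelling the meromorphic factor $F(-z-\tau)$ (harmless since $F \not\equiv 0$) leaves $f(z)f(-z-\tau) = g(z)/g(z+\tau)$. I then invoke part (ii), which applies because $F$ witnesses $\mS_D(g) \neq \{0\}$ and $U \neq \emptyset$ forces $D \cap (-D) \neq \emptyset$: taking $g(w)g(-w) = 1$ at $w = z+\tau$ gives $1/g(z+\tau) = g(-z-\tau)$, whence $f(z)f(-z-\tau) = g(z)g(-z-\tau)$, and the identity theorem propagates this wherever defined. The main obstacle is not any single computation but the domain bookkeeping — verifying that the single set $U$ renders all four functional equations simultaneously valid at a common point — after which the two-route cancellation and the appeal to part (ii) are routine.
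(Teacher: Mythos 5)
Your proof is correct and follows essentially the same route as the paper's: both arguments chain the $\mT$- and $\mS$-functional equations of a nonzero $F$ around the points $z, -z, z+\tau, -z-\tau$, cancel the nonzero meromorphic factor of $F$, and invoke part (ii) in the form $g(z+\tau)g(-z-\tau)=1$ to conclude. The only cosmetic difference is that the paper cancels $F(-z)$ after one further substitution while you cancel $F(-z-\tau)$; the underlying computation and domain bookkeeping are identical.
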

\begin{proof}
Statement (i) is clear, and (ii) follows by iterating the functional equation of any $F \in \mS_D(f)$ with $F \neq 0$; note that having $f(z) f(-z) = 1$ on a nonempty open set extends to $f(z)f(-z) = 1$ everywhere. For (iii), let $F \in \mT_D(f) \cap \mS_D(g) \setminus \{0\}$, and use these two symmetries of $F$ to obtain
\[
\begin{aligned}
    g(z)F(-z) = F(z) &= f(z)F(z+\tau) \\
    &= f(z)g(z+\tau) F(-z-\tau)
    =
    f(z)g(z+\tau)f(-z-\tau)F(-z),
\end{aligned}
\]
as an identity of meromorphic functions in $z \in D \cap (-D) \cap (D-\tau) \cap (-D-\tau)$. Since this is a nonempty open set and $F$ is not the zero function, we can cancel $F(-z)$ and multiply by $g(-z-\tau)$ to obtain the desired claim (note that $g(z+\tau)g(-z-\tau) = 1$ as meromorphic functions by (ii)). 
\end{proof}

\begin{remark}
A choice of meromorphic functions $f$, $g$ which satisfy the conditions in \cref{lem:basic-S}.(ii)-(iii) is given by $f(z) = \pm q^{c} x^d$ and $g(z) = \pm x^{d-2c}$, for $c, d \in \Z$. These come up in the next result.
\end{remark}

\begin{proposition}[Spaces given by monomials] \label{prop:monomial-spaces}
Let $d$ be any integer, and $D$ be a $\Z$-invariant domain containing an open horizontal strip $S$ of width $> \Im \tau$.
\begin{itemize}
\item[(i)] For any $\alpha \in \C^\times$ (which may depend on the fixed $q$), one has
\[
    \dim \mT_D\big(\alpha x^d\big) = 
    \begin{cases} 
    d, \qquad &d > 0, \\
    \one_{\alpha \in \left\{q^n : n \in \Z \right\}}, \qquad &d = 0, \\
    0, \qquad &d < 0.
    \end{cases}
\]
Moreover, for $d \ge 0$, any $F \in \mT_\C\left(\alpha x^d\right) \setminus \{0\}$ has exactly $d$ zeros (counting multiplicities) in every fundamental region $\{w - t - u\tau : t, u \in [0, 1) \}$, for $w \in \C$. 
\item[(ii)] For $c \in \Z$ and $s, t \in \{\pm 1\}$,
\begin{equation} \label{eq:dimension}
    \dim \left( \mT_\C \big(sq^c x^d \big) \cap \mS_\C \big(tx^{d-2c}\big) \right) = 
    \max\left( 1 + \left\lfloor \frac{d}{2} \right\rfloor 
    -
    \one_{2 \mid d} \one_{t = -1}
    -
    \one_{st = -1} ,\ 0\right).
\end{equation}
So for $d \in \{1, 2\}$, one obtains $\mT_\C\left(s q^cx\right) \subset \mS_\C\left(s x^{1-2c}\right)$ and $\mT_\C\left(q^cx^2\right) \subset \mS_\C\left(x^{2-2c}\right)$. Moreover, all elements of $\mT_\C \big(sq^c x^d \big) \cap \mS_\C \big(tx^{d-2c}\big)$ have common zeros at:
\[
\begin{cases}
z = m\tau + n, &\textnormal{if }\ t = -1, \\
z = m\tau + n + \frac{1}{2}, &\textnormal{if }\ t = (-1)^{d+1}, 
\\
z = \left(m + \frac{1}{2}\right)\tau + \frac{n}{2}, &\textnormal{if }\ st = -1,
\qquad\qquad\qquad \forall m,n \in \Z.
\end{cases}
\]

\end{itemize}
\end{proposition}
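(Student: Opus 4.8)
The plan is to settle part~(i) with the structural tools already developed, and to obtain part~(ii) by exhibiting the intersection as a $+1$-eigenspace of an explicit involution whose trace I read off on the canonical basis. For part~(i) with $d>0$ I apply \cref{prop:upper-bounds} with $m=1$, $P_0\equiv 1$, $P_1=\alpha x^d$: then $A_n=1-\alpha q^nx^d$ has $\hat{A_n}(0)=1$ and $\hat{A_n}(d)=-\alpha q^n$, both nonzero for every $n$, so part~(ii) there applies with any $n_0$ and gives $\dim\mT_D\big(\alpha x^d\big)\le d$. The vectors $\vect{\alpha x^d}{k}$, $0\le k<d$, lie in $\mT_\C\big(\alpha x^d\big)\subseteq\mT_D\big(\alpha x^d\big)$ and have $x^j$-coefficient $\one_{j=k}$ for $0\le j<d$, so they realize the equality case and form a basis; hence the dimension is exactly $d$. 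The case $d=0$ is \cref{ex:constants,ex:constants-ctd}. The zero count for $d\ge 0$ is the argument principle: for $0\neq F\in\mT_\C\big(\alpha x^d\big)$ and a fundamental parallelogram $w,w+1,w+1+\tau,w+\tau$ missing the zeros of $F$, the $1$-periodicity cancels the two vertical contributions to $\frac1{2\pi i}\oint\frac{F'}{F}\,dz$, while $F(z)=\alpha x^dF(z+\tau)$ makes the horizontal ones differ by a multiple of $d(\log x)$ and contribute $2\pi i\,d$; thus $F$ has exactly $d$ zeros per fundamental domain (trivial for $d=0$). Finally, for $d<0$ set $e=-d$ and suppose $0\neq F\in\mT_D\big(\alpha x^d\big)$; by \cref{lem:basic}.(iii), $F\cdot\vect{\alpha^{-1}x^e}{0}\in\mT_D(1)$, a constant by \cref{ex:constants}, which cannot be $0$, so $F$ equals a nonzero constant over $\vect{\alpha^{-1}x^e}{0}$. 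But $\vect{\alpha^{-1}x^e}{0}$ has $e\ge1$ zeros per fundamental domain, and since $D$ contains a strip of width $>\Im\tau$ one of their $\Gamma$-translates lies in $D$, forcing a pole of $F$ in $D$ --- a contradiction, so the dimension is $0$.

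For the dimension in part~(ii), take $d\ge 1$ and write $W:=\mT_\C\big(sq^cx^d\big)$, of dimension $d$, and define $\iota F:=t\,x^{d-2c}(S_{-1}F)$, i.e.\ $(\iota F)(z)=t\,x^{d-2c}F(-z)$. Using $x^{d-2c}\in\mT_\C\big(q^{2c-d}\big)$ and $S_{-1}F\in\mT_\C\big(sq^{d-c}x^d\big)$ (immediate from the functional equation of $F$), \cref{lem:basic}.(iii) shows $\iota$ maps $W$ into $W$, and $\iota^2=\mathrm{id}$ because $t^2x^{d-2c}x^{2c-d}=1$. By definition the intersection in question is exactly the $+1$-eigenspace $V_+$ of $\iota$, so $\dim V_+=\tfrac12\big(d+\operatorname{tr}\iota\big)$. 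A reindexing of the defining theta series shows $\iota$ permutes the basis up to scalars, $\iota\,\vect{sq^cx^d}{k}=\lambda_k\,\vect{sq^cx^d}{\ell(k)}$ with $\ell(k)\equiv -2c-k\pmod d$, so only the indices fixed by $k\mapsto\ell(k)$ --- those with $2(k+c)\equiv 0\pmod d$ --- contribute to the trace; at each such index the $q$-power in $\lambda_k$ cancels, leaving $\lambda_k\in\{t,ts\}$ (always $ts$ when $d$ is odd). There is one fixed index if $d$ is odd and two if $d$ is even, yielding $\operatorname{tr}\iota=st$ for $d$ odd and $\operatorname{tr}\iota=t(1+s)$ for $d$ even; substituting into $\tfrac12(d+\operatorname{tr}\iota)$ reproduces $1+\lfloor d/2\rfloor-\one_{2\mid d}\one_{t=-1}-\one_{st=-1}$ in all parity and sign cases. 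For $d\le 0$ the intersection is $\{0\}$ --- from part~(i) when $d<0$, and from $\mT_\C(sq^c)=\mathrm{span}(x^{-c})$ together with the $\mS$-relation when $d=0$ --- which is precisely what the outer $\max(\,\cdot\,,0)$ records.

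The common zeros I obtain by evaluating the symmetry at the four two-torsion points $z_*\in\{0,\tfrac12,\tfrac\tau2,\tfrac{1+\tau}2\}$. At each, $-z_*\equiv z_*\pmod\Gamma$, so combining $F(z)=tx^{d-2c}F(-z)$ with the $\mT$- and $1$-periodicity multipliers lets me write $F(-z_*)=\mu(z_*)F(z_*)$ and collect a single factor; it equals $t$ at $z_*=0$, $t(-1)^d$ at $z_*=\tfrac12$, and $st$ at both $z_*=\tfrac\tau2$ and $z_*=\tfrac{1+\tau}2$. Whenever the factor is $-1$ the relation forces $F(z_*)=0$, and since all multipliers are nonvanishing the zero set is $\Gamma$-invariant; this propagates to $z=m\tau+n$ when $t=-1$, to $z=m\tau+n+\tfrac12$ when $t=(-1)^{d+1}$, and to $z=(m+\tfrac12)\tau+\tfrac n2$ when $st=-1$ (the last lattice combining the contributions of $\tfrac\tau2$ and $\tfrac{1+\tau}2$).

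The main obstacle is the scalar bookkeeping hidden in the phrases ``the $q$-power cancels'' and ``the factor equals $st$''. Concretely, I must verify by the explicit reindexing that $\lambda_k$ loses its $q$-power at every fixed index (so that $\lambda_k=\pm1$) and that the trace collapses to the floor-and-indicator expression for all four patterns of $(s,t)$ and both parities of $d$; and for the zeros I must compute the half-period multipliers $\mu(\tfrac\tau2)$ and $\mu(\tfrac{1+\tau}2)$, which involve evaluating $q^{\pm d/2}$-type factors, accurately enough that the combined factor comes out to $st$ at both points. Both reduce to careful but routine manipulation of the defining series.
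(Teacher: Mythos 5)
Your proposal is correct in substance, but it takes genuinely different routes from the paper in three of the four components, so a comparison is worthwhile. For $d>0$ in (i) you and the paper coincide (\cref{prop:upper-bounds} plus the canonical vectors). For $d<0$ the paper argues directly on Fourier coefficients: $\hat{F}(n)=\alpha q^{n-d}\hat{F}(n-d)$ forces growth of order $q^{dn^2/2}$ with $\vert q^d\vert>1$, contradicting convergence; your alternative --- multiplying by $\vect{\alpha^{-1}x^{-d}}{0}$ to land in $\mT_D(1)=\C$ and deriving a pole of $F$ at a zero of the theta factor inside $D$ --- works, because the zero set of $\vect{\alpha^{-1}x^{-d}}{0}$ lies on horizontal lines spaced $\Im\tau$ apart and is $\Gamma$-invariant, so the strip $S$ must meet it. For the zero count the paper avoids the argument principle: it exhibits $\la -\alpha x^d;q^d\ra$ with exactly $d$ zeros per cell and invokes \cref{lem:basic}.(vi) (ratios are elliptic, hence have equal numbers of zeros and poles); your contour computation along the period parallelogram, with the horizontal edges contributing $\int f'/f = 2\pi i d$, is the classical alternative and equally valid. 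In (ii) the divergence is most pronounced: the paper reduces to $c=0$, averages two basis representations of $F$ to show the pair sums $\vect{sx^d}{k}+t\vect{sx^d}{d-k}$ span, and treats the degenerate indices $k\in\{0,d/2\}$ by hand; your involution $\iota F=t\,x^{d-2c}S_{-1}F$ with $\dim V_+=\tfrac12(d+\operatorname{tr}\iota)$ is cleaner bookkeeping. Moreover, the ``scalar obstacle'' you flag is smaller than you fear: since $\iota^2=\mathrm{id}$, any fixed basis index automatically has $\lambda_k=\pm1$, so the $q$-powers cancel for free; conjugating by $x^c$ (which sends $\vect{sq^cx^d}{k}\mapsto\vect{sx^d}{k+c}$ and intertwines the involutions) removes the $c$-dependence, and at $c=0$ the reduction $\vect{sx^d}{d}=s\,\vect{sx^d}{0}$ gives $\lambda_0=ts$ and $\lambda_{d/2}=t$, i.e.\ $\operatorname{tr}\iota=t(1+s)$ for even $d$ and $st$ for odd $d$, exactly reproducing \cref{eq:dimension}. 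For the common zeros, the paper chains bijections by the multiplicative factors $\la x;q\ra$, $\la -x;q\ra$, $x\la qx^2;q^2\ra$ between intersection spaces of equal dimension; your direct evaluation at the two-torsion points is shorter, and the multipliers indeed come out $t$, $t(-1)^d$, $st$, $st$ at $0$, $\tfrac12$, $\tfrac\tau2$, $\tfrac{1+\tau}2$ respectively.

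One genuine (if degenerate) error: your sentence ``for $d\le 0$ the intersection is $\{0\}$'' contradicts the very formula you are proving at $d=0$, $s=t=1$. There $\mT_\C(q^c)=\C\,x^{-c}$ and $S_{-1}x^{-c}=x^{c}$, so $x^{-c}\in\mS_\C(x^{-2c})$ and the intersection is one-dimensional, matching $\max(1+0-0-0,\,0)=1$. The correct count at $d=0$ is $\one_{s=1}\one_{t=1}$: the case $s=-1$ kills the $\mT$-space (since $-q^c\notin\{q^n:n\in\Z\}$ when $0<\vert q\vert<1$), and $t=-1$ is incompatible with the $\mS$-relation on $x^{-c}$; the outer $\max$ is then active precisely at $d=0$, $s=1$, $t=-1$ (inner value $-1$) and for $d<0$. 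This is a one-line fix --- the paper itself dismisses $d\le 0$ as ``easily verified'' --- but as written your claim is false in that case.
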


\begin{remark}
The content of this proposition generalizes that of Theorems 1.7 and 1.8 from \cite{hickerson1988proof}, which take $D = \C$, $t = -1$, $c = 0$, $d > 0$ and $d$ odd for the second part. The case $d > 0$ from (i) is also generalized by \cite[Theorem 3.5]{beauville2013theta} in the language of line bundles.
\end{remark}

\begin{proof}
For part (i), case $d = 0$ was settled in \cref{ex:constants}. If $d < 0$, suppose that $\dim \mT_D\left(\alpha x^{d}\right) \neq 0$, and let $0 \neq F \in \mT_D\left(\alpha x^{d}\right)$. Then as in \cref{eq:recursion} we have $\hat{F}(n) = \alpha q^{n-d} \hat{F}(n-d)$ for all $n \in \Z$. Fixing a nonzero Fourier coefficient $\hat{F}(n_0)$, we find inductively that $\hat{F}(n_0 - nd)$ grows on the order of $q^{dn^2/2}$, which contradicts the convergence of the Fourier series of $F$ anywhere since $\left\vert q^d\right\vert > 1$.

Now suppose that $d > 0$. Take $m = 1$, $P_0(z) = 1$ and $P_1(z) = \alpha x^d$ in \cref{prop:upper-bounds}, and consider $A_n(z) = 1 - \alpha q^n x^d$. We have $\hat{A_n}(0) \neq 0$ and $\hat{A_n}(d) \neq 0$ for all $n \in \Z$, so \cref{prop:upper-bounds}.(ii) yields that $\dim \mT_D\left(\alpha x^d \right) \le d$, with equality iff there exist $\{F_k\}_{0 \le k < d} \subset \mT_D\left(\alpha x^d \right)$ such that 
\begin{equation} \label{eq:fourier}
    \hat{F_k}(j) = \one_{j=k}, \qquad \forall j, k \in \{0, 1, \ldots, d-1\},
\end{equation}
where the Fourier series are taken in $S$. Such functions are the canonical basis vectors from \cref{sec:intro},
\[
    \left\{\vect{\alpha x^d}{k}\right\}_{0 \le k < d}
    =
    \left\{\sum_{n \in \Z} \alpha^n q^{d\binom{n}{2}+k_jn}x^{dn+k}\right\}_{0 \le k < d},
\]
given by locally uniformly convergent Fourier series in $z \in \C \supset D$. Thus these are entire $1$-periodic functions satisfying $\vect{\alpha x^d}{k}(z) = \alpha x^d \vect{\alpha x^d}{k}(z+\tau)$ and \cref{eq:fourier}, proving that $\dim \mT_D\left(\alpha x^d\right) = d$. In fact, for any $k \in \Z$, $\vect{\alpha x^d}{k}$ has nonzero Fourier coefficients only at multiples of $d$ plus $k$, so the functions $\{\vect{\alpha x^d}{k_j}\}_{0 \le j < d}$ give a basis of $\mT_D\left(\alpha x^d\right)$ for any complete residue system $\{k_0, \ldots, k_{d-1}\}$ modulo $d$. Since we have $\mT_\C\left(\alpha x^d\right) \subset \mT_D\left(\alpha x^d \right)$ by \cref{lem:basic}.(i), the equality of dimensions forces an equality of vector spaces; i.e.\ each function in $\mT_D\left(\alpha x^d \right)$ can be holomorphically continued to $\C$.

Concerning the claim about zeros in (i), the case $d = 0$ is immediate since then $F$ can only be a scalar multiple of $x^n$ for some $n \in \Z$. For $d > 0$, note that $\la -\alpha x^d; q^d \ra \in \mT_\C\left(\alpha x^d\right)$ has zeros whenever $x^d = -\alpha^{-1}q^{nd}$ for $n \in \Z$, thus exactly $d$ zeros in any fundamental region as in part (i). Since the ratio of any two nonzero functions in $\mT_\C\left(\alpha x^d\right)$ is an elliptic function (by \cref{lem:basic}.(vi)), the same is true for any $F \in \mT_\C\left(\alpha x^d\right) \setminus \{0\}$.

For part (ii), the case $d \le 0$ is easily verified, so take $d > 0$. We can also assume WLOG that $c = 0$ due to the linear bijection given by multiplication by $x^c$,
\[
     \mT_\C\big(sq^c x^d \big) \cap \mS_\C\big(tx^{d-2c}\big)
     \xlongrightarrow{x^c} 
     \mT_\C\big(s x^d \big) \cap \mS_\C\big(tx^{d}\big),
\]
which also preserves canonical bases and zeros; note that $x^c \in \mT_\C\left(q^{-c}\right) \cap \mS_\C\left(x^{2c}\right)$. A quick computation shows that $S_{-1}\vect{sx^d}{k} = x^{-d} \vect{sx^d}{d-k}$ for $k \in \Z$ (using that $s \in \{\pm 1\}$), and so $\vect{sx^d}{k} + t\vect{sx^d}{d-k} \in \mS_\C\left(tx^d\right)$ as anticipated in \cref{tbl:spaces}. In fact, we claim that the functions
\begin{equation} \label{eq:pair-basis}
    \left\{\vect{sx^d}{k} + t\vect{sx^d}{d-k}\right\}_{0 \le k \le \lfloor d/2 \rfloor}
\end{equation}
span $\mT_\C\left(sx^d\right) \cap \mS_\C\left(tx^d\right)$. Indeed, any $F \in \mT_\C\left(sx^d\right)$ can be written as $\sum_{k=0}^{d-1} \hat{F}(k) \vect{sx^d}{k}$ and $\sum_{k=1}^d \hat{F}(k) \vect{sx^d}{k}$, by identifying Fourier coefficients in two basis representations of $F$. Hence
\[
    F =
    \frac{1}{2} \sum_{k=0}^{d-1} \left(\hat{F}(k) \vect{sx^d}{k}  + \hat{F}(d-k) \vect{sx^d}{d-k} \right).
\]
Assuming additionally that $F \in \mS_\C\left(tx^d\right)$, we find that $\hat{F}(k) = t\hat{F}(d-k)$ by identifying Fourier series, and thus $F$ is spanned by the sums in \cref{eq:pair-basis}. Since $\vect{sx^d}{k} + \vect{sx^d}{d-k}$ has null Fourier coefficients anywhere else other than at multiples of $d$ plus-minus $k$, it is clear that the vectors in \cref{eq:pair-basis} are linearly independent \emph{provided} that they are nonzero. But to have $\vect{sx^d}{k} + t\vect{sx^d}{d-k} = 0$ we would need in particular that $k \equiv d-k \pmod{d}$, so $k \in \{0, d/2\}$. Treating these two cases separately leads to the two terms subtracted from $1 + \lfloor d/2 \rfloor$ in \cref{eq:dimension}. For $d \in \{1, 2\}$ one has $1 + \lfloor d/2 \rfloor = d$, so for the right choices of $s, t \in \{\pm 1\}$ one can force an equality of vector spaces $\mT_\C\left(sq^c x^d \right) \cap \mS_\C\left(tx^{d-2c} \right) = \mT_\C\left(sq^c x^d \right)$, proving the claimed inclusions.

Finally, for the statement about common zeros in (ii), consider the maps
\begin{center}
\begin{tikzcd} [column sep = huge, row sep = tiny]
    \mT_\C\big({-s}x^{d-1}\big) \cap \mS_\C\big(x^{d-1}\big)
    \arrow{r}{\la x; q \ra}
    &
    \mT_\C\big(sx^d\big) \cap \mS_\C\big({-x^d}\big),
    \\
    \mT_\C\big(sx^{d-1}\big) \cap \mS_\C\big({(-1)^{d+1}x^{d-1}}\big)
    \arrow{r}{\la -x; q \ra}
    &
    \mT_\C\big(sx^d\big) \cap \mS_\C\big({(-1)^{d+1}x^d}\big),
    \\
    \mT_\C\big({-s}x^{d-2}\big) \cap \mS_\C\big({-s}x^{d-2}\big)
    \arrow{r}{x\la qx^2; q^2 \ra}
    &
    \mT_\C\big(sx^d\big) \cap \mS_\C\big({-sx^d}\big),
\end{tikzcd}
\end{center}
where we used that $\la \pm x; q\ra \in \mT_\C\left(\mp x\right) \cap \mS_\C\left(\mp x\right)$ and $x\la qx^2; q^2\ra \in \mT_\C\left(-x^2\right) \cap \mS_\C\left(x^2\right)$. These are injective linear maps of vector spaces of equal finite dimensions by \cref{eq:dimension} (e.g., for the first map both spaces have dimension $\max\left(\lfloor(d+1)/2\rfloor - \one_{s = 1}, 0\right)$). Thus the three maps above are bijections, and determining the zeros of $\la x; q\ra$, $\la -x; q\ra$ and $\la qx^2; q^2 \ra$ completes our proof.
\end{proof}

\begin{corollary} \label{cor:bijections}
For $c, d \in \Z$ with $d \ge 1$, the map
\begin{center}
\begin{tikzcd} [column sep = huge]
    \mT_\C\big({-q^c x^d}\big) \cap \mS_\C\big(x^{d-2c}\big)
    \arrow{r}{x^{-1}\la qx^2; q^2 \ra^{-1}}
    &
    \mT_\C\big({q^c x^{d-2}}\big) \cap \mS_\C\big(x^{d-2-2c}\big)
\end{tikzcd}
\end{center}
which divides by $x\la qx^2; q^2 \ra$, is a bijection of $\lfloor d/2 \rfloor$-dimensional vector spaces. If $d$ is odd,
\begin{center}
\small
\begin{tikzcd} [column sep = huge]
    \mT_\C\big({-q^c x^d}\big) \cap \mS_\C\big(x^{d-2c}\big)
    \arrow{r}{x^{-1}\la qx^2; q^2 \ra^{-1}}
    \arrow{rd}[swap]{\la -x; q \ra^{-1}}
    &
    \mT_\C\big({q^c x^{d-2}}\big) \cap \mS_\C\big(x^{d-2c-2}\big)
    \arrow{r}{\la -x; q \ra^{-1}}
    &
    \mT_\C\big({q^c x^{d-3}}\big) \cap \mS_\C\big(x^{d-2c-3}\big)
    \\
    & 
    \mT_\C\big({-q^c x^{d-1}}\big) \cap \mS_\C\big(x^{d-2c-1}\big)
    \arrow{ru}[swap]{x^{-1}\la qx^2; q^2 \ra^{-1}}
\end{tikzcd}
\end{center}
give more such bijections by multiplicative factors.
\end{corollary}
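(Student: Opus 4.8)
The plan is to exhibit each arrow in \cref{cor:bijections} as the inverse of one of the multiplication maps already shown to be bijective at the end of the proof of \cref{prop:monomial-spaces}.(ii); the only genuinely new point is that \emph{division} by the relevant product still lands in $\Hol(\C/\Z)$. The parameter $c$ never enters the maps themselves, only the quasiperiodicity exponents of the spaces, so it will simply ride along and no reduction to the case $c = 0$ is required.

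First I would record the quasiperiodicity bookkeeping. Recall from the proof of \cref{prop:monomial-spaces} that $x\la qx^2; q^2\ra \in \mT_\C(-x^2) \cap \mS_\C(x^2)$. Hence for $F$ in the source space $\mT_\C(-q^cx^d) \cap \mS_\C(x^{d-2c})$, the meromorphic quotient $F/(x\la qx^2; q^2\ra)$ acquires $\tau$-quasiperiodicity factor $(-q^cx^d)/(-x^2) = q^cx^{d-2}$ and $S_{-1}$-symmetry factor $x^{d-2c}/x^2 = x^{d-2c-2}$, by \cref{lem:basic}.(iii) and \cref{lem:basic-S}.(i) applied on the open set where the denominator does not vanish. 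Thus the quotient lies in the target space as a \emph{meromorphic} function, and the entire question reduces to holomorphicity.

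This is the crux. The zeros of $x\la qx^2; q^2\ra$ are the simple zeros $z = (m+\tfrac12)\tau + \tfrac n2$ (with $m, n \in \Z$), since $\la qx^2; q^2\ra$ vanishes exactly when $x^2 = q^{2k+1}$ for some $k \in \Z$, each to first order, while $x$ never vanishes. In the source space one has $s = -1$ and $t = +1$, so $st = -1$, and the common-zeros statement of \cref{prop:monomial-spaces}.(ii) guarantees that every such $F$ vanishes at precisely these points; as the zeros of the denominator are simple, the quotient is entire and hence lies in the target. Bijectivity is then immediate: multiplication by $x\la qx^2; q^2\ra$ carries the target back into the source (holomorphicity being automatic for a product) and is a two-sided inverse of the division map. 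Finally, \cref{eq:dimension} gives $\lfloor d/2\rfloor$ for both dimensions, via $1+\lfloor d/2\rfloor - \one_{st=-1} = \lfloor d/2\rfloor$ for the source and $1 + \lfloor (d-2)/2\rfloor = \lfloor d/2\rfloor$ for the target (where $s = t = +1$).

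The remaining arrows (for odd $d$) follow the same three moves with $\la -x; q\ra \in \mT_\C(x) \cap \mS_\C(x)$ replacing $x\la qx^2; q^2\ra$; its simple zeros are $z = m\tau + n + \tfrac12$, the ``$t = (-1)^{d+1}$'' locus in \cref{prop:monomial-spaces}.(ii). Division by it is well-defined and bijective exactly when the numerator vanishes there, which (since $t = +1$ throughout) happens iff $d$ is odd, precisely the stated hypothesis. The square in the diagram commutes because both composites equal division by the single product $x\la qx^2; q^2\ra \cdot \la -x; q\ra$, multiplication of functions being commutative. The main obstacle is the holomorphicity check: everything hinges on the common-zeros list together with the simplicity of the zeros of the denominators, and it is exactly this input that forces the odd-$d$ restriction on the $\la -x; q\ra^{-1}$ arrows.
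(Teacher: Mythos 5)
Your proof is correct, but it runs in the opposite logical direction from the paper's. The paper gives no separate proof of \cref{cor:bijections}: the three multiplication maps at the end of the proof of \cref{prop:monomial-spaces}.(ii) are shown to be bijections purely by soft linear algebra --- multiplication by a nonzero holomorphic function is injective, and \cref{eq:dimension} shows source and target have equal finite dimension --- and the common-zeros statement is then \emph{deduced} from this bijectivity. The corollary is just the observation that the inverses of these bijections (division maps, with the $x^c$-twist reinstating general $c$) are again bijections; in particular the paper never needs to verify directly that dividing preserves holomorphicity. You instead take the common-zeros statement as input, check that the denominators $x\la qx^2; q^2\ra$ and $\la -x; q\ra$ have simple zeros exactly on the loci $st=-1$ and $t=(-1)^{d+1}$ respectively, conclude that the quotients are entire, and get bijectivity from the two-sided inverse; the dimension formula then only confirms the value $\lfloor d/2\rfloor$. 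This is legitimate since the corollary follows the proposition, and it has the merit of making explicit \emph{why} division lands back in a space of entire functions --- something the paper's dimension count proves only indirectly. But be aware that your key input (the common zeros) was itself derived in the paper from the very bijections you are reproving, so globally the two arguments rest on the same foundation, with yours being the longer loop. Two cosmetic overstatements: the common-zeros statement says elements vanish \emph{at} those points, not ``precisely'' there (they may vanish elsewhere, e.g.\ the nontrivial zeros in \cref{fig:plots}), and your ``iff $d$ is odd'' for the $\la -x;q\ra^{-1}$ arrows should be an implication --- the proposition gives no non-vanishing claim for even $d$ (though a dimension count does rule out a bijection there). Neither affects the argument, which only uses vanishing to order at least one against simple zeros.
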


\begin{proposition}[Spaces given by polynomials] \label{prop:polynomial-spaces}
Suppose $f(z)$ is a Laurent polynomial in $x = e^{2\pi i z}$ with $\deg_x f = d \ge 1$, such that $f(z)$ has no zeros in $\{\Im z < -\Im \tau\}$. Then one has
\[
    \dim \mT_\Hminus\left(f \right) = d \qquad \text{ and } \qquad 
    \dim \mT_\C\left(f \right) \le d,
\]
and if $d \ge 2$, equality is reached in the right side if and only if $f$ is a monomial in $x$. Moreover, any nonzero $F \in \mT_\Hminus(f)$ has exactly $d$ zeros (counting multiplicity) in any fundamental region $\{w - t - u\tau : t, u \in [0, 1) \}$, for $w \in \H^-$.
\end{proposition}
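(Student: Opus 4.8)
The plan is to treat every $F\in\mT_{\H^-}(f)$ as a meromorphic function on $\C$ via \cref{lem:basic}.(v) (valid since $\H^-$ contains strips of arbitrary width), and to begin with the zero count, which drives everything else. Fix a nonzero $F$ and a fundamental parallelogram $P_w=\{w-t-u\tau:t,u\in[0,1)\}$ with $w\in\H^-$; since $\Im(w-t-u\tau)<\Im w<0$, the whole of $P_w$ lies in $\H^-$, where $F$ is holomorphic, so the argument principle gives $\#\{\text{zeros in }P_w\}=\tfrac{1}{2\pi i}\oint_{\partial P_w}\tfrac{F'}{F}\,dz$. By $1$-periodicity the two sides differing by $1$ cancel, and differentiating $\log$ in $F=f\cdot T_\tau F$ (so $F'/F=f'/f+T_\tau(F'/F)$) collapses the two sides differing by $\tau$ into a single winding integral $\tfrac{1}{2\pi i}\int\tfrac{f'}{f}\,dz$ along one horizontal period at height $\Im w-\Im\tau$. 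Writing $f=c_d\,x^a\prod_{j}(x-r_j)$ with $d-a$ roots $r_j$ and a zero or pole of order $|a|$ at $x=0$, this integral counts the $x$-roots inside $|x|=e^{-2\pi(\Im w-\Im\tau)}=:\rho'$ minus the order of the pole at the origin. The crux is that $\rho'>|q|^{-1}$ for every $w\in\H^-$, whereas the no-zeros hypothesis forces $\Im\rho_j\ge-\Im\tau$, i.e.\ $|r_j|\le|q|^{-1}<\rho'$, for each root; hence all $d-a$ roots are strictly enclosed, the pole at $0$ contributes $\max(-a,0)$, and the total is $(d-a)-\max(-a,0)=d$, independent of $w$. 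This proves the final assertion that any nonzero $F$ has exactly $d$ zeros per fundamental region.

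With the winding number $d$ in hand, I would compute the dimension through the line-bundle picture of the introductory Example: $\mT_{\H^-}(f)$ is the space of (meromorphic) sections, holomorphic on $\H^-$, of the line bundle on $\C/(\Z+\Z\tau)$ attached to $f$, whose degree equals the winding number $d$ just computed. Concretely, after exhibiting one nonzero $G\in\mT_{\H^-}(f)$, the map $F\mapsto F/G$ is a linear bijection onto the Riemann--Roch space $L(\operatorname{div}_0 G)$ of elliptic functions with poles bounded by the degree-$d$ divisor of zeros of $G$ (here $F/G$ is elliptic by \cref{lem:basic}.(vi)). Since $\deg\operatorname{div}_0 G=d\ge1$, Riemann--Roch on the genus-$1$ curve gives $\dim L=d$, so $\dim\mT_{\H^-}(f)=d$ (this simultaneously re-proves $\dim\le d$). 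The existence of a nonzero $G$ I would extract from \cref{eq:two-ways}: the lower tail, built from products of reciprocals $f(z+j\tau)^{-1}$ with $j<0$, converges locally uniformly in $\H^-$ because $\deg_x f=d\ge1$ makes $|f(z+j\tau)|\to\infty$ as $j\to-\infty$, while the no-zeros hypothesis keeps these factors pole-free; after normalizing $f$ by a constant, and—when $f$ carries negative powers of $x$—trading the monomial $x^a$ against a theta function of $\mT_\C$, one assembles a convergent quasiperiodic series with a nonzero Fourier coefficient.

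For the remaining claims about $\mT_\C(f)$, the bound $\dim\mT_\C(f)\le d$ is immediate from $\mT_\C(f)\subseteq\mT_{\H^-}(f)$ by \cref{lem:basic}.(i). For the monomial characterization with $d\ge2$: if $f=\alpha x^d$, then \cref{prop:monomial-spaces} already gives $\dim\mT_\C(\alpha x^d)=d$. Conversely, if $f$ is not a monomial it has a finite zero $z=\rho$ (a point with $x(\rho)$ equal to some root $r_j$), and every entire $F\in\mT_\C(f)$ must vanish there, since $F(\rho)=f(\rho)F(\rho+\tau)=0$. Thus $\mT_\C(f)$ is contained in the subspace of $\mT_{\H^-}(f)$ cut out by $F(\rho)=0$; as the degree-$d$ linear system is base-point-free for $d\ge2$, some element of $\mT_{\H^-}(f)$ is nonzero at $\rho$, making this a nontrivial linear condition and forcing $\dim\mT_\C(f)\le d-1<d$. (For $d=1$ the unique nonzero element is obliged to vanish at the single zero of $f$ and thereby stays entire, which is exactly why the equivalence is asserted only for $d\ge2$.)

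I expect the main obstacle to be the existence and convergence of a nonzero element for a genuine Laurent polynomial with negative $x$-powers, where the naive sum and product of \cref{eq:two-ways} diverge and one must descend to the polynomial case through monomial theta factors without creating poles in $\H^-$; this is also the only place where quoting Riemann--Roch (or, equivalently, proving base-point-freeness and $\dim L(D)=\deg D$ for $\deg D\ge1$ by hand) seems unavoidable, since the recurrence of \cref{prop:upper-bounds} over-counts by the size of the negative part of $f$. The winding computation itself is routine once the pole-at-origin bookkeeping is arranged, and the $\mT_\C$ arguments are elementary given the zero count.
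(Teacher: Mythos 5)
Your route is genuinely different from the paper's. The paper never invokes the argument principle or Riemann--Roch: it factors $f(z) = \alpha x^d\, g(-z)$ with $g$ a polynomial in $x$, $\hat{g}(0) = 1$, forms the everywhere-convergent product $G(z) := \prod_{n \ge 1} g(-z + n\tau) \in \mT_\C\left(g(-z)^{-1}\right)$ — zero-free on $\H^-$ precisely by the no-zeros hypothesis — and transports \emph{everything} (dimension, zero count, strictness for non-monomials) through the single bijection $\mT_\Hminus(f) \xrightarrow{\cdot\, G} \mT_\Hminus\left(\alpha x^d\right) = \mT_\C\left(\alpha x^d\right)$ furnished by \cref{prop:monomial-spaces}. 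Your winding-number computation of the zero count is correct in substance and is an attractive product-free alternative, modulo a bookkeeping slip: when the lowest exponent $a$ of $f$ is positive, the zero of order $a$ at $x = 0$ must be counted, so the tally is $(d-a) + \max(a, 0) - \max(-a, 0) = d$, whereas your expression $(d-a) - \max(-a,0)$ returns $d - a$ (it fails already for $f = x^d$). Your Riemann--Roch step is also sound once one verifies that the zero divisor of a nonzero $G \in \mT_\Hminus(f)$ is $\tau$-translation-invariant inside $\H^-$ (it is, since $\ord_z G = \ord_z f + \ord_{z+\tau} G$ and $f$ is zero-free below height $-\Im \tau$); without this, $F \mapsto F/G$ need not surject onto $L(\operatorname{div}_0 G)$.

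The genuine gap is exactly where you suspected it, but your proposed fix cannot work as stated: the \emph{existence} of a single nonzero element of $\mT_\Hminus(f)$, on which your whole dimension count rests. For Laurent $f$ the bilateral sum of \cref{eq:two-ways} diverges, and ``trading the monomial $x^a$ against a theta function of $\mT_\C$ without creating poles in $\H^-$'' is impossible: by \cref{prop:monomial-spaces}.(i), every nonzero element of $\mT_\C\left(\beta x^m\right)$ with $m \ge 1$ has exactly $m$ zeros in \emph{every} fundamental region, so dividing by such a factor always creates poles in $\H^-$, while multiplying by it only yields an injection into a polynomial-case space, i.e.\ the upper bound $\dim \mT_\Hminus(f) \le d - a$, not existence. (A repair is possible — pass to the subspace of the polynomial-case space vanishing on the zero columns of the theta factor, then divide — but this requires the polynomial-case dimension, simplicity of the theta zeros, and the column-periodicity of divisors, at which point you have essentially rebuilt the paper's $G$.) Note also that ``normalizing $f$ by a constant'' is only legitimate for constants $q^k$, traded via multiplication by $x^{-k}$; this happens to rescue convergence of the bilateral sum in the polynomial case (make the free coefficient small), but no power of $q$ can tame both tails when $f$ has negative $x$-powers.

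A second, smaller gap sits in your monomial characterization: all zeros $\rho$ of $f$ lie in $\{\Im z \ge -\Im\tau\}$, typically \emph{outside} $\H^-$, and the meromorphic continuation of an element of $\mT_\Hminus(f)$ can have poles exactly at upward $\tau$-translates of zeros of $f$ — so ``$F(\rho) = 0$'' is not a well-defined linear condition on $\mT_\Hminus(f)$. This is fixable: if $\dim \mT_\C(f) = d$ then every element of $\mT_\Hminus(f)$ would be entire, would vanish at a minimal-height zero $\rho$ of $f$, and hence, via $F(\rho - j_0\tau) = \prod_{j=1}^{j_0} f(\rho - j\tau)\cdot F(\rho)$ with nonvanishing factors, would vanish at a fixed point of $\H^-$, contradicting base-point-freeness of the degree-$d \ge 2$ system. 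The paper sidesteps the issue entirely by locating a zero $w_0$ of its comparison product $G$ and a theta basis vector $F_j$ with $F_j(w_0) \neq 0$, so that $F_j/G \in \mT_\Hminus(f) \setminus \mT_\C(f)$.
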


\begin{proof}
Write $f(z) = \alpha x^d \cdot g(-z)$, where $\alpha = \hat{f}(d)$ and $g(z)$ is a polynomial in $x$ with free coefficient $\hat{g}(0) = 1$. Then consider the function
\[
    G(z) := \prod_{n \ge 1} g(-z + n\tau) = \prod_{n \ge 1} \left(1 + \hat{g}(1) q^nx^{-1} + \hat{g}(2) q^{2n}x^{-2} + \cdots \right),
\]
where the product converges locally uniformly in $\C$ (given that $\sum_{n \ge 1} q^n$ converges); note that $G$ is designed to satisfy $G(z)g(-z) = G(z+\tau)$, hence $G \in \mT_\C\left(g(-z)^{-1}\right)$. Moreover, since $f(z)$ is nonzero on $\{\Im z < - \Im \tau\}$, each $g(-z+n\tau)$ is nonzero on $\{\Im z < (n-1)\Im \tau\}$, so $G$ is nonzero on $\{\Im z < 0\} = \H^-$ by standard properties of infinite products, and thus $G^{-1} \in \mT_\Hminus\left(g(-z)\right)$. Applying \cref{lem:basic}.(iii) twice, we find that multiplication by $G(z)$ induces a bijective linear map
\begin{equation}\label{eq:bijection}
     \mT_\Hminus (f)
     \xlongrightarrow{\cdot\ G(z)}
     \mT_\Hminus\big(\alpha x^d \big)
     =
     \mT_\C\big(\alpha x^d \big),
\end{equation}
proving $\dim \mT_\Hminus(f) = \dim \mT_\C\left(\alpha x^d\right) = d$ by \cref{prop:monomial-spaces}. By \cref{lem:basic}.(i), this immediately implies that $\dim \mT_\C(f) \le d$. Now say $f(z)$ is not a monomial in $x$, so it must have a zero $z_0 \in \C$, which results in a zero $w_0$ of $G(z)$. For $k \in \Z$, consider the product
\[
    F_k(z) := x^k \big\langle {-\alpha} q^k x^d; q^d \big\rangle \in \mT_\C\big(q^{-k} \cdot \alpha q^k x^d\big) = \mT_\C\big(\alpha x^d\big).
\]
Then $F_0(z)$ has zeros precisely when $-\alpha x^d = q^{nd}$ for some $n \in \Z$, i.e.\ when $z = u + (dn\tau + 2m\pi i)/d$ for some $m \in \Z$ and a fixed $u \in \C$. Hence $F_k(z) = x^k F_0(z + k\tau/d) = 0$ iff $z = u + ((dn-k)+ 2m\pi i)/d$ for some $m \in \Z$, which implies that $F_j$ and $F_k$ have no common zeros when $j \not\equiv k \pmod{d}$. So assuming $d \ge 2$, there is some $F_j$ with $F_j(w_0) \neq 0$, which implies that
\[
    F_j \cdot G^{-1} \in \mT_\Hminus(f) \setminus \mT_\C(f)
    \qquad 
    \Rightarrow 
    \qquad 
    \dim \mT_\C(f) < d.
\]
This settles the equality case of $\dim \mT_\C(f) \le d$ (together with \cref{prop:monomial-spaces}.(i)). The statement about zeros in fundamental regions of $\H^-$ follows from the analogous statement in \cref{prop:monomial-spaces}.(i) and the bijection in \cref{eq:bijection} (since $G$ has no zeros in $\H^-$).
\end{proof}

\begin{remark}
Combining \cref{cor:bijections} with the inclusion $\mT_\C\left(qx^2\right) \subset \mS_\C(1)$ from \cref{prop:monomial-spaces}, and the map in \cref{eq:bijection}, we obtain bijections by multiplicative factors
\begin{center}
\begin{tikzcd} [column sep = huge, row sep = tiny]
    \mT_\C\big({-q}x^5\big) \cap \mS_\C\big(x^3\big)
    \arrow{rr}{x^{-1}\left([-qx^3;0] + [-qx^3;1]\right)^{-1}}
    &&
    \mT_\C\big(qx^2\big)
    \arrow{rr}{(q; q)\left(x^{-1}; q\right)^{-1}}
    \arrow{rrd}[swap,xshift=0.3cm]{(q; q)\left(x^{-2}; q^2\right)^{-1}}
    &&
    \mT_\Hminus\big(qx^2 - x\big)
    \\
    \mT_\C\big({-q}x^4\big) \cap \mS_\C\big(x^2\big)
    \arrow{rru}[swap]{x^{-1}[-qx^2;0]^{-1}}
    && && 
    \mT_\Hminus\big(qx^2 - q^{-1}\big)
\end{tikzcd}
\end{center}
where we used that $\left(q^2; q^2\right)\la qx^2; q^2 \ra = \vect{-qx^2}{0}$ and $(q; q)\la -x; q\ra\la qx^2; q^2\ra = \vect{-qx^3}{0} + \vect{-qx^3}{1}$ by \cref{eq:triple,eq:quintuple} with adequate substitutions. What is special about the two resulting bijections (following the top and the bottom maps above) is that they preserve canonical bases: $\left\{\vect{-qx^5}{1} + \vect{-qx^5}{2}, \vect{-qx^5}{0} + \vect{-qx^5}{3}\right\}$ go to $\left\{\vect{qx^2 - x}{0}, \vect{qx^2-x}{1}\right\}$ by the top chain of maps, while $\left\{\vect{-qx^4}{1}, \vect{-qx^4}{0} + \vect{-qx^4}{2}\right\}$ go to $\left\{\vect{qx^2 - q^{-1}}{0}, \vect{qx^2-q^{-1}}{1}\right\}$ by the bottom one. This is the content of our main results in \cref{thm:bases-proportional,thm:bases-proportional-2}.
\begin{figure}[ht]
\centering
\includegraphics[scale = 0.6]{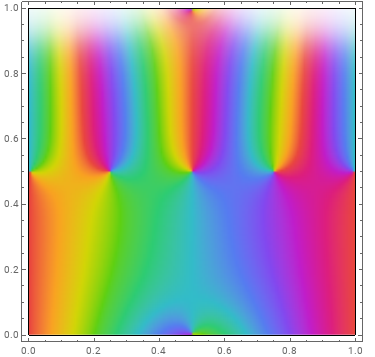} \qquad\qquad
\includegraphics[scale = 0.6]{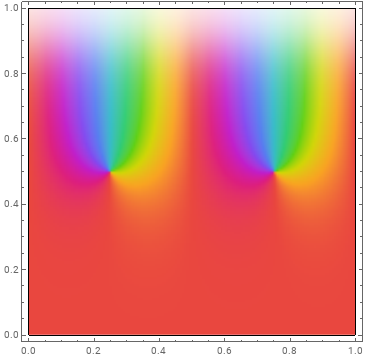}
\caption{Plots of $\vect{-qx^5}{1} + \vect{-qx^5}{2}$ (left) and $(q; q)\left(x^{-1}; q\right)\vect{qx^2 - x}{0}$ (right) on $\{\Re z, \Im z \in [0, 1)\}$, for $\tau = i$ (\emph{using domain coloring; plot made in Wolfram Mathematica}).}
\label{fig:plots}
\end{figure}
\FloatBarrier
As predicted by \cref{prop:monomial-spaces}.(i), $\vect{-qx^5}{1} + \vect{-qx^5}{2} \in \mT_\C\left(-qx^5\right) \cap \mS_\C\left(x^3\right)$ has five zeros in the fundamental region $\{t + u\tau : t,u \in [0, 1)\}$. Three of these zeros, at $z \in \left\{1/2, \tau/2, (1+\tau)/2\right\}$, correspond to the zeros of $\vect{-qx^3}{0} + \vect{-qx^3}{1}$ and are eliminated by the bijection in \cref{cor:bijections}; the remaining two nontrivial zeros coincide with the zeros of $\left(x^{-1}; q\right)\vect{qx^2 - x}{0} \in \mT_\C\left(qx^2\right)$, as seen in \cref{fig:plots}. For $\tau = i$, the nontrivial zeros lie approximately $0.24657 + 0.5i$ and $0.75343 + 0.5i$; note that they sum up to $1 + i$ since $\left(x^{-1}; q\right)\vect{qx^2 - x}{0}$ has a symmetry by a nonzero multiplicative factor at $z \mapsto 1 + \tau - z$ (by combining $1$-periodicity with the symmetries from $\mT_\C\left(qx^2\right) \subset \mS_\C(1)$).
\end{remark}

\subsection{Canonical basis vectors} 
\label{subsec:canonical-basis-vectors}
We are now ready to define the promised generalization of the functions $\vect{\alpha x^d}{k}$, $\vect{qx^2-x}{k}$ and $\vect{qx^2-q^{-1}}{k}$ from \cref{sec:intro} (and to show in particular that the latter two functions are well-defined).

\begin{proposition}[Canonical basis]\label{prop:canonical-basis-vectors}
Let $f(z)$ be a polynomial in $x = e^{2\pi i z}$ with $\deg_x f = d \ge 1$, such that $f(z)$ has no zeros in $\{\Im z < -\Im \tau\}$, and $\hat{f}(0) \not\in \left\{q^{-n} : n \ge d \right\}$. Then $\mT_\Hminus(f)$ has a canonical basis $\{\vect{f}{k}\}_{0 \le k < d}$ uniquely determined by
\[
    \hat{\vect{f}{k}}(j) = \one_{j=k} \quad \text{ in } \H^-, \qquad\quad  \forall j, k \in \{0, 1, \ldots, d-1\}.
\] 
\end{proposition}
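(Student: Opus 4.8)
The plan is to read off this statement as the conjunction of two facts already in hand: that $\mT_\Hminus(f)$ has dimension exactly $d$, and that the linear functional $F \mapsto \bigl(\hat{F}(0), \ldots, \hat{F}(d-1)\bigr)$ is injective on it with the right target. The first is supplied verbatim by \cref{prop:polynomial-spaces}, and the second I would extract from \cref{prop:upper-bounds}.(ii) by applying it with the Fourier anchor shifted to $n_0 = 0$.

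First I would note that $f$, being a polynomial with $\deg_x f = d \ge 1$ and no zeros in $\{\Im z < -\Im\tau\}$, is in particular a Laurent polynomial of the type covered by \cref{prop:polynomial-spaces}, so that proposition gives $\dim \mT_\Hminus(f) = d$ with no further hypotheses. This reduces the claim to producing, inside this $d$-dimensional space, functions $\vect{f}{k}$ whose low-order Fourier coefficients realize each standard basis vector of $\C^d$. To get these, I would invoke \cref{prop:upper-bounds}.(ii) with $D = \Hminus$ (a half-plane, hence containing horizontal strips of arbitrary width $> \Im\tau = m\cdot\Im\tau$), $m = 1$, $P_0 \equiv 1$ and $P_1 = f$, so that $f_1 = f$ and the ambient $d$ matches $\deg_x f$; the nondegeneracy $\bigl(\hat{P_0}(0), \hat{P_1}(0)\bigr) \neq (0,0)$ is immediate since $\hat{P_0}(0) = 1$. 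Here $A_n = 1 - q^n f$, so $\hat{A_n}(0) = 1 - q^n \hat{f}(0)$ and $\hat{A_n}(d) = -q^n \hat{f}(d)$.

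Taking $n_0 = 0$, the two conditions of \cref{prop:upper-bounds}.(ii) become: for $n < 0$, $\hat{A_n}(d) = -q^n \hat{f}(d) \neq 0$, which holds automatically because $\hat{f}(d) \neq 0$ (as $\deg_x f = d$); and for $n \ge d$, $\hat{A_n}(0) = 1 - q^n \hat{f}(0) \neq 0$, i.e.\ $\hat{f}(0) \neq q^{-n}$ for all $n \ge d$. With these verified, \cref{prop:upper-bounds}.(ii) gives $\dim \mT_\Hminus(f) \le d$, with equality precisely when for each $k \in \{0, \ldots, d-1\}$ there is a (uniquely determined) $F_k \in \mT_\Hminus(f)$ satisfying $\hat{F_k}(j) = \one_{j=k}$ for $j \in \{0, \ldots, d-1\}$, in which case $\{F_0, \ldots, F_{d-1}\}$ is a basis. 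Since the dimension genuinely equals $d$ by the first step, equality is forced, so these $F_k =: \vect{f}{k}$ exist, are unique, and form the asserted canonical basis.

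The entire content of the extra hypothesis $\hat{f}(0) \notin \{q^{-n} : n \ge d\}$ is exactly the second condition above, and this is the one point I expect to need care: it is what permits the anchor $n_0$ in \cref{prop:upper-bounds}.(ii) to be placed at $0$, so that the basis vectors are pinned down by their Fourier coefficients at the low indices $0, \ldots, d-1$ rather than at some higher range. (Geometrically, $\hat{f}(0) = q^{-n}$ for some $n \ge d$ would make $\hat{A_n}(0)$ vanish and break the forward recursion \cref{eq:recursion} that transports the finitely many free coefficients upward.) Everything else is bookkeeping in matching the general $m$-term setup of \cref{prop:upper-bounds} to the special case $m = 1$, $P_0 \equiv 1$.
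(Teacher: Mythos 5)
Your proposal is correct and is essentially identical to the paper's own proof: both invoke \cref{prop:polynomial-spaces} to get $\dim \mT_\Hminus(f) = d$ and then apply the equality case of \cref{prop:upper-bounds}.(ii) with $P_0 = 1$, $P_1 = f$, $n_0 = 0$, checking $\hat{A_n}(0) = 1 - q^n\hat{f}(0) \neq 0$ for $n \ge d$ and $\hat{A_n}(d) = -q^n\hat{f}(d) \neq 0$ via $\hat{f}(d) \neq 0$. Your added remark correctly identifying the hypothesis $\hat{f}(0) \notin \{q^{-n} : n \ge d\}$ as exactly what anchors the basis at the indices $0, \ldots, d-1$ matches the paper's intent, so there is nothing to fix.
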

\begin{proof}
\cref{prop:polynomial-spaces} implies that $\dim \mT_\Hminus(f) = d$, so we aim to apply the `only if' part of the equality case in \cref{prop:upper-bounds}.(ii). We have $P_0(z) = 1$ and $P_1(z) = f(z)$, so for all $n \in \Z$,
\[
    A_n(z) = P_0(z) - q^n P_1(z) 
    =
    1 - q^n \left(\hat{f}(0) + \hat{f}(1)x + \cdots + \hat{f}(d)x^d \right).
\]
Our assumptions imply that $\hat{A_n}(0) = 1 - q^n \hat{f}(0) \neq 0$ for all $n \ge d$, and $\hat{A_n}(d) = q^n \hat{f}(d) \neq 0$ for all $n \in \Z$. Hence using $n_0 = 0$, the conditions of \cref{prop:upper-bounds}.(ii) are fulfilled.
\end{proof}

\begin{remark}
In the proof of \cref{prop:polynomial-spaces}, dividing the canonical basis of $\mT_\C\left(\alpha x^d\right)$ by $G(z)$ yields a basis of $\mT_\Hminus(f)$, but this usually differs from the canonical basis $\{\vect{f}{k}\}_{0 \le k < d}$ (when well-defined); in fact, computing the relationship between these two bases is one of the main difficulties in proving \cref{thm:bases-proportional,thm:bases-proportional-2}. The advantage of expressing product identities in terms of the canonical basis is that for any $F \in \mT_\Hminus(f)$, by identifying Fourier coefficients one can write
\begin{equation} \label{eq:fourier-coeffs-basis}
    F = \sum_{k=0}^{d-1} \hat{F}(k) \vect{f}{k},
\end{equation}
and the coefficients $\{\hat{F}(k)\}_k$ may be objects of interest (in \cref{prop:2var-rog-ram-var} for example, they are the two Rogers--Ramanujan series from \cref{eq:rog-ram}). In particular, if $f(z)$ is a second-degree polynomial in $x$ with no zeros in $\{\Im z < -\Im \tau\}$ and $\hat{f}(0) \not\in \left\{q^{-n} : n \ge 2\right\}$, \cref{prop:polynomial-spaces,prop:canonical-basis-vectors} imply that $\dim \mT_\C(f) \le 1$ and $\dim \mT_\Hminus(f) = 2$, with a basis $\{\vect{f}{0}, \vect{f}{1}\}$. So if we are given an entire function $F \in \mT_\C(f)$, it must be unique up to a scalar, and it is natural to express it as in \cref{eq:fourier-coeffs-basis}; this is the subject of \cref{prop:2var-rog-ram} for $f(z) = qx^2 - x$.
\end{remark} 

\begin{example} \label{ex:plots}
For $f(z) = qx^2 - x$ and $k \in \{0, 1\}$, $\vect{qx^2-x}{k}$ is a well-defined holomorphic function in $\H^-$ by \cref{prop:polynomial-spaces}. By \cref{lem:basic}.(v), it can be continued meromorphically to $\C$, with simple poles at $z = n\tau + m$ for $n, m \in \Z$ with $n \ge 0$; these poles are cancelled when multiplying by $\left(x^{-1}; q\right)$, as expected since we know $\left(x^{-1}; q\right)\vect{qx^2 - x}{k} \in \mT_\C\left(qx^2\right)$ is entire. Using \cref{eq:triple}, \cref{eq:quintuple}, and our main result in \cref{thm:bases-proportional}, one can easily compute the residues of these functions at the periodic poles $z \in \Z$; up to a factor of $2\pi i$, these are
\[
    \lim_{x \to 1} (x-1)\vect{qx^2 - x}{k}
    =
    \begin{cases}
    \la q; q^5 \ra^{-1}, &k = 0, \\
    \la q^2; q^5 \ra^{-1}, &k = 1,
    \end{cases}
\]
which is consistent with the pole cancellation in \cref{prop:2var-rog-ram}. One can also verify this numerically as an identity of $q$-series: expand $F(z) := \vect{qx^2-x}{k} = \sum_{n \in \Z} \hat{F}(n) x^n$ in $\H^-$ and write
\[
    \lim_{x \to 1} (x-1)\vect{qx^2 - x}{k} =
    \lim_{n \to -\infty} \hat{F}(n),
\]
since $(x-1)\vect{qx^2 - x}{k}$ has the Fourier series $\sum_{n \in \Z} \big(\hat{F}(n-1) - \hat{F}(n)\big) x^n$ in $\left\{\Im z < \Im \tau \right\} \supset \Z$, and $\hat{F}(n) \to 0$ as $n \to \infty$ (note that $\sum_{n \in \Z} \hat{F}(n) x^n$ converges absolutely for $z \in \H^-$, i.e.\ $|x| > 1$). 
\begin{table}[ht]
    \centering \tiny
    \captionsetup{width=\linewidth}
    \begin{tabular}{r|rrrrrrrrrrrrrrrrrrrrrrrrrrrrrrrrrrrrrrrrrrrrrrrrrrrrrrrrrrrrrrrrrrrrrrrrrrrrrrrrrrrrrr}
    \textbf{11} &    &    &    &    &    &    &    &    &    &    &    &    &    &    &    &    &    &    &    &    &    &    &    &    \\
    \textbf{10} &    &    &    &    &    &    &    &    &    &    &    &    &    &    &    &    &    &    &    &    &    &    &    &    \\
    \textbf{9} &    &    &    &    &    &    &    &    &    &    &    &    &    &    &    &    &    &    &    &    &    & -1 & -1 & -1 \\
    \textbf{8} &    &    &    &    &    &    &    &    &    &    &    &    &    &    &    &    &  1 &    &  1 &  1 &  2 &  1 &  2 &  1 \\
    \textbf{7} &    &    &    &    &    &    &    &    &    &    &    &    &    & -1 & -1 & -1 & -1 & -1 & -1 & -1 &    & -1 &    &    \\
    \textbf{6} &    &    &    &    &    &    &    &    &    &  1 &    &  1 &  1 &  1 &    &  1 &    &    &    &    &    &    &    &    \\
    \textbf{5} &    &    &    &    &    &    &    & -1 & -1 &    & -1 &    &    &    &    &    &    &    &    &    &    &    &    &    \\
    \textbf{4} &    &    &    &    &  1 &    &  1 &    &    &    &    &    &    &    &    &    &    &    &    &    &    &    &    &    \\
    \textbf{3} &    &    &    & -1 &    &    &    &    &    &    &    &    &    &    &    &    &    &    &    &    &    &    &    &    \\
    \textbf{2} &    &  1 &    &    &    &    &    &    &    &    &    &    &    &    &    &    &    &    &    &    &    &    &    &    \\
    \textbf{1} &    &    &    &    &    &    &    &    &    &    &    &    &    &    &    &    &    &    &    &    &    &    &    &    \\
    \textbf{0} &  1 &    &    &    &    &    &    &    &    &    &    &    &    &    &    &    &    &    &    &    &    &    &    &    \\
    \textbf{-1} &  1 &    &    &    &    &    &    &    &    &    &    &    &    &    &    &    &    &    &    &    &    &    &    &    \\
    \textbf{-2} &  1 &  1 &    &    &    &    &    &    &    &    &    &    &    &    &    &    &    &    &    &    &    &    &    &    \\
    \textbf{-3} &  1 &  1 &  1 &    &    &    &    &    &    &    &    &    &    &    &    &    &    &    &    &    &    &    &    &    \\
    \textbf{-4} &  1 &  1 &  1 &  1 &  1 &    &    &    &    &    &    &    &    &    &    &    &    &    &    &    &    &    &    &    \\
    \textbf{-5} &  1 &  1 &  1 &  1 &  2 &  1 &  1 &    &    &    &    &    &    &    &    &    &    &    &    &    &    &    &    &    \\
    \textbf{-6} &  1 &  1 &  1 &  1 &  2 &  2 &  2 &  1 &  1 &  1 &    &    &    &    &    &    &    &    &    &    &    &    &    &    \\
    \textbf{-7} &  1 &  1 &  1 &  1 &  2 &  2 &  3 &  2 &  2 &  2 &  2 &  1 &  1 &    &    &    &    &    &    &    &    &    &    &    \\
    \textbf{-8} &  1 &  1 &  1 &  1 &  2 &  2 &  3 &  3 &  3 &  3 &  3 &  3 &  3 &  2 &  1 &  1 &  1 &    &    &    &    &    &    &    \\
    \textbf{-9} &  1 &  1 &  1 &  1 &  2 &  2 &  3 &  3 &  4 &  4 &  4 &  4 &  5 &  4 &  4 &  3 &  3 &  2 &  2 &  1 &  1 &    &    &    \\
    \textbf{-10} &  1 &  1 &  1 &  1 &  2 &  2 &  3 &  3 &  4 &  5 &  5 &  5 &  6 &  6 &  6 &  6 &  6 &  5 &  5 &  4 &  4 &  3 &  2 &  1 \\
    \textbf{-11} &  1 &  1 &  1 &  1 &  2 &  2 &  3 &  3 &  4 &  5 &  6 &  6 &  7 &  7 &  8 &  8 &  9 &  8 &  9 &  8 &  8 &  7 &  7 &  5 \\
    \textbf{-12} &  1 &  1 &  1 &  1 &  2 &  2 &  3 &  3 &  4 &  5 &  6 &  7 &  8 &  8 &  9 & 10 & 11 & 11 & 12 & 12 & 13 & 12 & 12 & 11 \\
    \textbf{-13} &  1 &  1 &  1 &  1 &  2 &  2 &  3 &  3 &  4 &  5 &  6 &  7 &  9 &  9 & 10 & 11 & 13 & 13 & 15 & 15 & 17 & 17 & 18 & 17 \\
    \textbf{-14} &  1 &  1 &  1 &  1 &  2 &  2 &  3 &  3 &  4 &  5 &  6 &  7 &  9 & 10 & 11 & 12 & 14 & 15 & 17 & 18 & 20 & 21 & 23 & 23 \\
    \textbf{-15} &  1 &  1 &  1 &  1 &  2 &  2 &  3 &  3 &  4 &  5 &  6 &  7 &  9 & 10 & 12 & 13 & 15 & 16 & 19 & 20 & 23 & 24 & 27 & 28 \\
    \textbf{-16} &  1 &  1 &  1 &  1 &  2 &  2 &  3 &  3 &  4 &  5 &  6 &  7 &  9 & 10 & 12 & 14 & 16 & 17 & 20 & 22 & 25 & 27 & 30 & 32 \\
    \textbf{-17} &  1 &  1 &  1 &  1 &  2 &  2 &  3 &  3 &  4 &  5 &  6 &  7 &  9 & 10 & 12 & 14 & 17 & 18 & 21 & 23 & 27 & 29 & 33 & 35 \\
    \textbf{-18} &  1 &  1 &  1 &  1 &  2 &  2 &  3 &  3 &  4 &  5 &  6 &  7 &  9 & 10 & 12 & 14 & 17 & 19 & 22 & 24 & 28 & 31 & 35 & 38 \\
    \textbf{-19} &  1 &  1 &  1 &  1 &  2 &  2 &  3 &  3 &  4 &  5 &  6 &  7 &  9 & 10 & 12 & 14 & 17 & 19 & 23 & 25 & 29 & 32 & 37 & 40 \\
    \textbf{-20} &  1 &  1 &  1 &  1 &  2 &  2 &  3 &  3 &  4 &  5 &  6 &  7 &  9 & 10 & 12 & 14 & 17 & 19 & 23 & 26 & 30 & 33 & 38 & 42 \\
    \hline
        &\ \textbf{0} &\ \textbf{1} &\ \textbf{2} &\ \textbf{3} &\ \textbf{4} &\ \textbf{5} &\ \textbf{6} &\ \textbf{7} &\ \textbf{8} &\ \textbf{9} &\textbf{10} &\textbf{11} &\textbf{12} &\textbf{13} &\textbf{14} &\textbf{15} &\textbf{16} &\textbf{17} &\textbf{18} &\textbf{19} &\textbf{20} &\textbf{21} &\textbf{22} &\textbf{23} 
    \end{tabular}
    \caption{2D Fourier coefficients of $\vect{qx^2-x}{0}$ in $\H^-$ (coeff.\ of $x^m q^n$ shown on line $m$, column $n$).}
    \label{tbl:plot1}
\end{table}
Indeed, we have $\la q; q^5 \ra^{-1} = 1 + q + q^2 + q^3 + 2q^4 + 2q^5 + 3q^6 + 3q^7 + 4q^8 + 5q^9 + 6q^{10} + \cdots$, which is approached by the coefficients of $x^{-n}$ for large $n$ in \cref{tbl:plot1} (these are related to the coefficients $b_{n+1}(q)$ from \cite{garrett1999variants}). Expansions of $\left(x^{-1}; q\right)\vect{qx^2 - x}{k}$ in the canonical basis of $\mT_\C\left(qx^2\right)$, leading to the proof of \cref{thm:bases-proportional}, are given in \cref{thm:twisted-rog-ram}; there is an analogous story for $\left(x^{-2}; q^2\right)\vect{qx^2 - q^{-1}}{k}$ in \cref{thm:twisted-rog-ram-var}, which is equivalent to \cref{thm:bases-proportional-2}. Another example concerns $f(z) = qx^2 - 2x + q^{-1}$, where the basis vector $\vect{qx^2-2x+q^{-1}}{1}$ turns out to be entire; the relevant identity here is
\[
    x(qx; q)^2 = \vect{qx^2-2x+q^{-1}}{1},
\]
which follows since both sides lie in $\mT_\Hminus\left(qx^2 - 2x + q^{-1}\right)$ and satisfy $\hat{F}(0) = 0$ and $\hat{F}(1) = 1$. By contrast, $\vect{qx^2 - 2x + q^{-1}}{0}$ can be shown to have double poles at $z = n\tau + m$ for $n, m \in \Z$, $n \ge 0$, which are cancelled in the multiplication by $G(z) = \left(x^{-1}; q\right)^2$.
\end{example}

With this motivation, we seek exact formulae for $\vect{f}{k}$, covering at least the canonical basis vectors in \cref{thm:bases-proportional} and \cref{thm:bases-proportional-2}. The next two propositions provide such results for two kinds of polynomials $f$; both generalize the original formula $\vect{\alpha x^d}{k} = \sum_{n \in \Z} \alpha^n q^{d\binom{n}{2} + kn} x^{dn+k}$.

\begin{proposition}[Formulae for canonical vectors, first kind] \label{prop:exact-formulae-kind1}
Suppose $f(z) = \alpha x^d + \beta$ is a polynomial in $x$ of degree $d \ge 1$ and free coefficient $\beta \in \C \setminus \left\{ q^{-n} : n \ge d \right\}$, such that $f(z) \neq 0$ in $\left\{\Im z < -\Im \tau\right\}$. Then for $0 \le k < d$, one has
\begin{equation} \label{eq:kind1}
    \vect{\alpha x^d + \beta}{k}
    \ =\
    \sum_{n \in \Z} \alpha^n \frac{q^{d\binom{n}{2}+kn}}{\left(\beta q^{d+k}; q^d\right)_n} x^{dn+k}
    \ =\
    \frac{\vect{\alpha x^d}{k}}{\left(\beta q^{d+k}; q^d \right) \left(-\alpha^{-1}\beta x^{-d} q^d; q^d \right)}.
\end{equation}
\end{proposition}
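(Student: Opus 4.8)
The plan is to prove the first equality by recognizing the middle series as the canonical basis vector $\vect{\alpha x^d+\beta}{k}$ through the uniqueness in \cref{prop:canonical-basis-vectors}, and then to deduce the second equality by showing the rightmost product lies in the same one-dimensional slice of $\mT_\Hminus(\alpha x^d+\beta)$. Write $F(z):=\sum_{n\in\Z}\alpha^n q^{d\binom{n}{2}+kn}(\beta q^{d+k};q^d)_n^{-1}x^{dn+k}$ for the middle expression, noting $\alpha=\hat f(d)\neq 0$ since $\deg_x f=d$.

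First I would establish three properties of $F$. (a) \emph{Quasiperiodicity}: substituting $x\mapsto qx$ and reindexing, one checks $F=(\alpha x^d+\beta)\,T_\tau F$; after factoring $\alpha^n q^{d\binom{n}{2}+kn}x^{dn+k}$ out of each side, the identity reduces to $(\beta q^{d+k};q^d)_{n-1}^{-1}+\beta q^{dn+k}(\beta q^{d+k};q^d)_n^{-1}=(\beta q^{d+k};q^d)_n^{-1}$, which follows immediately from $(\beta q^{d+k};q^d)_n=(1-\beta q^{dn+k})(\beta q^{d+k};q^d)_{n-1}$ — crucially valid for \emph{all} $n\in\Z$ because of the convention \cref{eq:finite-product-negative}. (b) \emph{Normalization}: only exponents $\equiv k\pmod d$ occur, so $\hat F(j)=\one_{j=k}$ for $0\le j<d$, the coefficient $1$ at $x^k$ coming from the $n=0$ term. (c) \emph{Holomorphy in $\H^-$}. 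Granting these, \cref{prop:canonical-basis-vectors} forces $F=\vect{\alpha x^d+\beta}{k}$, which is the first equality; in particular $\vect{\alpha x^d+\beta}{k}$ has Fourier support in $\{dn+k:n\in\Z\}$.

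I expect property (c) to be the main obstacle. Convergence as $n\to+\infty$ is immediate: $(\beta q^{d+k};q^d)_n$ tends to the nonzero constant $(\beta q^{d+k};q^d)$ (nonzero precisely because $\beta\notin\{q^{-n}:n\ge d\}$, which also keeps every finite factor nonzero), while $q^{d\binom{n}{2}}$ decays super-exponentially. As $n\to-\infty$ the delicate point is the growth of $(\beta q^{d+k};q^d)_n^{-1}$; expanding it through \cref{eq:finite-product-negative} as a finite product and estimating, the $n$th term is $O\big(|x|^k(|\beta/\alpha|\,|q|^d|x|^{-d})^{-n}\big)$, so the series converges locally uniformly exactly when $|x|^d>|\beta/\alpha|\,|q|^d$. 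The hypothesis that $f$ has no zeros in $\{\Im z<-\Im\tau\}$ is equivalent to $|\beta/\alpha|\le|q|^{-d}$, which makes $|\beta/\alpha|^{1/d}|q|\le 1<|x|$ throughout $\H^-$; this is exactly what secures convergence on all of $\H^-$, and explains why this is the right non-vanishing hypothesis.

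For the second equality, set $P:=(-\alpha^{-1}\beta q^dx^{-d};q^d)$, which is precisely the multiplier $G$ from the proof of \cref{prop:polynomial-spaces} applied to $f=\alpha x^d+\beta$. A short computation (matching the relevant entry of \cref{tbl:spaces}) gives $P^{-1}\in\mT_\Hminus(1+\alpha^{-1}\beta x^{-d})$, and $P$ is nonzero on $\H^-$ under the same condition $|\beta/\alpha|\le|q|^{-d}$. Since $\vect{\alpha x^d}{k}\in\mT_\C(\alpha x^d)$ and the prefactor $(\beta q^{d+k};q^d)$ is a nonzero constant, \cref{lem:basic}.(iii) places the rightmost product $R$ in $\mT_\Hminus(\alpha x^d(1+\alpha^{-1}\beta x^{-d}))=\mT_\Hminus(\alpha x^d+\beta)$, holomorphic on $\H^-$. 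As $\vect{\alpha x^d}{k}$ has support $\equiv k$ and $P^{-1}$ has support $\equiv 0\pmod d$, the product $R$ has Fourier support in $\{dn+k\}$, so the basis expansion \cref{eq:fourier-coeffs-basis} collapses to $R=\hat R(k)\,\vect{\alpha x^d+\beta}{k}$. Finally I would pin down $\hat R(k)=1$ by comparing the coefficients of $x^{dN+k}$ as $N\to\infty$: in both $R$ and $F$ the leading behavior is $\alpha^N q^{d\binom{N}{2}+kN}/(\beta q^{d+k};q^d)$ (using $(\beta q^{d+k};q^d)_N\to(\beta q^{d+k};q^d)$ for $F$, and the dominance of the $x^0$-term of $P^{-1}$ for $R$), so the constant ratio $\hat R(k)$ must equal $1$. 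This yields $R=F$ and completes the proof.
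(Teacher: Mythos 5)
Your proof is correct and, for the first equality, is essentially the paper's own argument: the same convergence analysis (extracting $|\beta/\alpha| \le |q|^{-d}$ from the non-vanishing hypothesis to bound the $n < 0$ tail by a geometric series, exactly as the paper does), the same verification of the functional equation via $(\beta q^{d+k}; q^d)_n = \left(1 - \beta q^{dn+k}\right)(\beta q^{d+k}; q^d)_{n-1}$ for all $n \in \Z$ under the convention \cref{eq:finite-product-negative}, and the same appeal to the uniqueness in \cref{prop:canonical-basis-vectors}. For the second equality you also follow the paper's structure (place the product in $\mT_\Hminus\left(\alpha x^d + \beta\right)$, observe that its Fourier support lies in $k$ modulo $d$, and reduce to a single scalar), but you diverge at the normalization step: the paper expands $\left(-\alpha^{-1}\beta q^d x^{-d}; q^d\right)^{-1}$ via \cref{eq:basic2}, multiplies Fourier series, and evaluates the coefficient of $x^k$ \emph{exactly} as $\sum_{m \ge 0} \frac{q^{d\binom{m}{2}}}{\left(q^d; q^d\right)_m}\left(-\beta q^{d+k}\right)^m = \left(\beta q^{d+k}; q^d\right)$ via Euler's identity \cref{eq:basic1}, whereas you pin down $\hat{R}(k)$ by an asymptotic comparison of the coefficients of $x^{dN+k}$ as $N \to \infty$. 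Your route is sound — the $m = 0$ term of $P^{-1}$ dominates since its coefficients satisfy $|c_m| \le C\rho^{dm}$ for any $\rho > 1$ (by locally uniform convergence of its Fourier series on $\H^-$), while the off-diagonal terms carry a factor $q^{dmN + d\binom{m}{2}}$ — and it has the mild advantage of not invoking \cref{eq:basic1}, in the spirit of the limit-identification in \cref{lem:proofs-by-fourier}. The trade-off is that the paper's exact computation is shorter and produces the constant $\left(\beta q^{d+k}; q^d\right)$ directly rather than through a limit; since your support argument already rests on the series expansion of $P^{-1}$, that exact coefficient-of-$x^k$ computation was available to you at no extra cost.
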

\begin{proof}
Note that $\left(\beta q^{d+k}; q^d\right)_n$ has no zeros for $n \ge 0$ by our assumption on $\beta$, and in fact we have $\left(\beta q^{d+k}; q^d\right)_n^{-1} = 1 + O(q) = O(1)$ for $n \ge 0$; hence the coefficients of $x^{dn+k}$ in the middle series above decrease like $q^{dn^2/2}$. For $n < 0$, we defined $\left(\beta q^{d+k}; q^d\right)_n = \left(\beta q^{d+k-dn}; q^d\right)_{-n}^{-1}$, and thus 
\[
\begin{aligned}
    \frac{\alpha^n q^{d\binom{n}{2}+kn}}{\big(\beta q^{d+k}; q^d \big)_n} =
    \alpha^n q^{d\binom{n}{2}+kn}
    \prod_{j=1}^{|n|} \left(1 - \beta q^{d+k-dj}\right)
    &=
    \alpha^{-|n|} q^{d\binom{|n|+1}{2}-k|n|} \cdot
    O\left(|\beta|^{|n|} |q|^{\sum_{j=1}^{|n|} (d+k-dj)} \right)
    \\
    &=
    O\left(|q^d\beta/\alpha|^{|n|} \right).
\end{aligned}
\]
assuming without loss of generality that $\beta \neq 0$ (when the given series coincides with $\vect{\alpha x^d}{k}$). But since $\alpha x^d + \beta \neq 0$ in the range $\{\Im z < -\Im \tau\}$ (when $|x^d|$ spans $\left(|q|^{-d}, +\infty\right)$), we must have $|\beta/\alpha| \le |q|^{-d}$, i.e.\ $|q^d \beta / \alpha| \le 1$. Hence the bound above reduces to $O(1)$.

This shows that the middle series in \cref{eq:kind1} converges absolutely and locally uniformly to a holomorphic function $R(z)$ for $z \in \H^-$ (using that $|x^{-1}| < 1$ to bound $\sum_{n < 0} x^{dn+k}$, and the quadratic-exponent decrease of $q^{dn(n-1)/2} x^n$ for $n \ge 0$). We clearly have $\hat{R}(j) = \one_{j=k}$ for $0 \le j < d$, so to prove the first equality in \cref{eq:kind1} it remains to show that $R \in \mT_{\Hminus}(\alpha x^d + \beta)$. Indeed, we have
\[
\begin{aligned}
    \left(\alpha x^d + \beta\right) R(z+\tau)
    &=
    \sum_{n \in \Z}
    \alpha^{n+1}
    \frac{q^{d\binom{n+1}{2}+k(n+1)}}{\left(\beta q^{d+k}; q^d\right)_n} x^{d(n+1)+k}
    +
    \beta \sum_{n \in \Z} \alpha^n \frac{q^{d\binom{n+1}{2}+k(n+1)}}{\left(\beta q^{d+k}; q^d\right)_n} x^{dn+k}
    \\
    &=
    \sum_{n \in \Z} \alpha^n \frac{q^{d\binom{n}{2}+kn}}{\left(\beta q^{d+k}; q^d\right)_n} x^{dn+k} \left(1 - \beta q^{d+k+d(n-1)} + \beta q^{dn+k} \right)
    \quad =
    R(z).
\end{aligned}
\]
For the second equality in \cref{eq:kind1}, we require two simple identities that will be proven (non-circularly) in the next subsection: \cref{eq:basic1} and \cref{eq:basic2}. For $z \in \H^-$, consider the function
\[
    F(z) := \frac{\vect{\alpha x^d}{k}}{\left(-\alpha^{-1}\beta x^{-d} q^d; q^d\right)}
    \
    \stackrel{\text{by \cref{eq:basic2}}}{=\joinrel=\joinrel=}
    \
    \sum_{n \in \Z} \alpha^n q^{d\binom{n}{2} + kn} x^{dn+k} \cdot \sum_{n \ge 0} \frac{1}{\left(q^d; q^d\right)_n} \big({-\alpha^{-1}\beta} x^{-d} q^d \big)^n.
\]
We have $F \in \mT_\Hminus\left(\alpha x^d\left(1 + \alpha^{-1}\beta x^{-d}\right) \right) = \mT_\Hminus\left(\alpha x^d + \beta \right)$ by \cref{tbl:spaces}, and thus $F$ has a basis representation as in \cref{eq:fourier-coeffs-basis}. But by multiplying out the Fourier series above, we see that
\[
    \forall j \in \{0, 1, \ldots, d-1\} : \qquad\quad 
    \hat{F}(j) = 
    \begin{cases}
        0, &j \neq k, \\
        \sum_{n \ge 0} \frac{q^{d\binom{n}{2}}}{\left(q^d; q^d\right)_n} \left(-\beta q^{d+k}\right)^n,
        &j = k,
    \end{cases}
\]
and the latter sum reduces to $\left(\beta q^{d+k}; q^d \right)$ by \cref{eq:basic1}. We thus have $F(z) = \left(\beta q^{d+k}; q^d \right) \vect{\alpha x^d + \beta}{k}$, completing our proof.
\end{proof}

\begin{corollary} \label{cor:kind1}
For $|y| \le |q|^{-1}$, the canonical basis vectors of $\mT_\Hminus\left(qx^2 - y\right)$ are
\[
\begin{aligned}
    \vect{qx^2 - y}{0} 
    &= \sum_{n \in \Z} \frac{q^{n^2}}{\left(-q^2y; q^2\right)_n} x^{2n}
    \ \ \hspace{0.11cm} =
    \frac{\vect{qx^2}{0}}{\left(-q^2y; q^2\right)\left(qx^{-2}y; q^2\right)}, 
    \\
    \vect{qx^2 - y}{1} 
    &= \sum_{n \in \Z} \frac{q^{n^2+n}}{\left(-q^3y; q^2\right)_n} x^{2n+1}
    =
    \frac{\vect{qx^2}{1}}{\left(-q^3y; q^2\right)\left(qx^{-2}y; q^2\right)}.
\end{aligned}
\]
\end{corollary}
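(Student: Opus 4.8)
The plan is to obtain \cref{cor:kind1} as the direct specialization of \cref{prop:exact-formulae-kind1} to the parameters $\alpha = q$, $d = 2$, and $\beta = -y$, so that $f(z) = \alpha x^d + \beta = qx^2 - y$. First I would verify that the hypotheses of \cref{prop:exact-formulae-kind1} are met under the assumption $|y| \le |q|^{-1}$. For the non-vanishing hypothesis, recall that $|x| = e^{-2\pi \Im z}$, so the half-plane $\{\Im z < -\Im\tau\}$ corresponds to $|x| > |q|^{-1}$ (as $|q|^{-1} = e^{2\pi\Im\tau}$). A zero of $qx^2 - y$ satisfies $|x| = (|y|/|q|)^{1/2}$, which exceeds $|q|^{-1}$ only when $|y| > |q|^{-1}$; hence the bound $|y| \le |q|^{-1}$ guarantees that $qx^2 - y$ has no zeros in $\{\Im z < -\Im\tau\}$. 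The same bound forces $\beta = -y \notin \{q^{-n} : n \ge 2\}$, since any such value would have modulus $|q|^{-n} \ge |q|^{-2} > |q|^{-1}$. Thus both hypotheses of \cref{prop:exact-formulae-kind1} hold, and the canonical basis vectors $\vect{qx^2-y}{k}$ are well-defined and given by \cref{eq:kind1} for $k \in \{0, 1\}$.

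The remaining work is the routine algebraic substitution. For $k \in \{0, 1\}$, the numerator exponent simplifies via $\alpha^n q^{d\binom{n}{2}+kn} = q^n \cdot q^{n(n-1)+kn} = q^{n^2 + kn}$, giving the summand $q^{n^2}x^{2n}$ when $k = 0$ and $q^{n^2+n}x^{2n+1}$ when $k = 1$. The denominator parameter $\beta q^{d+k} = -yq^{2+k}$ becomes $-q^2 y$ and $-q^3 y$ respectively, while the factor $-\alpha^{-1}\beta x^{-d}q^d = q^{-1}\,y\,x^{-2}\,q^2 = qx^{-2}y$ is common to both. Substituting these into the two expressions of \cref{eq:kind1} yields exactly the claimed series and the claimed product formulae.

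Since every step is a direct specialization, there is no genuine obstacle here; the only point requiring a moment's care is translating the modulus condition $|y| \le |q|^{-1}$ into the geometric non-vanishing statement on the strip $\{\Im z < -\Im\tau\}$, which I would handle as above by tracking $|x| = e^{-2\pi\Im z}$ and noting that $qx^2 = y$ has a solution in this region precisely when $|y| > |q|^{-1}$.
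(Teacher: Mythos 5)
Your proposal is correct and is exactly the paper's intended argument: the corollary is stated without proof as the direct specialization of \cref{prop:exact-formulae-kind1} with $\alpha = q$, $d = 2$, $\beta = -y$, and your hypothesis checks (that $|y| \le |q|^{-1}$ rules out zeros of $qx^2 - y$ in $\{\Im z < -\Im\tau\}$, where $|x| > |q|^{-1}$, and excludes $\beta \in \{q^{-n} : n \ge 2\}$) together with the substitutions $\beta q^{2+k} = -q^{2+k}y$ and $-\alpha^{-1}\beta x^{-d}q^{d} = qx^{-2}y$ are all accurate.
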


\begin{remark}
Since we know that $\vect{\alpha x^d + \beta}{k}$ exists based on more abstract arguments (recall the proof of \cref{prop:canonical-basis-vectors}), we could have identified its Fourier series in \cref{eq:kind1} directly from the recurrence relation in \cref{eq:recursion}. However, the first part of the proof of \cref{prop:exact-formulae-kind1} is instructive for the next proposition, where the analogous summation is usually not a Fourier series; also, the fact that the conditions of \cref{prop:canonical-basis-vectors} barely make the series in \cref{eq:kind1} converge shows that they are sharp.
\end{remark}
\begin{proposition}[Formulae for canonical vectors, second kind] \label{prop:exact-formulae-kind2}
Suppose $f(z)$ is a polynomial in $x$ of degree $d \ge 1$ with null free coefficient, and that $f(z)$ has no zeros in $\{\Im z < - \Im \tau\}$. Write $f(z) = \alpha x^d g(-z)$, where $\alpha = \hat{f}(d)$, and let $G(z) := \prod_{n \ge 1} g(-z+n\tau)$. Then for $0 \le k < d$,
\begin{equation} \label{eq:kind2}
    \vect{f}{k}
    =
    \sum_{n \in \Z} \alpha^n q^{d\binom{n}{2} + kn} x^{dn+k} \frac{G\left(z+(n-1)\tau\right)}{G(z)} g_k\left(-z-(n-1)\tau\right),
\end{equation}
where $g_k(z) := \sum_{j=0}^k \hat{g}(j) x^{j}$ is the truncation of $g$ to degree $\le k$. In particular, for $\deg_x g \le k < d$,
\[
    \vect{f}{0} =
    \sum_{n \in \Z} \alpha^n q^{d\binom{n}{2}} x^{dn} \frac{G(z+(n-1)\tau)}{G(z)}
    \qquad 
    \text{and} 
    \qquad 
    \vect{f}{k} =
    \sum_{n \in \Z} \alpha^n q^{d\binom{n}{2} + kn} x^{dn+k} \frac{G(z+n\tau)}{G(z)}.
\]
\end{proposition}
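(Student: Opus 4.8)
The plan is to show that the series on the right-hand side of \cref{eq:kind2}, which I will call $R_k(z)$, is a well-defined holomorphic $1$-periodic function on $\H^-$ lying in $\mT_\Hminus(f)$ and satisfying the normalization $\widehat{R_k}(j) = \one_{j=k}$ for $0 \le j < d$; the claim then follows from the uniqueness in \cref{prop:canonical-basis-vectors}, whose hypotheses hold here since the null free coefficient gives $\hat{f}(0) = 0 \notin \{q^{-n} : n \ge d\}$. Throughout I would use the defining relation $G(z+\tau) = g(-z)G(z)$ established in the proof of \cref{prop:polynomial-spaces}. The two ``in particular'' formulae are then immediate consequences of the general one: when $\deg_x g \le k$ the truncation satisfies $g_k = g$, so $G(z+(n-1)\tau)\,g(-z-(n-1)\tau) = G(z+n\tau)$, while $g_0 = \hat{g}(0) = 1$ yields the $\vect{f}{0}$ formula directly.

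First I would write each summand as $T_n(z) := \alpha^n q^{d\binom{n}{2}+kn} x^{dn+k}\, w_n(z)$ with twist $w_n(z) := G(z+(n-1)\tau)\,G(z)^{-1}\, g_k(-z-(n-1)\tau)$, and record that $w_n$ is $1$-periodic and expands as a series in $x^{-1}$ with constant term $1$: both factors have this form, the quotient $G(z+(n-1)\tau)/G(z)$ being the polynomial $\prod_{j=0}^{n-2} g(-z-j\tau)$ for $n \ge 1$ and the series $\prod_{\ell=1}^{1-n} g(-z+\ell\tau)^{-1}$ for $n \le 0$. Locally uniform convergence of $\sum_n T_n$ on $\H^-$ is then a routine estimate: the Gaussian factor $q^{d\binom{n}{2}}$ decays like $|q|^{dn^2/2}$, which dominates all the at-most-exponential-in-$n$ factors $|\alpha|^n$ and $|x|^{\pm dn}$, as well as $w_n$ — whose $x^{-1}$-degree is $O(n)$ and so forces a factor $|q|^{-\Theta(n^2)}$, but with coefficient at most $(d-1)/2 < d/2$ since $\deg_x g \le d-1$, leaving a net $q$-exponent that grows like a positive multiple of $n^2$.

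The quasiperiodicity $R_k = f\cdot T_\tau R_k$ on $\{\Im z < -\Im\tau\}$ is the straightforward part: substituting $z \mapsto z+\tau$ sends $x^{dn+k} \mapsto q^{dn+k}x^{dn+k}$ and, via $G(z+\tau) = g(-z)G(z)$, turns $f(z)R_k(z+\tau) = \alpha x^d g(-z)\,R_k(z+\tau)$ into $\sum_n \alpha^{n+1} q^{d\binom{n+1}{2}+k(n+1)} x^{d(n+1)+k}\, G(z+n\tau)G(z)^{-1} g_k(-z-n\tau)$, using $d\binom{n}{2}+dn = d\binom{n+1}{2}$; reindexing $m = n+1$ recovers $R_k(z)$.

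The main obstacle is the normalization, and the key realization is that \emph{only the $n=0$ term contributes} to the Fourier coefficients of order $0 \le j < d$. For $n \ge 1$ the twist $w_n$ is a polynomial in $x^{-1}$ of degree $\le (d-1)(n-1)+k$, so every power of $x$ occurring in $T_n$ is at least $dn+k - \big((d-1)(n-1)+k\big) = n + d - 1 \ge d$; for $n \le -1$ the top power $dn+k \le k-d \le -1$ is already negative. Hence for $0 \le j < d$ the coefficient $\widehat{R_k}(j)$ equals the coefficient of $x^{j}$ in $T_0 = x^k w_0$, i.e.\ the coefficient of $x^{\,j-k}$ in $w_0(z) = G(z-\tau)G(z)^{-1} g_k(-z+\tau) = g_k(-z+\tau)/g(-z+\tau)$ (using $G(z) = g(-z+\tau)G(z-\tau)$). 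Since $g$ and its truncation $g_k$ agree in the monomials $x^0, x^{-1}, \dots, x^{-k}$, one has $g_k(-z+\tau)/g(-z+\tau) = 1 + O(x^{-(k+1)})$, whence that coefficient is exactly $\one_{j=k}$ for every $0 \le j < d$. With convergence, quasiperiodicity, and this normalization established, \cref{prop:canonical-basis-vectors} identifies $R_k$ with $\vect{f}{k}$, completing the proof.
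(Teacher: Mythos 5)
Your proof is correct and follows essentially the same route as the paper's: establish convergence using $\deg_x g \le d-1$, show membership in $\mT_\Hminus(f)$ via the relation $G(z+\tau) = g(-z)G(z)$, observe that only the $n=0$ term contributes to the Fourier coefficients $\hat{R}(j)$ for $0 \le j < d$ (with the identical degree count $dn+k-(d-1)(n-1)-k \ge d$ and the identical simplification $w_0 = g_k(-z+\tau)/g(-z+\tau) = 1 + O(x^{-(k+1)})$), and conclude by the uniqueness in \cref{prop:canonical-basis-vectors}. The only cosmetic difference is that you verify the quasiperiodicity $R_k = f \cdot T_\tau R_k$ by direct reindexing, where the paper invokes the twisted-sum observation of \cref{eq:w-coeffs} together with the bijection induced by $G$; your treatment of convergence and of the two ``in particular'' formulae is slightly more explicit than the paper's, which leaves these to the reader.
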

\begin{remark}
For $\deg_x g \le k < d$, if $R(z)$ denotes the right-hand side of \cref{eq:kind2}, then $x^{-k} R(z)$ is precisely the second construction in \cref{eq:two-ways} for the factor $q^k f(z)$.
\end{remark}
\begin{proof}
First, recall from the proof of \cref{prop:polynomial-spaces} that $G \in \mT_\C\left(g(-z)^{-1}\right)$ is well-defined and multiplication by $G$ induces a bijection $\mT_\Hminus(f) \to \mT_\Hminus(\alpha x^d)$ as in $\cref{eq:bijection}$.
We leave to the reader to verify that the series in the right-hand side of \cref{eq:kind2} converges absolutely and locally uniformly for $z \in \H^-$ to a function $R(z)$, which is similar to the proof of \cref{prop:exact-formulae-kind1}; the fact that $\hat{f}(0) = 0$ and thus $\deg_x g \le d-1$ is crucial here, to ensure that $\hat{R}(n)$ decreases like $q^{cn^2}$ as $n \to \infty$ for some $c > 0$. Now $R(z)G(z)$ is just a \emph{twisted} version of the series $\sum_{n \in \Z} \alpha^n q^{d\binom{n}{2}+kn} x^{dn+k}$ in the sense of \cref{eq:w-coeffs}; thus $R \cdot G \in \mT_\Hminus\left(\alpha x^d\right)$, and so $R \in \mT_\Hminus (f)$. 

It remains to identify the Fourier coefficients $\hat{R}(j)$ for $0 \le j < d$. Note that the Fourier expansions of $G(z + (n-1)\tau)/G(z)$ and $g_k(-z-(n-1)\tau)$ only contain negative powers of $x$, so only the terms $n \ge 0$ in $\cref{eq:kind2}$ contribute to the Fourier coefficients $\{\hat{R}(j)\}_{0 \le j < d}$. But for $n \ge 1$, one has
\[
    x^{dn+k}
    \frac{G\left(z+(n-1)\tau\right)}{G(z)} g_k\left(-z-(n-1)\tau\right)
    =
    x^{dn+k}
    \Bigg(\prod_{j=2-n}^0 g(-z+j\tau)\Bigg)
    \Bigg( \sum_{j=0}^k \hat{g}(j) \left(q^{n-1} x\right)^{-j}\Bigg),
\]
which only contains powers of $x$ with exponents $\ge dn+k - (d-1)(n-1) - k \ge d$. Hence the only relevant term is the one given by $n = 0$, which is
\[
    x^k \frac{G(z-\tau)}{G(z)} g_k(-z+\tau)
    =
    x^k \frac{g_k(-z+\tau)}{g(-z+\tau)}
    =
    x^k - x^k\frac{g(-z+\tau) - g_k(-z+\tau)}{g(-z+\tau)}.
\] 
But by the definition of $g_k$, the difference $g(-z+\tau) - g_k(-z+\tau)$ only contains powers of $x$ with exponents less than $-k$. We conclude that $\hat{R}(j) = \one_{j=k}$ for $0 \le j < d$, and thus $R = \vect{f}{k}$.
\end{proof}

\begin{corollary} \label{cor:kind2}
For $|y| \le 1$, the canonical basis vectors of $\mT_\Hminus\left(qx^2 - yx\right)$ are
\[
    \vect{qx^2-yx}{0} = \sum_{n \in \Z}
    q^{n^2} x^{2n} \frac{(q^{-n+1}x^{-1}y; q)}{(x^{-1}y; q)}
    , 
    \qquad\quad
    \vect{qx^2-yx}{1} = \sum_{n \in \Z}
    q^{n^2+n} x^{2n+1} \frac{(q^{-n}x^{-1}y; q)}{(x^{-1}y; q)}.
\]
\end{corollary}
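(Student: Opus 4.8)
The plan is to obtain both formulae directly from \cref{prop:exact-formulae-kind2} by specializing to $d = 2$ and $\alpha = q$. First I would record that $f(z) = qx^2 - yx$ is a Laurent polynomial in $x$ of degree $d = 2$ with vanishing free coefficient, so the structural hypotheses of \cref{prop:exact-formulae-kind2} are in place; what remains to verify is that $f$ has no zeros in $\{\Im z < -\Im \tau\}$. Factoring $f(z) = x(qx - y)$, its only zero modulo the lattice occurs at $x = y/q$, where $|x| = |y|\,|q|^{-1} \le |q|^{-1}$ precisely because $|y| \le 1$. On the other hand, throughout $\{\Im z < -\Im \tau\}$ one has $|x| = e^{-2\pi \Im z} > e^{2\pi \Im \tau} = |q|^{-1}$, so no such zero lies in the strip. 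This is exactly the point where the hypothesis $|y| \le 1$ is used.

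Next I would identify the data $g$ and $G$ appearing in \cref{prop:exact-formulae-kind2}. Writing $f(z) = q x^2\, g(-z)$ forces $g(-z) = 1 - (y/q)x^{-1}$, i.e.\ $g(z) = 1 - (y/q)x$ as a polynomial in $x$, with $\hat g(0) = 1$ and $\deg_x g = 1$. The associated product is then
\[
    G(z) = \prod_{n \ge 1} g(-z + n\tau) = \prod_{n \ge 1}\left(1 - y q^{n-1} x^{-1}\right) = (x^{-1}y; q),
\]
after reindexing $m = n-1$. Shifting the argument gives the two translates I will need, namely $G(z + (n-1)\tau) = (q^{-n+1} x^{-1} y; q)$ and $G(z+n\tau) = (q^{-n} x^{-1}y; q)$, since replacing $z$ by $z + r\tau$ sends $x^{-1}$ to $q^{-r} x^{-1}$.

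With this in hand I would apply \cref{eq:kind2}. For $k = 0$ the truncation is $g_0 \equiv \hat g(0) = 1$, so \cref{eq:kind2} reads $\vect{qx^2 - yx}{0} = \sum_{n \in \Z} q^n q^{2\binom n2} x^{2n}\, G(z+(n-1)\tau)/G(z)$. For $k = 1$, since $\deg_x g = 1 \le 1 = k < 2 = d$, the simplified form in \cref{prop:exact-formulae-kind2} applies and gives $\vect{qx^2-yx}{1} = \sum_{n\in\Z} q^n q^{2\binom n2 + n} x^{2n+1}\, G(z+n\tau)/G(z)$. In both sums I would collapse the $q$-exponents using $q^n q^{2\binom n2 + kn} = q^{n^2 + kn}$ (as $2\binom n2 = n^2 - n$), yielding $q^{n^2}$ and $q^{n^2+n}$ respectively, and then substitute the two translates of $G$ computed above to land on the stated formulae.

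Since \cref{prop:exact-formulae-kind2} already supplies convergence together with the quasiperiodicity and normalization that pin down the canonical vectors, there is no serious obstacle here; the only genuine care is in the no-zero verification (which is what ties the hypothesis to $|y| \le 1$) and in keeping the shifts $G(z+(n-1)\tau)$ versus $G(z+n\tau)$ straight between the two cases $k=0$ and $k=1$.
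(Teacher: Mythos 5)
Your proposal is correct and is exactly the argument the paper intends: the corollary is stated as an immediate specialization of \cref{prop:exact-formulae-kind2} with $d=2$, $\alpha = q$, $g(z) = 1-(y/q)x$, hence $G(z) = (x^{-1}y;q)$, and the paper supplies no separate proof. Your explicit verification that $|y|\le 1$ forces the zero $x = y/q$ (with $|x|\le|q|^{-1}$) outside the region $\{\Im z < -\Im\tau\}$ (where $|x| > |q|^{-1}$), and your use of the simplified formulae for $k=0$ and for $k=1$ (where $\deg_x g = 1 \le k$), are precisely the checks needed.
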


\subsection{How to identify two functions in the same space} \label{subsec:identification}

In proving identities of functions in a finite-dimensional space $\mT_D(f_1, \ldots, f_m)$, one natural approach is to identify enough Fourier coefficients. We start with a fact that was implicit in a previous proof, but which is worth emphasizing.

\begin{lemma}[Fourier identification] \label{lem:proofs-by-fourier}
Suppose $\mT_D(f_1, \ldots, f_m)$, $S \subset D$, $d \ge 1$ and $n_0 \in \Z$ are as in the hypothesis of \cref{prop:upper-bounds}.(ii). Then given $F, G \in \mT_D(f_1, \ldots, f_m)$, one has $F = G$ iff $\hat{F}(k) = \hat{G}(k)$ for all $k \in \{n_0, n_0 + 1, \ldots, n_0 + d-1\}$ (taking Fourier series in the strip $S$). If $d = 1$ and the limits below exist and are nonzero, one also has
\[
    F = G \qquad 
    \iff 
    \qquad \lim_{n \to \infty} \hat{F}(n) 
    =
    \lim_{n \to \infty} \hat{G}(n).
\]
\end{lemma}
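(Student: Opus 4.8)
The plan is to reduce both equivalences to the linear-algebraic structure of the finite-dimensional space $V := \mT_D(f_1, \ldots, f_m)$ that was already extracted in the proof of \cref{prop:upper-bounds}.(ii). Since $V$ is a vector space, in each case it suffices to analyze the single difference $H := F - G \in V$ and show that the stated vanishing hypothesis forces $H \equiv 0$; the converse implications are trivial, as $F = G$ makes all Fourier coefficients, and hence their limits, agree.

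For the first equivalence I would invoke directly the injectivity established inside \cref{prop:upper-bounds}.(ii): under the stated hypotheses the recurrence \cref{eq:recursion} propagates any $d$ consecutive Fourier coefficients $\hat{H}(n_0), \ldots, \hat{H}(n_0 + d - 1)$ both forward (using $\hat{A_n}(0) \neq 0$ for $n \ge n_0 + d$) and backward (using $\hat{A_n}(d) \neq 0$ for $n < n_0$), so that the linear map $H \mapsto \big(\hat{H}(n_0), \ldots, \hat{H}(n_0 + d - 1)\big)$ from $V$ into $\C^d$ is injective. Consequently $\hat{F}(k) = \hat{G}(k)$ for all $k \in \{n_0, \ldots, n_0 + d - 1\}$ is equivalent to $\hat{H}(k) = 0$ on that range, which is equivalent to $H \equiv 0$, i.e.\ $F = G$.

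For the second equivalence, the same proposition gives $\dim V \le d = 1$. The key observation is that if $\lim_{n \to \infty} \hat{F}(n)$ exists and is nonzero, then $\hat{F}(n) \neq 0$ for all large $n$, so $F \not\equiv 0$; likewise $G \not\equiv 0$. Two nonzero elements of a space of dimension at most $1$ are proportional, so $G = cF$ for some $c \in \C^\times$. Passing to Fourier coefficients gives $\hat{G}(n) = c\,\hat{F}(n)$, whence $\lim_{n \to \infty} \hat{G}(n) = c \lim_{n \to \infty} \hat{F}(n)$. Writing $L$ for the common nonzero limit and using the hypothesis $\lim \hat{F}(n) = \lim \hat{G}(n) = L$ yields $L = cL$, so $c = 1$ and $F = G$.

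I expect the only genuine subtlety to lie in this second part, specifically in the role of the nonvanishing hypothesis. The functional $F \mapsto \lim_{n \to \infty} \hat{F}(n)$ is only partially defined on $V$, since the limit need not exist in general, so one cannot simply treat it as a second coordinate map and quote injectivity as in the first part; instead one must first use the \emph{nonzero} limits to deduce that $F$ and $G$ are genuinely nonzero, and only then invoke one-dimensionality to force proportionality. The nonvanishing is essential: were the common limit allowed to be $0$, then in a one-dimensional $V$ spanned by a vector whose coefficients tend to $0$, every element would have limit $0$ and equality of limits would carry no information. (As a consistency check, not needed for the argument, the two-term recurrence for $d = 1$ makes $\hat{F}(n)/\hat{F}(n-1)$ converge to $-\hat{P_0}(1)/\hat{P_0}(0)$, which can equal a nonzero limit only when this ratio is $1$; the proportionality argument sidesteps such asymptotic analysis entirely.)
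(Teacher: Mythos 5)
Your proposal is correct and follows essentially the same route as the paper: the first equivalence is exactly the injectivity of $F \mapsto \big(\hat{F}(n_0), \ldots, \hat{F}(n_0+d-1)\big)$ already extracted in \cref{prop:upper-bounds}.(ii), and the second is the one-dimensionality consequence that the paper dismisses as ``easily establishes the desired equivalence.'' Your proportionality argument ($G = cF$, then $L = cL$ forces $c = 1$) is precisely the natural filling-in of that step, and your remark on why the nonvanishing of the limits is essential is accurate.
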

\begin{proof}
In the proof of \cref{prop:upper-bounds}.(ii), we showed (using the linear relation of Fourier coefficients \cref{eq:recursion}) that $\mT_D(f_1, \ldots, f_m)$ injects into $\C^d$ by $F \mapsto \big(\hat{F}(n_0), \ldots, \hat{F}(n_0 + d-1)\big)$, proving the first claim. Note that this does \emph{not} require an equality in $\dim \mT_D(f_1, \ldots, f_m) \le d$.

For the second claim, having $d = 1$ implies that $\dim \mT_D(f_1, \ldots, f_m) \le 1$ by \cref{prop:upper-bounds}.(ii), which easily establishes the desired equivalence.
\end{proof}

To showcase the use of \cref{lem:proofs-by-fourier}, we further prove two results which imply Jacobi's triple product identity and which will be helpful in \cref{sec:higher-prod-id,sec:rog-ram}. The first is Ramanujan's famous $_1\psi_1$ summation, whose history, applications and further extensions are reviewed in \cite{warnaar2013ramanujan}. Our proof is essentially equivalent to that given by Adiga, Berndt, Bhargava and Watson in \cite[Entry 17]{adiga1985ramanujan}, but it serves as a good illustration of the generality of \cref{prop:upper-bounds} and \cref{lem:proofs-by-fourier}.

\begin{proposition}[Ramanujan's $_1\psi_1$ summation \cite{warnaar2013ramanujan}] \label{prop:ramanujan-1psi1}
Let $a, b \in \C$ such that $a, q/b \not\in \{q^n : n \ge 1\}$ and $a \neq 0$. Then for $|b/a| < |x| < 1$, one has
\begin{equation} \label{eq:ramanujan-1psi1}
    \sum_{n \in \Z} 
    \frac{(a; q)_n}{(b; q)_n} x^n 
    =
    \frac{(q; q)(b/a; q)}{(b; q)(q/a; q)} \frac{\la ax; q \ra}{(x; q)(b/ax; q)}.
\end{equation}
Taking $x \mapsto x/a$ and then $a \to \infty$ recovers Jacobi's triple product identity from \cref{eq:triple}.
\end{proposition}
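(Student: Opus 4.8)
The plan is to realize both sides of \cref{eq:ramanujan-1psi1} as elements of a single one-dimensional space $\mT_D(f)$ and then fix the one remaining scalar using the $d=1$ clause of \cref{lem:proofs-by-fourier}. Here $D$ is the $\Z$-invariant strip $\{0 < \Im z < \tfrac{1}{2\pi}\log|a/b|\}$ corresponding to the annulus $|b/a| < |x| < 1$ of convergence. First I would find the $q$-difference equation for the left-hand side $F(x) := \sum_{n} \tfrac{(a;q)_n}{(b;q)_n}x^n$: writing $c_n$ for its general term, the Pochhammer recurrences give $(1-bq^n)c_{n+1} = (1-aq^n)x\,c_n$, and summing over $n\in\Z$ (using $\sum_n q^n c_n = F(qx)$ after reindexing) collapses to
\[
    (1-x)\,F(x) = \left(bq^{-1}-ax\right)F(qx).
\]
Thus $F \in \mT_D(f)$ with $f = P_1/P_0$, where $P_0 = 1-x$ and $P_1 = bq^{-1}-ax$.

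Next I would verify that the right-hand side $R(x)$ lies in the same space, by a direct quasiperiodicity computation: combining $(x;q) = (1-x)(qx;q)$, the symmetry $\la ax;q\ra\in\mT_\C(-ax)$ (as in \cref{tbl:spaces}), and $(b/(aqx);q) = (1-b/(aqx))(b/(ax);q)$, one finds $R(x)/R(qx) = (bq^{-1}-ax)/(1-x) = f$, so $R \in \mT_D(f)$ as well. To force $\dim\mT_D(f)\le 1$ I would invoke \cref{prop:upper-bounds}.(ii) with $m=1$, $d=1$, $n_0=0$: here $A_n = (1-bq^{n-1}) + (aq^n-1)x$, and the hypotheses $q/b\notin\{q^n:n\ge1\}$ and $a\notin\{q^n:n\ge1\}$ yield $\hat{A_n}(0)\ne0$ for $n\ge1$ and $\hat{A_n}(1)\ne0$ for $n\le-1$. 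This step needs $D$ to contain a strip of width $>\Im\tau$, i.e.\ $|b/a|<|q|$; I would impose this temporarily and remove it at the end by analytic continuation in $a,b$ (both sides being analytic in the parameters for fixed $x$ in the annulus).

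With $\dim\mT_D(f)\le1$ and $F,R$ both nonzero, the $d=1$ case of \cref{lem:proofs-by-fourier} reduces $F=R$ to the single equality $\lim_{n\to\infty}\hat{F}(n) = \lim_{n\to\infty}\hat{R}(n)$ (both nonzero, assuming generically $a\notin\{q^{-k}:k\ge0\}$). On the left this limit is $\lim_n (a;q)_n/(b;q)_n = (a;q)/(b;q)$. On the right, $R$ is holomorphic in the annulus and its nearest singularity outside $|x|=1$ is the simple pole of $(x;q)^{-1}$ at $x=1$, so $\lim_{n\to\infty}\hat{R}(n) = -\lim_{x\to1}(x-1)R(x)$; using $\lim_{x\to1}(x-1)/(x;q) = -1/(q;q)$ this evaluates to $\tfrac{(q;q)(b/a;q)}{(b;q)(q/a;q)}\cdot\tfrac{(a;q)(q/a;q)}{(b/a;q)(q;q)} = (a;q)/(b;q)$. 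The two agree, so $F=R$, and the prefactor in \cref{eq:ramanujan-1psi1} is precisely what makes this happen.

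The only genuinely computational points are verifying the functional equation for $R$ and identifying $\lim_n\hat{R}(n)$ with the residue at $x=1$; the main subtlety I expect to flag is the strip-width constraint, handled by the $|b/a|<|q|$ restriction together with continuation. Finally, Jacobi's triple product \cref{eq:triple} follows by specializing $b=0$, replacing $x\mapsto x/a$, and letting $a\to\infty$: then $(a;q)_n a^{-n}\to(-1)^n q^{\binom{n}{2}}$ while the right-hand side collapses to $(q;q)\la x;q\ra$, recovering $\sum_{n\in\Z}(-1)^n q^{\binom{n}{2}}x^n = (q;q)\la x;q\ra$.
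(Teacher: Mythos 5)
Your proposal is correct and follows essentially the same route as the paper: both realize the two sides as elements of $\mT_D\left(((b/q)-ax)/(1-x)\right)$, apply \cref{prop:upper-bounds}.(ii) with $d=1$, $n_0=0$ under the temporary restriction $|b/a|<|q|$ (removed afterwards by analytic continuation), and conclude via the $d=1$ clause of \cref{lem:proofs-by-fourier} by matching $\lim_{n\to\infty}\hat{F}(n)=(a;q)/(b;q)$ on both sides. The only cosmetic difference is in evaluating $\lim_{n\to\infty}\hat{R}(n)$ for the product side: you use a residue/nearest-singularity argument at $x=1$, whereas the paper telescopes the absolutely convergent Fourier series of $(1-x)R$ on the enlarged annulus $|b/a|<|x|<|q|^{-1}$ and uses $\lim_{n\to-\infty}\hat{R}(n)=0$ --- the two mechanisms are equivalent, resting on the same holomorphic extension of $(1-x)R$.
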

\begin{proof}[Proof in our framework]
It suffices to prove the claim when $|b/a| < |q|$ and $a \not \in \{q^n : n \in \Z\}$, by the uniqueness of analytic continuation in $a$. Letting $D := \{z : |b/a| < |x| < 1\}$, we note that $D$ includes a horizontal strip of length $> \Im \tau$, and that both sides of \cref{eq:ramanujan-1psi1} yield holomorphic functions of $z$ inside $D$. Using \cref{tbl:spaces}, the right-hand side lies in
\[
    \mT_D\left(\frac{-ax}{(1-x)(1-b/aqx)^{-1}} \right)
    =
    \mT_D\left(\frac{(b/q) - ax}{1-x}\right),
\]
and the same is easily verified for the left-hand side. In the context of \cref{prop:upper-bounds}.(ii), we have $A_n(z) = 1 - x - q^n((b/q) + ax)$; hence $d = 1$, $\hat{A_n}(0) = 1 - q^{n-1}b \neq 0$ for any $n \ge 1$, and $\hat{A_n}(d) = q^n a - 1 \neq 0$ for any $n < 0$. Thus the conditions in \cref{prop:upper-bounds}.(ii) are fulfilled for $n_0 = 0$, and so by \cref{lem:proofs-by-fourier} it suffices to check that $\lim_{n \to \infty} \hat{F}(n) = \lim_{n \to \infty} \hat{G}(n)$, where $F$ and $G$ denote the left-hand side and the right-hand side of \cref{eq:ramanujan-1psi1}. But clearly
\[
    \lim_{n \to \infty} \hat{F}(n) = \frac{(a; q)}{(b; q)} \neq 0.
\]
Concerning the right-hand side, note that $(1-x)G(z)$ extends to a holomorphic function on $D' := \{z : |b/a| < |x| < |q|^{-1}\}$; hence the Fourier series $(1-x)G(z) = \sum_{n \in \Z} \left(\hat{G}(n) - \hat{G}(n-1)\right) x^n$ extends to (and converges absolutely and locally uniformly in) $D'$. Plugging in $x = 1$ yields
\[
    \lim_{x \to 1} (1-x)G(z) 
    =
    \sum_{n \in \Z} \left(\hat{G}(n) - \hat{G}(n-1)\right),
\]
where the sum converges absolutely. But the fact that $\sum_{n \in \Z} \hat{G}(n) x^n$ also converges absolutely for some $|x| < 1$ forces $\lim_{n \to -\infty} \hat{G}(n) = 0$, and thus
\[
    \lim_{n \to \infty} \hat{G}(n) = \lim_{x \to 1} (1-x)G(z)
    =
    \frac{(q; q)(b/a; q)}{(b; q)(q/a; q)} \frac{(a; q)(q/a; q)}{(q; q)(b/a; q)}
    =
    \frac{(a; q)}{(b; q)},
\]
which completes our proof.
\end{proof}

\begin{corollary}[Basic infinite product expansions] \label{cor:basic-identities}
For $z \in \C$, one has \cite[p.~19]{andrews1998theory}
\begin{equation} \label{eq:basic1}
(x; q) = \sum_{n \geq 0} \frac{q^{\binom{n}{2}}}{(q; q)_n} (-x)^n,
\end{equation}
More generally, for $z \in \H^+$ and $y \in \C$, one has the $q$-binomial theorem \cite[Theorem 2.4]{andrews1974applications}
\begin{equation} \label{eq:basic2}
    \frac{(xy; q)}{(x; q)} = \sum_{n \geq 0} \frac{(y; q)_n}{(q; q)_n} x^n
    \qquad\quad \xRightarrow{y \mapsto 0}
    \qquad\quad
    \frac{1}{(x; q)} = \sum_{n \geq 0} \frac{1}{(q; q)_n} x^n.
\end{equation} 
If additionally $|y| < 1$, then
\begin{equation} \label{eq:basic3}
    \frac{(xy; q)}{(x; q)(y; q)} 
    =
    \sum_{m, n \ge 0} \frac{q^{mn} x^m y^n}{(q; q)_m (q; q)_n}.
\end{equation}
\end{corollary}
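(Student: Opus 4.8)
The plan is to derive all three expansions from Ramanujan's $_1\psi_1$ summation (Proposition~\ref{prop:ramanujan-1psi1}) by specializing its bilateral sum to unilateral ones and then recombining the results; no new machinery is needed beyond \cref{eq:ramanujan-1psi1} and the negative-index convention \cref{eq:finite-product-negative}.

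First I would establish the $q$-binomial theorem \cref{eq:basic2} by setting $b = q$ in \cref{eq:ramanujan-1psi1}. The key point is that \cref{eq:finite-product-negative} forces $(q; q)_n^{-1} = 0$ for every $n < 0$ (indeed $(q; q)_{-1}^{-1} = (1; q)_1 = 0$), so the bilateral sum collapses to $\sum_{n \ge 0} \frac{(a; q)_n}{(q; q)_n} x^n$. On the right-hand side the prefactor becomes $\frac{(q; q)(q/a; q)}{(q; q)(q/a; q)} = 1$, and since $\la ax; q\ra = (ax; q)(q/ax; q)$ the factor $(q/ax; q) = (b/ax; q)$ cancels, leaving $\frac{(ax; q)}{(x; q)}$. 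Renaming $a = y$ gives $\frac{(xy; q)}{(x; q)} = \sum_{n \ge 0} \frac{(y; q)_n}{(q; q)_n} x^n$; this holds first under the constraints of Proposition~\ref{prop:ramanujan-1psi1} and then for all $|x| < 1$ and $y \in \C$ by analytic continuation in $y$ (the series converges locally uniformly for $|x| < 1$ because $(y; q)_n/(q; q)_n$ tends to the constant $(y; q)/(q; q)$). Letting $y \to 0$, so that $(y; q)_n \to 1$, yields the stated special case $\frac{1}{(x; q)} = \sum_{n \ge 0} \frac{x^n}{(q; q)_n}$.

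For \cref{eq:basic1} I would run the complementary limit in the $q$-binomial theorem, written as $\sum_{n \ge 0} \frac{(a; q)_n}{(q; q)_n} x^n = \frac{(ax; q)}{(x; q)}$: substitute $x \mapsto x/a$ and send $a \to \infty$. Termwise one has $\frac{(a; q)_n}{a^n} = \prod_{k=0}^{n-1}\bigl(a^{-1} - q^k\bigr) \to (-1)^n q^{\binom{n}{2}}$, so $(a; q)_n (x/a)^n \to (-1)^n q^{\binom{n}{2}} x^n$, while on the right $\frac{(x; q)}{(x/a; q)} \to (x; q)$ since $(x/a; q) \to (0; q) = 1$. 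This gives $(x; q) = \sum_{n \ge 0} \frac{q^{\binom{n}{2}}}{(q; q)_n}(-x)^n$. Finally, for \cref{eq:basic3} I would divide \cref{eq:basic2} by $(y; q)$ and use $\frac{(y; q)_n}{(y; q)} = \frac{1}{(q^n y; q)}$ to obtain $\frac{(xy; q)}{(x; q)(y; q)} = \sum_{n \ge 0} \frac{x^n}{(q; q)_n\,(q^n y; q)}$; expanding each $\frac{1}{(q^n y; q)} = \sum_{m \ge 0} \frac{q^{nm} y^m}{(q; q)_m}$ by the $y \to 0$ case of \cref{eq:basic2} and rearranging the double sum produces $\sum_{m, n \ge 0} \frac{q^{mn} x^n y^m}{(q; q)_n (q; q)_m}$, which is \cref{eq:basic3} up to the symmetric relabeling $m \leftrightarrow n$.

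The obstacles here are analytic rather than algebraic. The two points needing care are the interchange of limit and summation in the $a \to \infty$ passage for \cref{eq:basic1}, and the rearrangement of the double series for \cref{eq:basic3}. For the former, a uniform bound $\bigl|(a; q)_n/a^n\bigr| \le \prod_{k \ge 0}(1 + |q|^k) < \infty$ for $|a| \ge 1$, together with the lower bound $\inf_n |(q; q)_n| > 0$, dominates the summands by a constant multiple of $|x|^n$, so dominated convergence applies for $|x| < 1$. For the latter, the hypothesis $|y| < 1$ (with $|x| < 1$) guarantees absolute and locally uniform convergence of the double series, as recorded in the Fourier-series discussion of \cref{subsec:notation}, legitimizing the rearrangement.
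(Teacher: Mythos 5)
Your proof is correct, and it overlaps with the paper's only in part. For \cref{eq:basic1} and \cref{eq:basic2} you and the paper are on the same track: the paper simply states that these "follow as particular cases" of Ramanujan's $_1\psi_1$ summation, and your $b = q$ specialization (using the convention \cref{eq:finite-product-negative} to make $(q;q)_n^{-1}$ vanish for $n < 0$, which collapses the bilateral sum) together with the $x \mapsto x/a$, $a \to \infty$ limit for \cref{eq:basic1} is the natural way to realize those particular cases; the paper also sketches an alternative via the one-dimensional spaces $\mT_\C(1-x)$ and $\mT_\Hplus\left((1-xy)/(1-x)\right)$. Where you genuinely diverge is \cref{eq:basic3}. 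The paper stays inside its framework: it checks that both sides lie in $\mT_\Hplus\left((1-xy)/((1-x)(1-y))\right)$ and invokes \cref{lem:proofs-by-fourier} to reduce the identity to matching the single Fourier coefficient of $x^0$, namely $(y;q)^{-1}$ versus $\sum_{n \ge 0} y^n/(q;q)_n$, which is the second identity in \cref{eq:basic2}. You instead prove \cref{eq:basic3} by direct series manipulation: dividing \cref{eq:basic2} by $(y;q)$, using $(y;q)_n/(y;q) = (q^n y; q)^{-1}$, expanding each such factor by the $y \to 0$ case of \cref{eq:basic2}, and rearranging an absolutely convergent double sum. Both routes are valid; the paper's buys a two-line verification once the $\mT_D$ machinery is in place (and showcases the method the paper is advertising), while yours is self-contained at the level of $q$-series and makes the role of the hypothesis $|y| < 1$ completely explicit, since it is exactly what licenses the expansion of $(q^n y; q)^{-1}$ and the rearrangement.

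One small point to patch: your dominated-convergence argument establishes \cref{eq:basic1} only for $|x| < 1$, whereas the statement asserts it for all $z \in \C$. Since both sides are entire in $x$ (the series converges everywhere thanks to the factor $q^{\binom{n}{2}}$), a one-line appeal to the identity theorem finishes it; likewise, your continuation in $y$ for \cref{eq:basic2} silently crosses the discrete set of $a$-values excluded by the $_1\psi_1$ hypotheses, which holomorphy of both sides in $y$ justifies. These are cosmetic omissions, not gaps.
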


\begin{proof}[Proof in our framework]
\cref{eq:basic1} and \cref{eq:basic2} follow as particular cases of \cref{prop:ramanujan-1psi1}, but can also be deduced by working in the one-dimensional spaces $\mT_\C(1 - x)$ and $\mT_\Hplus \left((1-xy)/(1-x)\right)$.

Finally, one easily checks that both sides of \cref{eq:basic3} lie in the space $\mT_\Hplus\left((1 - xy)/((1 - x)(1 - y))\right)$, and by \cref{lem:proofs-by-fourier} it suffices to identify the coefficients of $x^0$. These Fourier coefficients are $(y; q)^{-1}$ and respectively $\sum_{n \ge 0} \frac{1}{(q; q)_n} y^n$, which coincide due to the second identity in \cref{eq:basic2}.
\end{proof}

\cref{lem:proofs-by-fourier} also leads to a proof of Jacobi's triple product identity via a finitized version of the result, due to Cauchy \cite{cauchy1893second} (see also \cite[(6)]{bressoud1983easy}); we recount this identity below.

\begin{proposition}[Cauchy's finite triple product identity \cite{cauchy1893second}]
\label{prop:Cauchy} 
For $z \in \C$ and any $N \ge 1$,
\[
(x; q)_N \left(q/x; q\right)_N = \sum_{n = -N}^N \frac{(q; q)_{2N}}{(q; q)_{N-n} (q; q)_{N+n}} (-1)^n q^{\binom{n}{2}} x^n.
\]
Taking $N \to \infty$ recovers Jacobi's triple product identity from \cref{eq:triple}.
\end{proposition}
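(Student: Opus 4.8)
The plan is to recognize both sides as entire $1$-periodic functions of $z$ (Laurent polynomials in $x$ supported on exponents $-N \le n \le N$) that lie in a common one-dimensional quasiperiodic space, and then to match a single Fourier coefficient via \cref{lem:proofs-by-fourier}. First I would compute the quasiperiodicity multiplier of $F_N(z) := (x;q)_N (q/x;q)_N$. Using the telescoping relations $(qx;q)_N = \tfrac{1-q^Nx}{1-x}(x;q)_N$ and $(x^{-1};q)_N = \tfrac{1-x^{-1}}{1-q^Nx^{-1}}(q/x;q)_N$, one gets $(x;q)_N \in \mT_\C\big(\tfrac{1-x}{1-q^Nx}\big)$ and $(q/x;q)_N \in \mT_\C\big(\tfrac{1-q^Nx^{-1}}{1-x^{-1}}\big)$, so by \cref{lem:basic}.(iii) the product satisfies $F_N \in \mT_\C(f)$ with
\[
    f(z) = \frac{1-x}{1-q^Nx}\cdot\frac{1-q^Nx^{-1}}{1-x^{-1}} = \frac{q^N - x}{1-q^Nx}.
\]
This bookkeeping of the multiplier is really the only substantive step; everything afterwards is forced.

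Next I would invoke \cref{prop:upper-bounds} with $m=1$, $P_0(z) = 1-q^Nx$ and $P_1(z) = q^N - x$, so that $d = \deg_x f = 1$ and $A_n = P_0 - q^nP_1 = (1-q^{N+n}) + (q^n - q^N)x$. Here $\hat{A_n}(0) = 1-q^{N+n}$ vanishes only at $n=-N$ and $\hat{A_n}(1) = q^n - q^N$ vanishes only at $n=N$, so the hypotheses of \cref{prop:upper-bounds}.(ii) are satisfied for any $n_0$ with $-N \le n_0 \le N$, yielding $\dim \mT_\C(f) \le 1$.

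It then remains to place the right-hand side $G_N := \sum_{n=-N}^N c_n x^n$, where $c_n = \tfrac{(q;q)_{2N}}{(q;q)_{N-n}(q;q)_{N+n}}(-1)^nq^{\binom{n}{2}}$, in the same space and to match one coefficient. Writing $F_N = f\cdot T_\tau F_N$ as a recursion on Fourier coefficients, \cref{eq:recursion} reads $(1-q^{N+n})\hat{F}(n) = (q^N - q^{n-1})\hat{F}(n-1)$ for the present $A_n$; a one-line $q$-Pochhammer computation gives $c_n/c_{n-1} = (q^N - q^{n-1})/(1-q^{N+n})$, so the $c_n$ satisfy exactly this recursion and hence $G_N \in \mT_\C(f)$ as well. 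By \cref{lem:proofs-by-fourier} in the case $d=1$, it suffices to check equality of a single coefficient; taking the top degree $n_0 = N$, the coefficient of $x^N$ on the left is the leading coefficient $(-1)^Nq^{\binom{N}{2}}$ of $(x;q)_N$ times the constant term $1$ of $(q/x;q)_N$, while $c_N = (-1)^Nq^{\binom{N}{2}}$ since $(q;q)_0 = 1$. Therefore $F_N = G_N$.

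I expect no genuine obstacle here: the crux is simply identifying the correct rational multiplier $f$ and confirming that $d=1$, after which the finite-dimensionality collapses the identity to a single-coefficient check. Finally, letting $N \to \infty$ termwise (legitimate since $(q;q)_{2N}/[(q;q)_{N-n}(q;q)_{N+n}] \to (q;q)^{-1}$ for each fixed $n$, with uniform control coming from $|q|<1$) turns the identity into $\la x;q\ra = (q;q)^{-1}\sum_{n \in \Z}(-1)^nq^{\binom{n}{2}}x^n$, which is \cref{eq:triple}.
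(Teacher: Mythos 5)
Your proof is correct and follows essentially the same route as the paper: both sides are placed in $\mT_\C\left(\frac{q^N - x}{1 - q^N x}\right)$, \cref{prop:upper-bounds}.(ii) gives $d = 1$ with the same analysis of where $\hat{A_n}(0)$ and $\hat{A_n}(d)$ vanish, and \cref{lem:proofs-by-fourier} reduces everything to matching the coefficient of $x^N$, which is $(-1)^N q^{\binom{N}{2}}$ on both sides. The only difference is that you explicitly carry out the two membership verifications (the telescoping computation for the left side and the Pochhammer ratio check of \cref{eq:recursion} for the right side) that the paper leaves to the reader, and both checks are accurate.
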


\begin{proof}[Proof in our framework]
We leave to the reader to verify that both sides above lie in
\[
    \mT_\C\left(\frac{q^N - x}{1 - q^{N}x}\right).
\]
Going back to \cref{prop:upper-bounds}.(ii), we have $d = 1$ and $A_n(z) = 1 - q^{N}x - \left(q^N - x\right)q^n = \left(1 - q^{N+n}\right) - \left(q^{N} - q^{n} \right)x$ in this case. But for any choice of $n_0 \in [-N, N]$, we have $1 - q^{N+n} \neq 0$ for all $n \ge n_0 + 1$, and $q^{N} - q^{n} \neq 0$ for all $n < n_0$. So by \cref{lem:proofs-by-fourier}, it suffices to identify the coefficients of $x^N$ in the two sides of Cauchy's identity, which are $(-1) \cdot (-q) \cdots (-q)^{N-1} = (-1)^N q^{\binom{N}{2}}$.
\end{proof}

\begin{remark}
Taking $x \mapsto \alpha x^d$ and $q \mapsto q^d$ in Jacobi's triple product identity \cref{eq:triple} for some $\alpha \in \C^\times$ yields that
\[
    \big(q^d; q^d\big)\big\langle {-\alpha} x^d; q^d \big\rangle = 
    \sum_{n \in \Z} \alpha^n q^{d\binom{n}{2}} x^{dn} 
    =
    \vect{\alpha x^d}{0},
\]
and taking $z \mapsto z + k\tau/d$ for $k \in \Z$ gives the more general formula
\begin{equation}\label{eq:canonical-formula}
        \vect{\alpha x^d}{k} = \big(q^d; q^d\big)\big\langle {-\alpha} q^k x^d ; q^d \big\rangle x^k.
\end{equation}
Note that this is consistent with the vector spaces in \cref{tbl:spaces}. In particular, we know exactly where the canonical basis vectors $\vect{\alpha x^d}{k}$ have zeros (recall the proof of \cref{prop:polynomial-spaces}), although the zeros of a sum of two such functions can be nontrivial (recall \cref{ex:plots}). 
\end{remark}
Moving on to proofs by specialization or value identification, it is often the case that verifying a product identity at sufficiently many values of $z$ is enough to recover the general case. The following lemma gives such a criterion for the spaces $\mT_\C\left(\alpha x^d \right)$, previously characterized in \cref{prop:monomial-spaces}.
\begin{lemma}[Value identification]  \label{lem:proofs-by-value}
Let $d$ be a positive integer, $\alpha \in \C^\times$, and $F, G \in \mT_\C\left(\alpha x^d\right)$. If $\alpha = -q^n$ for some $n \in \Z$, assume additionally that $\hat{F}(d-n) = \hat{G}(d-n)$. Then one has $F = G$ iff $F(j/d) = G(j/d)$ for all integers $0 \le j < d$.
\end{lemma}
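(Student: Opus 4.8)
The plan is to set $H := F - G \in \mT_\C(\alpha x^d)$ and show that the hypotheses force $H \equiv 0$; the forward implication is immediate. Since $\dim \mT_\C(\alpha x^d) = d$ by \cref{prop:monomial-spaces}.(i), I would expand $H$ in the canonical basis as $H = \sum_{k=0}^{d-1}\hat H(k)\,\vect{\alpha x^d}{k}$, so that proving $H = 0$ amounts to showing all coefficients $a_k := \hat H(k)$ vanish. The statement then reduces to understanding the $d$ evaluation functionals $H \mapsto H(j/d)$ on this $d$-dimensional space.

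The key computation is to evaluate each basis vector at $z = j/d$. Writing $\zeta := e^{2\pi i/d}$, at $z = j/d$ we have $x = \zeta^j$ and hence $x^{dn+k} = \zeta^{jk}$, so from the series defining $\vect{\alpha x^d}{k}$ one gets $\vect{\alpha x^d}{k}(j/d) = \zeta^{jk}c_k$, where $c_k := \vect{\alpha x^d}{k}(0) = (q^d;q^d)\la -\alpha q^k; q^d\ra$ by \cref{eq:canonical-formula}. Thus $H(j/d) = \sum_{k} a_k \zeta^{jk}c_k$, i.e.\ the vector $\big(H(j/d)\big)_j$ equals $V\,\mathrm{diag}(c_0,\dots,c_{d-1})\,(a_k)_k$, where $V = (\zeta^{jk})_{0 \le j,k < d}$ is the (invertible) discrete Fourier / Vandermonde matrix. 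Consequently the evaluation map is injective precisely when every $c_k \neq 0$, which reduces the problem to locating the zeros of $\la -\alpha q^k; q^d\ra$.

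Here I would use that $\la y; q^d\ra = (y; q^d)(q^d/y; q^d)$ vanishes exactly when $y \in \{q^{dm} : m \in \Z\}$; hence $c_k = 0$ iff $\alpha = -q^e$ with $e \equiv -k \pmod d$. In the generic case $\alpha \notin \{-q^n : n \in \Z\}$, all $c_k$ are nonzero, $V\,\mathrm{diag}(c_k)$ is invertible, and $H(j/d) = 0$ for all $j$ already forces $H = 0$. The remaining case $\alpha = -q^n$ is where the extra hypothesis enters: then exactly one index $k_0 \in \{0,\dots,d-1\}$ satisfies $k_0 \equiv -n \pmod d$ and has $c_{k_0} = 0$, while all other $c_k \neq 0$; so the kernel of the evaluation map is the line spanned by $\vect{\alpha x^d}{k_0}$, and vanishing of $H$ at the points $j/d$ only yields $H = a_{k_0}\vect{\alpha x^d}{k_0}$. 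To finish, I would note that $\vect{\alpha x^d}{k_0}$ has nonzero Fourier coefficients exactly at exponents $\equiv k_0 \pmod d$, and that $d - n \equiv -n \equiv k_0 \pmod d$, so $\hat{\vect{\alpha x^d}{k_0}}(d-n) \neq 0$; the added assumption $\hat H(d-n) = 0$ then forces $a_{k_0} = 0$, giving $H = 0$.

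The individual computations are routine; the one point deserving care — and the main obstacle to a clean writeup — is the special case, namely verifying that the single extra Fourier coefficient $\hat F(d-n)$ named in the statement lands in exactly the residue class $k_0$ spanning the kernel, and that $\vect{\alpha x^d}{k_0}$ does not vanish at that exponent.
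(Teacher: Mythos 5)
Your proof is correct and follows essentially the same route as the paper: expand $H = F - G$ in the canonical basis, evaluate at $d$th roots of unity via \cref{eq:canonical-root} to obtain an invertible Vandermonde system, conclude $\hat{H}(k)\la -\alpha q^k; q^d\ra = 0$ for all $k$, and invoke the extra hypothesis $\hat F(d-n) = \hat G(d-n)$ precisely to eliminate the single index $k_0 \equiv -n \pmod d$ where $\la -\alpha q^{k_0}; q^d\ra$ vanishes. The only cosmetic difference is that the paper transfers $\hat H(d-n) = 0$ to $\hat H(k_0) = 0$ through the recursion $\hat H(m) = \alpha q^{m-d}\hat H(m-d)$, whereas you argue directly from the Fourier support of the surviving basis vector $\vect{\alpha x^d}{k_0}$ --- equivalent steps.
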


\begin{remark}
When $z = j/d$, $x = e^{2\pi i j/d}$ is a $d$th root of unity, which is helpful since the powers of $x$ occurring in a canonical basis vector $\vect{\alpha x^d}{k}$ are all multiples of $d$ plus $k$. Indeed, \cref{eq:canonical-formula} implies that
\begin{equation} \label{eq:canonical-root}
    \forall j, k \in \Z: \qquad\quad \vect{\alpha x^d}{k}\left(j/d\right) =
    \big( q^d; q^d \big) \big\langle {-\alpha} q^k; q^d \big\rangle e^{2\pi i (jk/d)}.
\end{equation}
Specializing product identities at roots of unity to prove them is, of course, not a new idea \cite{cao2012applications}, but it is helpful to phrase it in terms of the $d$-dimensional spaces $\mT_\C\left(\alpha x^d\right)$.
\end{remark}

\begin{proof}
Take $G = 0$ without loss of generality, and suppose $F \in \mT_\C\left(\alpha x^d\right)$ satisfies $F(j/d) = 0$ for all $0 \le j < d$. Writing $F = \sum_{k=0}^{d-1} \hat{F}(k) \vect{f}{k}$ as in \cref{eq:fourier-coeffs-basis}, the computation in \cref{eq:canonical-root} implies that
\[
    \begin{pmatrix}
    1 & 1 & \cdots & 1 \\
    1 & \zeta & \cdots & \zeta^{d-1} \\
    & & \ddots \\
    1 & \zeta^{d-1} & \cdots & \zeta^{(d-1)(d-1)}
    \end{pmatrix}
    \begin{pmatrix}
    \hat{F}(0) \big\langle {-\alpha}; q^d \big\rangle \\
    \hat{F}(1) \big\langle {-\alpha} q; q^d \big\rangle \\
    \vdots \\
    \hat{F}(d-1) \big\langle {-\alpha} q^{d-1}; q^d \big\rangle
    \end{pmatrix}
    =
    \begin{pmatrix}
    0 \\
    0 \\
    \vdots \\
    0
    \end{pmatrix}, \qquad \text{where } \zeta = e^{2\pi i /d}.
\]
But the matrix above is Vandermonde and invertible, and hence $\hat{F}(k) \la -\alpha q^k; q^d \ra = 0$ for all $0 \le k < d$. Now note that $\la -\alpha q^k; q^d \ra = 0$ only when $\alpha = -q^{d(m+1)-k}$ for some $m \in \Z$, and in this case the assumption of the lemma ensures that $\hat{F}(k-dm) = 0$. But $F \in \mT_\C\left(\alpha x^d\right)$ implies that $\hat{F}(n) = \alpha q^{n-d} \hat{F}(n-d)$ for all $n \in \Z$, and hence $\hat{F}(k) = 0$. We thus have $\hat{F}(k) = 0$ for all $0 \le k < d$, which forces $F = 0$ by its canonical basis representation (or by \cref{lem:proofs-by-fourier}).
\end{proof}

We can now give short proofs of the quintuple and septuple product identities from \cref{sec:intro} (essentially equivalent to, but more compact than those in \cite{cao2012applications}):
\begin{proof}[Proof of the quintuple identity, \cref{eq:quintuple}]
To show that $(q; q) \la x; q\ra \la qx^2; q^2\ra = \vect{qx^3}{0} - \vect{qx^3}{1}$, it is easier to take $x \mapsto -x$ and prove 
\[
    (q; q) \la -x; q\ra \la qx^2; q^2\ra \stackrel{?}{=} \left(q^3; q^3\right)\left(\la qx^3; q^3 \ra + x\la q^2 x^3; q^3 \ra \right)
\]
instead (where we used \cref{eq:canonical-formula} on the right-hand side). Both sides lie in $\mT_\C\left(-qx^3\right) \cap \mS_\C\left(x\right)$ by multiplying the respective factors in \cref{tbl:spaces}, which is one-dimensional by \cref{prop:monomial-spaces}, and spanned by $\vect{-qx^3}{0} + \vect{-qx^3}{1}$. Hence both sides have $\hat{F}(2) = 0$, and by \cref{lem:proofs-by-value} it suffices to check the equality when $x = \zeta$ is a cube root of unity. The latter reduces to
\[
    (q; q) \left( -\zeta; q\right) \left( -q\zeta^2; q\right) 
    \left( q\zeta^2; q^2\right) \left( q\zeta; q^2\right) 
    \stackrel{?}{=} \left(q^3; q^3\right)\left(\la q; q^3 \ra + \zeta \la q^2; q^3 \ra \right).
\]
Using $(-\zeta; q) = (1 + \zeta)(-q\zeta; q)$ and simplifying by $(q; q)(1+\zeta)$, this further reduces to
\[
    \left( -q\zeta; q\right) \left( -q\zeta^2; q\right) 
    \left( q\zeta; q^2\right)\left( q\zeta^2; q^2\right)
    \stackrel{?}{=} 1.
\]
Finally, using $(-x; q) = \left(x^2; q^2\right)/(x; q)$, the left-hand side becomes \[
    \frac{\left( q^2\zeta^2; q^2\right) \left( q^2\zeta; q^2\right) 
    \left( q\zeta; q^2\right)\left( q\zeta^2; q^2\right)}
    {\left( q\zeta; q\right) \left( q\zeta^2; q\right)}
    =
    \frac{\left( q\zeta^2; q\right) \left( q\zeta; q\right)}
    {\left( q\zeta; q\right) \left( q\zeta^2; q\right)} = 1,
\]
as we wanted.
\end{proof}

\begin{proof}[Proof of the septuple identity, \cref{eq:septuple}]
Note that both sides of $\cref{eq:septuple}$ lie in $\mT_\C\left(-qx^5\right) \cap \mS_\C\left(x^3\right)$ by \cref{tbl:spaces}, a space which is spanned by $\vect{-qx^5}{0} + \vect{-qx^5}{3}$ and $\vect{-qx^5}{1} + \vect{-qx^5}{2}$ by \cref{prop:monomial-spaces}. Hence both sides have $\hat{F}(4) = 0$, and by \cref{lem:proofs-by-value} it suffices to verify \cref{eq:septuple} when $x = \zeta$ is a fifth root of unity, which reduces by \cref{eq:canonical-root} to
\[
    (q; q) \la \zeta; q \ra \la \zeta^2; q^2 \ra \la q\zeta^2; q^2 \ra 
    \stackrel{?}{=}
    \frac{\left(q^5; q^5\right)\la q; q^5 \ra \left(1 + \zeta^3\right)}{\la q; q^5 \ra} - \frac{\left(q^5; q^5\right)\la q^2; q^5\ra \left(\zeta + \zeta^2\right)}{\la q^2; q^5 \ra}.
\]
But the right-hand side is just $\left(q^5; q^5\right)\left(1 - \zeta - \zeta^2 + \zeta^3\right)$, whereas grouping $\la \zeta^2; q^2 \ra \la q\zeta^2; q^2 \ra = \la \zeta^2; q \ra$, the left-hand side becomes
\[
    (q; q) (1 - \zeta) (q \zeta; q) \left(q\zeta^4; q\right) \left(1 - \zeta^2\right) \left(q \zeta^2; q\right) \left(q\zeta^3; q\right)
    =
    (1 - \zeta)\left(1 - \zeta^2\right) \left(q^5; q^5\right),
\]
by factoring $1 - q^{5n} = \prod_{k=0}^4 \left(1 - \zeta^k q^n\right)$. Expand $\left( 1- \zeta\right)\left(1 - \zeta^2\right) = 1 - \zeta - \zeta^2 + \zeta^3$ to finish.
\end{proof}

\begin{remark}
\cref{lem:proofs-by-value} does not lead to an easy proof of our two nonuple product identities (\cref{prop:nonuple} and \cref{prop:nonuple2}), which will require different methods based on \cref{lem:proofs-by-fourier}. Nevertheless, below is another application of \cref{lem:proofs-by-value}, which will be helpful in relation to \cref{thm:bases-proportional-2}.
\end{remark}

\begin{proposition}[Squared triple product identity]
\label{prop:squared-triple}
In $\mT_\C\left(x^2\right) \cap \mS_\C\left(x^2\right)$, one has
\[
    \frac{1}{\left(q^2; q^2\right)}\left((q; q)\la x; q \ra\right)^2 = \left(-q; q^2\right)^2 \vect{x^2}{0} - 2\left(-q^2; q^2\right)^2\vect{x^2}{1}.
\]

\end{proposition}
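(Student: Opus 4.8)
The plan is to apply the value-identification criterion of \cref{lem:proofs-by-value}, following the same strategy as the proofs of the quintuple and septuple identities above. First I would check that both sides lie in the two-dimensional space $\mT_\C\big(x^2\big)$, whose canonical basis is $\{\vect{x^2}{0},\vect{x^2}{1}\}$ by \cref{prop:monomial-spaces}.(i). By \cref{eq:triple} we have $(q;q)\la x;q\ra \in \mT_\C(-x)$, so its square lies in $\mT_\C\big(x^2\big)$ by \cref{lem:basic}.(iii), and dividing by the constant $\big(q^2;q^2\big)$ preserves this; each basis vector on the right-hand side lies in $\mT_\C\big(x^2\big)$ by definition. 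The membership in $\mS_\C\big(x^2\big)$ recorded in the statement is then automatic, since $\mT_\C\big(x^2\big)\subset \mS_\C\big(x^2\big)$ by \cref{prop:monomial-spaces}.(ii); hence it suffices to prove the identity inside $\mT_\C\big(x^2\big)$.

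Since the quasiperiodicity factor is $\alpha x^2$ with $\alpha = 1 \notin \{-q^n : n \in \Z\}$, \cref{lem:proofs-by-value} (with $d=2$) reduces the identity to checking it at $z=0$ and $z=1/2$, i.e.\ at $x=1$ and $x=-1$. I would evaluate the right-hand side at these points using \cref{eq:canonical-root}, which here reads $\vect{x^2}{k}(j/2) = \big(q^2;q^2\big)\la -q^k;q^2\ra\, e^{\pi i jk}$, together with the elementary evaluations $\la -1;q^2\ra = 2\big(-q^2;q^2\big)^2$, $\la -q;q^2\ra = \big(-q;q^2\big)^2$, and $\la -1;q\ra = 2(-q;q)^2$, and the splitting identities $(q;q)(-q;q) = \big(q^2;q^2\big)$ and $\big(-q;q^2\big)\big(-q^2;q^2\big) = (-q;q)$.

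At $x=1$, the left-hand side vanishes because $\la x;q\ra$ contains the factor $(1-x)$ from $(x;q)$; on the right-hand side the two terms both evaluate to $2\big(q^2;q^2\big)\big(-q;q^2\big)^2\big(-q^2;q^2\big)^2$ and cancel. At $x=-1$, the left-hand side becomes $\tfrac{1}{(q^2;q^2)}\big((q;q)\cdot 2(-q;q)^2\big)^2 = 4\big(q^2;q^2\big)(-q;q)^2$ after using $(q;q)(-q;q)=\big(q^2;q^2\big)$, while the right-hand side (noting the sign $e^{\pi i}=-1$ in $\vect{x^2}{1}(1/2)$) collapses to $4\big(q^2;q^2\big)\big(-q;q^2\big)^2\big(-q^2;q^2\big)^2 = 4\big(q^2;q^2\big)(-q;q)^2$. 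Since the two sides agree at both points, \cref{lem:proofs-by-value} completes the proof.

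The structural reduction is immediate once the correct space $\mT_\C\big(x^2\big)$ is identified, so I expect no genuine obstacle here; the only real work is the bookkeeping of the root-of-unity evaluations and the repeated use of the product-splitting identities, which is entirely routine.
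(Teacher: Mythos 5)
Your proposal is correct and is exactly the paper's proof: the paper likewise invokes \cref{lem:proofs-by-value} to reduce to $x \in \{\pm 1\}$ and evaluates via \cref{eq:canonical-root}, explicitly leaving the details to the reader. Your root-of-unity evaluations and product-splitting manipulations (including the cancellation at $x=1$ and the value $4\big(q^2;q^2\big)(-q;q)^2$ at $x=-1$) check out.
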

\begin{proof}
By \cref{lem:proofs-by-value}, it suffices to check the identity for $x \in \{\pm 1\}$, which is easy using \cref{eq:canonical-root}; we leave the details to the reader.
\end{proof}



Finally, our third method of identification is well-suited for meromorphic functions that have poles in any strip of length $> \Im \tau$, and leads to identities for quotients of double products.

\begin{lemma}[Pole identification] \label{lem:proofs-by-pole}
Let $\alpha \in \C^\times$ and $d \in \Z$ with $d \le 0$; if $d = 0$, assume additionally that $\alpha \not\in \left\{q^n : n \in \Z\right\}$. Let $S$ be an open horizontal strip of length $> \Im \tau$, and suppose $F, G \in \mT_{S \setminus I}\left(\alpha x^d\right)$ where $I \subset S$ is a set of isolated points. Then one has $F = G$ iff $F - G \in \Hol(S)$. Hence if $F$ and $G$ only have simple poles, it suffices to check that their residues at those poles agree.
\end{lemma}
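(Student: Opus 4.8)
The plan is to reduce to the homogeneous case and then invoke the vanishing of $\mT_\C\big(\alpha x^d\big)$ established in \cref{prop:monomial-spaces}. The forward implication is trivial, so suppose $F - G \in \Hol(S)$ and set $H := F - G$. Because the quasiperiodicity defining $\mT_{S \setminus I}\big(\alpha x^d\big)$ is a linear condition, $H \in \mT_{S \setminus I}\big(\alpha x^d\big)$; that is, $H$ is holomorphic on $S \setminus I$ and satisfies $H = \alpha x^d \cdot T_\tau H$ as meromorphic functions. The hypothesis $H \in \Hol(S)$ says precisely that the isolated singularities of $H$ at the points of $I$ are removable, so after removing them $H$ becomes holomorphic on the full strip $S$, i.e.\ $H \in \mT_S\big(\alpha x^d\big)$ with no punctures.

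First I would continue $H$ to an entire function on $\C$. Since $\alpha x^d = \alpha e^{2\pi i d z}$ is entire and nowhere vanishing, the defining relation can be solved both ways, namely $H(z+\tau) = \alpha^{-1} e^{-2\pi i d z} H(z)$, without ever introducing poles. Concretely, $S$ has width $> \Im \tau$, so $S \cap (S - \tau) \neq \emptyset$; defining $H$ on $S + \tau$ via this relation agrees with the original $H$ on the overlap (by the functional equation on $S \cap (S-\tau)$), and one argues symmetrically on $S - \tau$. Iterating upward and downward, exactly as in the proof of \cref{lem:basic}.(v) but now tracking holomorphicity rather than mere meromorphy, extends $H$ to an entire, $1$-periodic function on $\C$ still satisfying $H = \alpha x^d \cdot T_\tau H$. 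Hence $H \in \mT_\C\big(\alpha x^d\big)$. Note that this continuation step uses nothing about the sign of $d$; that condition enters only afterward.

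Now I would apply \cref{prop:monomial-spaces}.(i): for $d < 0$ one has $\dim \mT_\C\big(\alpha x^d\big) = 0$, while for $d = 0$ the dimension equals $\one_{\alpha \in \{q^n : n \in \Z\}}$, which vanishes under the standing assumption $\alpha \not\in \{q^n : n \in \Z\}$. In either case $H \equiv 0$, i.e.\ $F = G$, which proves the equivalence. The final assertion is then immediate: if $F$ and $G$ have only simple poles and their residues agree at each point of $I$, then at every such point the principal parts of $F$ and $G$ cancel, so $F - G$ has a removable singularity there and extends to an element of $\Hol(S)$; the equivalence just proved forces $F = G$.

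The only real content is the continuation step of the second paragraph: the point to notice is that the absence of both zeros and poles in the multiplier $\alpha x^d$ upgrades the generic meromorphic continuation of \cref{lem:basic}.(v) to a genuinely holomorphic (entire) continuation, which is exactly what places $H$ in the zero-dimensional space $\mT_\C\big(\alpha x^d\big)$ rather than in some larger space of meromorphic quasiperiodic functions. Everything else is routine bookkeeping.
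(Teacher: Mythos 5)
Your proof is correct and follows the paper's own route: reduce to $H := F - G \in \mT_S\left(\alpha x^d\right)$ and kill it using \cref{prop:monomial-spaces}.(i). The only difference is that your holomorphic continuation of $H$ to all of $\C$ (valid, with the nonvanishing of the multiplier $\alpha x^d$ being the right justification) is unnecessary: \cref{prop:monomial-spaces}.(i) is stated for \emph{any} $\Z$-invariant domain containing an open horizontal strip of width $> \Im \tau$, so it applies directly to $\mT_S\left(\alpha x^d\right) = \{0\}$, which is exactly the paper's one-line argument.
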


\begin{proof}
Note that $F - G \in \Hol(S)$ implies $F - G \in \mT_S\left(\alpha x^d\right)$, so it remains to show that $\mT_S\left(\alpha x^d\right) = \{0\}$. This was established in \cref{prop:monomial-spaces}.(i) (under the given condition if $d = 0$).
\end{proof}

\begin{proposition}[Quotients of double products] \label{prop:fractional}
For $x, q \in \C$ with $0 < |q| < |x| < 1$, one has (as noted, for instance, in \cite[para.\ 486]{tannery1898elements})
\begin{equation} \label{eq:inverse}
    \frac{(q; q)^2}{\la x; q \ra} 
    =
    \sum_{m, n \geq 0} \left( q^{(2m+1)n} - q^{(2n+1)(m+1)}\right) x^{m-n},
\end{equation}
Moreover, one has
\begin{equation} \label{eq:inverse2}
    \frac{(q; q)^4}{\la x; q\ra^2}
    =
    \sum_{m, n \geq 0} \left(m+n+1\right) q^{(m+1)n} x^{m-n}.
\end{equation}
If additionally $y \in \C$ such that $|q| < |y| < 1$, then as noted by Hickerson \cite[Theorem 1.5]{hickerson1988proof},
\begin{equation} \label{eq:fractional}
   (q; q)^2 \frac{ \la xy; q \ra}{\la x; q \ra \la y; q \ra} = 
    \sum_{m, n \ge 0} q^{mn} x^m y^n - \sum_{m, n \ge 1} q^{mn} x^{-m} y^{-n}.
\end{equation}
\end{proposition}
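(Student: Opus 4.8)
The plan is to prove all three identities with the pole-identification criterion of \cref{lem:proofs-by-pole}, since in each case the left-hand side is a quotient of double products whose poles sit on the lattice $z=n\tau+m$ and which lives in a space $\mT_D(\alpha x^d)$ with $d\le 0$. Using \cref{tbl:spaces} together with \cref{lem:basic}.(iii), I first locate these spaces: the left-hand side of \cref{eq:inverse} lies in $\mT(-x^{-1})$ (so $d=-1$, simple poles); that of \cref{eq:inverse2} in $\mT(x^{-2})$ (so $d=-2$, double poles); and, viewing $y$ as a fixed parameter with $|q|<|y|<1$, that of \cref{eq:fractional} in $\mT(y)$ (so $d=0$ and $\alpha=y$, which is automatically not in $\{q^n:n\in\Z\}$ in this range). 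In every case the poles are exactly the simple zeros of $\la x;q\ra$ (resp. double zeros of $\la x;q\ra^2$), and \cref{prop:monomial-spaces}.(i) guarantees $\mT_S(\alpha x^d)=\{0\}$ for such $d$.

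Next I would show that each right-hand side represents the same meromorphic function. The strip $S_0=\{|q|<|x|<1\}=\{0<\Im z<\Im\tau\}$ is precisely the pole-free strip lying between two consecutive rows of the lattice, and each right-hand series converges there absolutely and locally uniformly to a holomorphic function $R$; I take $S\supset S_0$ slightly wider than $\Im\tau$, so that its only poles are the two bounding rows $z=m$ and $z=\tau+m$, collected into the isolated set $I$. The delicate point is that the defining series of $R$ converges only on $S_0$, so its quasi-periodicity is an identity of meromorphic continuations rather than of convergent series: shifting $x\mapsto qx$ and comparing Laurent coefficients, the formal discrepancy $R-\alpha\,T_\tau R$ is a Fourier series supported on a single row of poles. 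For the simple-pole cases \cref{eq:inverse} and \cref{eq:fractional} this discrepancy is exactly the Dirac comb $\sum_{m\in\Z}x^m$, while for \cref{eq:inverse2} it is the corresponding second-order object; in either form it simultaneously certifies $R\in\mT_{S\setminus I}(\alpha x^d)$ as meromorphic functions and encodes the residues.

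With both functions in the same space, \cref{lem:proofs-by-pole} reduces the identity to checking that the principal parts agree, and by $1$-periodicity together with the quasi-periodicity it suffices to do this at the single pole $z=0$ (equivalently $x=1$). For \cref{eq:inverse} and \cref{eq:fractional} the poles are simple, so I only match $\lim_{x\to1}(1-x)(\,\cdot\,)$. On the left, $\la x;q\ra=(1-x)(qx;q)(q/x;q)\sim(1-x)(q;q)^2$ as $x\to1$, whence both left-hand sides are $\sim(1-x)^{-1}$ and $\lim_{x\to1}(1-x)(\text{LHS})=1$. On the right, writing $R=\sum_k c_kx^k$ one has $\lim_{x\to1}(1-x)R=c_+-c_-$, where $c_\pm:=\lim_{k\to\pm\infty}c_k$; a short computation gives $c_+=1$ and $c_-=0$ in both cases (for \cref{eq:fractional} the coefficients $c_k$ are the geometric sums $\sum_{n\ge0}(q^ky)^n$ for $k\ge0$ and $-\sum_{n\ge1}(q^{|k|}/y)^n$ for $k<0$), so the residues match.

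For \cref{eq:inverse2} the poles are double, so matching residues is not enough and I must match the full principal part $\tfrac{A}{(1-x)^2}+\tfrac{B}{1-x}$; here $A=1$ and, pleasantly, $B=0$ because $\tfrac{d}{dx}\log\big[(qx;q)(q/x;q)\big]\big|_{x=1}=\sum_{n\ge1}\big(\tfrac{-q^n}{1-q^n}+\tfrac{q^n}{1-q^n}\big)=0$, so the left-hand side is $\sim(1-x)^{-2}$ with vanishing residue. One then checks that $R$ has the same principal part, from $\lim_{x\to1}(1-x)^2R=1$ (a telescoping of second differences of the $c_k$, using $c_k\sim k+1$ as $k\to+\infty$ and $c_k\to0$ as $k\to-\infty$) and the analogous vanishing of its residue. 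Alternatively, since $\tfrac{(q;q)^4}{\la x;q\ra^2}=\big(\tfrac{(q;q)^2}{\la x;q\ra}\big)^2$, \cref{eq:inverse2} follows by squaring the Laurent series of \cref{eq:inverse} and resumming the Cauchy product. Once the principal parts agree, $F-R$ is holomorphic on $S$, hence lies in $\mT_S(\alpha x^d)=\{0\}$, giving the identity. The main obstacle is precisely this bookkeeping of the meromorphic continuation of the one-sided-convergent series $R$ across a row of poles: extracting the correct quasi-periodicity and residues (and, for \cref{eq:inverse2}, the whole double-pole principal part) from the asymptotics of the Laurent coefficients.
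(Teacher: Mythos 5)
Your proposal follows the paper's own strategy almost exactly: the paper also proves \cref{eq:inverse} and \cref{eq:fractional} by placing both sides in $\mT_{S\setminus I}\left(-x^{-1}\right)$, resp.\ $\mT_{S\setminus I}(y)$, and invoking \cref{lem:proofs-by-pole} after matching residues at $x = 1$ (your residue computations, including $c_+ = 1$, $c_- = 0$ and, for the double pole, $A = 1$, $B = 0$ via the vanishing logarithmic derivative, are all correct). The one place where your write-up is not yet a proof is the step you yourself flag as the main obstacle: certifying $R \in \mT_{S \setminus I}\left(\alpha x^d\right)$. Your ``Dirac comb'' argument — shifting $x \mapsto qx$ and comparing Laurent coefficients of series converging in \emph{disjoint} annuli — is precisely the formal-series manipulation the paper's closing remark warns against: a nonzero formal discrepancy $\sum_{m \in \Z} x^m$ cannot ``certify'' membership in $\mT_{S\setminus I}$, since that membership requires the discrepancy of the \emph{meromorphic continuations} to vanish identically. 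The comb heuristic is morally right only because $\frac{1}{1-x} + \frac{x^{-1}}{1-x^{-1}} = 0$ as meromorphic functions, and making this rigorous requires exactly what the paper does: regroup the right-hand side as $\frac{1}{1-x} + \sum_{n \ge 1} U_n(z) - \sum_{n \ge 0} V_n(z)$ with $U_n, V_n$ explicit geometric series in $x$, which simultaneously constructs the continuation across the row of poles and allows a direct verification that $-x^{-1}R(z+\tau) = R(z)$. Without that regrouping, your claim that the continuation to the wider strip exists with poles only on the bounding rows is assumed rather than proved.

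For \cref{eq:inverse2} you genuinely diverge from the paper, which sidesteps the double pole entirely by differentiating \cref{eq:fractional} in $y$ at $y = q/x$ and multiplying by $q/x$ — a two-line derivation once \cref{eq:fractional} is in hand. Your direct route (matching the full principal part $\frac{A}{(1-x)^2} + \frac{B}{1-x}$) is viable, and the paper acknowledges as much, but note that your coefficient asymptotics $c_k \sim k+1$ and $c_k \to 0$ only give pointwise limits; concluding that $R - \frac{1}{(1-x)^2}$ is holomorphic across $|x| = 1$, and that the residue of $R$ vanishes, again needs the explicit partial-fraction decomposition (now with terms of the form $(1 - q^n x)^{-2}$) rather than tail estimates on the $c_k$. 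Your alternative of squaring \cref{eq:inverse} and ``resumming the Cauchy product'' is not a shortcut: that resummation is a substantive combinatorial identity in its own right (the paper's remark ties it to Besge's formula for $\sum_k \sigma(k)\sigma(n-k)$), so presenting it as an afterthought understates the work. In short: right framework, right residue data, but the continuation/quasiperiodicity bookkeeping you defer is the actual content of the paper's proof, and the paper's trick for \cref{eq:inverse2} is worth adopting over the double-pole route.
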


\begin{remark}
In most product identities, the nonzero 2D Fourier coefficients of entire and half-plane-entire functions tend to cluster in parabolas, such as in \cref{eq:triple,eq:quintuple,eq:septuple,eq:nonuple} or (partly) in \cref{tbl:plot1}. But the series in \cref{prop:fractional} have poles when $x = q^n$ ($n \in \Z$), so their nonzero coefficients cluster in a collection of lines rather than parabolas; each line corresponds to a series of the type $(1-q^nx)^{-1} = \sum_{m \ge 0} q^{mn}x^m$ or $(1-q^nx)^{-2} = \sum_{m \ge 0} (m+1) q^{mn}x^m$, indicating the order of the associated pole.
\end{remark}
\begin{table}[ht]
    \centering \tiny
    \captionsetup{width=\linewidth}
    \begin{tabular}{r|rrrrrrrrrrrrrrrrrrrrrrrrrrrrrrrrrrrrrrrrrrrrrrrrrrrrrrrrrrrrrrrrrrrrrrrrrrrrrrrrrrrrrr}
    \textbf{15} & 16 &    &    &    &    &    &    &    &    &    &    &    &    &    &    &    &    & 18 &    &    &    &    &    &    \\
    \textbf{14} & 15 &    &    &    &    &    &    &    &    &    &    &    &    &    &    &    & 17 &    &    &    &    &    &    &    \\
    \textbf{13} & 14 &    &    &    &    &    &    &    &    &    &    &    &    &    &    & 16 &    &    &    &    &    &    &    &    \\
    \textbf{12} & 13 &    &    &    &    &    &    &    &    &    &    &    &    &    & 15 &    &    &    &    &    &    &    &    &    \\
    \textbf{11} & 12 &    &    &    &    &    &    &    &    &    &    &    &    & 14 &    &    &    &    &    &    &    &    &    &    \\
    \textbf{10} & 11 &    &    &    &    &    &    &    &    &    &    &    & 13 &    &    &    &    &    &    &    &    &    &    &    \\
    \textbf{9} & 10 &    &    &    &    &    &    &    &    &    &    & 12 &    &    &    &    &    &    &    &    &    &    &    &    \\
    \textbf{8} &  9 &    &    &    &    &    &    &    &    &    & 11 &    &    &    &    &    &    &    &    &    &    &    & 13 &    \\
    \textbf{7} &  8 &    &    &    &    &    &    &    &    & 10 &    &    &    &    &    &    &    &    &    &    & 12 &    &    &    \\
    \textbf{6} &  7 &    &    &    &    &    &    &    &  9 &    &    &    &    &    &    &    &    &    & 11 &    &    &    &    &    \\
    \textbf{5} &  6 &    &    &    &    &    &    &  8 &    &    &    &    &    &    &    &    & 10 &    &    &    &    &    &    &    \\
    \textbf{4} &  5 &    &    &    &    &    &  7 &    &    &    &    &    &    &    &  9 &    &    &    &    &    &    &    &    &    \\
    \textbf{3} &  4 &    &    &    &    &  6 &    &    &    &    &    &    &  8 &    &    &    &    &    &    &    &    & 10 &    &    \\
    \textbf{2} &  3 &    &    &    &  5 &    &    &    &    &    &  7 &    &    &    &    &    &    &    &  9 &    &    &    &    &    \\
    \textbf{1} &  2 &    &    &  4 &    &    &    &    &  6 &    &    &    &    &    &    &  8 &    &    &    &    &    &    &    &    \\
    \textbf{0} &  1 &    &  3 &    &    &    &  5 &    &    &    &    &    &  7 &    &    &    &    &    &    &    &  9 &    &    &    \\
    \textbf{-1} &    &  2 &    &    &  4 &    &    &    &    &  6 &    &    &    &    &    &    &  8 &    &    &    &    &    &    &    \\
    \textbf{-2} &    &    &  3 &    &    &    &  5 &    &    &    &    &    &  7 &    &    &    &    &    &    &    &  9 &    &    &    \\
    \textbf{-3} &    &    &    &  4 &    &    &    &    &  6 &    &    &    &    &    &    &  8 &    &    &    &    &    &    &    &    \\
    \textbf{-4} &    &    &    &    &  5 &    &    &    &    &    &  7 &    &    &    &    &    &    &    &  9 &    &    &    &    &    \\
    \textbf{-5} &    &    &    &    &    &  6 &    &    &    &    &    &    &  8 &    &    &    &    &    &    &    &    & 10 &    &    \\
    \textbf{-6} &    &    &    &    &    &    &  7 &    &    &    &    &    &    &    &  9 &    &    &    &    &    &    &    &    &    \\
    \textbf{-7} &    &    &    &    &    &    &    &  8 &    &    &    &    &    &    &    &    & 10 &    &    &    &    &    &    &    \\
    \textbf{-8} &    &    &    &    &    &    &    &    &  9 &    &    &    &    &    &    &    &    &    & 11 &    &    &    &    &    \\
    \textbf{-9} &    &    &    &    &    &    &    &    &    & 10 &    &    &    &    &    &    &    &    &    &    & 12 &    &    &    \\
    \textbf{-10} &    &    &    &    &    &    &    &    &    &    & 11 &    &    &    &    &    &    &    &    &    &    &    & 13 &    \\
    \textbf{-11} &    &    &    &    &    &    &    &    &    &    &    & 12 &    &    &    &    &    &    &    &    &    &    &    &    \\
    \textbf{-12} &    &    &    &    &    &    &    &    &    &    &    &    & 13 &    &    &    &    &    &    &    &    &    &    &    \\
    \textbf{-13} &    &    &    &    &    &    &    &    &    &    &    &    &    & 14 &    &    &    &    &    &    &    &    &    &    \\
    \textbf{-14} &    &    &    &    &    &    &    &    &    &    &    &    &    &    & 15 &    &    &    &    &    &    &    &    &    \\
    \textbf{-15} &    &    &    &    &    &    &    &    &    &    &    &    &    &    &    & 16 &    &    &    &    &    &    &    &    \\
    \hline
        &\ \textbf{0} &\ \textbf{1} &\ \textbf{2} &\ \textbf{3} &\ \textbf{4} &\ \textbf{5} &\ \textbf{6} &\ \textbf{7} &\ \textbf{8} &\ \textbf{9} &\textbf{10} &\textbf{11} &\textbf{12} &\textbf{13} &\textbf{14} &\textbf{15} &\textbf{16} &\textbf{17} &\textbf{18} &\textbf{19} &\textbf{20} &\textbf{21} &\textbf{22} &\textbf{23} 
    \end{tabular}
    \caption{2D Fourier coefficients in \cref{eq:inverse2} for $|q| < |x| < 1$ (coeff.\ of $x^m q^n$ shown on line $m$, column $n$).}
    \label{tbl:plot2}
\end{table}
\begin{proof}[Proof of \cref{prop:fractional}]
We view the expressions in \cref{eq:inverse,eq:inverse2,eq:fractional} as functions of $z$ (as usual), and regard $q$ and $y$ as constants. Let $L(z)$ denote the left-hand side and $R(z)$ the right-hand side of \cref{eq:inverse}. One can check that both sides define holomorphic functions on $\{0 < \Im z < \Im \tau\}$, with boundaries imposed on $R(z)$ by the convergence of $\sum_{m \ge 0} x^m$ (coming from $n = 0$), respectively $\sum_{n \ge 0} q^n x^{-n}$ (coming from $m = 0$); in fact, $L(z)$ is a meromorphic in all of $\C$. Now rewrite
\[
    R(z) = \frac{1}{1 - x} + \sum_{n \ge 1} U_n(z) - \sum_{n \ge 0} V_n(z)
    \qquad\qquad 
    \text{for } 0 < \Im z < \Im \tau,
\]
where $U_n(z) = \sum_{m \ge 0} q^{(2m+1)n} x^{m-n}$ and $V_n(z) = \sum_{m \ge 0} q^{(2n+1)(m+1)} x^{m-n}$. This gives a meromorphic continuation of $R(z)$ to $S := \{-\Im \tau < \Im z < \Im \tau\}$, with poles only at $x = 1$ (i.e., at $z = 2\pi i n$, $n \in \Z$). Moreover, on $S \cap (S - \tau) = \{-\Im \tau < \Im z < 0\}$, this continuation satisfies
\[
\begin{aligned}
    -x^{-1} R(z + \tau)
    &=
    -\frac{x^{-1}}{1 - qx} - 
    \sum_{n \ge 1} \left(V_n(z) + x^{-n-1}\right)
    +
    \sum_{n \ge 0} U_{n+1}(z)
    \\
    &= R(z) - \frac{1}{1-x} -\frac{x^{-1}}{1 - qx} + V_0(z) + \frac{x^{-2}}{1 - x^{-1}} \\
    &= R(z) - \frac{1}{1-x} -\frac{x^{-1}}{1 - qx} + \frac{q}{1-qx} + \frac{x^{-1}}{x - 1}
    \quad =
    R(z) + x^{-1} - x^{-1} \ = R(z).
\end{aligned}
\]
Thus $R \in \mT_{S \setminus I}\left(-x^{-1}\right)$, where $I = \{x = 1\}$. But the same is true for $L(z) = (q; q)^2 \la x; q\ra^{-1}$, since $\la x; q\ra \in \mT_\C\left(-x\right)$ only has zeros on $\{x = q^n : n \in \Z\}$. Furthermore, near the poles one has

\[
    \lim_{x \to 1} (1 - x) L(z) = \lim_{x \to q} \frac{(q; q)^2 (1 - x)}{\la x; q\ra} 
    = \lim_{x \to 1} \frac{(q; q)^2}{(qx; q)\left(qx^{-1}; q\right)} = 1,
\]
\[
    \lim_{x \to 1} (1 - x) R(z) = \lim_{x \to 1} \frac{1-x}{1-x} = 1,
\]
so that $L - R$ is holomorphic on $S$; \cref{lem:proofs-by-pole} now completes the proof of \cref{eq:inverse}. The proof of $\cref{eq:fractional}$ is analogous, by writing the right-hand side as $\frac{1}{1-x} + \sum_{n \ge 1} \frac{y^n}{1-q^n x} - \sum_{n \ge 1} \frac{q^nx^{-1}y^{-n}}{1-q^n x^{-1}}$ on the same punctured strip $S \setminus I$; the relevant function space is now $\mT_{S\setminus I}(y)$, and the fact that $y \not\in \left\{q^n : n \in \Z\right\}$ (since $|q| < |y| < 1$) allows us to apply \cref{lem:proofs-by-pole} (again, by identifying residues at $x = 1$).

As for \cref{eq:inverse2}, one can proceed similarly using \cref{lem:proofs-by-pole} for the function space $\mT_{S \setminus I}\left(x^{-2}\right)$, except that the presence of double poles at $x = 1$ makes the identification harder. Alternatively, one can recover \cref{eq:inverse2} by taking the derivative in $y$ of \cref{eq:fractional} at $y = q/x$, and multiplying by $q/x$.
\end{proof}

\begin{remark}
Identity \cref{eq:inverse}, with the substitutions $x \mapsto x^2$, $q \mapsto q^2$, also follows from \cref{eq:fractional} by taking $y = -q/x$.  We note that it would be more difficult to prove such identities of meromorphic functions if we viewed them strictly as formal series. Indeed, there is no adequate ring of formal series containing the right-hand sides of \cref{eq:inverse,eq:inverse2,eq:fractional} both before and after substituting $x \mapsto xq$; attempting such proofs would lead to expressions of the form $\sum_{n \in \Z} x^n$, which correspond in the meromorphic setting to sums of the type $\frac{1}{1-x} + \frac{x^{-1}}{1-x^{-1}} = 0$.
\end{remark}

\begin{remark}
From the fact that \cref{eq:inverse2} is the square of \cref{eq:inverse}, one can eventually deduce Besge's identity for the convolution $\sum_{k=1}^{n-1} \sigma(k) \sigma(n-k)$, where $\sigma$ is the sum-of-divisors function.
\end{remark}

\section{Higher-order product identities and generalized eta functions} \label{sec:higher-prod-id}

\subsection{M-coefficients and higher-order identities}\label{subsec:higher-order}
There are two main ingredients to proving the nonuple and undecuple product identities. The first one consists of the triple and quintuple identities proven in \cref{subsec:identification} (applied at two different instances), and the second one consists of the following lemma, which formalizes the multiplication identities anticipated in \cref{eq:m-coeffs}.

\begin{lemma}[Multiplication identities for $\mT_\C(\alpha x^d)$] \label{lem:m-identities}
Let $\alpha, \beta \in \C^\times$, and $a, b, u, v$ be positive integers. Then for $k, j \in \Z$ and any complete residue system $R$ of integers modulo $au^2+bv^2$, one has
\begin{equation} \label{eq:m-expansion}
    \vect{\alpha x^a}{k}(u z) \cdot \vect{\beta x^b}{j}(v z) = 
    \sum_{\ell \in R} M_\ell(q) \vect{\alpha^u \beta^v q^{a\binom{u}{2} + b \binom{v}{2}} x^{au^2+bv^2}}{\ell},
\end{equation}
where the coefficients $M_\ell(q) \in \C$ also depend on $a, b, \alpha, \beta, u, v$ but not on $z$, and are computed as follows. Let $d := \gcd(au, bv)$, $a' := au/d$, $b' := bv/d$ and $\ell' := (\ell - uk - vj)/d$. If $\ell' \not\in \Z$, then $M_\ell(q) = 0$. Else, letting $(n_0, m_0)$ be an integer solution to $a'n_0 + b'm_0 = 1$, one has
\begin{equation} \label{eq:m-coeffs-formula}
    M_\ell(q)
    =
    \gamma_\ell \sum_{n \in \Z} 
    \big(\alpha^{b'} \beta^{-a'}\big)^n 
    q^{\frac{ab'^2 + ba'^2}{2}n^2 + \delta_\ell n},
\end{equation}
\[
    \gamma_\ell = \big( \alpha^{n_0} \beta^{m_0} \big)^{\ell'} q^{a\binom{n_0 \ell' }{2} + b\binom{m_0 \ell'}{2} + (kn_0 + jm_0)\ell'},
    \qquad\quad
    \delta_\ell 
    =
    (ab'n_0 - ba'm_0)\ell' - \frac{ab' - ba'}{2} + kb' - ja'.
\]
\end{lemma}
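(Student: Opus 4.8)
The plan is to first pin down the ambient space. Setting $D := au^2 + bv^2$ and $\gamma := \alpha^u\beta^v q^{a\binom{u}{2}+b\binom{v}{2}}$, I would show that the left-hand side of \cref{eq:m-expansion} lies in $\mT_\C\bigl(\gamma x^D\bigr)$, so that the claimed expansion exists with uniquely determined coefficients. Writing $\vect{\alpha x^a}{k}(uz) = \bigl(S_u\vect{\alpha x^a}{k}\bigr)(z)$ and applying \cref{lem:basic}.(iv) with $g = \alpha x^a$, a short computation gives $\prod_{j=0}^{u-1} S_u T_{j\tau}(\alpha x^a) = \alpha^u q^{a\binom{u}{2}} x^{au^2}$, whence $S_u\vect{\alpha x^a}{k}\in\mT_\C\bigl(\alpha^u q^{a\binom u2}x^{au^2}\bigr)$, and symmetrically for the $v$-factor. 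Multiplying the two via \cref{lem:basic}.(iii) lands the product in $\mT_\C(\gamma x^D)$, which is $D$-dimensional with basis $\{\vect{\gamma x^D}{\ell}\}_{\ell\in R}$ by \cref{prop:monomial-spaces}; this secures existence and uniqueness of the $M_\ell(q)$.

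The conceptual key, which avoids any global reindexing of the summation lattice, is that $M_\ell(q)$ is simply the coefficient of $x^\ell$ on the left-hand side. Indeed, each $\vect{\gamma x^D}{\ell''}$ has nonzero Fourier coefficients only at exponents $\equiv\ell''\!\pmod D$, with the coefficient of $x^{\ell''}$ itself equal to $1$ (the $N=0$ term of its defining series); since $R$ is a complete residue system, extracting the coefficient of $x^\ell$ in \cref{eq:m-expansion} isolates the single summand $\ell''=\ell$. I would therefore read off $M_\ell(q)$ directly from the double series $\sum_{n,m}\alpha^n\beta^m q^{a\binom n2+kn+b\binom m2+jm}x^{aun+bvm+uk+vj}$: the contributing pairs $(n,m)$ are exactly those with $aun+bvm=\ell-uk-vj$. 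Because $\gcd(au,bv)=d$, this has integer solutions only when $d\mid\ell-uk-vj$, i.e.\ $\ell'\in\Z$; otherwise the coefficient, hence $M_\ell(q)$, vanishes, which is the first case of the lemma.

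When $\ell'\in\Z$, I would parametrize the solution set by $n=n_0\ell'+b's$, $m=m_0\ell'-a's$ for $s\in\Z$, using a fixed solution $(n_0,m_0)$ of $a'n_0+b'm_0=1$. Substituting, the monomial factor becomes $\alpha^n\beta^m=(\alpha^{n_0}\beta^{m_0})^{\ell'}(\alpha^{b'}\beta^{-a'})^s$, and the quadratic $q$-exponent $a\binom n2+kn+b\binom m2+jm$ separates into an $s$-free constant, a term linear in $s$, and a term quadratic in $s$. Collecting these should reproduce exactly the Gaussian sum $\sum_{s\in\Z}(\alpha^{b'}\beta^{-a'})^s q^{\frac{ab'^2+ba'^2}{2}s^2+\delta_\ell s}$ times the prefactor $\gamma_\ell$, matching \cref{eq:m-coeffs-formula}; convergence is automatic, since $a,b>0$ forces the positive leading coefficient $\tfrac12(ab'^2+ba'^2)$, dominating the geometric factor.

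The bulk of the work, and the only genuine obstacle, is the algebraic verification in this last step that the constant, linear, and quadratic parts in $s$ reproduce $\gamma_\ell$, $\delta_\ell$, and the leading coefficient precisely as stated. Here I would use the identity $a\binom{n_0\ell'}2+b\binom{m_0\ell'}2=\tfrac12(an_0^2+bm_0^2)\ell'^2-\tfrac12(an_0+bm_0)\ell'$ to recognize the constant $q$-exponent (which combines with $(\alpha^{n_0}\beta^{m_0})^{\ell'}$ to give $\gamma_\ell$), together with $an_0b'-bm_0a'=ab'n_0-ba'm_0$ and the remaining terms $-\tfrac12(ab'-ba')+kb'-ja'$ to identify the coefficient $\delta_\ell$ of $s$. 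These simplifications are routine once the substitution is in place, so no hard estimate is involved; the entire difficulty is careful bookkeeping of the quadratic form in $(n,m)$ after the change of variables.
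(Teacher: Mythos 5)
Your proposal is correct and follows essentially the same route as the paper's proof: place the product in $\mT_\C\bigl(\alpha^u\beta^v q^{a\binom{u}{2}+b\binom{v}{2}}x^{au^2+bv^2}\bigr)$ via \cref{lem:basic}.(iii)--(iv), read off $M_\ell(q)$ as the coefficient of $x^\ell$ in the double series, observe solvability of $aun+bvm=\ell-uk-vj$ requires $\ell'\in\Z$, and parametrize the solutions as $(n,m)=\ell'(n_0,m_0)+s(b',-a')$ before collecting the constant, linear, and quadratic parts of the exponent. Your extra justification that extracting the coefficient of $x^\ell$ isolates the single summand $\ell''=\ell$ (via the Fourier support of each $\vect{\gamma x^D}{\ell''}$ modulo $D$) is a detail the paper leaves implicit, but it is the same argument.
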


\begin{remark}
When $u = v = 1$ and $R$ is a complete residue system modulo $a+b$, \cref{lem:m-identities} states that
\[
    \vect{\alpha x^a}{k} \cdot \vect{\beta x^b}{j}
    =
    \sum_{\ell \in R} M_\ell(q) \vect{\alpha \beta x^{a+b}}{\ell},
\]
which is precisely the type of identity anticipated in \cref{eq:m-coeffs}; $M_\ell(q)$ is also a bit simpler in this case since $ab' = ba'$. If $\gcd(a, b) = 1$ then $M_\ell(q)$ is always nonzero, and we get $d = 1$, $a' = a$, $b' = b$, $\ell' = \ell - k - j$; so in particular, $\delta_\ell$ becomes $ab(n_0 - m_0)\ell' + kb - ja$. If additionally $b = 1$, then we can take $n_0 = 1$ and $m_0 = 1-a$, which simplifies $M_\ell(q)$ even further.

But in practice, one may avoid computing $\gamma_\ell$ and $\delta_\ell$ by the formulae above, since knowing that $M_\ell(q)$ takes the form in \cref{eq:m-coeffs-formula} allows for direct numerical identification (by computing the first few terms of the Fourier series in $\tau$ of the coefficient of $x^\ell$ in the original product).
\end{remark}

\begin{proof}
By \cref{lem:basic}.(iv), $S_u\left(\vect{\alpha x^a}{k}\right) \in \mT_\C\big(\alpha^u q^{a\binom{u}{2}} x^{au^2} \big)$ and $S_v\left(\vect{\beta x^b}{j}\right) \in \mT_\C\big(\beta^v q^{b\binom{v}{2}} x^{bv^2} \big)$. Hence their product lies in the product $\mT_\C$-space, which is spanned by the basis vectors on the right-hand side of \cref{eq:m-expansion} for any complete residue system $R$ modulo $au^2+bv^2$ (recall the proof of \cref{prop:polynomial-spaces}). It remains to compute the coefficients $M_\ell(q)$, i.e.\ the coefficients of $x^\ell$ in
\[
   \vect{\alpha x^a}{k}(u z) \cdot \vect{\beta x^b}{j}(v z)
   =
   \sum_{n \in \Z} \alpha^n q^{a\binom{n}{2} + kn} x^{aun + uk} 
    \cdot 
    \sum_{m \in \Z} \beta^m q^{b\binom{m}{2}+jm} x^{bvm+vj}.
\]
By multiplying these series and identifying coefficients of $x^\ell$, we find that
\[
    M_\ell(q)
    =
    \sum_{aun+bvm=\ell-uk-vj} 
    \alpha^n \beta^m q^{a\binom{n}{2} + kn + b\binom{m}{2} + jm}
    =
    \sum_{a'n + b'm = \ell'} 
    \alpha^n \beta^m q^{a\binom{n}{2} + kn + b\binom{m}{2} + jm}.
\]
Hence if $\ell' \not\in \Z$, there are no terms in the summation above and thus $M_\ell(q) = 0$. Otherwise, there are infinitely many terms, given precisely by
\[
    (n, m) = \ell'(n_0, m_0) + p(b', -a'), 
    \qquad\qquad p \in \Z.
\]
By reindexing and replacing $p$ with $n$, this yields
\[
    M_\ell(q) 
    =
    \sum_{n \in \Z} \alpha^{n_0\ell'  + b'n} \beta^{m_0\ell'  - a'n} 
    q^{a\binom{n_0\ell' + b'n}{2} + k(n_0\ell'  + b'n) + b \binom{m_0\ell' - a'n}{2} + j(m_0\ell' - a'n)},
\]
which simplifies to the expression in \cref{eq:m-coeffs-formula}.
\end{proof}

\begin{remark}
Given $\alpha, \beta \in \C^\times$ and positive integers $a, b, u, v$, we are interested in the products below:
\begin{align}
    \label{eq:type1}
    \text{Type 1:}\qquad
    &\la \alpha x^{u a}; q^a \ra \la \beta x^{v b}; q^b \ra,
    \\
    \label{eq:type2}
    \text{Type 2:}\qquad
    &\la \alpha x^{u a}; q^a \ra \la \alpha^2 x^{2u a}q^a; q^{2a} \ra \la \beta x^{v b}; q^b \ra ,
    \\
    \label{eq:type3}
    \text{Type 3:}\qquad
    &\la \alpha x^{u a}; q^a \ra \la \alpha^2 x^{2u a}q^a; q^{2a} \ra \la \beta x^{v b}; q^b \ra \la \beta^2 x^{2v b} q^b; q^{2b} \ra.
\end{align}
At the same time, adequate substitutions in \cref{eq:triple,eq:quintuple} yield
\begin{align} \label{eq:substitutions-triple}
    \left(q^a; q^a\right)\la \alpha x^{u a}; q^a \ra 
    &= 
    \vect{-\alpha x^a}{0}(uz),
    \\
    \label{eq:substitutions-quintuple}
    \left(q^a; q^a\right) \la \alpha x^{u a}; q^a \ra \la \alpha^2 x^{2u a}q^a; q^{2a} \ra 
    &=
    \vect{\alpha^3 q^a x^{3a}}{0}(uz) 
    - 
    \alpha \vect{\alpha^3 q^a x^{3a}}{a}(uz),
\end{align}
In this context, one strategy for finding higher-order product identities proceeds as follows:
\begin{itemize}
\item[\textbf{Step}]\textbf{1.}\ Given an infinite product as in \cref{eq:type1,eq:type2,eq:type3}, use \cref{eq:substitutions-triple,eq:substitutions-quintuple} to express it as a sum of products of the form in \cref{lem:m-identities}, up to scalar factors of $\left(q^a; q^a\right)\left(q^b; q^b\right)$.

\item[\textbf{Step}]\textbf{2.}\ Apply \cref{lem:m-identities} repeatedly and group terms to obtain a representation of the given product in the canonical basis of the appropriate $\mT_\C\left(\gamma x^d\right)$ space. Having an additional $\mS_\C$ structure (using \cref{tbl:spaces} in case $\alpha, \beta \in \{\pm 1\}$) reduces this work by half, since in a space of the form $\mT_\C\left(s q^c x^d\right) \cap \mS_\C\left(t x^{d-2c}\right)$, the canonical basis vectors pair up (recall \cref{prop:monomial-spaces}).

\item[\textbf{Step}]\textbf{3.}\ After Step 2, the coefficients of the canonical basis vectors $\vect{\gamma x^d}{\ell}$ in the resulting expansion are linear combinations of theta-type series in $q$, as in \cref{eq:m-coeffs-formula}.
Use \cref{eq:substitutions-triple,eq:substitutions-quintuple} again, specialized at adequate values $x \mapsto f(q)$, to rewrite each of these $q$-coefficients as infinite products (when applicable). This results in product identities like \cref{eq:septuple} and \cref{eq:nonuple}.
\end{itemize}
\cref{tbl:products} gathers some results of this approach. 
\end{remark}
\begin{table}[ht]
\renewcommand{\arraystretch}{1.3}
    \captionsetup{width=0.7\linewidth}
    \centering \small
    \begin{tabular}{c|c|c|c|c|c|c|c|c|c}
         Identity 
         & Sq.\ Triple
         & Sext.
         & Sept.
         & Sept.\ 2*
         & Oct.
         & Non.*
         & Non. 2*
         & Sq.\ Quint.*
         & Undec.*
         \\\hline
         Location
         & Pr.\ \ref{prop:squared-triple}
         & \cref{eq:sextuple}
         & \cref{eq:septuple}
         & Pr.\ \ref{prop:septuple2}
         & Pr.\ \ref{prop:octuple} 
         & Pr.\ \ref{prop:nonuple}
         & Pr.\ \ref{prop:nonuple2}
         & Pr.\ \ref{prop:squared-quintuple}
         & Pr.\ \ref{prop:undecuple}
         \\\hline
         Type(s)
         & 1
         & 1
         & 1 and 2
         & 2
         & 2
         & 2
         & 2
         & 3
         & 3
         \\\hline
         $\mT_\C$-Factor
         & $x^2$
         & $x^4$
         & $-qx^5$
         & $-qx^6$
         & $-qx^4$
         & $q^2x^7$
         & $-q^5x^{13}$
         & $q^2x^6$
         & $q^6x^{15}$
         \\\hline
         $\mS_\C$-Factor 
         & $x^2$
         & $x^4$
         & $x^3$
         & $x^4$
         & $x^2$
         & $x^3$
         & $x^3$
         & $x^2$
         & $x^3$
    \end{tabular}
    \caption{Identities for products of types 1, 2, or 3, following \cref{eq:type1,eq:type2,eq:type3} (`*' indicates original results, to the best of the author's knowledge).}
    \label{tbl:products}
\end{table}
While the names of these product identities (sextuple, septuple, etc.)\ are somewhat arbitrary (mainly because different authors discovered them), they can be usually justified by counting each factor of the form $(q^a; q^b)$ once, and each double product $\la q^b x^a; q^a \ra$ twice. \cref{fig:products} records how the infinite products in these identities build on each other progressively:
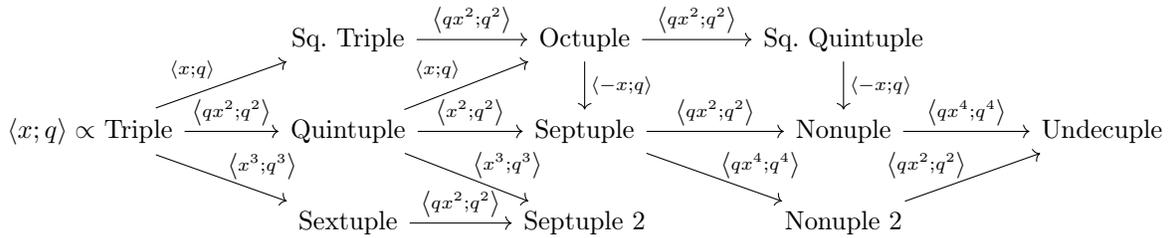
\begin{figure}[ht]
\captionsetup{width=0.8\linewidth}
\centering \small 
\begin{tikzcd} [column sep = large]
& \txt{Sq.\ Triple} \arrow{r}{\la qx^2; q^2 \ra}
& \txt{Octuple} \arrow{r}{\la qx^2; q^2 \ra} \arrow{d}{\la -x; q \ra}
& \txt{Sq.\ Quintuple} \arrow{d}{\la -x; q \ra}
\\
\txt{$\la x; q\ra \propto$ Triple} \arrow{r}{\la qx^2; q^2 \ra} \arrow{dr}[yshift=-0.1cm]{\la x^3; q^3 \ra} \arrow{ur}{\la x; q \ra}
& \txt{Quintuple} \arrow{r}{\la x^2; q^2 \ra} \arrow{dr}[yshift=-0.1cm]{\la x^3; q^3 \ra} \arrow{ur}{\la x; q \ra}
& \txt{Septuple} \arrow{r}{\la qx^2; q^2 \ra} \arrow{dr}[yshift=-0.1cm]{\la qx^4; q^4 \ra} 
& \txt{Nonuple} \arrow{r}{\la qx^4; q^4 \ra}  
& \txt{Undecuple} 
\\
& \text{Sextuple} \arrow{r}{\la qx^2; q^2 \ra}
& \text{Septuple 2}
& \text{Nonuple 2} \arrow{ur}[yshift=-0.1cm]{\la qx^2; q^2 \ra} 
\end{tikzcd}
\caption{Relations between infinite products; `$A \xrightarrow{F} B$' means that $B = A \cdot \alpha F$ for some entire function $F(z) = \la \pm q^b x^a; q^a \ra$ and some $\alpha \in \C^\times$ (which may depend on $q$).}
\label{fig:products}
\end{figure}

The strategy outlined in the previous remark would be cumbersome to formalize fully for type-2 and type-3 products (mainly due to variations in Step 3), but there is a relatively simple general structure for type-1 products. We note that identities for type-1 and closely related products are common in literature \cite{chu2007unification,adiga2014identities,yan2009several}, and often formulated in terms of Ramanujan's theta function.

\begin{proposition}[General type-1 identity] \label{prop:general-type-1}
Let $\alpha, \beta \in \C^\times$ and $a, b, u, v$ be positive integers. Define $d := \gcd(au, bv)$, $a' := au/d$ and $b' := bv/d$ as before, and let $c := ab'^2 + ba'^2$. Then there exist coefficients $\gamma_\ell \in \C$ and powers $p_\ell \in \Z$ such that for any complete residue system $R$ modulo $au^2 + bv^2$,
\[
    \la \alpha x^{u a}; q^a \ra \la \beta x^{v b}; q^b \ra
    =
    \sum_{\substack{\ell \in R \\ d \mid \ell}} 
    \gamma_\ell \frac{(q^c; q^c)\la -(-\alpha)^{b'}(-\beta)^{-a'}q^{p_\ell} ; q^c \ra}{\left(q^a; q^a\right)\left(q^b; q^b\right)}
    \vect{(-\alpha)^u (-\beta)^v q^{a\binom{u}{2} + b \binom{v}{2}} x^{au^2+bv^2}}{\ell}.
\]
It is often preferable to bring $p_\ell$ to the range $[0, c/2]$ using that $\la y; q^c \ra = -y\la q^c y; q^c \ra = \la q^c/y; q^c \ra$.
\end{proposition}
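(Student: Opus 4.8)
The plan is to reduce the claim to the multiplication identity in \cref{lem:m-identities} by first clearing denominators with the triple product substitution \cref{eq:substitutions-triple}. Concretely, I would multiply the left-hand side by $(q^a; q^a)(q^b; q^b)$ and rewrite
\[
    (q^a; q^a)(q^b; q^b)\la \alpha x^{ua}; q^a \ra \la \beta x^{vb}; q^b \ra
    =
    \vect{-\alpha x^a}{0}(uz)\,\vect{-\beta x^b}{0}(vz),
\]
applying \cref{eq:substitutions-triple} once for each factor (with quasiperiodicity monomials $-\alpha x^a$ and $-\beta x^b$). This places the product into exactly the form handled by \cref{lem:m-identities}, taken with $k = j = 0$ and with the roles of the $\alpha, \beta$ there played by $-\alpha, -\beta$.

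Next I would invoke \cref{lem:m-identities} directly. Since $k = j = 0$, the parameter $\ell'$ equals $\ell/d$, so $M_\ell(q) = 0$ unless $d \mid \ell$; this accounts for the restriction $d \mid \ell$ in the stated sum. For the surviving indices, \cref{eq:m-coeffs-formula} gives $M_\ell(q) = \gamma_\ell \sum_{n \in \Z} w^n q^{\frac{c}{2} n^2 + \delta_\ell n}$, where $w := (-\alpha)^{b'}(-\beta)^{-a'}$, $c = ab'^2 + ba'^2$, and $\gamma_\ell, \delta_\ell$ are the explicit quantities from the lemma specialized to $k = j = 0$ (so that the $kb'-ja'$ term in $\delta_\ell$ vanishes). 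The coefficients $\gamma_\ell$ in the proposition are precisely these.

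The crux is to recognize each surviving theta-type sum as a single modified theta function. Matching against Jacobi's triple product identity \cref{eq:triple} with $q \mapsto q^c$ (so that $q^{c\binom{n}{2}} = q^{\frac{c}{2}n^2 - \frac{c}{2}n}$) and solving for the argument, I would obtain
\[
    \sum_{n \in \Z} w^n q^{\frac{c}{2} n^2 + \delta_\ell n}
    =
    (q^c; q^c)\la -w\, q^{\,\delta_\ell + c/2}; q^c \ra,
\]
which identifies $p_\ell := \delta_\ell + c/2$ and the argument $-(-\alpha)^{b'}(-\beta)^{-a'} q^{p_\ell}$ matching the statement. Substituting this back into $M_\ell(q)$ and dividing through by $(q^a; q^a)(q^b; q^b)$ then yields the asserted expansion.

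The main obstacle is purely arithmetic: verifying that $p_\ell = \delta_\ell + c/2$ is an \emph{integer}, even though $c$ itself may be odd. Using $\delta_\ell = (ab'n_0 - ba'm_0)\ell' - \tfrac{ab' - ba'}{2}$, one computes $p_\ell = (ab'n_0 - ba'm_0)\ell' + \tfrac{ab'(b'-1) + ba'(a'+1)}{2}$, and integrality follows because $b'(b'-1)$ and $a'(a'+1)$ are products of consecutive integers, hence even, while $(ab'n_0 - ba'm_0)\ell' \in \Z$ since $\ell' \in \Z$. Finally, the closing remark about bringing $p_\ell$ into $[0, c/2]$ is immediate from the elementary symmetries $\la y; q^c \ra = -y\la q^c y; q^c \ra = \la q^c/y; q^c \ra$, which let one shift $p_\ell$ by multiples of $c$ and reflect its sign at the cost of absorbing a scalar into $\gamma_\ell$.
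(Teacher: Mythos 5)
Your proposal is correct and follows essentially the same route as the paper's own proof: rewrite the product via \cref{eq:substitutions-triple} as $\vect{-\alpha x^a}{0}(uz)\,\vect{-\beta x^b}{0}(vz)$, apply \cref{lem:m-identities} with $k=j=0$ and $\alpha \mapsto -\alpha$, $\beta \mapsto -\beta$, and convert each $M_\ell(q)$ back to a product via the triple product identity with $q \mapsto q^c$, arriving at exactly the paper's choice $p_\ell = \delta_\ell + c/2$. Your explicit verification that $p_\ell \in \Z$ (via the evenness of $ab'(b'-1)$ and $ba'(a'+1)$) is a correct detail the paper leaves implicit, and is a welcome addition.
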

\begin{proof}
Use \cref{eq:substitutions-triple} and then \cref{lem:m-identities} (for $\alpha \mapsto -\alpha$, $\beta \mapsto -\beta$ and $k = j = 0$) to rewrite the left-hand side as
\[
    \frac{\vect{-\alpha x^a}{0}(uz) \cdot \vect{-\alpha x^b}{0}(vz)}{\left(q^a; q^a\right)\left(q^b; q^b\right)}
    =
    \sum_{d \mid \ell \in R} 
    \frac{M_\ell(q)}{\left(q^a; q^a\right)\left(q^b; q^b\right)}
    \vect{(-\alpha)^u (-\beta)^v q^{a\binom{u}{2} + b \binom{v}{2}} x^{au^2+bv^2}}{\ell}.
\]
It remains to identify the coefficients $M_\ell(q)$ as infinite products, which follows from their formula in \cref{eq:m-coeffs-formula}, and the triple product identity with adequate substitutions. The resulting choices of $\gamma_\ell$ are as in \cref{lem:m-identities} but for $\alpha \mapsto -\alpha$ and $\beta \mapsto -\beta$, while $p_\ell = \delta_\ell + (c/2)$ where $\delta_\ell$ are as in \cref{lem:m-identities}.
\end{proof}

\cref{prop:general-type-1} may seem cumbersome due to the large number of parameters; a particular case related to \cref{qtn:characters} is given below. Following \cite[(5.2)]{andrews1999a2}, denote $\chi_j^{(2,c)}(q) := (q; q)^{-1}\left(q^c; q^c\right) \la q^j; q^c \ra$ for all $j \in \Z$, and note that these products satisfy $\chi_{j}^{(2,c)}(q) = -q^j \chi_{j+c}^{(2,c)}(q) = \chi_{c-j}^{(2,c)}(q)$.

\begin{corollary}[$M(2, u^2+v^2)_2$ character identity]\label{cor:character-identity}
Let $u, v$ be positive integers with $\gcd(u, v) = 1$ and $u + v$ odd, and set $c := u^2 + v^2$. Let $(n_0, m_0)$ be an integer solution to $un_0 + vm_0 = 1$, and take $p := v n_0 - u m_0$ and $r := (u-v+c)/2$. Then there exist factors $\gamma_\ell$ of the form $(-1)^{a_\ell} q^{b_\ell}$ (depending on $u, v, n_0, m_0$) such that, as an identity of functions in $\mT_\C\big(-q^{\binom{u}{2} + \binom{v}{2}} x^c\big) \cap \mS_\C\left(x^{u+v}\right)$,
\begin{equation} \label{eq:character-id}
    (q; q)\la x^u; q \ra \la x^v; q \ra
    =
    \sum_{\ell = (u+v-c+2)/2}^{(u+v-1)/2} 
    \gamma_\ell\ \chi_{p\ell + r}^{(2, c)}(q)
    \left( \vect{-q^{\binom{u}{2} + \binom{v}{2}} x^{c}}{\ell} + \vect{-q^{\binom{u}{2} + \binom{v}{2}} x^{c}}{u+v-\ell} \right).
\end{equation}
Moreover, each character of $M(2, c)_2$ (given by $\chi_j^{(2,c)}(q)$ for $1 \le j \le (c-1)/2$) appears exactly once in \cref{eq:character-id}, after applying the symmetries $\chi_{j}^{(2,c)}(q) = -q^j \chi_{j+c}^{(2,c)}(q) = \chi_{c-j}^{(2,c)}(q)$ as needed.
\end{corollary}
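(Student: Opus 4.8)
The plan is to obtain \cref{eq:character-id} as the specialization of \cref{prop:general-type-1} to $a=b=1$, $\alpha=\beta=1$, and then to exploit an $\mS_\C$-symmetry to pair up the basis vectors and identify the resulting coefficients with the characters $\chi_j^{(2,c)}$. First I would set $a=b=1$, $\alpha=\beta=1$, so that $d=\gcd(u,v)=1$, $a'=u$, $b'=v$, and $c = ab'^2+ba'^2 = u^2+v^2$. Since $u+v$ is odd, $(-1)^u(-1)^v = -1$, so the quasiperiodicity factor is $-q^{\binom u2+\binom v2}x^c$, and the product argument simplifies via $-(-1)^v(-1)^{-u} = -(-1)^{u+v} = 1$; hence the coefficient in \cref{prop:general-type-1} is $(q^c;q^c)\la q^{p_\ell};q^c\ra/(q;q)^2$. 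Multiplying the whole identity by $(q;q)$ turns each coefficient into $\chi_{p_\ell}^{(2,c)}(q)$ by the definition $\chi_j^{(2,c)}(q) = (q;q)^{-1}(q^c;q^c)\la q^j;q^c\ra$, producing the left-hand side $(q;q)\la x^u;q\ra\la x^v;q\ra$. A direct substitution into the formula for $\delta_\ell$ from \cref{lem:m-identities} (with $k=j=0$, $\ell'=\ell$) gives $\delta_\ell = (vn_0-um_0)\ell - (v-u)/2 = p\ell - (v-u)/2$, so $p_\ell = \delta_\ell + c/2 = p\ell + r$ with $r=(u-v+c)/2$; note $r\in\Z$ since $u-v+c$ is even.

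Next I would install the $\mS_\C$ structure in order to pair the basis vectors. Using $\la x^{-u};q\ra = \la qx^u;q\ra = -x^{-u}\la x^u;q\ra$ one checks $\la x^u;q\ra \in \mS_\C(-x^u)$ (and likewise for $v$), so by \cref{lem:basic-S}.(i) the left-hand side lies in $\mT_\C(-q^{\binom u2+\binom v2}x^c)\cap\mS_\C(x^{u+v})$; this is exactly the space in \cref{tbl:spaces} with $s=-1$, $t=1$, and $\mS$-exponent $c - 2(\binom u2+\binom v2) = u+v$. By \cref{prop:monomial-spaces}.(ii) the canonical basis of this intersection consists of the pairs $\vect{-q^{\binom u2+\binom v2}x^c}{\ell} + \vect{-q^{\binom u2+\binom v2}x^c}{u+v-\ell}$ (indices mod $c$), and the coefficients produced above must respect this symmetry. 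Thus it suffices to sum over one representative $\ell$ from each pair, which is the role of the range $(u+v-c+2)/2 \le \ell \le (u+v-1)/2$: this consists of $(c-1)/2$ consecutive integers, covering one element of each nondegenerate pair while omitting the unique self-paired residue $\ell_0$ with $2\ell_0\equiv u+v \pmod c$.

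It then remains to verify the final counting claim. The key arithmetic input is that $p^2 \equiv -1 \pmod c$; indeed, from $un_0+vm_0=1$ and $p=vn_0-um_0$ one computes $up\equiv v$ and $vp\equiv -u \pmod c$, so $(un_0+vm_0)(p^2+1)\equiv 0$ forces $p^2\equiv -1$, in particular $\gcd(p,c)=1$. Consequently $\ell\mapsto p\ell+r$ is a bijection of $\Z/c\Z$, and for any pair one finds $(p\ell+r)+(p(u+v-\ell)+r) = p(u+v)+2r \equiv (v-u)+(u-v) = 0 \pmod c$; hence the two partners map to indices $j$ and $c-j$, matching the symmetry $\chi_j^{(2,c)} = \chi_{c-j}^{(2,c)}$ precisely. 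At the omitted residue $\ell_0$ the same computation gives $p\ell_0+r\equiv 0\pmod c$, so that term would vanish because $\chi_0^{(2,c)}(q)=0$ (as $\la 1;q^c\ra = 0$), which justifies dropping it. Putting these together, as $\ell$ runs over the chosen range the reduced indices hit each of $\chi_1^{(2,c)},\dots,\chi_{(c-1)/2}^{(2,c)}$ exactly once after applying $\chi_j^{(2,c)} = -q^j\chi_{j+c}^{(2,c)} = \chi_{c-j}^{(2,c)}$, and these reductions absorb the $\gamma_\ell$ from \cref{prop:general-type-1} (namely $(-1)^{(n_0+m_0)\ell}q^{\binom{n_0\ell}{2}+\binom{m_0\ell}{2}}$ times extra signs and powers of $q$) into factors of the stated form $(-1)^{a_\ell}q^{b_\ell}$.

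The main obstacle I anticipate is this last bookkeeping: carefully confirming that the folding $j\mapsto\min(j\bmod c,\,c-(j\bmod c))$ composed with the affine map $\ell\mapsto p\ell+r$ gives a bijection of the $(c-1)/2$ chosen representatives onto the characters $\{\chi_1^{(2,c)},\dots,\chi_{(c-1)/2}^{(2,c)}\}$, and tracking the exact powers of $-1$ and $q$ that the symmetries $\chi_j^{(2,c)} = -q^j\chi_{j+c}^{(2,c)} = \chi_{c-j}^{(2,c)}$ contribute to each $\gamma_\ell$. The compatibility of the $\mS_\C$-pairing $\ell\leftrightarrow u+v-\ell$ with the character symmetry $\chi_j\leftrightarrow\chi_{c-j}$, established above through the relation $p^2\equiv-1\pmod c$, is what makes this counting work out cleanly rather than requiring an ad hoc case analysis.
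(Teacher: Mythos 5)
Your proposal is correct and follows essentially the same route as the paper: specializing \cref{prop:general-type-1} at $a=b=\alpha=\beta=1$, computing $p_\ell = \delta_\ell + c/2 = p\ell + r$ via \cref{lem:m-identities}, pairing the basis vectors through the $\mS_\C\left(x^{u+v}\right)$ structure and \cref{prop:monomial-spaces}.(ii), and checking that $\ell \mapsto p\ell + r$ covers each nonzero residue class modulo $c$ once (the self-paired $\ell_0 = (u+v+c)/2$ mapping to $0$). Your explicit verifications of $p^2 \equiv -1 \pmod{c}$ and of the compatibility $p(u+v) + 2r \equiv 0 \pmod{c}$ are exactly the details the paper compresses into ``it is easy to check,'' and they are sound.
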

\begin{proof}
Take $a = b = \alpha = \beta = 1$ and $R = \{(u+v-c+2)/2, \ldots, (u+v+c)/2\}$ in \cref{prop:general-type-1}, and compute
\[
    p_\ell \ =\ \delta_\ell + \frac{c}{2} 
    \ =\ (vn_0 - um_0)\ell - \frac{v-u}{2} + \frac{c}{2} 
    \ =\ p \ell + r,
\]
using the values $d = 1$ and $k = j = 0$ in \cref{lem:m-identities}. This produces equation \cref{eq:character-id}, except that $\ell$ varies in $R$ and the canonical basis vectors in the right-hand side are not paired. The pairing follows from the symmetry space $\mS_\C\left(x^{u+v}\right)$ of the left-hand side (and our adequate choice of $R$); the remaining value $\ell = (u+v+c)/2$ can be ignored since by \cref{prop:monomial-spaces}, the space $\mT_\C\big(-q^{\binom{u}{2} + \binom{v}{2}} x^c\big) \cap \mS_\C\left(x^{u+v}\right)$ is spanned by the sums $\vect{-q^{\binom{u}{2} + \binom{v}{2}} x^{c}}{\ell} + \vect{-q^{\binom{u}{2} + \binom{v}{2}} x^{c}}{u+v-\ell}$ for $(u+v-c+2)/2 \le \ell \le (u+v-1)/2$. Finally, it is easy to check that
\[
    \gcd(p, c) = 1 \qquad\text{and}\qquad p \ell + r \equiv 0 \pmod{c}
    \qquad \text{for} \qquad 
    \ell \equiv \frac{u+v+c}{2} \pmod{c},
\]
so that the index $p\ell+r$ covers each residue classes modulo $c$ \emph{except for $0$} exactly once when $R \setminus \{(u+v+c)/2\}$. Together with the aforementioned symmetries at $j \mapsto j+c$ and $j \mapsto c-j$ of the characters $\chi^{(2,c)}_{j}(q)$, this settles the second part of \cref{cor:character-identity}.
\end{proof}

\begin{example}
For $u = 1$ and $v = 2$ (thus $c = u^2 + v^2 = 5$), \cref{cor:character-identity} recovers the septuple product identity from \cref{eq:septuple}, which can be rephrased as
\[
\begin{aligned}
    (q; q)\la x; q\ra \la x^2; q\ra 
    =
    &-\chi_1^{(2,5)}(q) \left(\vect{-qx^5}{1} + \vect{-qx^5}{2}\right)
    \\
    &+\chi_2^{(2,5)}(q) \left(\vect{-qx^5}{0} + \vect{-qx^5}{3}\right).
\end{aligned}
\]
For $u = 2$ and $v = 3$, one obtains the analogous result
\[
\begin{aligned}
    (q; q)\la x^2; q\ra \la x^3; q \ra
    = &- q\chi_5^{(2,13)}(q) \left(\vect{-q^4 x^{13}}{-3} + \vect{-q^4 x^{13}}{8}\right) \\
    &- q\chi_3^{(2,13)}(q) \left(\vect{-q^4 x^{13}}{-2} + \vect{-q^4 x^{13}}{7}\right) \\
    &+ q\chi_2^{(2,13)}(q) \left(\vect{-q^4 x^{13}}{-1} + \vect{-q^4 x^{13}}{6}\right) \\
    &+ \hspace{0.16cm} \chi_6^{(2,13)}(q) \left(\vect{-q^4 x^{13}}{0} + \vect{-q^4 x^{13}}{5}\right) \\
    &+ q\chi_1^{(2,13)}(q) \left(\vect{-q^4 x^{13}}{1} + \vect{-q^4 x^{13}}{4}\right) \\
    &- \hspace{0.16cm} \chi_4^{(2,13)}(q) \left(\vect{-q^4 x^{13}}{2} + \vect{-q^4 x^{13}}{3}\right)
    ,
\end{aligned}
\]
which may be linked to the Andrews--Gordon identities \cite[Theorem 7.8]{andrews1998theory} as explained after \cref{qtn:characters}. On the other hand, the squared triple product identity from \cref{prop:squared-triple} follows by taking $a = b = u = v = 1$ in \cref{prop:general-type-1} (so that $au^2 + bv^2 = c = 2$), a case not covered by \cref{cor:character-identity}. Other such consequences of \cref{prop:general-type-1} are listed below.
\end{example}
\begin{corollary}[More type-1 identities]
As an identity of functions in $\mT_\C\left(x^4\right) \cap \mS_\C\left(x^2\right)$,
\begin{equation}\label{eq:sextuple}
\begin{aligned}
    \frac{(q; q)\left(q^3; q^3\right)}{\left(q^{12}; q^{12}\right)} 
    \la x; q\ra \la x^3; q^3\ra 
    &=
    \la-q^6; q^{12} \ra \vect{x^4}{0}
    +
    q \la-1; q^{12} \ra \vect{x^4}{2}
    \\[-5pt]
    &-
    \la-q^3; q^{12} \ra\left(\vect{x^4}{1} + \vect{x^4}{3} \right).
\end{aligned}
\end{equation}
(This is the sextuple product identity from \cite{zhu2011sextuple} in an equivalent form.) Moreover,
\begin{equation}\label{eq:order-10}
\begin{aligned}
    \frac{(q; q)^2}{\left(q^{10}; q^{10}\right)} \la x; q\ra \la x^3; q \ra 
    =
    &- \la -q^3; q^{10}\ra
    \left(\vect{q^3x^{10}}{1} + \vect{q^3x^{10}}{3}\right) \\[-5pt]
    &+ \la -q^4; q^{10}\ra
    \left(\vect{q^3x^{10}}{0} + \vect{q^3x^{10}}{4}\right)\\
    &- q\la-q; q^{10}\ra \hspace{0.15cm}
    \left(\vect{q^3x^{10}}{-1} + \vect{q^3x^{10}}{5}\right)
    \\
    &+ q\la-q^2; q^{10}\ra
    \left(\vect{q^3x^{10}}{-2} + \vect{q^3x^{10}}{6}\right)\\
    &- q\la-q^5; q^{10}\ra
    \vect{q^3x^{10}}{-3} 
    \ +\  q\la-1; q^{10}\ra\vect{q^3x^{10}}{2}
\end{aligned}
\end{equation}
inside $\mT_\C\left(q^3 x^{10}\right) \cap \mS_\C\left(x^4\right)$. Finally, in $\mT_\C\left(-q^6 x^{21}\right) \cap \mS_\C\left(x^9\right)$,
\begin{equation}\label{eq:order-21}
\begin{aligned}
    \left(q^3; q^3\right)\la x^3; q \ra \la x^6; q^3\ra
    &=
    \frac{\vect{-q^6 x^{21}}{0} + \vect{-q^6x^{21}}{9}}
    {\la q; q^7\ra \la q^2; q^7 \ra}
    \\
    &-
    \frac{\vect{-q^6 x^{21}}{3} + \vect{-q^6x^{21}}{6}}
    {\la q^2; q^7\ra \la q^3; q^7 \ra}
    -
    q
    \frac{\vect{-q^6 x^{21}}{-3} + \vect{-q^6x^{21}}{12}}
    {\la q^3; q^7\ra \la q; q^7 \ra}.
\end{aligned}
\end{equation}
\end{corollary}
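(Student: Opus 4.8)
The plan is to derive all three identities as special cases of \cref{prop:general-type-1}, since each left-hand side is (up to the displayed scalar prefactor) a type-1 product $\la \alpha x^{ua}; q^a\ra\la\beta x^{vb}; q^b\ra$ with $\alpha=\beta=1$. Reading off the exponents, the sextuple identity \cref{eq:sextuple} corresponds to $(a,u,b,v)=(1,1,3,1)$, giving $d=\gcd(au,bv)=1$, $a'=1$, $b'=3$, $c=ab'^2+ba'^2=12$ and $au^2+bv^2=4$; the order-$10$ identity \cref{eq:order-10} corresponds to $(a,u,b,v)=(1,1,1,3)$, giving $d=1$, $a'=1$, $b'=3$, $c=10$, $au^2+bv^2=10$; and the order-$21$ identity \cref{eq:order-21} corresponds to $(a,u,b,v)=(1,3,3,2)$, giving $d=\gcd(3,6)=3$, $a'=1$, $b'=2$, $c=7$, $au^2+bv^2=21$. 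In each case the target space $\mT_\C\big((-1)^u(-1)^v q^{a\binom{u}{2}+b\binom{v}{2}}x^{au^2+bv^2}\big)$ agrees with the one named in the statement (namely $\mT_\C(x^4)$, $\mT_\C(q^3x^{10})$ and $\mT_\C(-q^6x^{21})$ respectively).

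First I would invoke \cref{prop:general-type-1} to expand each product over a complete residue system $R$ modulo $au^2+bv^2$, with coefficients $\gamma_\ell(q^c;q^c)\la -(-1)^{b'}(-1)^{-a'}q^{p_\ell};q^c\ra/((q^a;q^a)(q^b;q^b))$. Since $\alpha=\beta=1\in\{\pm1\}$, each factor $\la x^{ua};q^a\ra$ lies in an $\mS_\C$-space (\cref{tbl:spaces}), so the product lies in $\mS_\C(x^{ua+vb})$; this is exactly the $\mS$-space recorded for these identities, and by \cref{prop:monomial-spaces} it forces the canonical basis vectors to combine into the paired sums $\vect{\cdot}{\ell}+\vect{\cdot}{(au^2+bv^2)-\ell}$ displayed in \cref{eq:sextuple,eq:order-10,eq:order-21}. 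Choosing $R$ symmetric about this pairing involution halves the work, so that one computes $\gamma_\ell$ and $p_\ell=\delta_\ell+c/2$ from \cref{eq:m-coeffs-formula} only for representatives of each pair. Here the theta argument is $-(-1)^{b'-a'}q^{p_\ell}$, which is $-q^{p_\ell}$ for \cref{eq:sextuple,eq:order-10} and $+q^{p_\ell}$ for \cref{eq:order-21}; I would then bring each $p_\ell$ into the range $[0,c/2]$ and normalize the accompanying sign and power of $q$ using the theta symmetries $\la y;q^c\ra=-y\la q^cy;q^c\ra=\la q^c/y;q^c\ra$, exactly as in the proof of \cref{cor:character-identity}.

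This directly produces the coefficients in \cref{eq:sextuple} and \cref{eq:order-10}, which are already single modified theta functions $\la -q^i;q^c\ra$. The order-$21$ identity \cref{eq:order-21} requires one extra step: after multiplying through by the prefactor $(q^3;q^3)$, a residual factor $(q^7;q^7)/(q;q)$ survives, and I would rewrite it via the splitting $(q;q)=(q^7;q^7)\la q;q^7\ra\la q^2;q^7\ra\la q^3;q^7\ra$ (obtained by grouping $(q;q)$ by residues modulo $7$ and using $\la q^j;q^7\ra=(q^j;q^7)(q^{7-j};q^7)$). The single theta $\la q^{p_\ell};q^7\ra$ coming from \cref{prop:general-type-1} then cancels one of the three denominator factors, turning each coefficient into a reciprocal $1/(\la q^i;q^7\ra\la q^j;q^7\ra)$ of the displayed shape. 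I expect the main obstacle to be organizational rather than conceptual: tracking signs and powers of $q$ through the normalization of each $p_\ell$, verifying that paired indices receive equal coefficients (a consistency check guaranteed by the $\mS_\C$-symmetry), and handling the self-paired indices $\ell$ (with $2\ell\equiv au^2+bv^2-2\cdot(\text{exponent})\pmod{au^2+bv^2}$), which contribute single terms but occasionally vanish — as happens for the index $\ell=15$ in \cref{eq:order-21}, where $p_\ell\equiv0\pmod 7$ forces $\la q^{p_\ell};q^7\ra=0$, explaining why fewer pairs appear than one might naively expect. This bookkeeping is most delicate for \cref{eq:order-10}, which splits into six distinct pairs. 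As noted after \cref{lem:m-identities}, one may shortcut the algebra entirely by computing only the first few $\tau$-Fourier coefficients of each $M_\ell(q)$ numerically, since their theta-series shape is already determined.
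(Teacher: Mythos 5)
Your proposal is correct and follows essentially the same route as the paper: all three identities are obtained by specializing \cref{prop:general-type-1} with exactly the parameter choices the paper uses ($(a,u,b,v)=(1,1,3,1)$, $(1,1,1,3)$, and $(1,3,3,2)$ respectively), followed by rescaling and normalizing the $p_\ell$ via the theta symmetries. The details you supply beyond the paper's terse proof — the $\mS_\C$-pairing of basis vectors, the splitting $(q;q)=(q^7;q^7)\la q;q^7\ra\la q^2;q^7\ra\la q^3;q^7\ra$ for \cref{eq:order-21}, and the vanishing of the self-paired index $\ell=15$ where $p_\ell\equiv 0\pmod 7$ — are all correct and merely make explicit what the paper leaves implicit.
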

\begin{proof}
For \cref{eq:sextuple}, use \cref{prop:general-type-1} for $\alpha = \beta = u = v = 1$, $a = 1$ and $b = 3$ (so $c = 1 \cdot 3^2 + 3 \cdot 1^2 = 12$ and $au^2 + bv^2 = 4$), then multiply both sides by $\left(q^a; q^a\right)\left(q^b; q^b\right) / \left(q^{c}; q^{c}\right)$. For \cref{eq:order-10}, repeat the same argument with the only difference that $v = 3$ and $b = 1$ instead (so $c = 1 \cdot 3^2 + 1 \cdot 1^2 = 10$ and $au^2 + bv^2 = 10$). Finally, for \cref{eq:order-21}, take $\alpha = \beta = 1$, $u = 3$, $v = 2$, $a = 1$ and $b = 3$ in \cref{prop:general-type-1}; this leads to $d = 3$, $a' = 1$ and $b' = 2$; thus $c = 1 \cdot 2^2 + 3 \cdot 1^2 = 7$ and $au^2 + bv^2 = 21$.
\end{proof}

\begin{remark}
While the freedom in constructing type-1 identities is large, we listed \cref{eq:order-10} and \cref{eq:order-21} since they will lead to identities of generalized eta functions in \cref{subsec:generalized-eta}; in fact, the choices of $a, b, u$ and $v$ were designed to produce the exponents of 10 and 21 in the right-hand sides.
\end{remark}
Moving on to type-2 identities, the first example is that of the septuple product identity once again. Indeed, the septuple product $\la x; q \ra \la qx^2; q^2 \ra \cdot \la x^2; q^2 \ra$ also assumes the form in \cref{eq:type2}; following the framework outlined before and using the quintuple identity repeatedly in Step 3 recovers \cref{eq:septuple}. A similar argument applies to the the following new variation of the septuple identity.
\begin{proposition}[Second septuple product identity] \label{prop:septuple2} 
In $\mT_\C\left(-qx^6\right) \cap \mS_\C\left(x^4\right)$, one has
\begin{equation} \label{eq:septuple2}
    \left(q^3; q^3\right)\la x; q \ra \la qx^2; q^2 \ra \la x^3; q^3 \ra 
    =
    \frac{\vect{-qx^6}{0} + \vect{-qx^6}{4}}{\left(q^3; q^6\right) \la q; q^6 \ra} - \frac{\vect{-qx^6}{1} + \vect{-qx^6}{3}}{\left(q^3; q^6\right) \la q^2; q^6 \ra}.
\end{equation}
\end{proposition}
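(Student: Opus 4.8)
The plan is to follow the three-step framework for type-2 products described just before the statement, feeding in the quintuple and triple identities. First I would record that both sides lie in $\mT_\C(-qx^6) \cap \mS_\C(x^4)$: multiplying the $\mT$- and $\mS$-factors from \cref{tbl:spaces} gives $\la x;q\ra\la qx^2;q^2\ra\la x^3;q^3\ra \in \mT_\C\big((-x)(-qx^2)(-x^3)\big) \cap \mS_\C\big((-x)(1)(-x^3)\big) = \mT_\C(-qx^6)\cap\mS_\C(x^4)$, and the constant $(q^3;q^3)$ does not affect this. By \cref{prop:monomial-spaces}.(ii) (with $s=-1$, $c=1$, $d=6$, $t=+1$) this space is $3$-dimensional, spanned by the paired vectors $\vect{-qx^6}{0}+\vect{-qx^6}{4}$, $\vect{-qx^6}{1}+\vect{-qx^6}{3}$ and $2\vect{-qx^6}{2}$, the pairing being $k+\ell = d-2c = 4$. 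For Step 1, I would use \cref{eq:substitutions-quintuple} and \cref{eq:substitutions-triple}: since $(q;q)\la x;q\ra\la qx^2;q^2\ra = \vect{qx^3}{0}-\vect{qx^3}{1}$ and $(q^3;q^3)\la x^3;q^3\ra = \vect{-x^3}{0}$, the product collapses to
\[
  (q^3;q^3)\la x;q\ra\la qx^2;q^2\ra\la x^3;q^3\ra = \frac{1}{(q;q)}\big(\vect{qx^3}{0}-\vect{qx^3}{1}\big)\vect{-x^3}{0}.
\]

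Next I would apply \cref{lem:m-identities} (Step 2) with $a=b=3$, $u=v=1$, $\alpha=q$, $\beta=-1$ to expand $\vect{qx^3}{0}\vect{-x^3}{0}$ and $\vect{qx^3}{1}\vect{-x^3}{0}$ in the canonical basis of $\mT_\C(-qx^6)$. Here $d=\gcd(3,3)=3$, so the $d\mid\ell$ condition lets only residues $\ell\equiv 0\pmod 3$ survive: the first product is supported on $\ell\in\{0,3\}$ and the second on $\ell\in\{1,4\}$. Two consequences follow at once. The coefficient of $\vect{-qx^6}{2}$ vanishes, so the $2\vect{-qx^6}{2}$ basis element is absent, matching the two-term right-hand side. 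And since the full product lies in $\mS_\C(x^4)$, its basis coefficients are symmetric under $\ell\mapsto 4-\ell$ (as $t=+1$), so it suffices to compute one $M$-coefficient from each product: $A :=$ the coefficient of $\vect{-qx^6}{0}$ in $\vect{qx^3}{0}\vect{-x^3}{0}$, and $B :=$ the coefficient of $\vect{-qx^6}{1}$ in $\vect{qx^3}{1}\vect{-x^3}{0}$, giving $\frac{1}{(q;q)}\big(A(\vect{-qx^6}{0}+\vect{-qx^6}{4}) - B(\vect{-qx^6}{1}+\vect{-qx^6}{3})\big)$.

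For Step 3, formula \cref{eq:m-coeffs-formula} (or directly matching the coefficients of $x^0$ and $x^1$ in the two products) yields the theta series $A = \sum_{n\in\Z}(-1)^nq^{3n^2+n}$ and $B = \sum_{n\in\Z}(-1)^nq^{3n^2+2n}$. Applying Jacobi's triple product \cref{eq:triple} with $q\mapsto q^6$ identifies these as $A = (q^6;q^6)\la q^2;q^6\ra$ and $B = (q^6;q^6)\la q;q^6\ra$. It then remains to check the rearrangements $\frac{A}{(q;q)} = \frac{1}{(q^3;q^6)\la q;q^6\ra}$ and $\frac{B}{(q;q)} = \frac{1}{(q^3;q^6)\la q^2;q^6\ra}$; both reduce to the single factorization $(q;q) = (q;q^6)(q^2;q^6)(q^3;q^6)(q^4;q^6)(q^5;q^6)(q^6;q^6)$ together with $\la q;q^6\ra = (q;q^6)(q^5;q^6)$ and $\la q^2;q^6\ra = (q^2;q^6)(q^4;q^6)$, and substituting back produces exactly the right-hand side of \cref{eq:septuple2}. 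I expect the main obstacle to be the bookkeeping in Steps 2--3: pinning down which residues each product contributes to, fixing the normalizations $\alpha=q$, $\beta=-1$ in \cref{lem:m-identities}, and verifying that $A$ and $B$ rearrange into precisely the claimed double-product denominators rather than equivalent-looking variants. As the remark after \cref{lem:m-identities} notes, one may bypass the explicit $\gamma_\ell,\delta_\ell$ computation by reading $A$ and $B$ off the first few $q$-coefficients of each product, which I would use as a numerical cross-check.
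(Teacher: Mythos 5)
Your proposal is correct and follows essentially the same route as the paper's proof: expand via \cref{eq:substitutions-quintuple,eq:substitutions-triple}, apply \cref{lem:m-identities} twice with $a=b=3$, $u=v=1$ (so $au^2+bv^2=6$ and $ab'^2+ba'^2=6$, yielding your theta series $A=\sum_n(-1)^nq^{3n^2+n}$ and $B=\sum_n(-1)^nq^{3n^2+2n}$), and convert the $q$-coefficients to products via \cref{eq:substitutions-triple} with $a=6$. Your extra details all check out — the support residues $\ell\equiv 0,1\pmod 3$, the use of the $\mS_\C(x^4)$ pairing $\ell\mapsto 4-\ell$ to halve the computation (exactly Step 2 of the paper's framework), and the rearrangements $A/(q;q)=\left(\left(q^3;q^6\right)\la q;q^6\ra\right)^{-1}$ and $B/(q;q)=\left(\left(q^3;q^6\right)\la q^2;q^6\ra\right)^{-1}$.
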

\begin{proof}
Write $(q; q)\la x; q \ra \la qx^2; q^2 \ra = \vect{qx^3}{0} - \vect{qx^3}{1}$ and $\left(q^3; q^3\right)\la x^3; q^3 \ra = \vect{-x^3}{0}$ as in \cref{eq:substitutions-quintuple,eq:substitutions-triple}. Multiply these two equalities and apply \cref{lem:m-identities} twice, both times with $a = b = 3$ and $u = v = 1$ (thus $au^2 + bv^2 = 6$ and $ab'^2 + ba'^2 = 6$). Then apply \cref{eq:substitutions-triple} for $a = 6$ and $u = 1$ to express the resulting $q$-coefficients as infinite products, and divide by $(q; q)$ to recover \cref{prop:septuple2}.
\end{proof}

\begin{remark}
We noted in \cref{subsec:dim-bounds} that \cref{thm:bases-proportional,thm:bases-proportional-2} give proportionalities of canonical bases for $\mT_\C\left(-qx^d\right) \cap \mS_\C\left(x^{d-2}\right) \propto \mT_\Hminus(f_d)$ when $d \in \{4, 5\}$, corresponding to the octuple and septuple product identities; in this context, \cref{prop:septuple2} may correspond to the case $d = 6$ for some $f_6$. This possibility is supported by the framework in \cref{qtn:characters}, since the $q$-coefficients in the right-hand side of \cref{prop:septuple2} are $M(2, 6)_2$ characters according to the notation in \cite[(5.2)]{andrews1999a2}.
\end{remark}
\begin{proposition}[Ewell's octuple product identity \cite{ewell1982octuple}] \label{prop:octuple}
In $\mT_\C\left(-qx^4\right) \cap \mS_\C\left(x^2\right)$, one has
\begin{equation}\label{eq:octuple}
\begin{aligned}
    (q; q)\left(q; q^2\right) \la x; q \ra^2 \la qx^2; q^2 \ra &= 
    \hspace{0.19cm} \left(-q; q^2\right)\left(\vect{-qx^4}{0} + \vect{-qx^4}{2}\right) \\
    &- 2\left(-q^2; q^2\right) \vect{-qx^4}{1}.
\end{aligned}
\end{equation}
\end{proposition}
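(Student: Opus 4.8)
The plan is to realize the octuple product as the product of a \emph{squared} triple product with a single triple product, and then expand the result in the canonical basis of $\mT_\C\left(-qx^4\right)$ using the multiplication identities of \cref{lem:m-identities}. First I would record the ambient function space: by \cref{tbl:spaces}, $\la x;q\ra \in \mT_\C(-x)\cap\mS_\C(-x)$ and $\la qx^2;q^2\ra \in \mT_\C\left(-qx^2\right)\cap\mS_\C(1)$, so the left-hand side lies in $\mT_\C\left(-qx^4\right)\cap\mS_\C\left(x^2\right)$ (the scalar $(q;q)(q;q^2)$ does not affect this). By \cref{prop:monomial-spaces}.(ii) this intersection is two-dimensional, spanned by $\vect{-qx^4}{0}+\vect{-qx^4}{2}$ and $\vect{-qx^4}{1}$, and the right-hand side is visibly a member of it; hence it suffices to match the two coordinates in this basis.

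Next I would rewrite the left-hand side. \cref{prop:squared-triple} gives $\big((q;q)\la x;q\ra\big)^2 = (q^2;q^2)\big[(-q;q^2)^2\vect{x^2}{0} - 2(-q^2;q^2)^2\vect{x^2}{1}\big]$, while \cref{eq:substitutions-triple} gives $(q^2;q^2)\la qx^2;q^2\ra = \vect{-qx^2}{0}$; combining these with $(q;q^2) = (q;q)/(q^2;q^2)$ (so that the powers of $(q^2;q^2)$ collapse) yields
\[
(q;q)(q;q^2)\la x;q\ra^2\la qx^2;q^2\ra = \frac{1}{(q^2;q^2)}\Big[(-q;q^2)^2\vect{x^2}{0} - 2(-q^2;q^2)^2\vect{x^2}{1}\Big]\vect{-qx^2}{0}.
\]

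Now I would apply \cref{lem:m-identities} to each of $\vect{x^2}{0}\vect{-qx^2}{0}$ and $\vect{x^2}{1}\vect{-qx^2}{0}$, with $\alpha=1$, $\beta=-q$, $a=b=2$, $u=v=1$; here $d=2$, $a'=b'=1$, and a short computation of the exponents in \cref{eq:m-coeffs-formula} (taking $n_0=1$, $m_0=0$) gives $\delta_\ell=\ell$, so the $M$-coefficients reduce to theta series $\sum_n(-1)^nq^{2n^2+(\ell-1)n}$ that the triple product identity turns back into modified products $(q^4;q^4)\la q^{\ell+1};q^4\ra$. Two coincidences do the real work: $\la q^3;q^4\ra=\la q;q^4\ra$ forces the $\vect{-qx^4}{0}$ and $\vect{-qx^4}{2}$ coefficients to agree (consistent with the $\mS_\C$ pairing), and $\la q^4;q^4\ra=0$ annihilates the stray $\vect{-qx^4}{3}$ term. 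This leaves
\[
\vect{x^2}{0}\vect{-qx^2}{0} = (q^4;q^4)\la q;q^4\ra\big(\vect{-qx^4}{0}+\vect{-qx^4}{2}\big), \qquad \vect{x^2}{1}\vect{-qx^2}{0} = (q^4;q^4)\la q^2;q^4\ra\vect{-qx^4}{1}.
\]

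The last step, which I expect to be the main obstacle, is to simplify the two resulting scalar coefficients to the claimed products. Substituting the displays above, the coefficient of $\vect{-qx^4}{0}+\vect{-qx^4}{2}$ becomes $(-q;q^2)^2(q^4;q^4)\la q;q^4\ra/(q^2;q^2)$, which must equal $(-q;q^2)$, and the coefficient of $\vect{-qx^4}{1}$ becomes $-2(-q^2;q^2)^2(q^4;q^4)\la q^2;q^4\ra/(q^2;q^2)$, which must equal $-2(-q^2;q^2)$. Both reduce to the elementary identities $(-q;q^2)(q^4;q^4)\la q;q^4\ra = (q^2;q^2)$ and $(-q^2;q^2)(q^4;q^4)\la q^2;q^4\ra = (q^2;q^2)$, which follow by unwinding $\la q;q^4\ra=(q;q^2)$, $\la q^2;q^4\ra=(q^2;q^4)^2$, and the Euler relations $(-q;q^2)(q;q^2)=(q^2;q^4)$ and $(-q^2;q^2)=(q^4;q^4)/(q^2;q^2)$. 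As a cross-check one could instead invoke \cref{lem:proofs-by-value}: since both sides lie in $\mT_\C\left(-qx^4\right)$ with the common coefficient $\hat F(3)=0$, it is enough to verify the identity at the fourth roots of unity $x\in\{1,i,-1,-i\}$, where \cref{eq:canonical-root} turns each evaluation into one of the same product identities.
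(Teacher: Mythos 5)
Your proof is correct, but it takes a genuinely different route from the paper's. The paper proves \cref{prop:octuple} by factoring the octuple product as (triple)\,$\times$\,(quintuple): it multiplies $(q;q)\la x;q\ra = \vect{-x}{0}$ by $(q;q)\la x;q\ra\la qx^2;q^2\ra = \vect{qx^3}{0}-\vect{qx^3}{1}$ and applies \cref{lem:m-identities} with $a=3$, $b=u=v=1$, so the $M$-coefficients are theta series of modulus $ab'^2+ba'^2=12$, resummed via the quintuple identity \cref{eq:substitutions-quintuple} (for the coefficient of $\vect{-qx^4}{0}+\vect{-qx^4}{2}$) and the triple identity (for that of $\vect{-qx^4}{1}$). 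You instead factor as (squared triple, \cref{prop:squared-triple})\,$\times$\,$\vect{-qx^2}{0}$ — in the language of \cref{fig:products}, you follow the ``Sq.\ Triple $\to$ Octuple'' edge where the paper's written proof follows the ``Quintuple $\to$ Octuple'' edge. Your computation checks out: with $a=b=2$, $u=v=1$, $\alpha=1$, $\beta=-q$ one indeed gets $d=2$, $\delta_\ell=\ell$, $M_\ell \propto \sum_n(-1)^nq^{2n^2+(\ell-1)n}=(q^4;q^4)\la q^{\ell+1};q^4\ra$, the $\ell=3$ coefficient vanishes since $\la q^4;q^4\ra=0$ (consistent with $\vect{-qx^4}{-1}+\vect{-qx^4}{3}=0$ from the $\mS_\C(x^2)$ pairing), and the two scalar reductions $(-q;q^2)(q^4;q^4)\la q;q^4\ra=(q^2;q^2)$ and $(-q^2;q^2)(q^4;q^4)\la q^2;q^4\ra=(q^2;q^2)$ are elementary as you say. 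The trade-off is clean: your modulus-$4$ $M$-coefficients need only the triple product identity downstream, which is simpler than the paper's modulus-$12$ series requiring the quintuple identity, but you import \cref{prop:squared-triple} — this is non-circular, since the paper proves it independently by value identification (\cref{lem:proofs-by-value}), whereas the paper's route is uniform with its treatment of the other type-2 identities (\cref{prop:septuple2}, \cref{prop:nonuple}). Your closing cross-check at fourth roots of unity is also legitimate: the hypothesis $\hat{F}(3)=0$ needed in \cref{lem:proofs-by-value} (since $\alpha=-q$) holds on the left because the $\mT_\C\left(-qx^4\right)$ and $\mS_\C\left(x^2\right)$ symmetries together force $\hat{F}(3)=-\hat{F}(-1)=-\hat{F}(3)$.
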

\begin{proof}
Expand $(q; q)\la x; q\ra = \vect{-x}{0}$ and $(q; q)\la x; q\ra \la qx^2; q^2\ra = \vect{qx^3}{0} - \vect{qx^3}{1}$, multiply the two equalities and apply \cref{lem:m-identities} twice for $a = 3$ and $b = u = v = 1$ (thus $au^2 + bv^2 = 4$ and $ab'^2 + ba'^2 = 12$). Then group terms, apply \cref{eq:substitutions-quintuple} with $a = 4$ and $u = 1$ for the coefficient of $\vect{-qx^4}{0} + \vect{-qx^4}{2}$, and \cref{eq:substitutions-triple} with $a = 12$ and $u = 1$ for the coefficient of $\vect{-qx^4}{1}$. To finish, divide everything by $\left(q^2; q^2\right)$ and use that $\la -q; q^4 \ra = \left(-q; q^2\right)$, $\left(q^4; q^4\right)/\left(q^2; q^2\right) = \left(-q^2; q^2\right)$.
\end{proof}
With this machinery we can also prove the (first) nonuple product identity introduced early in \cref{prop:nonuple}, and then follow it with the anticipated \emph{second} nonuple identity.
\begin{proof}[Proof of \cref{prop:nonuple}]
Recall that the (first) nonuple product in \cref{eq:nonuple} is proportional to
\[
    \la x; q \ra \la qx^2; q^2 \ra^2 \la x^2; q^2 \ra 
    =
    \la x; q \ra \la qx^2; q^2 \ra \cdot \la x^2; q \ra,
\]
which takes the type-2 form in \cref{eq:type2} for $a = b = u = 1$ and $v = 2$ (and $\alpha = \beta = 1$). After using \cref{eq:substitutions-triple,eq:substitutions-quintuple} and expanding, apply \cref{lem:m-identities} twice for $a = 3, b = 1$, $u = 1$ and $v = 2$, leading to the exponents $au^2 + bv^2 = 7$ in the canonical basis vectors and $ab'^2 + ba'^2 = 3 \cdot 2^2 + 1 \cdot 3^2 = 21$ in the $M_\ell$ coefficients. Then group terms, apply \cref{eq:substitutions-triple} with $a = 21$ and $u = 1$ for the coefficient of $\vect{q^2x^7}{5}$, and \cref{eq:substitutions-quintuple} with $a = 7$ and $u = 1$ for the other three $q$-coefficients. Scaling by the appropriate $q$-factor now recovers \cref{eq:nonuple}.
\end{proof}
\begin{proposition}[Second nonuple product identity] \label{prop:nonuple2}
In $\mT_\C\left(-q^5 x^{13}\right) \cap \mS_\C\left(x^3\right)$, one has
\begin{equation} \label{eq:nonuple2}
\begin{aligned}
    (q; q)\la x; q \ra \la qx^2; q^2 \ra \la x^2; q^2 \ra &\la qx^4; q^2 \ra
    \\
    =
    -&\frac{\vect{-q^5x^{13}}{1} + \vect{-q^5x^{13}}{2}}{\la q^3,q^4,q^5,q^6 ; q^{13}\ra \la q^2 ; q^{26} \ra}
    +
    \frac{\vect{-q^5x^{13}}{0} + \vect{-q^5x^{13}}{3}}{\la q,q^2,q^4,q^5 ; q^{13}\ra \la q^6 ; q^{26} \ra}
    \\[7pt]
    -\ q&\frac{\vect{-q^5x^{13}}{-1} + \vect{-q^5x^{13}}{4}}{\la q,q^2,q^4,q^6 ; q^{13}\ra \la q^{10} ; q^{26} \ra}
    -q^2
    \frac{\vect{-q^5x^{13}}{-2} + \vect{-q^5x^{13}}{5}}{\la q^2,q^3,q^4,q^5 ; q^{13}\ra \la q^{14} ; q^{26} \ra}
    \\[7pt]
    +\ q&\frac{\vect{-q^5x^{13}}{-3} + \vect{-q^5x^{13}}{6}}{\la q,q^2,q^3,q^4 ; q^{13}\ra \la q^{18} ; q^{26} \ra}
    -q
    \frac{\vect{-q^5x^{13}}{-4} + \vect{-q^5x^{13}}{7}}{\la q,q^3,q^5,q^6 ; q^{13}\ra \la q^{22} ; q^{26} \ra},
\end{aligned}
\end{equation}
where we used the common notation $\la q^{a_1}, q^{a_2}, \ldots, q^{a_k}; q^b \ra := \prod_{j=1}^k \la q^{a_j}; q^b \ra$.
\end{proposition}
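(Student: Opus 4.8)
The plan is to mirror the type-2 strategy already used for \cref{prop:nonuple}, \cref{prop:septuple2} and \cref{prop:octuple}: factor the left-hand product into a triple piece times a quintuple piece, expand the product of the corresponding canonical basis vectors with \cref{lem:m-identities}, and then recognize the resulting $q$-coefficients as infinite products. First I would record the ambient space. Multiplying the factors from \cref{tbl:spaces} shows that both sides of \cref{eq:nonuple2} lie in $\mT_\C\left(-q^5x^{13}\right)\cap\mS_\C\left(x^3\right)$; by \cref{prop:monomial-spaces} (equation \cref{eq:dimension} with $s=-1$, $c=5$, $d=13$, $t=1$, so $st=-1$) this space is six-dimensional and spanned by the paired vectors $\vect{-q^5x^{13}}{k}+\vect{-q^5x^{13}}{3-k}$ for $k\in\{1,0,-1,-2,-3,-4\}$, which are exactly the six pairs appearing on the right-hand side. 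It therefore suffices to compute the coefficient of each pair.

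Next I would rewrite the product. Using the dissection $\la x^2;q\ra=\la x^2;q^2\ra\la qx^2;q^2\ra$ (from the product manipulations in \S\ref{subsec:notation}), the left-hand side becomes $(q;q)\la x;q\ra\cdot\la x^2;q\ra\la qx^4;q^2\ra$, where $\la x^2;q\ra\la qx^4;q^2\ra$ is precisely the quintuple product in the variable $x^2$. Applying \cref{eq:substitutions-triple} (with $a=u=1$, $\alpha=1$) to $\la x;q\ra$ and \cref{eq:substitutions-quintuple} (with $a=1$, $u=2$, $\alpha=1$) to the quintuple factor then yields
\[
(q;q)^2\la x;q\ra\la qx^2;q^2\ra\la x^2;q^2\ra\la qx^4;q^2\ra=\vect{-x}{0}(z)\,\big(\vect{qx^3}{0}(2z)-\vect{qx^3}{1}(2z)\big).
\]

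The core computation is to expand each product $\vect{qx^3}{j}(2z)\cdot\vect{-x}{0}(z)$, for $j\in\{0,1\}$, via \cref{lem:m-identities} with first-factor parameters $\alpha=q$, $a=3$, $u=2$ and second-factor parameters $\beta=-1$, $b=1$, $v=1$. One checks $au^2+bv^2=13$ and $\alpha^u\beta^v q^{a\binom{u}{2}+b\binom{v}{2}}=q^2\cdot(-1)\cdot q^3=-q^5$, confirming the target space $\mT_\C\left(-q^5x^{13}\right)$, while $d=\gcd(au,bv)=1$, $a'=6$, $b'=1$ give the $M$-coefficient modulus $ab'^2+ba'^2=39$. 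Taking the signed combination over $j\in\{0,1\}$ and grouping the resulting canonical vectors into the six $\mS_\C\left(x^3\right)$-pairs, each pair acquires a coefficient that is a linear combination of base-$q^{39}$ theta series of the form \cref{eq:m-coeffs-formula}.

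The final and hardest step is Step 3: identifying these theta-series coefficients as the reciprocal products displayed in \cref{eq:nonuple2}. After dividing by $(q;q)^2$, each pair's coefficient has the shape $M_\ell(q)/(q;q)$ with $M_\ell$ a single Jacobi theta function of base $q^{39}=q^{3\cdot 13}$ (by the triple product identity, exactly as in the proof of \cref{prop:general-type-1}); I would then invoke the quintuple identity \cref{eq:substitutions-quintuple} with $a=13$, $u=1$, specialized at suitable powers of $q$, together with the base dissection $\la y;q^{13}\ra=\la y;q^{26}\ra\la q^{13}y;q^{26}\ra$, to convert each such base-$q^{39}$ theta into the mixed-base product $\pm q^{\ast}\big/\big(\la q^{a_1},q^{a_2},q^{a_3},q^{a_4};q^{13}\ra\la q^{a_5};q^{26}\ra\big)$. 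The main obstacle is precisely this eta-quotient bookkeeping: matching the bilateral exponents $\delta_\ell$ to the correct residues modulo $13$ and $26$, tracking the accompanying signs and powers of $q$, and confirming that the single $q^{26}$-factor appears with the listed exponent in each denominator. As the author notes for \cref{prop:nonuple}, this identification can in practice be bypassed by numerical verification, since the theta-series form \cref{eq:m-coeffs-formula} of each coefficient is known a priori, so comparing the first several $\tau$-Fourier coefficients of both sides pins down the finitely many constants and completes the proof.
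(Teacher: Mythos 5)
Your proposal matches the paper's proof essentially step for step: the same decomposition $(q;q)\la x;q\ra\cdot\la x^2;q\ra\la qx^4;q^2\ra$ into the type-2 form of \cref{eq:type2} with $a=b=v=1$, $u=2$, the same two applications of \cref{lem:m-identities} with $a=3$, $b=1$, $u=2$, $v=1$ (giving $au^2+bv^2=13$ and $ab'^2+ba'^2=39$), and the same final identification of the paired $q$-coefficients via \cref{eq:substitutions-quintuple} with $a=13$, $u=1$. The only minor imprecision is calling each pair's coefficient ``a single Jacobi theta function of base $q^{39}$'' handled by the triple product identity before switching to the quintuple substitution — after grouping the $j=0$ and $j=1$ contributions it is the quintuple-type combination, exactly as you then use it — so this does not affect correctness.
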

\begin{proof}
The second nonuple product in \cref{eq:nonuple2} can be rewritten (up to a factor of $(q; q)$) as
\[
    \la x^2; q \ra \la qx^4; q^2 \ra \cdot \la x; q \ra,
\]
which takes the form of the type-2 product \cref{eq:type2} for $a = b = v = 1$ and $u = 2$ (and $\alpha = \beta = 1$). As before, use \cref{eq:substitutions-triple,eq:substitutions-quintuple}, then apply \cref{lem:m-identities} twice for $a = 3, b = 1, u = 2$ and $v = 1$ (so that $au^2 + bv^2 = 13$ and $ab'^2 + ba'^2 = 39$). Group terms and apply \cref{eq:substitutions-quintuple} with $a = 13$ and $u = 1$ for each of the resulting $q$-coefficients, and scale by the appropriate $q$-factor to conclude.
\end{proof}

\begin{remark}
What is special about the first and second nonuple product identities, among all identities in this section, is that they do not reduce to trivial equalities when specialized at $x = 1$. In fact, setting $x = 1$ here will lead to new identities of generalized eta functions in \cref{subsec:generalized-eta}.
\end{remark}

We are left with only two high-order product identities, both of type 3:
\begin{proposition}[Squared quintuple product identity] \label{prop:squared-quintuple}
In $\mT_\C\left(q^2x^6\right) \cap \mS_\C\left(x^2\right)$, one has
\[
\begin{aligned}
    \frac{(q; q)^2}{\left(q^6; q^6\right)} \la x; q \ra^2 \la qx^2; q^2 \ra^2 
    = 
    &\la-q^3; q^6 \ra \left(\vect{q^2x^6}{0} + \vect{q^2x^6}{2}\right)\hspace{0.12cm} - 2q\la-q; q^6 \ra\vect{q^2x^6}{4} \\
    +\ q&\la-1; q^6 \ra \left(\vect{q^2x^6}{-1} + \vect{q^2x^6}{3}\right) - 2\la-q^2; q^6 \ra \vect{q^2x^6}{1}.
\end{aligned}
\]
\end{proposition}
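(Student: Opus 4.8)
The plan is to follow the type-3 recipe laid out before \cref{tbl:products}, treating $\la x;q\ra^2\la qx^2;q^2\ra^2$ as the square of a single quintuple product. Both sides lie in $\mT_\C\left(q^2x^6\right)\cap\mS_\C\left(x^2\right)$ by \cref{tbl:spaces}, a space which \cref{prop:monomial-spaces}.(ii) shows to be four-dimensional (here $d=6$, $s=t=1$, $c=2$), spanned by the four paired combinations on the right-hand side. Note that with the $\mS_\C$-pairing $k+\ell\equiv 2\pmod 6$, the ``pairs'' $(1,1)$ and $(4,-2)$ degenerate into single basis vectors modulo $6$, which is precisely what produces the factor-of-two coefficients $-2\la-q^2;q^6\ra$ and $-2q\la-q;q^6\ra$.

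First I would use \cref{eq:substitutions-quintuple} with $a=u=1$ and $\alpha=1$ to write $(q;q)\la x;q\ra\la qx^2;q^2\ra=\vect{qx^3}{0}-\vect{qx^3}{1}$, and square it:
\[
(q;q)^2\la x;q\ra^2\la qx^2;q^2\ra^2=\vect{qx^3}{0}^{2}-2\,\vect{qx^3}{0}\vect{qx^3}{1}+\vect{qx^3}{1}^{2}.
\]
Next I would expand each of the three products $\vect{qx^3}{k}\vect{qx^3}{j}$ (for $(k,j)\in\{(0,0),(0,1),(1,1)\}$) via \cref{lem:m-identities} with $a=b=3$, $u=v=1$ and $\alpha=\beta=q$; this yields $d=\gcd(3,3)=3$, $a'=b'=1$, and $c=ab'^2+ba'^2=6$, so each product lands in $\mT_\C\left(q^2x^6\right)$ and the M-coefficients are theta-type series $M_\ell(q)=\gamma_\ell\sum_{n\in\Z}q^{3n^2+\delta_\ell n}$ (since $\alpha^{b'}\beta^{-a'}=q^{1}q^{-1}=1$, there is no linear factor multiplying $q^{3n^2}$ inside the sum).

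I would then collect the coefficient of each canonical basis vector $\vect{q^2x^6}{\ell}$ and exploit the $\mS_\C\left(x^2\right)$ symmetry to combine them into the four paired expressions of the statement, halving the bookkeeping. In the final step each surviving $q$-coefficient is a sum $\sum_{n\in\Z}q^{3n^2+mn}$, which I would rewrite as a product using Jacobi's triple product identity \cref{eq:triple} with $q\mapsto q^6$: since $3n^2+mn=6\binom{n}{2}+(3+m)n$, one obtains $\sum_{n\in\Z}q^{3n^2+mn}=\left(q^6;q^6\right)\la-q^{3+m};q^6\ra$. Dividing through by $\left(q^6;q^6\right)$ to match the stated normalization $(q;q)^2/\left(q^6;q^6\right)$ then turns each coefficient into one of $\la-q^3;q^6\ra$, $q\la-q;q^6\ra$, $q\la-1;q^6\ra$, $\la-q^2;q^6\ra$, recovering the identity once signs and powers of $q$ are matched.

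The main obstacle I expect is purely computational: tracking the prefactors $\gamma_\ell$, the shifts $\delta_\ell$, and the signs through the cross term $-2\,\vect{qx^3}{0}\vect{qx^3}{1}$, then reducing each index $\ell$ modulo $6$ so the degenerate pairs correctly produce the factor-of-two coefficients. As a cleaner alternative and an independent check, since the ambient space is only four-dimensional one could instead invoke \cref{lem:proofs-by-value}: as $\alpha=q^2\notin\{-q^n:n\in\Z\}$, it suffices to verify the identity at the sixth roots of unity $x=e^{2\pi i j/6}$ for $0\le j<6$, evaluating each term via \cref{eq:canonical-root}, with the $\mS_\C\left(x^2\right)$ symmetry further reducing these checks.
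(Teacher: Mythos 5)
Your proposal is correct and follows essentially the same route as the paper: the paper's proof squares the quintuple identity $(q;q)\la x;q\ra\la qx^2;q^2\ra=\vect{qx^3}{0}-\vect{qx^3}{1}$ and then proceeds exactly as in \cref{prop:septuple2}, i.e.\ applies \cref{lem:m-identities} with $a=b=3$, $u=v=1$ (so $au^2+bv^2=6$ and $ab'^2+ba'^2=6$), converts the resulting $q$-coefficients into products via the triple product identity at modulus $q^6$, and divides by $\left(q^6;q^6\right)$ at the end, just as you describe — including your correct observation that the degenerate $\mS_\C\left(x^2\right)$-pairs $(1,1)$ and $(4,-2)$ account for the factor-of-two coefficients. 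Your fallback via \cref{lem:proofs-by-value} at sixth roots of unity (valid since $q^2\neq -q^n$ for $|q|<1$) is a legitimate independent check, though not the route the paper takes here.
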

\begin{proof}
Square the quintuple product identity $(q; q)\la x; q\ra \la qx^2; q^2\ra = \vect{qx^3}{0} - \vect{qx^3}{1}$, and then proceed exactly as in the proof of \cref{prop:septuple2} (dividing by $(q^6; q^6)$ in the end).
\end{proof}

\begin{proposition}[Undecuple product identity] \label{prop:undecuple}
In $\mT_\C\left(q^6 x^{15} \right) \cap \mS_\C\left(x^3 \right)$, one has
\[
\begin{aligned}
    \frac{(q; q)^2}{\left(q^{15}; q^{15}\right)}\la x; q \ra \la x^2q; q^2 \ra^2 &\la x^2; q^2 \ra \la x^4q; q^2 \ra
    \\
    =
    &-\la -q^6; q^{15} \ra \left(\vect{q^6 x^{15}}{1} + \vect{q^6 x^{15}}{2}\right) \\
    &+
    \left(\la -q^7; q^{15} \ra + q\la -q^2; q^{15} \ra\right) \left(\vect{q^6 x^{15}}{0} + \vect{q^6 x^{15}}{3}\right) \\
    &- q^2 \la -1; q^{15} \ra\left(\vect{q^6 x^{15}}{-1} + \vect{q^6 x^{15}}{4}\right) \\
    &- q \la -q^3; q^{15} \ra \left(\vect{q^6 x^{15}}{-2} + \vect{q^6 x^{15}}{5}\right) \\
    &+ q\left(\la -q^4; q^{15}\ra + q\la -q; q^{15} \ra\right) \left(\vect{q^6 x^{15}}{-3} + \vect{q^6 x^{15}}{6}\right) \\
    &- q\la -q^6; q^{15} \ra \left(\vect{q^6 x^{15}}{-4} + \vect{q^6 x^{15}}{7}\right) \\
    &- q^2\la -q^3; q^{15} \ra \left(\vect{q^6 x^{15}}{-5} + \vect{q^6 x^{15}}{8}\right) + 2q^2\la -q^5; q^{15} \ra\vect{q^6 x^{15}}{9}.
\end{aligned}
\]
\end{proposition}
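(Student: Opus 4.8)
The plan is to follow the three-step recipe for type-3 products, exactly as in the proofs of \cref{prop:squared-quintuple} and \cref{prop:septuple2}. First I would simplify the left-hand product using the elementary identity $\la x^2; q^2\ra \la x^2 q; q^2\ra = \la x^2; q\ra$ (a special case of $\la y; q^2\ra\la yq; q^2\ra = \la y; q\ra$), which lets one of the two factors $\la x^2 q; q^2\ra$ absorb $\la x^2; q^2\ra$. This regroups the product into two quintuple factors,
\[
\la x; q \ra \la x^2 q; q^2 \ra^2 \la x^2; q^2 \ra \la x^4 q; q^2 \ra
= \big(\la x; q\ra \la x^2 q; q^2\ra\big)\big(\la x^2; q\ra \la x^4 q; q^2\ra\big),
\]
the first being the quintuple in $x$ and the second the quintuple in $x^2$. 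Applying \cref{eq:substitutions-quintuple} with $\alpha = 1$, $a = 1$ and scaling $u = 1$, respectively $u = 2$, gives
\[
(q; q)\la x; q\ra\la x^2 q; q^2\ra = \vect{qx^3}{0} - \vect{qx^3}{1},
\]
\[
(q; q)\la x^2; q\ra\la x^4 q; q^2\ra = \vect{qx^3}{0}(2z) - \vect{qx^3}{1}(2z).
\]

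Next I would multiply these two expansions, producing four products $\vect{qx^3}{k}(z)\,\vect{qx^3}{j}(2z)$ with $k, j \in \{0,1\}$, and apply \cref{lem:m-identities} to each with $\alpha = \beta = q$, $a = b = 3$, $u = 1$, $v = 2$. Here $d = \gcd(au, bv) = 3$, $a' = 1$, $b' = 2$, so each product lands in $\mT_\C\big(q^{6} x^{15}\big)$ (since $\alpha^u\beta^v q^{a\binom{u}{2}+b\binom{v}{2}} = q\cdot q^2\cdot q^3 = q^6$ and $au^2+bv^2 = 15$), matching the claimed space and exponent. The $\mS_\C(x^3)$ symmetry of the left-hand side, visible by multiplying the $\mS$-factors of \cref{tbl:spaces}, pairs $\vect{q^6 x^{15}}{\ell}$ with $\vect{q^6 x^{15}}{3-\ell}$ and halves the bookkeeping. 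For Step 3, since $ab'^2 + ba'^2 = 15$ each coefficient $M_\ell(q)$ is a theta series of modulus $q^{15}$ with base $\alpha^{b'}\beta^{-a'} = q$; using \cref{eq:substitutions-triple} with $a = 15$, $u = 1$ and the normalization $\la y; q^{15}\ra = \la q^{15}/y; q^{15}\ra$ rewrites each $M_\ell(q)$ as a product $\la -q^{j}; q^{15}\ra$ with $j \in [0, 15/2]$ read off from $\delta_\ell$. Dividing by $(q^{15}; q^{15})$ and rescaling by the appropriate power of $q$ then reproduces the statement.

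The main obstacle—and the reason several coefficients in the statement are \emph{sums} of two theta products—is that the residue hit by $\vect{qx^3}{k}(z)\vect{qx^3}{j}(2z)$ is $uk + vj \equiv k + 2j \pmod{3}$, which is $0$ for both $(k,j)=(0,0)$ and $(k,j)=(1,1)$. Hence every basis vector with $\ell \equiv 0 \pmod 3$ collects contributions from two distinct source products, whose $M_\ell$ coefficients do not generally collapse to a single product; this produces the combinations $\la -q^7; q^{15}\ra + q\la -q^2; q^{15}\ra$ and $q\big(\la -q^4; q^{15}\ra + q\la -q; q^{15}\ra\big)$ appearing in the identity. A related point requiring care is the self-paired vector $\vect{q^6 x^{15}}{9}$: under the pairing $\ell \leftrightarrow 3 - \ell$ one has $3 - 9 \equiv 9 \pmod{15}$, so it stands alone with the doubled coefficient $2q^2\la -q^5; q^{15}\ra$. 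Keeping signs and $q$-powers consistent across these overlapping contributions is the bulk of the work; the remaining theta-series-to-product conversions are the routine content of \cref{lem:m-identities}.
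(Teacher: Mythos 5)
Your proposal is correct and follows essentially the same route as the paper's proof: the same regrouping via $\la x^2; q^2\ra\la qx^2; q^2\ra = \la x^2; q\ra$ into two quintuple factors expanded by \cref{eq:substitutions-quintuple} (with $u = 1$ and $u = 2$), the same four applications of \cref{lem:m-identities} with $a = b = 3$, $u = 1$, $v = 2$ (giving $au^2 + bv^2 = 15$ and $ab'^2 + ba'^2 = 15$), and the same conversion of the $M_\ell$ coefficients via \cref{eq:substitutions-triple} with $a = 15$ followed by division by $\left(q^{15}; q^{15}\right)$. Your supplementary observations — that the source products $(k,j) = (0,0)$ and $(1,1)$ both feed the residue class $\ell \equiv 0 \pmod 3$ (producing the summed coefficients), and that $\vect{q^6 x^{15}}{9}$ is self-paired under $\ell \mapsto 3 - \ell \pmod{15}$ (producing the doubled coefficient) — are accurate and correctly account for the structure of the right-hand side.
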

\begin{proof}
Rewrite the undecuple product (up to a factor depending on $q$) as
\[
    \la x; q \ra \la x^2q ; q^2 \ra 
    \cdot \la x^2; q \ra \la x^4 q; q^2 \ra,
\]
which has the form of the type-3 product \cref{eq:type3} for $a = b = u = 1$ and $v = 2$. Apply \cref{lem:m-identities} four times for $a = b = 3$, $u = 1$ and $v = 2$, so that $au^2 + bv^2 = 15$ and $ab'^2 + ba'^2 = 15$. Then group terms, apply \cref{eq:substitutions-triple} with $a = 15$ and $u = 1$ repeatedly, and divide by $\left(q^{15}; q^{15}\right)$ to conclude.
\end{proof}

\begin{question}\label{qtn:finitized}
In \cref{prop:Cauchy}, we reproved a finitized version of Jacobi's triple product identity due to Cauchy; we used that $\dim \mT_\C(f_N) \le 1$, where $f_N(z)$ was a degree-1 rational function in $x$ approaching $-x$ as $N \to \infty$. Are there similar finitized analogues of the higher-order product identities in this section, provable using the finite-dimensionality of the appropriate $\mT_\C(f_N)$ spaces?
\end{question}

\subsection{Identities for quotients of generalized eta functions} \label{subsec:generalized-eta}
In \cref{lem:proofs-by-value}, we saw how to prove product identities via specializations of $x$ at roots of unity. Here we reverse this process, and specialize product identities from \cref{subsec:higher-order} to deduce identities of generalized eta functions. 

\begin{notation}[Eta quotients] \label{not:eta} 
Recall the notations from \cref{eq:generalized-eta} for the Dedekind eta function $\eta(\tau) := q^{1/24} (q; q)$, and the generalized eta functions $\E_g(\tau) := q^{N B(g/N)/2}\la q^g; q^N \ra$ of a fixed level $N \ge 1$ (for $\tau \in \H^+$). As mentioned after \cref{cor:eta-poly}, these are not to be confused with the Eisenstein series $E_{2k}(\tau)$, which do not appear in this paper. An \emph{eta quotient} \cite{gordon1993multiplicative,martin1997eta} is an expression
\[
    \prod_{1 \le d \mid n} \eta(d\tau)^{r_d},
\]
for some $n \ge 1$ and $r_d \in \Z$; we define \emph{generalized eta quotients} analogously, allowing only functions $\E_g$ of the same level $N$ (however, $g \in \Z$ may vary).
\end{notation}

\begin{remark}
$\eta$ is a half-integral weight modular form for the full modular group $\Gamma$, and eta quotients are useful in computing bases of modular forms for congruence subgroups containing $\Gamma_0(n)$. Similarly, the generalized eta functions $\E_g$ satisfy transformation formulae at the action of $\Gamma_0(N)$ \cite[Corollary 2]{yang2004transformation}; one can design generalized eta quotients that are invariant under the action of $\Gamma_1(N)$, and use them to produce generators of function fields associated to general genus-zero congruence subgroups \cite{yang2004transformation}. The functions $\E_g$ (and a further generalization $\E_{g,h}$ thereof) were also studied in \cite{berndt1973generalized}.

For our purposes, the generalized eta functions provide the ``right'' normalization of the double infinite products $\la q^g; q^N \ra$, in the sense that when specializing a product identity at a value $x = \pm q^r$, all extraneous powers of $q$ will be encapsulated in the functions $\eta$ and $\E_g$; this \emph{must} be the case because (non-constant) powers of $q$ do not transform nicely under $\tau \mapsto -1/\tau$. The following lemma provides a few easy facts about $\eta$ and $\E_g$, with proofs left to the reader.
\end{remark}
\begin{lemma}\label{lem:basic-E}
Let $\tau \in \H^+$ and fix a level $N$.
\begin{itemize}
    \item[(i).] If $N$ is odd, $\prod_{g=1}^{\lfloor N/2 \rfloor} \E_g(\tau) = \frac{\eta(\tau)}{\eta(N\tau)}$.
    \item[(ii).] If $N$ is even, $\prod_{g=1}^{(N/2)-1} \E_g(\tau) = \frac{\eta(\tau)}{\eta(N\tau/2)}$.
    \item[(iii).] For any $g \in \Z$, $\E_g(\tau) = \E_{N-g}(\tau) = -\E_{g+N}(\tau)$. In particular, $\E_0(\tau) = 0$.
\end{itemize}
\end{lemma}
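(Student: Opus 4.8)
The plan is to reduce the whole lemma to two ingredients: the symmetries of the double product $\la q^g; q^N \ra$ and of the polynomial $B$, and a single splitting of $(q; q)$ into double products indexed by residues modulo $N$. I would first settle (iii), since its symmetries streamline the other two parts. The identity $\la q^g; q^N \ra = \la q^{N-g}; q^N \ra$ is immediate from $\la x; q \ra := (x; q)(q/x; q)$, as swapping the two factors carries $q^g \mapsto q^{N-g}$; together with the reflection $B(t) = B(1-t)$ of $B(t) = t^2 - t + \tfrac16$ about $t = \tfrac12$, this gives $\E_g = \E_{N-g}$. For $\E_g = -\E_{g+N}$, I would compute $\la q^{g+N}; q^N \ra = -q^{-g}\la q^g; q^N \ra$ directly, by peeling off the factor $(1-q^g)$ from $(q^g; q^N)$ and the factor $(1-q^{-g})$ from $(q^{-g}; q^N)$, and combine this with the shift $B(t+1) - B(t) = 2t$ (so that $\tfrac{N}{2}[B(\tfrac{g+N}{N}) - B(\tfrac{g}{N})] = g$); the powers of $q$ then cancel, leaving exactly the sign $-1$. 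Finally $\E_0 = 0$ drops out either from the vanishing factor $1 - q^0$ in $\la 1; q^N \ra$, or more cleanly by combining $\E_0 = \E_N$ with $\E_0 = -\E_N$.

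For (i) and (ii) the key input is $(q; q) = (q^N; q^N)\prod_{g=1}^{N-1}(q^g; q^N)$, which is the third product manipulation recorded in \S\ref{subsec:notation} applied with $x = q$, $d = N$ (equivalently, sort the factors $1 - q^n$ of $(q;q)$ by the residue of $n$ modulo $N$). Passing to double products via $\prod_{g=1}^{N-1}\la q^g; q^N \ra = \big(\prod_{g=1}^{N-1}(q^g; q^N)\big)^2 = \big((q;q)/(q^N;q^N)\big)^2$, I would then fold the range $1 \le g \le N-1$ in half using $\la q^g; q^N \ra = \la q^{N-g}; q^N \ra$. For $N$ odd the folding is clean, and extracting the square root (unambiguous since every factor is $1 + O(q)$) yields $\prod_{g=1}^{(N-1)/2}\la q^g; q^N \ra = (q;q)/(q^N;q^N)$. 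For $N$ even there is a self-paired middle term $g = N/2$ with $\la q^{N/2}; q^N \ra = (q^{N/2}; q^N)^2$; dividing it out and applying the even/odd split $(q^{N/2}; q^{N/2}) = (q^{N/2}; q^N)(q^N; q^N)$ collapses the product to $\prod_{g=1}^{N/2-1}\la q^g; q^N \ra = (q;q)/(q^{N/2}; q^{N/2})$.

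It then remains to check that the accumulated powers of $q$ from the prefactors $q^{N B(g/N)/2}$ in $\E_g$ reproduce exactly the $q$-powers of $\eta(\tau)/\eta(N\tau) = q^{(1-N)/24}(q;q)/(q^N;q^N)$ and of $\eta(\tau)/\eta(N\tau/2) = q^{(2-N)/48}(q;q)/(q^{N/2};q^{N/2})$. Concretely I would verify the two closed forms $\tfrac{N}{2}\sum_{g=1}^{(N-1)/2} B(g/N) = \tfrac{1-N}{24}$ (for $N$ odd) and $\tfrac{N}{2}\sum_{g=1}^{N/2-1} B(g/N) = \tfrac{2-N}{48}$ (for $N$ even), both of which reduce to the standard evaluations of $\sum g$ and $\sum g^2$. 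I expect this $B$-sum bookkeeping to be the main obstacle --- not conceptually, but as the step where a stray sign or factor of $2$ is easy to lose; indeed the normalization by $N B(g/N)$ in the definition of $\E_g$ was chosen precisely so that these exponents collapse to the clean eta-quotient values.
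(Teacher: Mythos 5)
Your proposal is correct and complete: all three symmetry computations in (iii), the splitting $(q;q) = (q^N;q^N)\prod_{g=1}^{N-1}(q^g;q^N)$ with the folding via $\la q^g;q^N\ra = \la q^{N-g};q^N\ra$, and the two exponent sums $\tfrac{N}{2}\sum_{g=1}^{(N-1)/2}B(g/N) = \tfrac{1-N}{24}$ and $\tfrac{N}{2}\sum_{g=1}^{N/2-1}B(g/N) = \tfrac{2-N}{48}$ all check out, and this is exactly the elementary verification the paper intends when it leaves the proof to the reader. One small streamlining: you can skip the square-root extraction entirely, since $\prod_{g=1}^{\lfloor (N-1)/2\rfloor}\la q^g;q^N\ra = \prod_{g=1}^{\lfloor (N-1)/2\rfloor}(q^g;q^N)(q^{N-g};q^N)$ already runs over each residue $1\le g\le N-1$ exactly once when $N$ is odd (and over all residues except $N/2$ when $N$ is even), giving the product identities directly without squaring.
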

Our goal here is to compute more complicated sums of quotients of generalized eta functions in terms of the better-understood function $\eta$. For $N \in \{2, 3, 4, 6\}$, each function $\E_g(\tau)$ can be expressed directly as an eta quotient; for instance, when $N = 6$, quick computations show that
\[
    \E_3(\tau) = \frac{\eta(3\tau)^2}{\eta(6\tau)^2}
    ,\qquad\quad 
    \E_2(\tau) = \frac{\eta(2\tau)}{\eta(6\tau)}
    ,\qquad\quad 
    \E_1(\tau) = \frac{\eta(\tau)\eta(6\tau)}{\eta(2\tau)\eta(3\tau)}.
\]
One can obtain more related identities when $N = 6$ by specializing \cref{prop:septuple2} and \cref{prop:squared-quintuple} at $x \in \{\pm 1, \pm i\}$, but we omit these here (similarly, \cref{eq:sextuple} and \cref{prop:octuple} lead to identities for $N = 4$). For the level $N = 5$, we remark that $\E_1(\tau)^{-1}$ and $\E_2(\tau)^{-1}$ are essentially the Rogers--Ramanujan functions from \cref{eq:rog-ram}, and that the Rogers--Ramanujan continued fraction $\E_2/\E_1$ (recall \cref{ex:two-var-rog-ram}) is a modular function for the principal congruence subgroup $\Gamma(5)$.

Our identities concern the higher levels $N \in \{7, 10, 13\}$. We implicitly use \cref{lem:basic-E}, the fact that $\la -x; q \ra = \la x^2; q^2 \ra / \la x; q\ra$, and the specialization of canonical basis vectors from \cref{eq:canonical-root} in our proofs; we also leave easy computational details to the reader, with the assurance that all the results below were verified numerically.

\begin{proposition}[Level-7 generalized eta quotients] \label{prop:level-7}
For the level $N = 7$ and $\tau \in \H^+$,
\begin{align}
    \label{eq:level-7-1}
    &\sum_{g=1}^3 \frac{\E_g(2\tau)}{\E_{3g}(\tau)}
    = 
    2\frac{\eta(14\tau)^2}{\eta(7\tau)^2},
    &
    &\sum_{g=1}^3 \frac{\E_{3g}(\tau)}{\E_g(2\tau)}
    = 
    \frac{\eta(7\tau)^2}{\eta(14\tau)^2},
    \\
    \label{eq:level-7-2}
    &\sum_{g=1}^3 
    \frac{\E_g(\tau)^2 \E_{3g}(\tau)}{\E_{3g}(2\tau)}
    =
    2\frac{\eta(2\tau)^2}{\eta(7\tau)^2},
    &
    &\sum_{g=1}^3 
    \E_g(\tau)\E_{3g}(3\tau)
    =
    0.
\end{align}
\end{proposition}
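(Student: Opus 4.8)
The plan is to specialize the two level-$7$ product identities from \cref{subsec:higher-order} at $x=1$ (and, for the remaining two identities, at $x=1$ after dividing out a vanishing factor). The relevant identities are the first nonuple identity \cref{eq:nonuple}, living in $\mT_\C\left(q^2x^7\right)$, and the type-$1$ identity \cref{eq:order-21}, living in $\mT_\C\left(-q^6x^{21}\right)$; both have right-hand sides whose denominators are the level-$7$ products $\la q^j;q^7\ra$. In each case I would evaluate the canonical basis vectors via \cref{eq:canonical-formula}, namely $\vect{\alpha x^d}{k}(1)=(q^d;q^d)\la -\alpha q^k;q^d\ra$, pair the resulting modified theta values using $\la y;q\ra=\la q/y;q\ra$ and $\la y;q\ra=-y\la qy;q\ra$, rewrite mixed-modulus products through $\la x;q\ra\la -x;q\ra=\la x^2;q^2\ra$ (so that $\la -q^{j};q^7\ra=\la q^{2j};q^{14}\ra/\la q^{j};q^7\ra$ and $(-q^7;q^7)=1/(q^7;q^{14})$), and finally absorb all stray fractional powers of $q$ into the normalizations $\eta$ and $\E_g$ from \cref{eq:generalized-eta}, using \cref{lem:basic-E} and $\E_g=\E_{N-g}=-\E_{g+N}$ to reindex $3g\bmod 7$.

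For the first identity in \cref{eq:level-7-1}, I note that $\la 1;q\ra=0$ forces the left side of \cref{eq:nonuple} to vanish at $x=1$, so its right side gives a linear relation among the basis-vector values. The three paired terms (with $k+\ell=3$, by the $\mS_\C\left(x^3\right)$-symmetry) evaluate, after the reindexing $3\{1,2,3\}\equiv\{3,6,2\}\pmod 7$, to a multiple of $\sum_{g=1}^3 \E_g(2\tau)/\E_{3g}(\tau)$, while the self-paired middle term $2\vect{q^2x^7}{5}$ evaluates to $2(q^7;q^7)\la -1;q^7\ra=4(q^7;q^7)(-q^7;q^7)^2$; since $2\,\eta(14\tau)^2/\eta(7\tau)^2=2q^{7/12}(-q^7;q^7)^2$, this produces exactly the claimed identity. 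The same mechanism applied to \cref{eq:order-21} at $x=1$ proves the second identity in \cref{eq:level-7-2}: here there is no unpaired term, so the three paired contributions must sum to zero, and after simplifying $\la q^{27};q^{21}\ra=-q^{-6}\la q^{6};q^{21}\ra$ and $\la q^{18};q^{21}\ra=\la q^{3};q^{21}\ra$ they reassemble into $\sum_{g=1}^3 \E_g(\tau)\E_{3g}(3\tau)=0$.

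The two remaining identities both carry an $\E(2\tau)$ (hence a $\la\cdot;q^{14}\ra$) in the denominator, so plain evaluation at $x=1$ will not produce them. I would instead divide \cref{eq:nonuple} by the factor $\la x^2;q^2\ra$ (respectively $\la qx^2;q^2\ra$), which has a simple zero at $x=1$ matching that of $\la x;q\ra$, and then let $x\to 1$: the resulting $0/0$ limit (a ratio of residues at the simple poles that $\vect{q^2x^7}{k}/\la x^2;q^2\ra$ acquires at $x=1$) extracts a genuinely different linear combination of the $\la q^{j};q^7\ra$ and $\la q^{2j};q^{14}\ra$. Applying once more $\la x;q\ra\la -x;q\ra=\la x^2;q^2\ra$ to move the modulus-$14$ products into the denominator, I expect these to collapse to $\sum_{g=1}^3\E_{3g}(\tau)/\E_g(2\tau)=\eta(7\tau)^2/\eta(14\tau)^2$ and $\sum_{g=1}^3\E_g(\tau)^2\E_{3g}(\tau)/\E_{3g}(2\tau)=2\,\eta(2\tau)^2/\eta(7\tau)^2$; a leading-order check in $q$ (common factors $q^{-7/12}$, resp.\ $q^{-1}(q^2;q^2)^2/(q^7;q^7)^2$) already confirms the shapes.

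The main obstacle is the bookkeeping of the fractional exponents $q^{7B(g/7)/2}$, $q^{7B(g/7)}$ and $q^{21B(3g/7)/2}$: one must check that, after factoring out a single common fractional power (e.g.\ $q^{7/12}$ for the first and third sums, $q^{-2/3}$ for the vanishing sum), the three per-$g$ exponents differ by integers, so that the surviving $\la\cdot\ra$-coefficients genuinely assemble into the stated $\E_g$-products and the powers of $q$ match the target eta quotients. The pairing of the (up to seven) basis vectors into three $\E$-terms, and the sign conventions coming from $\E_{g+7}=-\E_g$ and $\la y;q\ra=-y\la qy;q\ra$, is where the care is needed; for the reciprocal and third identities, the additional subtlety is justifying the $x\to1$ limit of the $0/0$ expression (equivalently, identifying the relevant residues), which is the step most of the ``left to the reader'' computation hides.
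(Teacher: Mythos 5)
Your treatment of two of the four identities matches the paper exactly: the first identity in \cref{eq:level-7-1} is indeed obtained by setting $x=1$ in the nonuple identity \cref{eq:nonuple} (the factor $\la x;q\ra$ kills the left side, the pairs $k+\ell=3$ evaluate via \cref{eq:canonical-formula} to the $\E_g(2\tau)/\E_{3g}(\tau)$ terms through $\la -q^m;q^7\ra=\la q^{2m};q^{14}\ra/\la q^m;q^7\ra$, and the self-paired term $2\vect{q^2x^7}{5}$ yields $2\eta(14\tau)^2/\eta(7\tau)^2$), and the vanishing sum in \cref{eq:level-7-2} is exactly the paper's specialization of \cref{eq:order-21} at $x=1$.

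For the other two identities there is a genuine gap. First, $\la qx^2;q^2\ra$ does \emph{not} vanish at $x=1$: its value there is $\la q;q^2\ra=(q;q^2)^2\neq 0$ (its zeros sit at $x=\pm q^{(2n-1)/2}$), so dividing \cref{eq:nonuple} by it and letting $x\to1$ is not a $0/0$ limit and only reproduces the first identity. Second, even with the divisor $\la x^2;q^2\ra$, which does have a simple zero at $x=1$, the left side of \cref{eq:nonuple} has a \emph{double} zero there (from $\la x;q\ra\cdot\la x^2;q^2\ra$), so the quotient still tends to $0$; by L'H\^opital the relation you extract is that the $z$-derivative of the right side of \cref{eq:nonuple} vanishes at $z=0$. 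That relation involves $\frac{d}{dz}\vect{q^2x^7}{k}\big|_{z=0}$, i.e.\ series of the shape $\sum_n (7n+k)\,q^{\cdots}$ (compare the right-hand side of \cref{cor:mock-theta-fifth}), which are not eta products and do not reassemble into the quotients with $\la q^{2g};q^{14}\ra$ in the denominator; a leading-order check in $q$ cannot bridge this. The paper proceeds differently: the reciprocal-sum identity in \cref{eq:level-7-1} is not derived from \cref{eq:nonuple} at all, but is shown to be equivalent to an identity of Hickerson \cite{hickerson1988seventh} (equating his (4.6) and (4.8)), namely
\[
\frac{\la q;q^7\ra}{\la q^4;q^{14}\ra} - \frac{\la q^2;q^7\ra}{\la q^6;q^{14}\ra} + \frac{\la q^3;q^7\ra}{q\,\la q^2;q^{14}\ra} = \frac{\la q^7;q^{14}\ra}{q},
\]
while the first identity in \cref{eq:level-7-2} comes from specializing \cref{eq:order-21} at the \emph{nonvanishing} point $x=-q^{1/3}$, using the quintuple substitution \cref{eq:substitutions-quintuple} with $a=7$ to convert each pair of basis-vector values into the modulus-$7$ and modulus-$14$ products making up $\E_g(\tau)^2\E_{3g}(\tau)/\E_{3g}(2\tau)$. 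If you want something in the spirit of your proposal, you must specialize at points where the relevant products stay nonzero (such as $-q^{1/3}$, or roots of unity as in \cref{lem:proofs-by-value}), rather than take degenerate limits at $x=1$.
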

\begin{proof}
The first equality in \cref{eq:level-7-1} follows by specializing the nonuple product identity from \cref{prop:nonuple} at $x = 1$; the left-hand side of \cref{eq:nonuple} becomes null due to the factor of $\la x; q\ra$, and the last term in \cref{eq:nonuple} results (after suitable scaling) in the quotient $2\eta(14\tau)^2/\eta(7\tau)^2$ above.

The second equality in \cref{eq:level-7-1} is equivalent to an identity of Hickerson; indeed, by dividing the fact that $(4.6)$ equals $(4.8)$ in \cite{hickerson1988seventh} by $-q^3(q; q)\left(q^7; q^7\right)^2$ and simplifying, one obtains:
\[
\frac{\la q; q^7 \ra}{\la q^4; q^{14} \ra} - \frac{\la q^2; q^7 \ra}{\la q^6; q^{14} \ra} + \frac{\la q^3; q^7 \ra}{q\la q^2; q^{14} \ra} = \frac{\la q^7; q^{14} \ra}{q}.
\]
Finally, the two equalities in \cref{eq:level-7-2} follow by specializing our type-1 product identity \cref{eq:order-21} at $x = -q^{1/3}$ (and applying \cref{eq:substitutions-quintuple} with $a = 7$ for the specialization of each pair of canonical basis vectors), respectively at $x = 1$.
\end{proof}

\begin{proof}[Proof of \cref{cor:eta-poly}]
Letting $X_g(\tau) := \E_g(2\tau)/\E_{3g}(\tau)$, \cref{eq:level-7-1} provides identities for $X_1 + X_2 + X_3$ and $\frac{1}{X_1} + \frac{1}{X_2} + \frac{1}{X_3}$. Combining these with the fact that
\[
    X_1(\tau)X_2(\tau)X_3(\tau) 
    =
    -\frac{\prod_{g=1}^3 \E_g(2\tau)}{\prod_{g=1}^3 \E_g(\tau)}
    =
    -\frac{\eta(2\tau)\eta(7\tau)}{\eta(\tau)\eta(14\tau)},
\]
which follows from \cref{lem:basic-E}, we obtain the three Vieta identities for $(X - X_1)(X - X_2)(X - X_3)$ that constitute \cref{cor:eta-poly}.
\end{proof}

\begin{proposition}[Level-10 generalized eta quotients]
For the level $N = 10$ and $\tau \in \H^+$,
\begin{align*}
    \frac{\E_1(2\tau)\E_2(2\tau)}{\E_1(\tau)\E_2(\tau)}
    &=
    \frac{1}{2}\left(
    \frac{\eta(2\tau)^4}{\eta(\tau)^2\eta(10\tau)^2} - \frac{\eta(10\tau)^2}{\eta(5\tau)^2}
    - \frac{\eta(\tau)\eta(5\tau)}{\eta(10\tau)^2}
    \right),
    \\
    \frac{\E_3(2\tau)\E_4(2\tau)}{\E_3(\tau)\E_4(\tau)}
    &=
     \frac{1}{2}\left(
    \frac{\eta(2\tau)^4}{\eta(\tau)^2\eta(10\tau)^2} - \frac{\eta(10\tau)^2}{\eta(5\tau)^2}
    + \frac{\eta(\tau)\eta(5\tau)}{\eta(10\tau)^2}
    \right).
\end{align*}
\end{proposition}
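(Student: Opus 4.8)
The plan is to specialize the type-1 identity \cref{eq:order-10}, which the remark following \cref{eq:order-21} flags as the source of the level-$10$ eta identities. First I would unwind the two left-hand sides: expanding \cref{eq:generalized-eta} at level $N=10$, replacing $\tau$ by $2\tau$, and using $\la -x; q\ra = \la x^2; q^2\ra/\la x; q\ra$, the fractional powers of $q$ contributed by the $B(g/10)$-normalization combine so that
\[
    \frac{\E_1(2\tau)\E_2(2\tau)}{\E_1(\tau)\E_2(\tau)} = q^{5/12}\la -q; q^{10}\ra\la -q^2; q^{10}\ra =: q^{5/12}P_{12},
\]
\[
    \frac{\E_3(2\tau)\E_4(2\tau)}{\E_3(\tau)\E_4(\tau)} = q^{-7/12}\la -q^3; q^{10}\ra\la -q^4; q^{10}\ra =: q^{-7/12}P_{34}.
\]
Thus the two claimed identities are equivalent to computing the combinations $qP_{12}+P_{34}$ and $qP_{12}-P_{34}$ as explicit eta quotients; it will also be convenient to set $P_{05} := \la -1; q^{10}\ra\la -q^5; q^{10}\ra = 2(q^{10};q^{10})^2/(q^5;q^5)^2$.

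Next I would specialize \cref{eq:order-10} at the tenth roots of unity $z = j/10$, in the spirit of \cref{lem:proofs-by-value}. By \cref{eq:canonical-root}, one has $\vect{q^3 x^{10}}{k}(j/10) = (q^{10}; q^{10})\la -q^{3+k}; q^{10}\ra e^{2\pi i jk/10}$, so the paired basis vectors on the right of \cref{eq:order-10} collapse, and (folding indices via $\la -q^a; q^{10}\ra = \la -q^{10-a}; q^{10}\ra$) every term of the right-hand side becomes one of $P_{12}, P_{34}, P_{05}$ times a root of unity. Hence each $j$ produces a relation
\[
    \frac{(q;q)^2}{(q^{10};q^{10})}\la \zeta^j; q\ra\la \zeta^{3j}; q\ra = (q^{10};q^{10})\big(c_{34}(j)P_{34} + c_{12}(j)P_{12} + c_{05}(j)P_{05}\big),
\]
with $\zeta := e^{\pi i/5}$, $c_{34}(j) = 1 + \zeta^{4j} - \zeta^{j} - \zeta^{3j}$, $c_{12}(j) = -q(-1)^j c_{34}(j)$, and $c_{05}(j) = q\,\zeta^{2j}\big(1-(-1)^j\big)$.

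The sum $qP_{12}+P_{34}$ comes cleanly from $j=5$, where $\zeta^5=-1$: then $\la \zeta^5;q\ra\la \zeta^{15};q\ra = \la -1;q\ra^2 = 4(-q;q)^4$, while $c_{34}=4$, $c_{12}=4q$, $c_{05}=2q$. Substituting the value of $P_{05}$ and using $(q;q)(-q;q)=(q^2;q^2)$ to rewrite $(q;q)^2(-q;q)^4 = (q^2;q^2)^4/(q;q)^2$, the relation becomes $qP_{12}+P_{34} = (q^2;q^2)^4/\big((q;q)^2(q^{10};q^{10})^2\big) - q(q^{10};q^{10})^2/(q^5;q^5)^2$, which is $q^{7/12}$ times the sum of the two target identities. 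For the difference $qP_{12}-P_{34}$ I would take the even $j \in \{2,4,6,8\}$: there $c_{05}(j)=0$ and $c_{12}(j)=-qc_{34}(j)$, so each relation reads $P_{34}-qP_{12} = (q;q)^2\la \zeta^j;q\ra\la \zeta^{3j};q\ra/\big((q^{10};q^{10})^2 c_{34}(j)\big)$. The main obstacle is that the individual products $\la \zeta^j;q\ra\la \zeta^{3j};q\ra$ are Rogers--Ramanujan-type and carry primitive-fifth-root irrationalities, so no single even $j$ visibly yields an eta quotient. I would circumvent this by multiplying the four relations together: as $j$ ranges over the even residues, $\zeta^j$ and $\zeta^{3j}$ each range over all primitive fifth roots, so $\prod_j \la \zeta^j;q\ra\la \zeta^{3j};q\ra = \big(5(q^5;q^5)^2/(q;q)^2\big)^2$ by the factorization $\prod_{\zeta^5=1}\la \zeta x;q\ra = \la x^5;q^5\ra$ taken in the limit $x\to 1$; and factoring $c_{34}(j) = (\zeta^j-1)^2(\zeta^{2j}+\zeta^j+1)$ gives $\prod_j c_{34}(j)=25$ (the $(\zeta^j-1)^2$ factors contribute $\Phi_5(1)^2=25$ and the quadratic factors contribute $1$). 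Hence $(P_{34}-qP_{12})^4 = (q;q)^4(q^5;q^5)^4/(q^{10};q^{10})^8$, and matching constant terms in $q$ fixes the positive fourth root, giving $P_{34}-qP_{12} = (q;q)(q^5;q^5)/(q^{10};q^{10})^2$, which is $q^{7/12}$ times the difference of the two targets. Adding and subtracting these two combinations isolates $P_{12}$ and $P_{34}$; converting back through \cref{eq:generalized-eta} and \cref{lem:basic-E} and matching the powers $q^{\pm 7/12}$ then yields the two stated identities.
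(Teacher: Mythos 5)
Your proposal is correct, and it splits naturally into two halves with different relationships to the paper. For the \emph{sum} of the two identities you do exactly what the paper does: specialize \cref{eq:order-10} at $x=-1$ (your $j=5$); your normalization bookkeeping (the powers $q^{5/12}$ and $q^{-7/12}$, the reduction of the left-hand sides to $P_{12}$ and $P_{34}$ via $\la -x; q\ra = \la x^2; q^2\ra/\la x; q\ra$) checks out, as do your coefficients $c_{34}(j)=(1-\zeta^j)^2(1+\zeta^j+\zeta^{2j})$, $c_{12}(j)=-q(-1)^j c_{34}(j)$ and $c_{05}(j)=q\zeta^{2j}\left(1-(-1)^j\right)$, which I verified against \cref{eq:canonical-root} applied to \cref{eq:order-10}. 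For the \emph{difference}, however, you take a genuinely different route. The paper reduces $P_{34}-qP_{12}=(q;q)(q^5;q^5)/(q^{10};q^{10})^2$ to the statement \cref{eq:undecuple-det} that $U(q,q^{10})-qU(q^3,q^{10})=(q;q)^2/(q^{10};q^{10})$, i.e.\ to the determinant of the matrix in \cref{eq:explicit-matrix} being $1$, which it then obtains from the change-of-basis machinery (\cref{thm:change-of-basis}, \cref{cor:3-var-2x2-matrix-ctd}) together with Slater's identities \cref{eq:slater} --- ultimately the Rogers--Ramanujan apparatus of \cref{sec:rog-ram}. You instead stay entirely within the root-of-unity framework of \cref{lem:proofs-by-value}: at the primitive fifth roots of unity ($j$ even, nonzero) the $P_{05}$-term vanishes and $c_{12}=-qc_{34}$, so each specialization expresses $P_{34}-qP_{12}$ up to a cyclotomic coefficient; multiplying the four Galois-conjugate relations kills the irrationalities, and both ingredient evaluations are correct ($\prod_{k=1}^4 \la \omega^k; q\ra = 5(q^5;q^5)^2/(q;q)^2$ from $\prod_{\zeta^5=1}\la \zeta x; q\ra = \la x^5; q^5\ra$ as $x\to 1$, and $\prod_j c_{34}(j)=25$ since the quadratic factors multiply to $\Phi_5$ evaluated at primitive cube roots, giving $1$). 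Your fourth-root extraction is legitimate and worth stating carefully: both sides are power series with constant term $1$, so in the integral domain of power series $F^4=G^4$ forces $F=\zeta G$ for a fixed fourth root of unity $\zeta$, and the constant terms give $\zeta=1$. What each route buys: the paper's proof ties the eta-quotient difference to the determinant-one phenomenon and the Slater identities, embedding it in the paper's broader narrative; yours is self-contained, uses nothing from \cref{sec:rog-ram}, and as a byproduct furnishes an independent elementary proof of the specialization \cref{eq:undecuple-det} of that determinant identity.
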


\begin{proof}
We obtain separate identities for the sum and the difference of the two quotients above. The equality
\[
    \frac{\E_1(2\tau)\E_2(2\tau)}{\E_1(\tau)\E_2(\tau)} + 
    \frac{\E_3(2\tau)\E_4(2\tau)}{\E_3(\tau)\E_4(\tau)}
    =
    \frac{\eta(2\tau)^4}{\eta(\tau)^2\eta(10\tau)^2} - \frac{\eta(10\tau)^2}{\eta(5\tau)^2}
\]
follows by specializing our type-1 identity \cref{eq:order-10} at $x = -1$. The second equality,
\[
    \frac{\E_3(2\tau)\E_4(2\tau)}{\E_3(\tau)\E_4(\tau)}
    -
    \frac{\E_1(2\tau)\E_2(2\tau)}{\E_1(\tau)\E_2(\tau)}
    =
    \frac{\eta(\tau)\eta(5\tau)}{\eta(10\tau)^2},
\]
is equivalent (after multiplying by $\eta(10\tau)\E_1(\tau) \E_2(\tau) \E_3(\tau) \E_4(\tau) = \eta(\tau)\eta(10\tau)/\eta(5\tau)$ and a power of $q$) to the fact that
\begin{equation} \label{eq:undecuple-det}
    U\left(q, q^{10}\right) - qU\left(q^3, q^{10}\right) = \frac{(q; q)^2}{\left(q^{10}; q^{10}\right)},
\end{equation}
where $U(x, q) := (q; q)\la x; q \ra \la qx^2; q^2 \ra^2 \la x^2; q^2 \ra \la q x^4 ; q^2 \ra$ denotes the undecuple product from \cref{prop:undecuple}. Finally, \cref{eq:undecuple-det} is precisely the statement that the determinant of the matrix in \cref{eq:explicit-matrix} equals $1$; we will prove this in the next section (non-circularly), based on a generalization of \cref{prop:2x2-determinant} and 4 identities of Slater \cite[(94),(96),(98),(99)]{slater1952further}, which are also implied by \cref{thm:bases-proportional}. 
\end{proof}

\begin{remark}
It would be interesting to see if our undecuple product identity (\cref{prop:undecuple}) can be used to prove \cref{eq:undecuple-det}, or if it can be combined with \cref{eq:undecuple-det} to produce other identities of eta quotients.
\end{remark}

\begin{proposition}[Level-13 generalized eta quotients] \label{prop:level-13}
For the level $N = 13$ and $\tau \in \H^+$,
\[
\sum_{g=1}^6
\frac{\E_{g}(\tau)\E_{2g}(\tau)\E_{6g}(\tau)}{\E_g(2\tau)} = 0.
\]
\end{proposition}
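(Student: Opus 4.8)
The plan is to specialize the second nonuple product identity (\cref{prop:nonuple2}) at $x = 1$, exactly as the level-$7$ identities in \cref{prop:level-7} were extracted from \cref{prop:nonuple}. Since the left-hand side of \cref{eq:nonuple2} carries a factor $\la x; q\ra$, and $\la 1; q\ra = (1; q)(q; q) = 0$ (the factor $1 - q^0$ vanishes), the whole left-hand side dies at $x = 1$. Consequently the six terms on the right-hand side of \cref{eq:nonuple2}, evaluated at $x = 1$, must sum to zero; this vanishing sum will become the claimed eta relation once recast appropriately.

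First I would evaluate each pair of canonical basis vectors at $x = 1$, i.e.\ at $z = 0$. By \cref{eq:canonical-root} with $\alpha = -q^5$, $d = 13$ and $j = 0$, one has $\vect{-q^5 x^{13}}{k}(0) = (q^{13}; q^{13})\la q^{5+k}; q^{13}\ra$, and the reflection $\la q^a; q^{13}\ra = \la q^{13-a}; q^{13}\ra$ shows that the two members of each pair $\{k, 3-k\}$ contribute equally, so that $\vect{-q^5x^{13}}{k}(0) + \vect{-q^5x^{13}}{3-k}(0) = 2(q^{13}; q^{13})\la q^{5+k}; q^{13}\ra$. Substituting these values and cancelling the single numerator factor $\la q^{5+k}; q^{13}\ra$ against one factor of each denominator $\la\,\cdot\,; q^{13}\ra$ turns the right-hand side of \cref{eq:nonuple2} into a sum of six terms, each of the shape $\pm q^{m}\big/\big(\la q^{a}, q^{b}, q^{c}; q^{13}\ra\,\la q^{e}; q^{26}\ra\big)$ for appropriate $m \in \Z$.

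Next I would convert these reciprocals of products into the numerator shape demanded by the eta quotients. The key elementary identity is $\prod_{a=1}^{6}\la q^a; q^{13}\ra = (q; q)/(q^{13}; q^{13})$, obtained by grouping the factors of $(q; q)$ by residue modulo $13$ and using $\la q^a; q^{13}\ra = (q^a; q^{13})(q^{13-a}; q^{13})$; this lets me replace $1/\la q^a, q^b, q^c; q^{13}\ra$ by $\tfrac{(q^{13}; q^{13})}{(q; q)}\la q^{a'}, q^{b'}, q^{c'}; q^{13}\ra$, where $\{a', b', c'\}$ is the complementary triple in $\{1, \dots, 6\}$. On the other hand, unwinding the level-$13$ definitions $\E_g(\tau) = q^{13 B(g/13)/2}\la q^g; q^{13}\ra$ and $\E_g(2\tau) = q^{13 B(g/13)}\la q^{2g}; q^{26}\ra$, each summand $\E_g(\tau)\E_{2g}(\tau)\E_{6g}(\tau)/\E_g(2\tau)$ for $g = 1, \dots, 6$ has precisely the same structure: three $q^{13}$-theta factors $\la q^g, q^{2g}, q^{6g}; q^{13}\ra$ in the numerator (reduced to $\{1,\dots,6\}$ via $\la q^a; q^{13}\ra = \la q^{13-a}; q^{13}\ra$ and $\E_{g+13} = -\E_g$) over the single $q^{26}$-theta $\la q^{2g}; q^{26}\ra$. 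Matching the two lists of six terms by their $q^{26}$-argument (via reflections such as $\la q^{18}; q^{26}\ra = \la q^{8}; q^{26}\ra$) pairs each of the six eta quotients with one term of \cref{eq:nonuple2} at $x = 1$, up to a scalar.

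The main obstacle, and essentially the only real work, is the bookkeeping of the powers of $q$ and the signs: one must verify that, after extracting the common factor $2(q^{13};q^{13})^2/(q;q)$, the prefactor $\pm q^{m}$ attached to each of the six terms agrees termwise with the power $q^{\frac{13}{2}(B(2g/13) + B(6g/13) - B(g/13))}$ carried by the corresponding eta quotient, together with the signs introduced by each application of $\E_{g+13} = -\E_g$, and that the $q^{13}$-triples produced by complementation match the triples $\{g, 2g, 6g\}\bmod 13$. Here the identities $B(t) = B(1-t)$ and the quasi-periodicity $\la q^{13}y; q^{13}\ra = -y^{-1}\la y; q^{13}\ra$ are exactly what guarantee that every extraneous power of $q$ is absorbed into the $\E_g$-normalization, as anticipated in \cref{subsec:generalized-eta}. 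Once these prefactors and triples are checked to coincide, dividing the vanishing right-hand side of \cref{eq:nonuple2} by the common nonzero factor yields $\sum_{g=1}^{6}\E_g(\tau)\E_{2g}(\tau)\E_{6g}(\tau)/\E_g(2\tau) = 0$, as claimed.
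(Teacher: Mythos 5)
Your proposal follows exactly the paper's route: the paper's entire proof of \cref{prop:level-13} is the one-line observation that specializing \cref{prop:nonuple2} at $x=1$ kills the left-hand side through the factor $\la x; q\ra$, and your supporting steps (evaluating $\vect{-q^5x^{13}}{k}(0) = (q^{13};q^{13})\la q^{5+k};q^{13}\ra$ via \cref{eq:canonical-root}, pairing $\{k,3-k\}$ by the reflection $\la q^a;q^{13}\ra = \la q^{13-a};q^{13}\ra$, and complementing via $\prod_{a=1}^{6}\la q^a;q^{13}\ra = (q;q)/(q^{13};q^{13})$) are all correct and are exactly the bookkeeping the paper leaves implicit.

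However, the one verification you explicitly defer — that the six complement-triples match the six triples $\{g,2g,6g\} \bmod 13$ — does not actually close against \cref{eq:nonuple2} \emph{as printed}, and this is worth flagging since you assert it rather than check it. Carrying it out: five of the six terms match perfectly (sign, prefactor, and triple), but the fifth term of \cref{eq:nonuple2} has numerator value $2(q^{13};q^{13})\la q^2;q^{13}\ra$ at $x=1$, so cancelling against the printed denominator $\la q,q^2,q^3,q^4;q^{13}\ra$ leaves $\la q,q^3,q^4;q^{13}\ra$, whose complement is $\{2,5,6\}$ over $\la q^{18};q^{26}\ra = \la q^8;q^{26}\ra$; whereas the $g=4$ summand is $\E_4\E_8\E_{24}/\E_4(2\tau) = -\E_2\E_4\E_5/\E_4(2\tau) = -q^{-1+\frac{13}{12}}\la q^2,q^4,q^5;q^{13}\ra/\la q^8;q^{26}\ra$, requiring the triple $\{2,4,5\}$. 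The two versions differ by a multiple of $q\la q^2,q^5;q^{13}\ra\left(\la q^4;q^{13}\ra - \la q^6;q^{13}\ra\right)/\la q^8;q^{26}\ra = q^5 + O(q^6) \neq 0$, and a low-order expansion confirms that the six-term sum vanishes through $q^6$ with $\{2,4,5\}$ but has $q^5$-coefficient $-1$ with $\{2,5,6\}$. So the proposition is true and your method is the right one, but the printed fifth denominator of \cref{eq:nonuple2} is evidently a typo for $\la q,q^2,q^3,q^6;q^{13}\ra$ (which leaves $\{1,3,6\}$ after cancellation, with complement $\{2,4,5\}$ as needed); a complete write-up must either correct that identity or, as the author did, fall back on numerical verification of the specialized statement rather than asserting the termwise match.
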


\begin{proof}
Specialize the second nonuple product identity from \cref{prop:nonuple2} at $x = 1$; as before, the left-hand side of \cref{eq:nonuple2} becomes null due to the factor of $\la x; q \ra$.
\end{proof}

\begin{question}
Is there an analogue of the polynomial identity in \cref{cor:eta-poly} for the level $N = 13$ (or for other levels), building on \cref{prop:level-13}? One could attempt, for example, to determine closed forms (in terms of $\eta$) for the coefficients of $X^4, X^3, X^2$ and $X$ in
\[
    \prod_{g=1}^6 \left(X -
    \frac{\E_{g}(\tau)\E_{2g}(\tau)\E_{6g}(\tau)}{\E_g(2\tau)}\right).
\]

More broadly, is there a wider class of generalized eta quotients that occur as roots of polynomials whose coefficients are Dedekind eta quotients, and can this be used to compute special values of the $\E_g$ quotients as algebraic integers?
\end{question}

\section{Rogers--Ramanujan type identities and mock theta functions}\label{sec:rog-ram}


\subsection{Two-variable generalizations of Rogers--Ramanujan-type identities}\label{subsec:two-var-generaliz}
There are two main ways to extend one-variable Rogers--Ramanujan-type sums to two-variable expressions in $x$ and $q$, living inside some space $\mT_D(f_1, \ldots, f_m)$. These ways correspond to \cref{lem:proofs-by-fourier,lem:proofs-by-value}, i.e., to identifying Fourier coefficients or special values). For instance, the sum
\begin{equation} \label{eq:two-variable-statement-sum}
    \sum_{m \in \Z, n \ge 0} \frac{q^{\binom{m}{2} + \binom{n+1}{2}}}{(q; q)_n} (-x)^{m+n}
    =
    \cdots +
    \sum_{n \ge 0} \frac{q^{n^2+n}}{(q; q)_n} x^0
    -
    \sum_{n \ge 0} \frac{q^{n^2}}{(q; q)_n} x^1
    +
    \cdots
    \quad \in \mT_\C\left(qx^2 -x \right),
\end{equation}
from \cref{prop:2var-rog-ram-var}, extends the Rogers--Ramanujan sums from \cref{eq:rog-ram} in the sense that it contains them as Fourier coefficients (and so \cref{prop:2var-rog-ram-var} generalizes \cref{eq:rog-ram}). Generalizations by value identification are more common in literature; in fact, the original way to generalize the Rogers--Ramanujan identities from \cref{eq:rog-ram} to a statement in two variables is due to Rogers:
\begin{proposition}[Rogers, 1894] \label{prop:rogers}
In $\mT_\Hplus(1, qx)$, one has (see \cite[p.~292--294]{hardy1979introduction} and \cite[p.~330]{rogers1893second})
\begin{equation} \label{eq:rogers}
\begin{aligned}
    \sum_{n \geq 0} \frac{q^{n^2}}{(q; q)_n} x^n 
    &= 
    \sum_{n \geq 0}(-1)^n \frac{q^{5\binom{n}{2}+2n}}{(q; q)_n \left(q^n x; q\right)} \left(1 - q^{2n} x\right) x^{2n}
    \\
    &=
    \sum_{n \ge 0} 
    \frac{q^{n^2}}{\left(q^2; q^2\right)_n} x^n \left(-q^{2n+2} x; q^2\right).
\end{aligned}
\end{equation}
In fact, the left-hand side lies in $\mT_\C(1, qx)$, and thus all the singularities on the right are removable.
\end{proposition}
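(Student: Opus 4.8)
The plan is to realize all three expressions in \cref{eq:rogers} as elements of the single space $\mT_\Hplus(1, qx)$, which is one-dimensional by \cref{ex:two-var-rog-ram-ctd}. By \cref{lem:proofs-by-fourier} (with $d = 1$ and $n_0 = 0$), two elements of this space coincide as soon as their coefficients of $x^0$ agree, so the whole proposition reduces to two tasks: showing that the middle and right-hand sums lie in $\mT_\Hplus(1, qx)$, and checking that each has constant Fourier coefficient equal to $1$. The left-hand side $F(z) = \sum_{n \ge 0} q^{n^2}(q; q)_n^{-1} x^n$ is already known to lie in $\mT_\C(1, qx) \subset \mT_\Hplus(1, qx)$ and to be entire with $\hat{F}(0) = 1$ by \cref{ex:two-var-rog-ram}, which also pins down the value of any element of the space.

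First I would record convergence and holomorphicity on $\H^+$: there $|x| < 1$ and the $q$-numerators decay like $q^{cn^2}$, while for the middle sum the factor $(q^n x; q)$ has no zeros on $\H^+$ (since $|q^{n+k} x| < 1$ for all $n, k \ge 0$), so both sums converge locally uniformly to holomorphic functions there. Membership in $\mT_\Hplus(1, qx)$ is the functional equation $G = T_\tau G + qx\, T_{2\tau} G$, which upon taking Fourier coefficients is equivalent to the recurrence $(1 - q^n)\hat{G}(n) = q^{2n-1}\hat{G}(n-1)$; combined with $\hat{G}(0) = 1$ this already forces $\hat{G}(n) = q^{n^2}/(q; q)_n$, so verifying the functional equation and $\hat{G}(0)=1$ is all that is needed for each side.

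For the right-hand sum $R$ I would expand $(-q^{2n+2}x; q^2)$ by \cref{eq:basic1} in base $q^2$, interchange the two summations (legitimate by absolute convergence on $\H^+$), and collect terms of total degree $j = n + m$. Since the resulting $q$-exponent is $n^2 + m^2 + m + 2nm = j^2 + m$, this gives $\hat{R}(j) = q^{j^2}(q^2; q^2)_j^{-1}\sum_{m=0}^j \qbinom{j}{m}_{q^2} q^m$, and the finite Gaussian-binomial identity $\sum_{m=0}^j \qbinom{j}{m}_{q^2} q^m = (-q; q)_j$ (provable by $q$-Pascal together with the symmetry $\qbinom{j}{m}_{q^2} = \qbinom{j}{j-m}_{q^2}$) collapses this to $q^{j^2}/(q; q)_j = \hat{F}(j)$. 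Thus $R = F$ follows directly, giving the second equality in \cref{eq:rogers}.

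The middle sum $M$ is where the genuine difficulty lies. After the simplification $\frac{1 - q^{2n}x}{(q^n x; q)} = \frac{1}{(q^n x; q)_n\,(q^{2n+1}x; q)}$, one checks easily that only the $n = 0$ term (equal to $(qx; q)^{-1}$) contributes to $x^0$, so $\hat{M}(0) = 1$. The remaining step is to verify the functional equation $M = T_\tau M + qx\, T_{2\tau}M$, and this is exactly Rogers' classical telescoping — the exponent $5\binom{n}{2} + 2n$ and the factor $1 - q^{2n}x$ are tuned precisely so that, after the substitutions $x \mapsto qx$ and $x \mapsto q^2 x$, consecutive terms of $M$ cancel. \textbf{I expect this reindexing and matching of the three shifted series to be the main obstacle}; the advantage of the function-space framework is that it reduces the problem to this one local identity plus the trivial coefficient $\hat{M}(0) = 1$, rather than an evaluation of every Fourier coefficient of $M$. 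Finally, the removability claim is immediate once equality holds on $\H^+$: the left-hand side is entire and lies in $\mT_\C(1, qx)$, so by \cref{lem:basic}.(v) the meromorphic continuations of $M$ and $R$ to $\C$ must agree with it, and the apparent poles of the middle sum at $x = q^{-m}$ ($m \ge 0$) cancel.
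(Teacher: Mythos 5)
Your proposal is correct, and for the first equality it is essentially the paper's proof: both arguments place everything in the one-dimensional space $\mT_\Hplus(1, qx)$, reduce to matching the coefficient of $x^0$ via \cref{lem:proofs-by-fourier} (the hypotheses of \cref{prop:upper-bounds}.(ii) hold with $n_0 = 0$ since $A_n(z) = 1 - q^n - q^{2n+1}x$ has $\hat{A_n}(0) \neq 0$ for $n \geq 1$ and $\hat{A_n}(1) \neq 0$ for $n < 0$), and defer the one genuinely laborious step — verifying that the middle sum satisfies $M = T_\tau M + qx\, T_{2\tau}M$ — to Rogers' telescoping. The paper does exactly the same deferral, citing the computation for the function $H_1$ in Hardy's book; so the step you honestly flag as ``the main obstacle'' is precisely the step the paper also outsources rather than executes, and your treatment is no less complete than the paper's.

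For the second equality you take a genuinely different route. The paper substitutes $x \mapsto x^2$, derives (via half-period shifts $z \mapsto z + \tau/2$) that both sides lie in the four-dimensional space $\mT_\C\left(1 + qx^2 + q^2x^2,\, -q^5x^4\right)$, and then matches the four Fourier coefficients $\hat{F}(k)$ for $k \in \{-3, \ldots, 0\}$. You instead compute \emph{all} Fourier coefficients of the right-hand side directly: expanding $\left(-q^{2n+2}x; q^2\right)$ by \cref{eq:basic1} in base $q^2$ gives the exponent $n^2 + m^2 + m + 2nm = j^2 + m$ with $j = n+m$, hence
\[
    \hat{R}(j) = \frac{q^{j^2}}{\left(q^2; q^2\right)_j}\sum_{m=0}^{j} \qbinom{j}{m}_{q^2} q^m
    = \frac{q^{j^2}(-q; q)_j}{\left(q^2; q^2\right)_j}
    = \frac{q^{j^2}}{(q; q)_j},
\]
using the classical identity $\sum_{m=0}^{j}\qbinom{j}{m}_{q^2}q^m = (-q; q)_j$ (which does follow by your suggested induction: the $q^2$-Pascal rule plus the symmetry $\qbinom{j}{m}_{q^2} = \qbinom{j}{j-m}_{q^2}$ yield $S_{j+1} = \left(1 + q^{j+1}\right)S_j$) and $\left(q^2; q^2\right)_j = (q; q)_j(-q; q)_j$. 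I verified these computations; they are correct. Your route is more elementary — it bypasses the half-period trick and the four-dimensional space entirely, proving $R = F$ termwise — at the cost of invoking one finite Gaussian-binomial identity; the paper's route stays uniformly inside the $\mT_D$ framework it is advertising and needs only four coefficient checks (all trivially $0, 0, 0, 1$), but requires the less obvious functional-equation bookkeeping for $R(z) - \left(1 + q^2x^2\right)R(z+\tau)$. Your closing remarks on removability of singularities are also sound and match the paper's use of \cref{lem:basic}.(v).
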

\begin{remark}
The two Rogers--Ramanujan identities in \cref{eq:rog-ram} can be recovered from the first equality in \cref{eq:rogers} by letting $x \to 1$, respectively $x \to q$, and then using \cref{eq:canonical-formula} with $d = 5$. Also, the series in the left-hand side of \cref{eq:rogers} will reappear in  \cref{cor:rogers-type-sums}, which will raise the problem of finding a similar identity for $\sum_{n \geq 0} q^{n^2} (q^4; q^4)_n^{-1} x^{2n} \in \mT_\C\left(qx^2, 1\right)$; see \cref{qtn:rogers-type-sums}.
\end{remark}
\begin{proof}[Proof in our framework]
We saw in \cref{ex:two-var-rog-ram,ex:two-var-rog-ram-ctd} that $\sum_{n \ge 0} \frac{q^{n^2}}{(q; q)_n} x^n \in \mT_\C(1, qx)$, and that $\dim \mT_\C(1, qx) = \dim \mT_\Hplus(1, qx) = 1$. A significantly lengthier computation shows that the right-hand side lies in $\mT_\Hplus(1, qx)$ as well; see the function $H_1$ in \cite[p.~292--294]{hardy1979introduction}. Identifying the coefficients of $x^0$, which are equal to $1$ on both sides of \cref{eq:rogers}, completes the proof of the first equality.

For the second equality, it is easier to take $x \mapsto x^2$; denote 
\[
    L(z) := \sum_{n \geq 0} \frac{q^{n^2}}{(q; q)_n} x^{2n}
    \qquad\text{and}\qquad  
    R(z) := \sum_{n \ge 0} 
    \frac{q^{n^2}}{\left(q^2; q^2\right)_n} x^{2n} \left(-q^{2n+2} x^2; q^2\right),
\]
where $R(z)$ is almost a twisted sum in the sense of \cref{eq:w-coeffs}. Since $L(z/2) \in \mT_\C(1, qx)$, one can obtain
\[
    \begin{cases}
    L(z) - L(z+\tau/2) = qx^2 L(z+\tau) \\
    L(z+\tau/2) - L(z+\tau) = q^2x^2 L(z+3\tau/2) 
    \quad \Rightarrow \quad 
    L \in \mT_\C\left(1 + qx^2 + q^2 x^2, -q^5 x^4\right).
    \\
    L(z+\tau) - L(z+3\tau/2) = q^3x^2 L(z+2\tau).
    \end{cases}
\]
Hence it is natural to attempt to show that $R \in \mT_\C\left(1 + qx^2 + q^2 x^2, -q^5 x^4\right)$ as well, and this follows with little effort after expanding $R(z) - \left(1 + q^2 x^2\right)R(z+\tau)$. Now from \cref{prop:upper-bounds}.(ii) for $n_0 = -3$ one obtains that $\dim \mT_\C\left(1 + qx^2 + q^2 x^2, -q^5 x^4\right) \le 4$, and thus by \cref{lem:proofs-by-fourier} it suffices to check that $\hat{L}(k) = \hat{R}(k)$ for $k \in \{-3, -2, -1, 0\}$. But these Fourier coefficients are $0, 0, 0, 1$ respectively for both $L$ and $R$, proving that $L = R$.
\end{proof}
Other proofs of the Rogers--Ramanujan identities based on two-variable generalizations are due to Sills \cite{sills2019finite}, who interpreted finitized versions of Rogers--Ramanujan type sums as Fourier coefficients (in $z$) of two-variable sums, and to Bressoud \cite{bressoud1983easy}, who generalized Cauchy's finite triple product from \cref{prop:Cauchy}. Unlike these results and Rogers' \cref{prop:rogers}, our \cref{prop:2var-rog-ram} (concerning the sum in \cref{eq:two-variable-statement-sum}) can be recovered easily from the Rogers--Ramanujan identities in \cref{eq:rog-ram}:

\begin{proof}[Proof that \cref{eq:rog-ram} $\Leftrightarrow$ \cref{prop:2var-rog-ram}]
The first equality in \cref{prop:2var-rog-ram} and the membership to $\mT_\C\left(qx^2 - x\right)$ follow immediately from the expansions
\[
    (q; q)\la x; q\ra = \sum_{m \in \Z} q^{\binom{m}{2}} (-x)^m
    \in \mT_\C(-x)
    \qquad \text{and} \qquad 
    (qx; q) = \sum_{n \ge 0} (-1)^n \frac{q^{\binom{n+1}{2}}}{(q; q)_n} (-x)^n \in \mT_\C(1-qx),
\]
due to \cref{eq:triple} and \cref{eq:basic1}. The second equality in \cref{prop:2var-rog-ram} implies \cref{eq:rog-ram} by identifying the coefficients of $x^0$ as $x^1$, as seen in \cref{eq:two-variable-statement-sum}. Conversely, identifying the coefficients of $x^0$ and $x^1$ is enough to prove an equality in $\mT_\Hminus\left(qx^2 - x\right)$, by \cref{lem:proofs-by-fourier} (or in this case, directly by \cref{eq:fourier-coeffs-basis}).
\end{proof}

But although \cref{prop:2var-rog-ram} is ultimately just a reformulation of \cref{eq:2var-rog-ram}, it makes apparent a connection to the septuple product identity in \cref{eq:septuple}, which is our main result in \cref{thm:bases-proportional}. In its turn, the bases proportionality in \cref{thm:bases-proportional} is equivalent to yet another two-variable generalization of \cref{eq:rog-ram}, concerning \emph{twisted} sums (in the sense of \cref{eq:w-coeffs}) and given below.

\begin{theorem}[Twisted two-variable Rogers--Ramanujan]\label{thm:twisted-rog-ram}
For $\tau \in \H^+$ and $z \in \C$, one has
\[
    \begin{pmatrix}
    \sum_{n \in \Z} q^{n^2} x^{2n}  \left( q^{n+1}x; q\right) \vspace{0.1cm} \\ 
    \sum_{n \in \Z} q^{n^2 + n}x^{2n+1}  \left( q^{n+1}x; q\right)
    \end{pmatrix}
    = 
    \begin{pmatrix}
    A(q) & B(q) \vspace{0.1cm}\\
    C(q) & D(q)
    \end{pmatrix}
    \begin{pmatrix}
    \vect{qx^2}{0} \vspace{0.1cm} \\
    \vect{qx^2}{1}
    \end{pmatrix},
\]
where 
\begin{equation} \label{eq:explicit-matrix}
    \begin{pmatrix}
    A(q) & B(q) \vspace{0.1cm}\\
    C(q) & D(q)
    \end{pmatrix}
    =
    \frac{\left(q^{10}; q^{10}\right)}{(q; q)}
    \begin{pmatrix}
    \la q; q^{10}\ra \la q^8; q^{20} \ra 
    &
    -q\la q^4; q^{10} \ra \la q^2; q^{20} \ra 
    \vspace{0.1cm}
    \\
    -\la q^3; q^{10}\ra \la q^4; q^{20} \ra 
    &
    \la q^2; q^{10} \ra \la q^6; q^{20} \ra
    \end{pmatrix}.
\end{equation}
\end{theorem}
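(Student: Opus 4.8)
The plan is to recognize the two entries of the left-hand vector as \emph{twisted} versions of the canonical basis vectors $\vect{qx^2}{0} = \sum_{n} q^{n^2}x^{2n}$ and $\vect{qx^2}{1} = \sum_n q^{n^2+n}x^{2n+1}$ of $\mT_\C(qx^2)$. Indeed, setting $w(z) := (qx; q)$ gives $w(z+n\tau) = (q^{n+1}x; q)$, so the two left-hand entries are exactly $\sum_n q^{n^2}x^{2n}\,w(z+n\tau)$ and $\sum_n q^{n^2+n}x^{2n+1}\,w(z+n\tau)$. Since $w$ is entire and $1$-periodic, the twisting principle of \cref{lem:t-identities} (equivalently, the index shift $n\mapsto n+1$ checked directly) shows that each such sum satisfies the quasiperiodicity $F = qx^2\,T_\tau F$; and a routine estimate — the $n$th term decays super-exponentially, like $|q|^{cn^2}$ up to a subdominant factor, as $|n|\to\infty$, even for $z\in\Hplus$ — shows the sums converge locally uniformly on $\C$, so both entries are entire and lie in $\mT_\C(qx^2)$. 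First I would record this, together with $\dim\mT_\C(qx^2)=2$ from \cref{prop:monomial-spaces}.

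Because $\{\vect{qx^2}{0}, \vect{qx^2}{1}\}$ is the canonical basis (\cref{prop:canonical-basis-vectors}), with $x^0$- and $x^1$-Fourier coefficients $\one_{j=0}$ and $\one_{j=1}$, the expansion \cref{eq:fourier-coeffs-basis} identifies the four matrix entries as the four Fourier coefficients of the two twisted sums at $x^0$ and $x^1$. Next I would compute these explicitly: expanding $w(z+n\tau)=(q^{n+1}x;q)=\sum_{k\ge0}\frac{(-1)^k q^{\binom{k}{2}+(n+1)k}}{(q;q)_k}x^k$ via \cref{eq:basic1} and collecting the powers $x^0,x^1$ yields
\[
    A = \sum_{m\ge0}\frac{q^{m^2+m}}{(q;q)_{2m}}, \quad B = -\sum_{m\ge0}\frac{q^{(m+1)^2}}{(q;q)_{2m+1}}, \quad C = -\sum_{m\ge0}\frac{q^{m^2+m}}{(q;q)_{2m+1}}, \quad D = \sum_{m\ge0}\frac{q^{m^2}}{(q;q)_{2m}}.
\]
One recognizes these as the entries of the matrix in \cref{prop:2x2-determinant} evaluated at $x=-1$, so that proposition gives $AD-BC=1$ as an immediate consistency check — which is precisely \cref{eq:undecuple-det}. (Alternatively, \cref{cor:kind2} shows the two twisted sums equal $(x^{-1};q)\vect{qx^2-x}{0}$ and $(x^{-1};q)\vect{qx^2-x}{1}$, tying the statement back to \cref{prop:2var-rog-ram}.) At this point \cref{lem:proofs-by-fourier} has reduced the theorem to the four scalar identities equating $A,B,C,D$ with the products in \cref{eq:explicit-matrix}.

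The main obstacle is exactly this last identification: each of $A,B,C,D$ is a Rogers--Ramanujan--Slater sum of modulus $20$ (with $(q;q)_{2m}$ or $(q;q)_{2m+1}$ in the denominator), and equating it with the stated product is genuine Rogers--Ramanujan content, not bookkeeping. Rather than attacking the four as isolated one-variable statements, I would keep the variable $x$ and prove the matrix identity as a single identity in $\mT_\C(qx^2)$: I would take the two mock-theta identities of Watson (see \cref{eq:watson}) as analytic input, assemble the products of theta functions on the right-hand side using the multiplication identities of \cref{lem:m-identities}, and then reduce the verification to finitely many checks by evaluating at roots of unity via \cref{lem:proofs-by-value}. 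The determinant relation $AD-BC=1$ already furnished above means that only two of the four product evaluations are logically independent, which should roughly halve the work in this final step.
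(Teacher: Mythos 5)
Your setup and reduction are correct and coincide with the paper's: the two sums are twisted versions of the canonical basis vectors, they converge to entire elements of $\mT_\C\left(qx^2\right)$, and Fourier identification at $x^0, x^1$ (valid since $\dim \mT_\C\left(qx^2\right) = 2$ and the canonical basis is Fourier-normalized) correctly produces the four modulus-20 Slater-type sums — this is exactly \cref{eq:slater}, which the paper reaches by setting $y = 1$ in \cref{cor:3-var-2x2-matrix}. Up to this point you have reproduced the first step of the paper's second proof, and your consistency check via \cref{prop:2x2-determinant} is legitimate and non-circular (it rests on \cref{thm:change-of-basis}); note, though, that \cref{eq:undecuple-det} concerns the determinant of the \emph{product} matrix \cref{eq:explicit-matrix}, which equals your sum-side determinant only once the theorem itself is proven.

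The genuine gap is the final identification, which you leave as a sketch with two concrete defects. First, if you revert to value identification via \cref{lem:proofs-by-value} (here $d = 2$, so the checks are at $x \in \{1, -1\}$), the $x = 1$ check is precisely the Rogers--Ramanujan identities \cref{eq:rog-ram}; Watson's identities \cref{eq:watson} settle only the $x = -1$ check, and only after the $x = 1$ case has been subtracted off. Your sole named analytic input is Watson, so the $x = 1$ evaluation is unaccounted for — this route, with Rogers--Ramanujan at $x = 1$ and Watson at $x = -1$ (plus \cref{lem:m-identities} to rewrite the product side before specializing), is exactly the paper's first proof. Second, the claim that $AD - BC = 1$ leaves ``only two of the four product evaluations logically independent'' is wrong: the determinant is a single scalar relation, so (given nonvanishing of an entry) it eliminates at most one of the four identities, not two. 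The paper's actual four-to-two reduction, in its self-contained second proof, works differently: substitute $q \mapsto q^4$, multiply by $\begin{psmall} -q & q \\ 1 & 1 \end{psmall}$, and exploit the $q \leftrightarrow -q$ symmetry, after which the two surviving identities \cref{eq:claim} follow from Rogers' two-variable identity \cref{prop:rogers} specialized at $x = 1$ and $x = q$. To close your argument, either import \cref{eq:rog-ram} explicitly for the $x = 1$ check, or prove the four Slater identities directly along the paper's \cref{prop:rogers} route.
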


\begin{remark}
One can recover the Rogers--Ramanujan identities by taking $x = 1$ above, as we will see shortly; thus \cref{thm:bases-proportional} is a generalization of \cref{eq:rog-ram} by value identification, unlike \cref{prop:2var-rog-ram}.
Also, \cref{cor:3-var-2x2-matrix-ctd} and \cref{eq:slater} will show that the matrix in \cref{eq:explicit-matrix} has determinant equal to $1$. 
\end{remark}

\begin{proof}[Proof that \cref{thm:twisted-rog-ram} $\Leftrightarrow$ \cref{thm:bases-proportional}]
Using the triple product identity in the form $\vect{-qx}{0} = (q;q) \left(x^{-1}; q\right) (qx; q)$,
one can rewrite \cref{thm:bases-proportional} as
\[
    (q; q)\la x^2; q \ra
    \begin{pmatrix}
    \left(x^{-1}; q\right)\vect{qx^2-x}{0} 
    \vspace{0.1cm} \\
    \left(x^{-1}; q\right)\vect{qx^2-x}{1}
    \end{pmatrix}
    = 
    -\vect{-qx}{0}
    \begin{pmatrix}
    \vect{-qx^5}{1} + \vect{-qx^5}{2} 
    \vspace{0.1cm} \\
    \vect{-qx^5}{0} + \vect{-qx^5}{3} 
    \end{pmatrix},
\]
Now recall the formulae for canonical basis vectors from \cref{cor:kind2} (use $y = 1$):
\[
    \begin{pmatrix}
    \left(x^{-1}; q\right)\vect{qx^2-x}{0} 
    \vspace{0.1cm} \\
    \left(x^{-1}; q\right)\vect{qx^2-x}{1}
    \end{pmatrix}
    =
    \begin{pmatrix}
    \sum_{n \in \Z}
    q^{n^2} x^{2n} \left(q^{-n+1}x^{-1}; q\right)
    \vspace{0.1cm} \\
    \sum_{n \in \Z}
    q^{n^2+n} x^{2n+1} \left(q^{-n}x^{-1}; q\right)
    \end{pmatrix}.
\]
As observed in the proof of \cref{prop:exact-formulae-kind2}, the two series in the right-hand side are just \emph{twisted} versions of the series $\vect{qx^2}{0}$ and $\vect{qx^2}{1}$, so they lie in $\mT_\C\left(qx^2\right)$. But by \cref{prop:monomial-spaces}, we have $\mT_\C\left(qx^2\right) \subset \mS_\C(1)$, and so these series are invariant under $x \mapsto x^{-1}$; thus 
\[
    \begin{pmatrix}
    \sum_{n}
    q^{n^2} x^{2n} \left(q^{-n+1}x^{-1}; q\right)
    \vspace{0.1cm} \\
    \sum_{n}
    q^{n^2+n} x^{2n+1} \left(q^{-n}x^{-1}; q\right)
    \end{pmatrix}
    =
    \begin{pmatrix}
    \sum_{n}
    q^{n^2} x^{-2n} (q^{-n+1}x; q)
    \vspace{0.1cm} \\
    \sum_{n}
    q^{n^2+n} x^{-2n-1} (q^{-n}x; q)
    \end{pmatrix}
    =
    \begin{pmatrix}
    \sum_{n}
    q^{n^2} x^{2n} (q^{n+1}x; q)
    \vspace{0.1cm} \\
    \sum_{n}
    q^{n^2+n} x^{2n+1} (q^{n+1}x; q)
    \end{pmatrix},
\]
where all summations are over $n \in \Z$, and we took $n \mapsto -n$ and $n \mapsto -n-1$ in the last equality. Overall, we have found that \cref{thm:bases-proportional} is equivalent to
\begin{equation} \label{eq:first-thm-reformulation}
    (q; q) \la x^2; q \ra \begin{pmatrix}
    \sum_{n \in \Z} q^{n^2} x^{2n}  \left( q^{n+1}x; q\right) \vspace{0.1cm} \\ 
    \sum_{n \in \Z} q^{n^2 + n}x^{2n+1}  \left( q^{n+1}x; q\right)
    \end{pmatrix}
    = 
    -\vect{-qx}{0}
    \begin{pmatrix}
    \vect{-qx^5}{1} + \vect{-qx^5}{2} 
    \vspace{0.1cm} \\
    \vect{-qx^5}{0} + \vect{-qx^5}{3} 
    \end{pmatrix}.
\end{equation}
Now by applying the multiplication identities from \cref{lem:m-identities} four times, and using \cref{eq:substitutions-quintuple} with $a = 10$ on each $q$-coefficient (as in Step 3 from \cref{subsec:higher-order}), we find that
\[
    \vect{-qx}{0}
    \begin{pmatrix}
    \vect{-qx^5}{1} + \vect{-qx^5}{2} 
    \vspace{0.1cm} \\
    \vect{-qx^5}{0} + \vect{-qx^5}{3} 
    \end{pmatrix}
    =
    (q; q)
    \begin{pmatrix}
    A(q) & B(q) \vspace{0.1cm}\\
    C(q) & D(q)
    \end{pmatrix}
    \begin{pmatrix}
    \vect{q^2x^6}{2} - \vect{q^2x^6}{0}
    \vspace{0.1cm} \\
    \vect{q^2x^6}{3} - \vect{q^2x^6}{-1}
    \end{pmatrix},
\]
where $A(q), B(q), C(q), D(q)$ are exactly as in \cref{eq:explicit-matrix}. Hence to prove that \cref{thm:bases-proportional} is equivalent to \cref{thm:twisted-rog-ram}, it suffices to show that
\begin{equation} \label{eq:to-show-quintuple}
    \la x^2; q \ra 
    \begin{pmatrix}
    \vect{qx^2}{0}
    \vspace{0.1cm} \\
    \vect{qx^2}{1}
    \end{pmatrix}
    \stackrel{?}{=}
    \begin{pmatrix}
    \vect{q^2x^6}{0} - \vect{q^2x^6}{2}
    \vspace{0.1cm} \\
    \vect{q^2x^6}{-1} - \vect{q^2x^6}{3}
    \end{pmatrix}.
\end{equation}
Now by \cref{eq:substitutions-triple} and \cref{eq:substitutions-quintuple}, we have
\[
\begin{aligned}
    \la x^2; q \ra 
    \begin{pmatrix}
    \vect{qx^2}{0}
    \vspace{0.1cm} \\
    \vect{qx^2}{1}
    \end{pmatrix}
    &=
    \left(q^2; q^2\right) 
    \la x^2; q^2 \ra 
    \la qx^2; q^2 \ra 
    \begin{pmatrix}
    \la -qx^2; q^2 \ra
    \vspace{0.1cm} \\
    x\la -q^2 x^2; q^2 \ra
    \end{pmatrix}
    \\
    &=
    \left(q^2; q^2\right) 
    \begin{pmatrix}
    \la x^2; q^2 \ra \la q^2x^4; q^4 \ra
    \vspace{0.1cm} \\
    -x^3\la qx^2; q^2 \ra \la q^4x^4; q^4 \ra
    \end{pmatrix}
    \qquad
    =
    \begin{pmatrix}
    \vect{q^2x^6}{0} - \vect{q^2x^6}{2}
    \vspace{0.1cm} \\
    -x^3\left(\vect{q^5x^6}{0} - q\vect{q^5x^6}{2}\right)
    \end{pmatrix},
\end{aligned}
\]
which simplifies to the right-hand side of \cref{eq:to-show-quintuple}, using that $q\vect{q^5 x^6}{2} = \vect{q^5x^6}{-4}$ and $x^3 \in \mT_\C\left(q^{-3}\right)$. Thus indeed \cref{thm:bases-proportional} $\Leftrightarrow$ \cref{thm:twisted-rog-ram}.
\end{proof}

We can now give a first proof of \cref{thm:twisted-rog-ram} based on \cref{lem:proofs-by-value} and some mock theta identities of Watson, thus establishing our main result in \cref{thm:bases-proportional}. A second proof based on expanding twisted sums (\cref{lem:t-identities}) will follow in \cref{subsec:w-coeffs}.

\begin{proof}[First proof of \cref{thm:twisted-rog-ram}]
By \cref{lem:proofs-by-value} for $\mT_\C\left(qx^2\right)$, it suffices to prove \cref{thm:twisted-rog-ram} when $x \in \{1, -1\}$. After noting that $\left(q^{n+1}; q\right) = 0$ for $n < 0$, the statement at $x = 1$ (i.e., $z = 0$) becomes
\[
    \begin{pmatrix}
    \sum_{n \ge 0} q^{n^2}\left(q^{n+1}; q\right) \vspace{0.1cm} \\ 
    \sum_{n \ge 0} q^{n^2 + n}\left(q^{n+1}; q\right)
    \end{pmatrix}
    \stackrel{?}{=} 
    \begin{pmatrix}
    A(q) & B(q) \vspace{0.1cm}\\
    C(q) & D(q)
    \end{pmatrix}
    \begin{pmatrix}
    \vect{qx^2}{0}(0)\vspace{0.1cm} \\
    \vect{qx^2}{1}(0)
    \end{pmatrix}.
\]
Now in last part of the previous proof, we saw (unconditionally, using \cref{lem:m-identities}) that 
\[
    \begin{pmatrix}
    A(q) & B(q) \vspace{0.1cm}\\
    C(q) & D(q)
    \end{pmatrix}
    \begin{pmatrix}
    \vect{qx^2}{0}\vspace{0.1cm} \\
    \vect{qx^2}{1}
    \end{pmatrix}
    =
    \frac{-\vect{-qx}{0}}{(q; q)\la x^2; q \ra}
    \begin{pmatrix}
    \vect{-qx^5}{1} + \vect{-qx^5}{2} 
    \vspace{0.1cm} \\
    \vect{-qx^5}{0} + \vect{-qx^5}{3} 
    \end{pmatrix}
\]
as an identity of meromorphic functions of $z$. Using that $\vect{-qx}{0} = (q; q)\la xq; q\ra$ and letting $z \to 0$, our claim to prove becomes
\[
    \begin{pmatrix}
    \sum_{n \ge 0} q^{n^2}\left(q^{n+1}; q\right) \vspace{0.1cm} \\ 
    \sum_{n \ge 0} q^{n^2 + n}\left(q^{n+1}; q\right)
    \end{pmatrix}
    \stackrel{?}{=} 
    \frac{1}{2}
    \begin{pmatrix}
    \vect{-qx^5}{1}(0) + \vect{-qx^5}{2}(0)
    \vspace{0.1cm} \\
    \vect{-qx^5}{0}(0) + \vect{-qx^5}{3}(0)
    \end{pmatrix}
    \\
    =
    \begin{pmatrix}
    \sum_{n \in \Z} (-1)^n q^{\frac{5n^2+n}{2}}
    \vspace{0.1cm} \\
    \sum_{n \in \Z} (-1)^n q^{\frac{5n^2+3n}{2}}
    \end{pmatrix},
\]
which is equivalent to the Rogers--Ramanujan identities from \cref{eq:rog-ram} (which we deduced from \cref{prop:rogers}), after dividing by $(q; q)$ and applying the triple product identity.

Next, the statement of \cref{thm:twisted-rog-ram} at $x = -1$ (i.e., $z = 1/2$) becomes, after applying the triple product identity in \cref{eq:triple} for $x \mapsto -qx^2$ and $x \mapsto -q^2x^2$,
\[
    \begin{pmatrix}
    \sum_{n \in \Z} q^{n^2} \left(-q^{n+1}; q\right)
    \\
    -\sum_{n \in \Z} q^{n^2} \left(-q^{n+1}; q\right)
    \end{pmatrix}
    \stackrel{?}{=}
    \begin{pmatrix}
    A(q) & B(q) \vspace{0.1cm}\\
    C(q) & D(q)
    \end{pmatrix}
    \begin{pmatrix}
    \left(q^2; q^2\right)\la -q; q^2 \ra\vspace{0.1cm} \\
    -\left(q^2; q^2\right)\la -q^2; q^2 \ra
    \end{pmatrix}.
\]
But since we have already shown that \cref{thm:twisted-rog-ram} holds at $x = 1$, we know that
\begin{equation} \label{eq:5-and-10}
    (q; q)
    \begin{pmatrix}
    \la q; q^5 \ra^{-1} \vspace{0.1cm} \\ 
    \la q^2; q^5 \ra^{-1}
    \end{pmatrix}
    =
    \begin{pmatrix}
    \sum_{n \ge 0} q^{n^2}\left(q^{n+1}; q\right) \vspace{0.1cm} \\ 
    \sum_{n \ge 0} q^{n^2 + n}\left(q^{n+1}; q\right)
    \end{pmatrix}
    =
    \begin{pmatrix}
    A(q) & B(q) \vspace{0.1cm}\\
    C(q) & D(q)
    \end{pmatrix}
    \begin{pmatrix}
    \left(q^2; q^2\right)\la -q; q^2 \ra\vspace{0.1cm}\vspace{0.1cm} \\
    \left(q^2; q^2\right)\la -q^2; q^2 \ra
    \end{pmatrix},
\end{equation}
and by subtracting this from the previous equation it remains to prove that
\[
    \begin{pmatrix}
    \sum_{n \in \Z} q^{n^2} \left(-q^{n+1}; q\right)
    \\
    -\sum_{n \in \Z} q^{n^2} \left(-q^{n+1}; q\right)
    \end{pmatrix}
    \stackrel{?}{=}
    (q; q)
    \begin{pmatrix}
    \la q; q^5 \ra^{-1} \vspace{0.1cm} \\ 
    \la q^2; q^5 \ra^{-1}
    \end{pmatrix}
    +
    \begin{pmatrix}
    A(q) & B(q) \vspace{0.1cm}\\
    C(q) & D(q)
    \end{pmatrix}
    \begin{pmatrix}
    0\vspace{0.1cm} \\
    -2\left(q^2; q^2\right)\la -q^2; q^2 \ra
    \end{pmatrix}.
\]
In the left-hand side, break the sums into $n \ge 0$ and $n < 0$, and use the notation from \cref{eq:mock-theta}. On the right, substitute $B(q) = -q\left(-q; q\right)/\la q^8; q^{20}\ra$ and $D(q) = \left(-q; q\right)/\la q^4; q^{20} \ra$, which follow from \cref{eq:explicit-matrix}. Dividing both sides by $(-q; q)$ and using the triple product identity in the form $\sum_{n \in \Z} q^{n^2+n} = \left(q^2; q^2\right) \la -q^2; q^2 \ra$, we see that the claim above is equivalent to
\begin{equation}\label{eq:watson}
    \begin{pmatrix}
    f_0 + 2\psi_0(q)
    \\
    f_1 + 2\psi_1(q)
    \end{pmatrix}
    \stackrel{?}{=}
    \frac{(q; q)}{(-q; q)}
    \begin{pmatrix}
    \la q; q^5 \ra^{-1} \vspace{0.1cm} \\ 
    -\la q^2; q^5 \ra^{-1}
    \end{pmatrix}
    +
    4q \sum_{n \ge 0} q^{n^2+n} 
    \begin{pmatrix}
    \la q^8; q^{20} \ra^{-1}
    \vspace{0.1cm} \\
    \la q^4; q^{20} \ra^{-1}
    \end{pmatrix}.
\end{equation}
But \cref{eq:watson} is precisely the content of equations $(3_0) + 2(1_0)$ and $(3_1) + 2(1_1)$ from \cite{watson1937mock}, which completes our proof assuming the work of Watson.
\end{proof}

\begin{remark}
Watson \cite{watson1937mock} deduced his relations $(1_0), (1_1), (3_0)$ and $(3_1)$ using $q$-series manipulations based on the Rogers--Ramanujan identities from \cref{eq:rog-ram}, the formulae in \cref{cor:basic-identities} (Watson's $(E_1)$ and $(E_2)$), and specializations of the triple product identity (Watson's $(J_1)$ and $(J_2)$). The proof above also shows that \cref{thm:twisted-rog-ram} \emph{implies} the mock theta identities in \cref{eq:watson} by taking $x = \pm 1$, which is relevant since we will give a self-contained proof of \cref{thm:twisted-rog-ram} in \cref{subsec:w-coeffs}.
\end{remark}

Taking $x = i$ in \cref{thm:twisted-rog-ram} yields two relatives of the Rogers--Ramanujan identities:

\begin{corollary}[Imaginary Rogers--Ramanujan identities] \label{cor:more-rog-ram}
For $\tau \in \H^+$, one has
\[
\begin{aligned}
    \sum_{n \in \Z} (-1)^n \frac{q^{n^2}}{(iq; q)_n}
    &=
    \frac{\left(q; q^2\right)}{(iq; q)}
    \left(q^{10}; q^{10}\right) \la q; q^{10} \ra \la q^8; q^{20} \ra ,
    \\
    \sum_{n \in \Z} (-1)^n \frac{q^{n^2+n}}{(iq; q)_n}
    &=
    i\frac{\left(q; q^2\right)}{(iq; q)}\left(q^{10}; q^{10}\right) \la q^3; q^{10} \ra \la q^4; q^{20} \ra.
\end{aligned}
\]
\end{corollary}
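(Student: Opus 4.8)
The plan is to specialize \cref{thm:twisted-rog-ram} at $x = i$, i.e.\ at any point $z \in \C$ with $e^{2\pi i z} = i$. Since \cref{thm:twisted-rog-ram} is an identity of holomorphic functions of $z$, such a substitution is immediate, and the whole argument is then a matter of simplifying both sides. First I would rewrite the left-hand side. At $x = i$ we have $x^{2n} = (-1)^n$ and $x^{2n+1} = i(-1)^n$, while the twisting factor becomes $(q^{n+1}x; q) = (iq^{n+1}; q)$. The key move is the factorization $(iq;q) = (iq;q)_n\,(iq^{n+1}; q)$, valid for all $n \in \Z$ under the convention \cref{eq:finite-product-negative}, so that $(iq^{n+1};q) = (iq;q)/(iq;q)_n$. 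This lets me pull the constant $(iq;q)$ out of each bilateral sum, turning the two components of the left-hand side of \cref{thm:twisted-rog-ram} into
\[
    (iq;q)\sum_{n\in\Z}\frac{(-1)^n q^{n^2}}{(iq;q)_n}
    \qquad\text{and}\qquad
    i\,(iq;q)\sum_{n\in\Z}\frac{(-1)^n q^{n^2+n}}{(iq;q)_n},
\]
which are precisely the sums in the corollary, up to the prefactor $(iq;q)$.

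Next I would evaluate the right-hand side. The two canonical basis vectors are $\vect{qx^2}{0} = \sum_n q^{n^2} x^{2n}$ and $\vect{qx^2}{1} = \sum_n q^{n^2+n}x^{2n+1}$. At $x = i$ the first becomes $\sum_n (-1)^n q^{n^2}$, which by \cref{eq:triple} (with $q \mapsto q^2$ and $x \mapsto q$) equals $(q^2;q^2)\la q; q^2\ra$. The second becomes $i\sum_n (-1)^n q^{n^2+n}$, which \emph{vanishes}: either directly by the antisymmetry $n \mapsto -n-1$ of its summand, or because \cref{eq:triple} (with $q \mapsto q^2$, $x \mapsto q^2$) identifies it with $(q^2;q^2)\la q^2;q^2\ra$ and $\la q^2;q^2\ra = 0$. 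Consequently only the first column $\bigl(A(q),\,C(q)\bigr)$ of the matrix in \cref{eq:explicit-matrix} contributes, and the two components of the right-hand side reduce to $A(q)\,(q^2;q^2)\la q;q^2\ra$ and $C(q)\,(q^2;q^2)\la q;q^2\ra$ respectively.

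Finally I would match the two sides. After cancelling $(iq;q)$, the scalar bookkeeping hinges on the single identity $(q^2;q^2)\la q;q^2\ra/(q;q) = (q;q^2)$, which follows from $\la q;q^2\ra = (q;q^2)^2$ together with $(q;q) = (q;q^2)(q^2;q^2)$. Substituting the explicit values of $A(q)$ and $C(q)$ from \cref{eq:explicit-matrix} then produces the two claimed infinite products, with the relation $-1/i = i$ supplying the factor of $i$ in the second identity.

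The computation is entirely routine, so there is no genuine obstacle beyond careful bookkeeping. The two points deserving a moment's attention are the vanishing of $\vect{qx^2}{1}$ at $x = i$ — which is what kills the off-diagonal entries $B(q), D(q)$ and leaves only the first column of \cref{eq:explicit-matrix} — and the negative-index Pochhammer factorization $(iq^{n+1};q) = (iq;q)/(iq;q)_n$, which is exactly what converts the bilateral twisted sums of \cref{thm:twisted-rog-ram} into the bilateral Rogers--Ramanujan-type sums of the corollary.
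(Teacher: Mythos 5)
Your proposal is correct and follows exactly the paper's route: the paper derives \cref{cor:more-rog-ram} precisely by setting $x = i$ in \cref{thm:twisted-rog-ram}, and your computations — the negative-index factorization $(iq^{n+1};q) = (iq;q)/(iq;q)_n$ under the convention \cref{eq:finite-product-negative}, the vanishing of $\vect{qx^2}{1}$ at $x=i$ (so that only the column $(A(q), C(q))$ survives), the evaluation $\vect{qx^2}{0}(i) = (q^2;q^2)\la q;q^2\ra$, and the simplification $(q^2;q^2)\la q;q^2\ra/(q;q) = (q;q^2)$ — all check out. The only nitpick is terminology: the vanishing of $\vect{qx^2}{1}$ kills the \emph{second column} $B(q), D(q)$ of the matrix in \cref{eq:explicit-matrix}, not the "off-diagonal entries," but your mathematics is unaffected by this slip.
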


Having proven \cref{thm:bases-proportional}, we now move on to our parallel result in \cref{thm:bases-proportional-2}. Its proof follows a similar strategy, but it is significantly easier for reasons to be explained shortly.
\begin{proposition}[Two-variable Rogers--Ramanujan variation] \label{prop:2var-rog-ram-var}
In $\mT_\Hminus\left(qx^2 - q^{-1}\right)$, one has
\[
    \frac{(q; q)\la x; q\ra (qx; q)}{(-q; q)^2\left(- x^{-1}; q\right)}
    =
    2\left(-q^2; q^2\right) \vect{qx^2 - q^{-1}}{0}
    - \left(-q; q^2\right)\vect{qx^2 - q^{-1}}{1}.
\]
\end{proposition}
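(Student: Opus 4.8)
The plan is to prove the identity directly as an equality of meromorphic functions, reducing it to the squared triple product identity (\cref{prop:squared-triple}). First I would record that the left-hand side lies in $\mT_\Hminus\left(qx^2-q^{-1}\right)$, so the statement is well-posed: multiplying the quasiperiodicity factors of $(q;q)\la x;q\ra$, of $(qx;q)$, of the constant $(-q;q)^{-2}$, and of $(-x^{-1};q)^{-1}$ from \cref{tbl:spaces} (the last being $1+q^{-1}x^{-1}$, i.e.\ $\beta=-q^{-1}$ in the entry $(\beta q x^{-1};q)^{-1}\in\mT_\Hminus(1-\beta x^{-1})$) gives $(-x)(1-qx)(1+q^{-1}x^{-1}) = qx^2-q^{-1}$, while the only candidate poles, from the zeros of $(-x^{-1};q)$ at $x=-q^n$ ($n\ge 0$, so $|x|\le 1$), lie outside $\Hminus$. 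Since $\dim\mT_\Hminus\left(qx^2-q^{-1}\right)=2$ by \cref{prop:polynomial-spaces}, the right-hand side is a genuine element of this space.

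The key reduction is to rewrite the right-hand side via explicit products. By \cref{cor:kind1} with $y=q^{-1}$ (valid since $|q^{-1}|\le|q|^{-1}$) one has $\vect{qx^2-q^{-1}}{0}=\vect{qx^2}{0}/[(-q;q^2)(x^{-2};q^2)]$ and $\vect{qx^2-q^{-1}}{1}=\vect{qx^2}{1}/[(-q^2;q^2)(x^{-2};q^2)]$, and expanding $\vect{qx^2}{k}$ through \cref{eq:canonical-formula} turns these into theta products. I would then use the factorization $(x^{-2};q^2)=(x^{-1};q)(-x^{-1};q)$ to merge denominators, so that after multiplying both sides by $(-x^{-1};q)(x^{-1};q)$ the claim becomes an equation of entire functions. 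On the left this clears $(-x^{-1};q)^{-1}$ and produces the factor $\la x;q\ra(qx;q)(x^{-1};q)$, which collapses by an elementary Pochhammer manipulation: writing $(x;q)=(1-x)(qx;q)$ and $(x^{-1};q)=(1-x^{-1})(q/x;q)$ gives $\la x;q\ra(qx;q)(x^{-1};q)=\tfrac{1-x^{-1}}{1-x}\la x;q\ra^2=-\la x;q\ra^2/x$.

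At this point the proposition reduces to the single identity
\[
    -\frac{(q;q)\la x;q\ra^2}{(-q;q)^2\,x}
    = (q^2;q^2)\left[\frac{2(-q^2;q^2)}{(-q;q^2)}\la-qx^2;q^2\ra - \frac{(-q;q^2)}{(-q^2;q^2)}\la-q^2x^2;q^2\ra\,x\right],
\]
which I would establish from \cref{prop:squared-triple}. That proposition expands $(q;q)^2\la x;q\ra^2$ in the canonical basis $\{\vect{x^2}{0},\vect{x^2}{1}\}$ of $\mT_\C(x^2)$; dividing by $(q;q)(-q;q)^2$, simplifying factors through $(q;q)(-q;q)=(q^2;q^2)$ and $(-q;q)=(-q;q^2)(-q^2;q^2)$, and converting $\vect{x^2}{0},\vect{x^2}{1}$ to products via \cref{eq:canonical-formula}, yields exactly the displayed right-hand side except with $\la-x^2;q^2\ra\,x^{-1}$ in place of $\la-q^2x^2;q^2\ra\,x$. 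The last step is the quasiperiodicity $\la-x^2;q^2\ra=x^2\la-q^2x^2;q^2\ra$ (the case $\la y;q^2\ra=-y\la q^2y;q^2\ra$), which identifies the two terms.

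The only place demanding care is the bookkeeping of the symbols $(q;q),(q^2;q^2),(-q;q),(-q;q^2),(-q^2;q^2)$ across the two reductions; the conceptual content is light precisely because the left-hand side collapses to $-\la x;q\ra^2/x$, which up to these scalar factors \emph{is} the squared triple product. This is why the argument is ``significantly easier'' than \cref{thm:bases-proportional}: no twisted-sum expansion or mock theta input is needed, only \cref{cor:kind1} and \cref{prop:squared-triple}.
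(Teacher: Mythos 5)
Your proposal is correct; I checked each reduction. The well-posedness bookkeeping is right ($(-x)(1-qx)(1+q^{-1}x^{-1})=qx^2-q^{-1}$, and the zeros of $\left(-x^{-1};q\right)$ at $x=-q^n$ satisfy $|x|\le 1$, so they miss $\H^-$); the specialization of \cref{cor:kind1} at $y=q^{-1}$ gives exactly the quotients you state (the boundary case $|y|=|q|^{-1}$ is allowed, and $-q^{-1}\notin\{q^{-n}:n\ge 2\}$ since $|q|<1$); the collapse $\la x;q\ra(qx;q)\left(x^{-1};q\right)=-x^{-1}\la x;q\ra^2$ is correct; and dividing \cref{prop:squared-triple} by $(q;q)(-q;q)^2$ and applying \cref{eq:canonical-formula} together with $\la -x^2;q^2\ra=x^2\la -q^2x^2;q^2\ra$ yields precisely your reduced identity, with the signs and the pairing $\vect{x^2}{1}/x=\vect{qx^2}{0}$, $\vect{x^2}{0}/x=\vect{qx^2}{1}$ working out. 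There is no circularity: \cref{cor:kind1} rests on \cref{eq:basic1,eq:basic2}, which the paper proves independently of this proposition.

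Your route shares the paper's backbone but finishes differently. The paper also begins with the same collapse of the left-hand side to $-x^{-1}(q;q)\la x;q\ra^2/\bigl[(-q;q)^2\left(x^{-2};q^2\right)\bigr]$ and also invokes \cref{prop:squared-triple}; but it then expands $\left(x^{-2};q^2\right)^{-1}$ as a series via \cref{eq:basic2}, multiplies out to a double sum, and concludes by identifying the Fourier coefficients of $x^0$ and $x^1$ in $\mT_\Hminus\left(qx^2-q^{-1}\right)$ — a step that requires the two Euler-type evaluations $\sum_{n\ge 0} q^{n^2}/\left(q^2;q^2\right)_n=\left(-q;q^2\right)$ and $\sum_{n\ge 0} q^{n^2+n}/\left(q^2;q^2\right)_n=\left(-q^2;q^2\right)$ from \cref{eq:rog-ram-var}. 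You instead consume the closed forms of \cref{cor:kind1} (equivalently \cref{thm:twisted-rog-ram-var}), clear the common factor $\left(x^{-2};q^2\right)$ (nonvanishing on $\H^-$, so this is reversible), and match theta products exactly — no series identification at all. What your version buys is a purely multiplicative, coefficient-free verification, arguably cleaner. What the paper's version buys is the intermediate double-sum expression with \cref{eq:rog-ram-var} appearing as the canonical-basis coefficients, which is what exhibits the proposition as a two-variable generalization of \cref{eq:rog-ram-var} parallel to \cref{prop:2var-rog-ram} (as the remark after the paper's proof emphasizes); your proof, by presupposing \cref{cor:kind1}, loses that expository payoff but confirms the paper's own remark that the proposition is ultimately equivalent to \cref{cor:kind1}, \cref{thm:twisted-rog-ram-var} and \cref{thm:bases-proportional-2} modulo \cref{prop:squared-triple}.
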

\begin{remark}
The $q$-coefficients in the right-hand side above are the same (up to a sign) as the $q$-coefficients on the right of the octuple identity from \cref{eq:octuple}. Moreover, multiplying the product above by the left-hand side of \cref{thm:bases-proportional-2} yields the octuple product in \cref{eq:octuple}. Hence under \cref{thm:bases-proportional-2}, the octuple identity and \cref{prop:2var-rog-ram-var} are equivalent.
\end{remark}
\begin{proof}[Proof of \cref{prop:2var-rog-ram-var}]
Rewrite the left-hand side as 
\[
\begin{aligned}
    \frac{-x^{-1} (q; q)\la x; q\ra^2}{(-q; q)^2\left( x^{-2}; q^2\right)}
    &= 
    -\frac{x^{-1}}{(-q; q)^2} \left(\left(-q; q^2\right)^2 \vect{x^2}{0} - 2\left(-q^2; q^2\right)^2\vect{x^2}{1}\right) \cdot \sum_{n \ge 0} \frac{1}{\left(q^2; q^2\right)_n} x^{-2n}
    \\
    &= 2\frac{\left(-q^2; q^2\right)}{\left(-q; q^2\right)} \sum_{m \in \Z, n \ge 0} \frac{q^{m^2}}{(q^2; q^2)_n}x^{2m-2n} - \frac{\left(-q; q^2\right)}{\left(-q^2; q^2\right)} \sum_{m \in \Z, n \ge 0} \frac{q^{m^2+m}}{(q^2; q^2)_n}x^{2m-2n+1},
\end{aligned}
\]
where we used the squared triple product identity from \cref{prop:squared-triple}, and \cref{eq:basic2} in the first equality and simply multiplied sums out in the second equality. Now using \cref{tbl:spaces}, we see that the left-hand side product lies in $\mT_\Hminus\left(q(-x)^2 \left(1 - x^{-2} q^{-2}\right)\right) = \mT_\Hminus\left(qx^2 - q^{-1}\right)$, so it is a linear combination of $\vect{qx^2-q^{-1}}{0}$ and $\vect{qx^2-q^{-1}}{1}$. To finish it suffices to identify the coefficients of $x^0$ and $x^1$ in the sum above; this amounts to the identities
\begin{equation} \label{eq:rog-ram-var}
    \sum_{n \ge 0} \frac{q^{n^2}}{(q^2; q^2)_n} = \left(-q; q^2\right) \qquad\text{and}\qquad 
    \sum_{n \ge 0} \frac{q^{n^2+n}}{(q^2; q^2)_n} = \left(-q^2; q^2\right),
\end{equation}
which follow from \cref{eq:basic1} for $q \mapsto q^2$ and $x \mapsto -q$, respectively $x \mapsto -q^2$.
\end{proof}
\begin{remark}
While \cref{prop:2var-rog-ram} generalizes the original Rogers--Ramanujan identities in \cref{eq:rog-ram}, \cref{prop:2var-rog-ram-var} is a generalization of the (much easier) Rogers--Ramanujan type identities in \cref{eq:rog-ram-var}.
\end{remark}

Next, we provide an analogue of \cref{thm:twisted-rog-ram}, which is equivalent to \cref{thm:bases-proportional-2}. This analogue turns out to be a direct consequence of our work in \cref{subsec:canonical-basis-vectors}, but we list it as a theorem to illustrate the parallelism between \cref{thm:bases-proportional,thm:bases-proportional-2}.

\begin{theorem}[Twisted two-variable Rogers--Ramanujan variation] \label{thm:twisted-rog-ram-var}
For $\tau \in \H^+$ and $z \in \H^-$,
\[
    \left(x^{-2}; q^2\right)
    \begin{pmatrix}
    \sum_{n \in \Z} q^{n^2} x^{2n} \left(-q^{2n+1}; q^2\right)
    \vspace{0.1cm} \\
    \sum_{n \in \Z} q^{n^2+n} x^{2n+1} \left(-q^{2n+2}; q^2\right)
    \end{pmatrix}
    =
    \begin{pmatrix}
    \vect{qx^2}{0}
    \vspace{0.1cm} \\
    \vect{qx^2}{1}
    \end{pmatrix}.
\]
\end{theorem}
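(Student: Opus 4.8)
The plan is to recognize each entry of the left-hand vector as a disguised canonical basis vector $\vect{qx^2 - q^{-1}}{k}$ from \cref{cor:kind1}, and then to clear the twist using the product formulae in that same corollary; no new machinery is needed beyond \cref{subsec:canonical-basis-vectors}.

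First I would rewrite the twisting factors via the extended finite-product convention of \cref{eq:finite-product-negative}. The key observation is that for every $n \in \Z$,
\[
    \left(-q^{2n+1}; q^2\right) = \frac{\left(-q; q^2\right)}{\left(-q; q^2\right)_n},
    \qquad\qquad
    \left(-q^{2n+2}; q^2\right) = \frac{\left(-q^2; q^2\right)}{\left(-q^2; q^2\right)_n}.
\]
For $n \ge 0$ these are immediate telescopings of $\left(-q; q^2\right) = \prod_{k \ge 0}\left(1 + q^{2k+1}\right)$, and for $n < 0$ they are precisely the defining convention \cref{eq:finite-product-negative} applied with $x = -q$ and $q \mapsto q^2$ (respectively $x = -q^2$).

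Substituting these into the two sums, and taking $y = q^{-1}$ in \cref{cor:kind1}, turns each entry into a scalar multiple of a basis vector of $\mT_\Hminus\left(qx^2 - q^{-1}\right)$:
\[
    \sum_{n \in \Z} q^{n^2} x^{2n}\left(-q^{2n+1}; q^2\right)
    = \left(-q; q^2\right)\sum_{n \in \Z} \frac{q^{n^2}}{\left(-q; q^2\right)_n} x^{2n}
    = \left(-q; q^2\right) \vect{qx^2 - q^{-1}}{0},
\]
and likewise the second entry equals $\left(-q^2; q^2\right)\vect{qx^2 - q^{-1}}{1}$. Here $y = q^{-1}$ sits on the boundary $|y| = |q|^{-1}$ permitted in \cref{cor:kind1}, and the series converge locally uniformly for $z \in \H^-$ as guaranteed there.

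Finally I would multiply both entries by $\left(x^{-2}; q^2\right)$ and invoke the second (product) equality in \cref{cor:kind1}, namely $\vect{qx^2 - q^{-1}}{0} = \vect{qx^2}{0}/\!\left(\left(-q; q^2\right)\left(x^{-2}; q^2\right)\right)$ together with $\vect{qx^2 - q^{-1}}{1} = \vect{qx^2}{1}/\!\left(\left(-q^2; q^2\right)\left(x^{-2}; q^2\right)\right)$. The factors $\left(-q; q^2\right)$, $\left(-q^2; q^2\right)$ and $\left(x^{-2}; q^2\right)$ cancel exactly, leaving $\vect{qx^2}{0}$ and $\vect{qx^2}{1}$ respectively. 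Since every manipulation is an algebraic identity among absolutely convergent $q$-series on $\H^-$, there is no substantive obstacle; the only care needed is the bookkeeping of the twist factors and the boundary admissibility of $y = q^{-1}$ in \cref{cor:kind1}. This is exactly the sense in which \cref{thm:twisted-rog-ram-var} is a direct consequence of \cref{subsec:canonical-basis-vectors}, in sharp contrast with the genuinely harder \cref{thm:twisted-rog-ram}.
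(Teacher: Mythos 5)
Your proposal is correct and is essentially identical to the paper's own proof, which likewise obtains \cref{thm:twisted-rog-ram-var} by taking $y = q^{-1}$ in \cref{cor:kind1} and multiplying the two resulting identities by $\left(-q; q^2\right)$ and $\left(-q^2; q^2\right)$. Your write-up merely makes explicit the telescoping $\left(-q; q^2\right)/\left(-q; q^2\right)_n = \left(-q^{2n+1}; q^2\right)$ (via the convention \cref{eq:finite-product-negative} for $n < 0$) and the boundary admissibility of $|y| = |q|^{-1}$, both of which the paper leaves implicit.
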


\begin{proof}
Take $y = q^{-1}$ in \cref{cor:kind1}, then multiply the results by $\left(-q; q^2\right)$ and $\left(-q^2; q^2\right)$. 
\end{proof}

\begin{proof}[Proof that \cref{thm:bases-proportional-2} $\Leftrightarrow$ \cref{thm:twisted-rog-ram-var}]
Let $z \in \H^-$. Using that $\vect{-x^2}{1} = \left(q^2; q^2\right)\la qx^2; q^2 \ra$ (by \cref{eq:canonical-formula}) and that $\la x; q \ra = -x (qx; q)\left(x^{-1}; q\right)$, one can rephrase \cref{thm:bases-proportional-2} as
\[
    \vect{-x^2}{1}\left(x^{-2}; q^2\right)
    \begin{pmatrix}
    \vect{qx^2-q^{-1}}{0} 
    \vspace{0.1cm} \\
    \vect{qx^2-q^{-1}}{1} 
    \end{pmatrix}
    =
    (q; q)
    \begin{pmatrix}
    \vect{-qx^4}{1}
    \vspace{0.1cm} \\
    \vect{-qx^4}{0} + \vect{-qx^4}{2} 
    \end{pmatrix}.
\]
Using the formulae for canonical basis vectors from \cref{cor:kind1}, we can further rewrite this as
\[
    \vect{-x^2}{1}\left(x^{-2}; q^2\right)
    \begin{pmatrix}
    \sum_{n \in \Z} q^{n^2} x^{2n} \frac{\left(-q^{n+1}; q^2\right)}{\left(-q; q^2\right)}
    \vspace{0.1cm} \\
    \sum_{n \in \Z} q^{n^2+n} x^{2n+1} \frac{\left(-q^{n+2}; q^2\right)}{\left(-q^2; q^2\right)}
    \end{pmatrix}
    =
    (q; q)
    \begin{pmatrix}
    \vect{-qx^4}{1}
    \vspace{0.1cm} \\
    \vect{-qx^4}{0} + \vect{-qx^4}{2} 
    \end{pmatrix}.
\]
Now from \cref{lem:m-identities} and the triple product identity from \cref{eq:substitutions-triple} for $a = 4$, we find that
\[
    \vect{-x^2}{1}
    \begin{pmatrix}
    \vect{qx^2}{0} 
    \vspace{0.1cm} \\
    \vect{qx^2}{1} 
    \end{pmatrix}
    =
    \left(q^4; q^4\right)
    \begin{pmatrix}
    \la q^2; q^4 \ra & 0 
    \vspace{0.1cm} \\
    0 & \la q; q^4 \ra
    \end{pmatrix}
    \begin{pmatrix}
    \vect{-qx^4}{1}
    \vspace{0.1cm} \\
    \vect{-qx^4}{0} + \vect{-qx^4}{2} 
    \end{pmatrix}.
\]  
This proves that \cref{thm:twisted-rog-ram-var} is equivalent to \cref{thm:bases-proportional-2} since $\left(q^4; q^4\right)\la q^2; q^4 \ra / (q; q) = \left(-q; q^2\right)$ and $\left(q^4; q^4\right)\la q; q^4 \ra / (q; q) = \left(-q^2; q^2\right)$.
\end{proof}

\begin{remark}
Comparing our main results in \cref{thm:bases-proportional} (equivalent to \cref{thm:twisted-rog-ram}) and \cref{thm:bases-proportional-2} (equivalent to \cref{thm:twisted-rog-ram-var}), the latter one is characterized by the
\emph{non-mixing of even and odd powers of $x$} in the basis vectors $\vect{qx^2 - q^{-1}}{k}$ (which originates from the fact that the symmetry factor $qx^2 - q^{-1}$ only contains even powers). This has several consequences:
\begin{itemize}
    \item[(i).] There is no need for a $2 \times 2$ matrix in \cref{thm:twisted-rog-ram-var} (or by a different choice of scalars, the matrix would be diagonal), unlike in \cref{thm:twisted-rog-ram}. 
    \item[(ii).] \cref{thm:bases-proportional-2} and \cref{thm:twisted-rog-ram-var} were easier to prove than \cref{thm:bases-proportional} and \cref{thm:twisted-rog-ram}.
    \item[(iii).] By identifying even and odd powers of $x$, \cref{prop:2var-rog-ram-var} splits into two identities for $\vect{qx^2 - q^{-1}}{0}$ and $\vect{qx^2 - q^{-1}}{1}$, which are ultimately equivalent to \cref{cor:kind2}, \cref{thm:twisted-rog-ram-var} and \cref{thm:bases-proportional-2}. By contrast, \cref{thm:bases-proportional} is stronger than \cref{prop:2var-rog-ram}.
    \item[(iv).] There is no nonzero entire function in $\mT_\Hminus\left(qx^2 - q^{-1}\right)$ (i.e., $\mT_\C\left(qx^2 - q^{-1}\right) = \{0\}$). This is ultimately because $\frac{1}{1-x^2}$ and $\frac{x}{1-x^2}$ (which correspond to the tails of the coefficients of $q^0$ in $\vect{qx^2 - q^{-1}}{0}$ and $\vect{qx^2 - q^{-1}}{1}$) are linearly independent. By contrast, $\mT_\C\left(qx^2 - x\right)$ is spanned by the left-hand side of \cref{prop:2var-rog-ram}.
\end{itemize}
The non-mixing phenomenon extends (with respect to residue classes mod $d$) to any $\mT_\Hminus\left(\alpha x^d + \beta\right)$, which is ultimately what allowed us to obtain closed forms for canonical basis vectors of the first kind in \cref{prop:exact-formulae-kind1}. Similarly, the mixing phenomenon is manifested in all nontrivial cases of canonical basis vectors of the second kind, from \cref{prop:exact-formulae-kind2}.
\end{remark}

\begin{remark}
Numerical generation also suggests identities for the cross-products in the (now proven) \cref{thm:bases-proportional,thm:bases-proportional-2}:
\begin{equation}\label{eq:cross-product-1}
\begin{aligned}
    &\vect{qx^2 - x}{1}\left(\vect{-qx^5}{1} + \vect{-qx^5}{2}\right)\\[-5pt]
    = \
    &\vect{qx^2 - x}{0}\left(\vect{-qx^5}{0} + \vect{-qx^5}{3}\right) 
    =  
    \frac{\left(q^{35}; q^{35}\right)}{\left(x^{-1}; q\right)} 
    \Big( 
    \la q^{15}; q^{35} \ra \left(\vect{-q^2 x^7}{0} + \vect{-q^2 x^7}{3}\right) 
    \\[-2pt]
    &\qquad\qquad\qquad\qquad\qquad \qquad\qquad\qquad\qquad\qquad
    -q^2\la q^5; q^{35} \ra \left(\vect{-q^2 x^7}{1} + \vect{-q^2 x^7}{2}\right)
    \\
    &\qquad\qquad\qquad\qquad\qquad \qquad\qquad\qquad\qquad\quad\ 
    -q\la q^{10}; q^{35} \ra \left(\vect{-q^2 x^7}{-1} + \vect{-q^2 x^7}{4}\right)
    \Big),
\end{aligned}
\end{equation}
respectively
\begin{equation} \label{eq:cross-product-2}
\begin{aligned}
    & \vect{qx^2 - q^{-1}}{1}\vect{-qx^4}{1} 
    \\[-5pt]
    = \
    &\vect{qx^2 - q^{-1}}{0}\left(\vect{-qx^4}{0} + \vect{-qx^4}{2}\right)
    =  
    \frac{\left(q^2; q^2\right)}{\left(x^{-2}; q^2\right)}
     \left(\vect{-q^2 x^6}{0} + \vect{-q^2 x^6}{2}\right).
\end{aligned}
\end{equation}
While \cref{eq:cross-product-2} can be proven easily using the results in this section and \cref{lem:m-identities}, \cref{eq:cross-product-1} appears to be more difficult. By specialization at $x = 1$, \cref{eq:cross-product-1} would imply an additional identity for the generalized eta functions of level $N = 7$ (using \cref{eq:generalized-eta}), which would belong in \cref{prop:level-7}:
\[
    \sum_{g = 1}^3  \E_g(5\tau) \E_{3g}(\tau) = 
    -
    \frac{\eta(\tau)\eta(5\tau)}{\eta(7\tau)\eta(35\tau)}.
\]
\end{remark}

\subsection{W-coefficients, second proofs of main theorems and change-of-basis identities} \label{subsec:w-coeffs}
A different approach to proving \cref{thm:bases-proportional} and \cref{thm:bases-proportional-2} proceeds by expanding the twisted sums in the left-hand sides of \cref{thm:twisted-rog-ram} and \cref{thm:bases-proportional-2}, using the lemma below. We mention this approach since it will also lead to the anticipated change-of-basis identities (such as \cref{prop:2x2-determinant}).


\begin{lemma}[Twisted sum identities] \label{lem:t-identities}
Let $0 \neq \alpha \in \C$, $s \in \{\pm 1\}$ and $d, k, a \in \Z$ with $0 \le k \le d-1$ and $1 \le a \mid d$. If $sa = d$ assume that $|y| < \left\vert \alpha q^k \right\vert$, and if $sa = -d$ assume that $|y| < \left\vert q^{d-k}/\alpha\right\vert$. Then
\begin{equation}\label{eq:twisted-series}
    \underbrace{\sum_{n \in \Z} \alpha^n q^{d\binom{n}{2}+kn} x^{dn+k}}_{\text{Original series $\vect{\alpha x^d}{k}$}} \underbrace{\vphantom{\sum_{n \in \Z}} \left(q^{san}x^{sa} y; q^{a}\right)}_{\text{Twist factor}}
    =
    \sum_{\ell=0}^{d-1} W_{k,\ell}(y, q)
    \vect{\alpha x^d}{\ell},
\end{equation}
where $\{W_{k,\ell}(y, q)\}_\ell$ also depend on $\alpha, s, d, a$ (but not on $x$), and are given by
\begin{equation}\label{eq:w-coeffs-value}
    W_{k,\ell}(y, q)
    = 
    \one_{\ell \equiv k\ (\textnormal{mod }a)} \cdot
    q^{a \binom{j}{2}}
    \sum_{n \ge \one_{s\ell < sk}}
    \alpha^{-sn}\ \frac{q^{a \binom{b}{2} n^2 + s(b(\ell-k)-\ell)n + \frac{s-1}{2}dn}}{\left(q^{a}; q^{a}\right)_{bn+j}} (-y)^{bn+j},
\end{equation}
where $b := d/a$ and $j := (\ell - k)/(sa)$.
\end{lemma}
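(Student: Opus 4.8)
The plan is to expand the twist factor as a $q$-series in $x$, convert the left-hand side of \cref{eq:twisted-series} into an explicit double sum, and read off its Fourier coefficients. Denote by $F(z)$ the left-hand side. Since each canonical vector $\vect{\alpha x^d}{\ell}$ has $j$th Fourier coefficient equal to $\one_{j=\ell}$ for $0\le j<d$, once I establish that $F\in\mT_\Hminus(\alpha x^d)$ the basis expansion \cref{eq:fourier-coeffs-basis} will give $F=\sum_{\ell=0}^{d-1}\hat F(\ell)\,\vect{\alpha x^d}{\ell}$, reducing everything to the verification that $\hat F(\ell)=W_{k,\ell}(y,q)$ for $0\le\ell<d$.

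First I would expand the twist factor using Euler's identity \cref{eq:basic1} applied with $q\mapsto q^a$ and $x\mapsto q^{san}x^{sa}y$, namely $(q^{san}x^{sa}y;q^a)=\sum_{m\ge0}\frac{q^{a\binom{m}{2}}}{(q^a;q^a)_m}(-q^{san}x^{sa}y)^m$. Substituting into the original series and collecting powers of $x$ yields the double sum $F(z)=\sum_{n\in\Z,\,m\ge0}\alpha^n q^{d\binom{n}{2}+kn}\frac{q^{a\binom{m}{2}}}{(q^a;q^a)_m}(-y)^m q^{sanm}\,x^{dn+k+sam}$. The coefficient of $x^\ell$ with $0\le\ell<d$ comes from the pairs satisfying $dn+k+sam=\ell$; because $a\mid d$ this forces $\ell\equiv k\ (a)$, which explains the indicator $\one_{\ell\equiv k\ (a)}$, and then $m=j-sbn$ with $b:=d/a$ and $j:=(\ell-k)/(sa)$. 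Reindexing the summation integer by $n\mapsto -sn$ turns this into $m=bn+j$ and $\alpha^n\mapsto\alpha^{-sn}$, matching the summation variable in \cref{eq:w-coeffs-value}; the constraint $m\ge0$ becomes $bn+j\ge0$, whose least admissible value is exactly $\one_{s\ell<sk}$ since $|\ell-k|<d$ forces $-j/b\in(-1,1)$.

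The membership $F\in\mT_\Hminus(\alpha x^d)$ is the twisting principle behind \cref{eq:w-coeffs}: writing $F(z)=\sum_n t_n(z)\,w(z+n\tau)$ with $t_n$ the $n$th summand of $\vect{\alpha x^d}{k}$ and $w(z)=(x^{sa}y;q^a)$, the term relation $t_n(z)=\alpha x^d\,t_{n-1}(z+\tau)$ — which encodes the quasiperiodicity of $\vect{\alpha x^d}{k}$ — shifts, after reindexing, into $F(z)=\alpha x^d\,F(z+\tau)$, so $F$ lies in the space described in \cref{prop:monomial-spaces}. With membership in hand, the rest is bookkeeping: substitute $m=bn+j$, use $ab=d$ and $\binom{-sn}{2}=\tfrac{n^2+sn}{2}$, and collect the total $q$-exponent $d\binom{-sn}{2}+k(-sn)+a\binom{m}{2}+sa(-sn)m$ into a quadratic part $a\binom{b}{2}n^2$, a linear part $s(b(\ell-k)-\ell)n+\tfrac{s-1}{2}dn$, and a constant $a\binom{j}{2}$ that is pulled out front. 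Together with the factors $\alpha^{-sn}$, $(-y)^{bn+j}$ and $(q^a;q^a)_{bn+j}^{-1}$ this reproduces \cref{eq:w-coeffs-value} term by term.

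The main obstacle is convergence and the legitimacy of interchanging the $n$- and $m$-summations. When $a<d$ we have $b\ge2$, so the $n$-sum carries the genuine quadratic factor $q^{a\binom{b}{2}n^2}$ with $a\binom{b}{2}>0$, and the double series converges absolutely and locally uniformly on $\Hminus$ for any $y$. The delicate case is $a=d$, where $b=1$ and $a\binom{b}{2}=0$: the quadratic decay vanishes and the $n$-sum becomes merely geometric, so convergence at the two ends $n\to\pm\infty$ is governed precisely by $|y|$. Tracking the ratio of consecutive terms shows the thresholds are $|y|<|\alpha q^k|$ when $sa=d$ and $|y|<|q^{d-k}/\alpha|$ when $sa=-d$, exactly the hypotheses imposed; these also ensure the original unexpanded twisted series converges, so the rearrangement is valid. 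I expect this convergence analysis in the boundary case $a=d$, rather than the exponent computation, to require the real care.
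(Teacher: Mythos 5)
Your proposal is correct and follows essentially the same route as the paper's proof: establish convergence and membership of the twisted series in $\mT\left(\alpha x^d\right)$ via the twisting principle of \cref{eq:w-coeffs}, expand the twist factor with \cref{eq:basic1}, and identify the Fourier coefficients of $x^\ell$ against the canonical basis. You additionally carry out the reindexing $n \mapsto -sn$ and exponent bookkeeping that the paper leaves as ``a short computation,'' and you correctly locate the only delicate convergence case ($a = d$, where the quadratic decay in $n$ vanishes and the hypotheses $|y| < |\alpha q^k|$, resp.\ $|y| < |q^{d-k}/\alpha|$, are exactly the geometric-ratio thresholds).
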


\begin{remark}
The condition $\ell \equiv k \pmod{a}$ in \cref{eq:w-coeffs-value} indicates the extent of the non-mixing phenomenon for Fourier coefficients described in the end of \cref{subsec:two-var-generaliz}; there is no mixing when $a = d$, maximum mixing when $a = 1$, and some mixing when $1 < a < d$. Also, the general case of \cref{lem:t-identities} can be recovered from the special case $a = 1$, using the substitutions $d \mapsto b$, $q \mapsto q^a$ and adequate substitutions for $\alpha$; however, it is more convenient to state and prove \cref{lem:t-identities} for a general $a \mid d$.
\end{remark}

\begin{proof}
Let $L(z)$ denote the left-hand side of \cref{eq:twisted-series}, where $x = e^{2\pi i z}$. One can check as in the proof of \cref{prop:exact-formulae-kind1} that $L$ converges absolutely and locally uniformly to a well-defined entire function under the given assumptions. It is then immediate that $L \in \mT_\C\left(\alpha x^d \right)$, since it is a twisted version of $\vect{\alpha x^d}{k}$ as in \cref{eq:w-coeffs}; hence $L$ has an expansion as in the right-hand side of \cref{eq:twisted-series}, where $W_{k,\ell}(y, q) = \hat{L}(\ell)$. Expanding the series of $L(z)$ using \cref{eq:basic1} and identifying the coefficients of $x^\ell$ leads to the formula in \cref{eq:w-coeffs-value}, after a short computation (treating the cases $s \in \{\pm 1\}$ separately).
\end{proof}

We further give a couple of consequences of \cref{lem:t-identities}, related to our previous results.
\begin{corollary} \label{cor:d=1} For $y \in \C$ with $|y| < |x|$ and $y \not\in \left\{q^n : n \le 0\right\}$, one has
\[
    \sum_{n \in \Z}
    \frac{q^{\binom{n}{2}}}{\left(y; q \right)_n} x^n 
    =
    \frac{(q; q)\la -x; q \ra}{(y; q)(-y/x; q)},
\]
generalizing \cref{eq:triple}, \cref{eq:basic1} and \cref{thm:twisted-rog-ram-var}. In fact, this is a particular case of Ramanujan's $_1\psi_1$ summation from \cref{prop:ramanujan-1psi1} (by taking $x \mapsto -x/a$, $a \to \infty$ and $b = y$).
\end{corollary}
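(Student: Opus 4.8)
The plan is to prove the identity within the $\mT_D$ framework: I would show that both sides are holomorphic elements of a one-dimensional space $\mT_D\left(x + yq^{-1}\right)$ on the domain $D := \{z : |y| < |x|\}$, and then match a single Fourier coefficient. The condition $|y| < |x|$ is exactly what makes the left-hand series converge: for $n \to -\infty$, the convention \cref{eq:finite-product-negative} gives $\hat F(-m) = (-y)^m (q/y; q)_m$, so the tail terms are controlled by $(|y|/|x|)^m$. The same condition pushes the zeros of $(-y/x; q)$ — located at $x = -yq^k$ with $|x| = |y||q|^k \le |y|$ — outside $D$, so the right-hand side is holomorphic there, while the hypothesis $y \not\in \{q^n : n \le 0\}$ guarantees $(y; q) \neq 0$ in the denominator.

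First I would check that the left-hand side $F(z) := \sum_{n \in \Z} \frac{q^{\binom{n}{2}}}{(y; q)_n} x^n$ lies in $\mT_D\left(x + yq^{-1}\right)$. Writing $c_n := \hat F(n) = q^{\binom{n}{2}}/(y; q)_n$ and using $q^{\binom{n}{2}} = q^{n-1} q^{\binom{n-1}{2}}$ together with the relation $(y; q)_n = (1 - yq^{n-1})(y; q)_{n-1}$ (valid for all $n \in \Z$ by \cref{eq:finite-product-negative}), one obtains the two-term recurrence $(1 - yq^{n-1}) c_n = q^{n-1} c_{n-1}$, which is precisely the Fourier form of $F = \left(x + yq^{-1}\right) \cdot T_\tau F$. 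For the right-hand side $G(z)$, I would use \cref{tbl:spaces} and \cref{lem:basic}.(iii): one has $(q; q)\la -x; q\ra = \vect{x}{0} \in \mT_\C(x)$ (this is \cref{eq:canonical-formula}, equivalently \cref{eq:triple} with $x \mapsto -x$), while a direct computation of the quasi-period shows $(-y/x; q) \in \mT_D\big((1 + yq^{-1}x^{-1})^{-1}\big)$ and hence $(-y/x; q)^{-1} \in \mT_D\left(1 + yq^{-1}x^{-1}\right)$; multiplying, $G \in \mT_D\big(x(1 + yq^{-1}x^{-1})\big) = \mT_D\left(x + yq^{-1}\right)$.

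Next I would invoke \cref{prop:upper-bounds}.(ii) with $P_0 = 1$, $P_1 = x + yq^{-1}$ (so $d = 1$) and $n_0 = 0$: here $A_n = 1 - yq^{n-1} - q^n x$, so $\hat{A_n}(0) = 1 - yq^{n-1} \neq 0$ for all $n \ge 1$ (this is exactly where $y \not\in \{q^n : n \le 0\}$ enters) and $\hat{A_n}(1) = -q^n \neq 0$ for all $n$. This yields $\dim \mT_D\left(x + yq^{-1}\right) \le 1$, so by \cref{lem:proofs-by-fourier} it suffices to verify $\hat F(0) = \hat G(0)$. Trivially $\hat F(0) = 1$. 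For $\hat G(0)$ I would expand $(q; q)\la -x; q\ra = \sum_n q^{\binom{n}{2}} x^n$ (by \cref{eq:triple}) and $(-y/x; q)^{-1} = \sum_{m \ge 0} \frac{(-y)^m}{(q; q)_m} x^{-m}$ (by \cref{eq:basic2}), then read off the $x^0$ term of their product: $\hat G(0) = \frac{1}{(y; q)} \sum_{m \ge 0} \frac{q^{\binom{m}{2}}}{(q; q)_m}(-y)^m = \frac{(y; q)}{(y; q)} = 1$, where the final sum collapses to $(y; q)$ by \cref{eq:basic1}.

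I expect the only genuine obstacle to be the bookkeeping of convergence and holomorphicity on $D$ — confirming that both series converge absolutely and locally uniformly there, and that $G$ has no poles in $D$ — which is routine but must be stated carefully, since it licenses both the $\mT_D$-membership computations and the termwise multiplication of Fourier series used to extract $\hat G(0)$. As an independent cross-check, the identity also falls out of Ramanujan's $_1\psi_1$ summation \cref{eq:ramanujan-1psi1} under $x \mapsto -x/a$, $b = y$ and $a \to \infty$, using $(a; q)_n(-x/a)^n \to q^{\binom{n}{2}} x^n$ and $(y/a; q), (q/a; q), (-x/a; q) \to 1$; the only subtlety in that route is justifying the passage of the limit $a \to \infty$ through the bilateral sum.
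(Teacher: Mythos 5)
Your proof is correct, but it is not the route the paper actually takes. The paper's primary proof derives the identity from the twisted-sum machinery: \cref{lem:t-identities} with $a = d = 1$ gives $\sum_{n \in \Z} q^{\binom{n}{2}}x^n \left(q^n xy; q\right) = \sum_{n \ge 0} \frac{(-y)^n}{(q; q)_n} \cdot \vect{x}{0}$ for $|y| < 1$, after which the substitution $y \mapsto y/x$, division by $(y; q)$, and \cref{eq:triple}, \cref{eq:basic1}, \cref{eq:basic2} finish the job. What you wrote out is instead the paper's one-sentence \emph{alternative} proof (``both sides lie in a one-dimensional space $\mT_D$ for a suitable half-plane $D$''), fully fleshed out: your verification that the recurrence $(1 - yq^{n-1})\,\hat F(n) = q^{n-1}\hat F(n-1)$ encodes membership in $\mT_D\left(x + yq^{-1}\right)$, the instantiation of \cref{prop:upper-bounds}.(ii) with $n_0 = 0$ (which is exactly where both hypotheses $|y| < |x|$ and $y \notin \{q^n : n \le 0\}$ enter, via $\hat{A_n}(0) = 1 - yq^{n-1}$), and the matching of $\hat F(0) = \hat G(0) = 1$ via \cref{lem:proofs-by-fourier} are all sound, as is your tail estimate $\hat F(-m) = (-y)^m(q/y; q)_m = O(|y|^m)$. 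The trade-off: the paper's route gets the identity as a corollary of an already-proved general lemma in one line, whereas yours is self-contained modulo the structural results of \S\ref{subsec:dim-bounds} and makes transparent why the two hypotheses on $y$ are sharp (convergence of the bilateral series and non-vanishing of $\hat{A_n}(0)$, respectively).

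One point in your favor worth recording: the paper's parenthetical remark names the space $\mT_D\left(x - y\right)$, but your factor $x + yq^{-1}$ is the correct one. This is confirmed independently by the $_1\psi_1$ route: the space used in the proof of \cref{prop:ramanujan-1psi1} is $\mT_D\big(((b/q) - ax)/(1-x)\big)$, and under $x \mapsto -x/a$, $b = y$, $a \to \infty$ this factor tends to $x + y/q$, not $x - y$. So the paper's remark contains a slip, and your computation fixes it. Your noted subtlety about justifying the limit $a \to \infty$ through the bilateral sum in the $_1\psi_1$ specialization is also legitimate (dominated convergence on the two tails handles it), though the paper itself treats that specialization as immediate.
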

\begin{proof}
\cref{lem:t-identities} yields $\sum_{n \in \Z} q^{\binom{n}{2}}x^n \left(q^n xy; q\right) = \sum_{n \ge 0} \frac{(-y)^n}{(q; q)_n} \cdot \vect{x}{0}$ for $|y| < 1$; take $y \mapsto y/x$, divide by $(y; q)$, and use \cref{eq:triple},\cref{eq:basic2} to finish. An alternative proof proceeds by noting that both sides lie in the one-dimensional space $\mT_D\left(x - y\right)$ (as functions of $z$), for a suitable half-plane $D$.
\end{proof}

\begin{corollary} \label{cor:3-var-2x2-matrix}
Let $y \in \C$ with $|y| \le 1$. Then as an equality of pairs of functions in $\mT_\C\left(qx^2\right)$,
\[
    \begin{aligned}
    \left(x^{-1} y; q\right) 
    \begin{pmatrix}
    \vect{qx^2 - yx}{0} \vspace{0.1cm} \\ 
    \vect{qx^2 - yx}{1} 
    \end{pmatrix}
    &=
    \begin{pmatrix}
    \sum_{n \in \Z}
        q^{n^2} x^{2n} \left(q^{n+1}xy; q\right)
    \vspace{0.1cm} \\
    \sum_{n \in \Z}
        q^{n^2+n} x^{2n+1} \left(q^{n+1}xy; q\right)
    \end{pmatrix}
    \\
    &=
    \begin{pmatrix}
    \sum_{n \ge 0} \frac{q^{n(n+1)}}{(q; q)_{2n}} y^{2n}
    & 
    - \sum_{n \ge 0} \frac{q^{(n+1)^2}}{(q; q)_{2n+1}} y^{2n+1}
    \vspace{0.1cm} \\
    -\sum_{n \ge 0} \frac{q^{n(n+1)}}{(q; q)_{2n+1}} y^{2n+1}
    &
    \sum_{n \ge 0} \frac{q^{n^2}}{(q; q)_{2n}} y^{2n}
    \end{pmatrix}
    \begin{pmatrix}
    \vect{qx^2}{0} \vspace{0.1cm} \\
    \vect{qx^2}{1} 
    \end{pmatrix}.
    \end{aligned}
\]
\end{corollary}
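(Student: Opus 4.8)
The plan is to prove the two asserted equalities separately, handling the first by the closed forms for canonical basis vectors and the second by the twisted-sum expansion of \cref{lem:t-identities}. For the first equality, I would start from \cref{cor:kind2}, which gives
\[
    \left(x^{-1}y;q\right)\vect{qx^2-yx}{0} = \sum_{n \in \Z} q^{n^2}x^{2n}\left(q^{-n+1}x^{-1}y;q\right),
    \qquad
    \left(x^{-1}y;q\right)\vect{qx^2-yx}{1} = \sum_{n \in \Z} q^{n^2+n}x^{2n+1}\left(q^{-n}x^{-1}y;q\right),
\]
after multiplying through by $\left(x^{-1}y;q\right)$ to clear the common denominator. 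The key structural observation is that each product lies in $\mT_\C\left(qx^2\right)$: a direct computation of quasiperiodicity factors shows $\left(x^{-1}y;q\right) \in \mT_\Hminus\left(\left(1-q^{-1}x^{-1}y\right)^{-1}\right)$ (and, for $|y|\le 1$, this factor is holomorphic and nonvanishing on $\H^-$), so by \cref{lem:basic}.(iii) the product lies in $\mT_\Hminus\!\left(\tfrac{qx^2-yx}{1-q^{-1}x^{-1}y}\right) = \mT_\Hminus\left(qx^2\right) = \mT_\C\left(qx^2\right)$, the last equality by \cref{prop:monomial-spaces}.(i).

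Now by \cref{prop:monomial-spaces}.(ii) we have $\mT_\C\left(qx^2\right) \subset \mS_\C(1)$, so both series are invariant under $x \mapsto x^{-1}$. Applying this invariance and then reindexing by $n \mapsto -n$ in the first row and $n \mapsto -n-1$ in the second (each substitution fixing the quadratic exponent $q^{n^2}$, resp.\ $q^{n^2+n}$) converts the twist factor $\left(q^{-n+1}x^{-1}y;q\right)$ into $\left(q^{n+1}xy;q\right)$, and similarly for the second row, yielding exactly the middle column. For the second equality I would apply \cref{lem:t-identities} to each twisted series, recognizing them as twists of $\vect{qx^2}{0}$ and $\vect{qx^2}{1}$ with parameters $\alpha = q$, $d = 2$, $s = 1$, $a = 1$ (hence $b = 2$), $k \in \{0,1\}$, and the lemma's twist variable set to $qy$, so that the lemma's factor $\left(q^{san}x^{sa}\cdot qy;q^a\right) = \left(q^{n+1}xy;q\right)$ matches. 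Since $sa = 1 \ne \pm d$, the lemma imposes no restriction on $y$ beyond the range $|y|\le 1$ already needed for \cref{cor:kind2} (which is exactly where $qx^2 - yx$ has no zeros in $\{\Im z < -\Im \tau\}$). It then remains to evaluate $W_{k,\ell}(qy,q)$ from \cref{eq:w-coeffs-value} for $k,\ell \in \{0,1\}$ and simplify using $(\pm qy)^{m} = (\pm1)^m q^m y^m$; for instance $(k,\ell)=(0,0)$ gives $\sum_{n\ge0} q^{n^2-n}(qy)^{2n}/(q;q)_{2n} = \sum_{n\ge0} q^{n(n+1)}y^{2n}/(q;q)_{2n}$, the top-left matrix entry.

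The main obstacle is purely computational and lies in the off-diagonal, ``descending'' case $(k,\ell)=(1,0)$: here $j = (\ell-k)/(sa) = -1 < 0$, so the sum in \cref{eq:w-coeffs-value} begins at $n \ge \one_{s\ell < sk} = 1$ and carries the prefactor $q^{a\binom{j}{2}} = q$, and one must reindex $n \mapsto n+1$ while correctly tracking the sign coming from $(-qy)^{2n+1}$; this is where sign and exponent errors are most likely, and verifying that the result simplifies to $-\sum_{n\ge0} q^{n(n+1)}y^{2n+1}/(q;q)_{2n+1}$ is the crux. Once all four coefficients $W_{k,\ell}(qy,q)$ are matched to the stated matrix (in particular confirming the minus signs in the $(1,2)$ and $(2,1)$ entries), the claimed identity of pairs of functions in $\mT_\C\left(qx^2\right)$ follows, and one recovers \cref{prop:2x2-determinant} upon comparing the $y$-dependence after specialization.
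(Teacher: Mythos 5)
Your proposal is correct and follows essentially the same route as the paper's proof: the first equality via \cref{cor:kind2} together with the $x \mapsto x^{-1}$ symmetry from $\mT_\C\left(qx^2\right) \subset \mS_\C(1)$ (with the reindexings $n \mapsto -n$ and $n \mapsto -n-1$), and the second via \cref{lem:t-identities} with $a = 1$, $d = 2$, $s = 1$, $\alpha = q$. Your instantiation of the lemma's twist variable as $qy$ is the correct reading, and your evaluation of all four coefficients $W_{k,\ell}(qy, q)$ --- including the delicate case $(k,\ell) = (1,0)$ with $j = -1$, prefactor $q^{\binom{-1}{2}} = q$, and summation starting at $n \ge 1$ --- checks out against the stated matrix.
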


\begin{proof}
For the first equality, use \cref{cor:kind2} and the symmetry at $x \mapsto x^{-1}$ (due to $\mT_\C\left(qx^2\right) \subset \mS_\C(1)$). For the second equality, use \cref{lem:t-identities} for $a = 1$, $d = 2$, $s = 1$ and $\alpha = y = q$.
\end{proof}

\begin{remark}
Concerning the second equality in \cref{cor:3-var-2x2-matrix}, one can deduce the bottom-row identity from the top-row one by taking $x \mapsto x/\sqrt{q}$ and $y \mapsto y\sqrt{q}$; however, the matrix appearing in the formulation above will be relevant later (it turns out to have determinant $1$; see \cref{cor:3-var-2x2-matrix-ctd}).
\end{remark}

\begin{corollary}[Rogers-type sums] \label{cor:rogers-type-sums}
For $\tau \in \H^+$ and $z \in \C$, one has
\[
(q; q)
\sum_{n \ge 0} \frac{q^{n^2}}{(q; q)_n} x^{-2n}
= 
\sum_{n \ge 0} \frac{q^{n(n+1)}}{(q; q)_{2n}} x^{2n} \cdot \vect{qx^2}{0}
- \sum_{n \ge 0} \frac{q^{(n+1)^2}}{(q; q)_{2n+1}} x^{2n+1} \cdot \vect{qx^2}{1},
\]
where the left-hand side is (up to a trivial substitution) the sum from \cref{prop:rogers}.
\end{corollary}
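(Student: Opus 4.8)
The plan is to deduce this corollary from the already-established \cref{cor:3-var-2x2-matrix} by specializing the auxiliary parameter $y$ to the variable $x$. Observe first that the right-hand side of the corollary is precisely the top entry of the matrix product in \cref{cor:3-var-2x2-matrix} after setting $y=x$: the coefficient series $\sum_{n\ge 0}\frac{q^{n(n+1)}}{(q;q)_{2n}}y^{2n}$ and $\sum_{n\ge 0}\frac{q^{(n+1)^2}}{(q;q)_{2n+1}}y^{2n+1}$ become exactly the two series multiplying $\vect{qx^2}{0}$ and $\vect{qx^2}{1}$ in the statement. Thus it suffices to show that the top entry of the \emph{left} vector in \cref{cor:3-var-2x2-matrix}, namely $\sum_{n\in\Z}q^{n^2}x^{2n}\left(q^{n+1}xy;q\right)$, specializes at $y=x$ to the left-hand side $(q;q)\sum_{n\ge 0}\frac{q^{n^2}}{(q;q)_n}x^{-2n}$.

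Before specializing, I would note that the identity between the middle and right expressions in \cref{cor:3-var-2x2-matrix} extends from $|y|\le 1$ to all $y\in\C$. Indeed, each coefficient series in the matrix is entire in $y$ (the factors $q^{n(n+1)}$ and $q^{(n+1)^2}$ decay super-exponentially), while $\sum_{n\in\Z}q^{n^2}x^{2n}\left(q^{n+1}xy;q\right)$ converges absolutely and locally uniformly in both $x$ and $y$, since after expanding the product its $q$-exponent is a positive-definite quadratic form in the summation indices. Hence, for each fixed $z$, both sides are entire functions of $y$ agreeing on $|y|\le 1$, so they agree for all $y$; in particular one may put $y=x=e^{2\pi i z}$.

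It then remains to evaluate $\sum_{n\in\Z}q^{n^2}x^{2n}\left(q^{n+1}x^2;q\right)$. I would expand the infinite product by \cref{eq:basic1} as $\left(q^{n+1}x^2;q\right)=\sum_{m\ge 0}\frac{q^{\binom{m}{2}}}{(q;q)_m}(-1)^m q^{(n+1)m}x^{2m}$, interchange the absolutely convergent sums, and reindex by $k=n+m$. A short computation collects the $q$-exponent into $\binom{m}{2}+m-km+k^2$, so that the inner sum over $m$ becomes $\sum_{m\ge 0}\frac{q^{\binom{m}{2}}}{(q;q)_m}\left(-q^{1-k}\right)^m=\left(q^{1-k};q\right)$ by a second application of \cref{eq:basic1}. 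Since $\left(q^{1-k};q\right)$ contains the vanishing factor $1-q^0$ whenever $k\ge 1$, only the indices $k=-\ell$ with $\ell\ge 0$ survive; using $\left(q^{1+\ell};q\right)=(q;q)/(q;q)_\ell$ one obtains $(q;q)\sum_{\ell\ge 0}\frac{q^{\ell^2}}{(q;q)_\ell}x^{-2\ell}$, which is exactly the left-hand side of the corollary.

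The main obstacle is conceptual rather than computational: it is the legitimacy of coupling the formerly independent constant $y$ to the variable $x$. Once the middle-versus-right identity of \cref{cor:3-var-2x2-matrix} is seen to hold for all $y$ (which rests only on the entirety in $y$ and the uniform convergence noted above), the substitution $y=x$ is harmless, and the remaining reindexing is a routine double application of \cref{eq:basic1} together with the vanishing of $\left(q^{1-k};q\right)$ for positive $k$.
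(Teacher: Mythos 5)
Your proof is correct, and it rests on the same key result as the paper's proof, namely \cref{cor:3-var-2x2-matrix} with the parameter $y$ coupled to $x$ --- but the execution is genuinely different. The paper instead takes $y = x^{-1}$, whereupon the twist factor $\left(q^{n+1}xy; q\right)$ collapses to $\left(q^{n+1}; q\right)$, which vanishes for $n < 0$ and equals $(q;q)/(q;q)_n$ for $n \ge 0$; the top row then immediately reads $(q;q)\sum_{n\ge 0}\frac{q^{n^2}}{(q;q)_n}x^{2n} = \sum_{n \ge 0}\frac{q^{n(n+1)}}{(q;q)_{2n}}x^{-2n}\vect{qx^2}{0} - \sum_{n\ge 0}\frac{q^{(n+1)^2}}{(q;q)_{2n+1}}x^{-2n-1}\vect{qx^2}{1}$, and a final substitution $x \mapsto x^{-1}$ --- legitimate because $\vect{qx^2}{k} \in \mT_\C\left(qx^2\right) \subset \mS_\C(1)$ is invariant under it --- yields the corollary with no series manipulation at all. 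Your choice $y = x$ avoids that symmetry step but transfers the work into evaluating $\sum_{n\in\Z}q^{n^2}x^{2n}\left(q^{n+1}x^2;q\right)$, and your double application of \cref{eq:basic1} with the reindexing $k = n+m$ checks out: the exponent $k^2 - km + \binom{m}{2} + m$ is right, the inner sum is $\left(q^{1-k};q\right)$, and the truncation to $k \le 0$ gives exactly $(q;q)\sum_{\ell\ge 0}\frac{q^{\ell^2}}{(q;q)_\ell}x^{-2\ell}$ --- so in effect you re-derive by direct computation the $x \mapsto x^{-1}$ symmetry that the paper invokes abstractly. One simplification: your extension-in-$y$ argument, while sound, can be bypassed entirely, since for $\Im z \ge 0$ one has $|x| \le 1$, so $y = x$ already lies in the stated range of \cref{cor:3-var-2x2-matrix}; both sides of the resulting one-variable identity are entire in $z$ (your positive-definite quadratic form observation gives this, and also justifies your interchange of sums), so analytic continuation in $z$ alone finishes the job.
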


\begin{proof}
Take $y = x^{-1}$ in \cref{cor:3-var-2x2-matrix} and identify the top row (alternatively, take $x \mapsto x/\sqrt{q}$, then $y = x^{-1}\sqrt{q}$ and identify the bottom row). Afterwards, substitute $x \mapsto x^{-1}$ on both sides, noting that $\mT_\C\left(qx^2\right) \subset \mS_\C(1)$.
\end{proof}

\begin{remark}
More identities of Rogers--Ramanujan type sums follow by plugging in $x \in \{-i, -i/\sqrt{q}\}$ in \cref{cor:rogers-type-sums} and using \cref{prop:rogers}.
\end{remark}

We are now ready to give second proofs for \cref{thm:twisted-rog-ram,thm:twisted-rog-ram-var}:

\begin{proof}[Second proof of \cref{thm:twisted-rog-ram}]
Taking $y = 1$ in \cref{cor:3-var-2x2-matrix}, the statement of \cref{thm:twisted-rog-ram} reduces to an equality of matrices
\begin{equation}\label{eq:slater}
    \begin{pmatrix}
    \sum_{n \ge 0} \frac{q^{n(n+1)}}{(q; q)_{2n}}
    & 
    - \sum_{n \ge 0} \frac{q^{(n+1)^2}}{(q; q)_{2n+1}}
    \vspace{0.1cm} \\
    -\sum_{n \ge 0} \frac{q^{n(n+1)}}{(q; q)_{2n+1}}
    &
    \sum_{n \ge 0} \frac{q^{n^2}}{(q; q)_{2n}}
    \end{pmatrix}
    \stackrel{?}{=}
    \begin{pmatrix}
    A(q) & B(q) \\
    C(q) & D(q)
    \end{pmatrix},
\end{equation}
where $A(q), B(q), C(q), D(q)$ are as in \cref{eq:explicit-matrix}. We note that \cref{eq:slater} is precisely the content of four Rogers--Ramanujan type identities of Slater \cite[(94),(96),(98),(99)]{slater1952further}, and we can prove them using \cref{prop:rogers}. Indeed, taking $q \mapsto q^4$ and then multiplying by $\begin{psmall}
    -q & q \\
    1 & 1
\end{psmall}$, one easily reduces \cref{eq:explicit-matrix} to
\[
    \begin{pmatrix}
    -\sum_{n \ge 0} \frac{q^{(n+1)^2}}{\left(q^4; q^4\right)_n}
    & 
    -\sum_{n \ge 0} \frac{q^{(n+1)^2}}{\left(q^4; q^4\right)_n}
    \vspace{0.1cm} \\
    \sum_{n \ge 0} \frac{q^{n^2}}{\left(q^4; q^4\right)_n}
    &
    \sum_{n \ge 0} \frac{q^{n^2}(-1)^n}{\left(q^4; q^4\right)_n}
    \end{pmatrix}
    \stackrel{?}{=}
    \begin{pmatrix}
    A\left(q^4\right) & B\left(q^4\right) \\
    C\left(q^4\right) & D\left(q^4\right)
    \end{pmatrix}
    \begin{pmatrix}
    -q & q \\
    1 & 1
    \end{pmatrix},
\]
which, in light of the symmetry $q \leftrightarrow -q$, boils down to
\[
    \sum_{n \ge 0} \frac{q^{n^2}}{\left(q^4; q^4\right)_n}
    \stackrel{?}{=}
    D\left(q^4\right) - qC\left(q^4\right)
    ,\qquad\qquad
    \sum_{n \ge 0} \frac{q^{n^2+2n}}{\left(q^4; q^4\right)_n}
    \stackrel{?}{=}
    A\left(q^4\right) - q^{-1} B\left(q^4\right).
\]
Now using \cref{eq:explicit-matrix} and the quintuple identity \cref{eq:substitutions-quintuple} for $a = 10$, one can expand $(q; q)A(q)$, $(q; q)B(q)$, $(q; q)C(q)$ and $(q; q)D(q)$ as explicit series where the $n$th term has a dominant power of $q^{30\binom{n}{2}}$. By taking $q \mapsto q^4$, adding these series and then using \cref{eq:substitutions-quintuple} for $a = 10$ again, one obtains
\[
\begin{aligned}
    \left(q^4; q^4\right) \left(D\left(q^4\right) - qC\left(q^4\right)\right) 
    &\ =\ \sum_{n \in \Z}
    (-1)^n q^{30\binom{n}{2}}\left(q^{13n} + q^{7n+1}\right)
    \ =\ 
    \left(q^{10}; q^{10}\right)\la -q; q^{10}\ra \la q^{12}; q^{20}\ra 
    , 
    \\ 
    \left(q^4; q^4\right) \left(A\left(q^4\right) - q^{-1}B\left(q^4\right)\right)
    &\ =\
    \sum_{n \in \Z}
    (-1)^n q^{30\binom{n}{2}}\left(q^{11n} + q^{n+3}\right)
    \ \ =\ 
    \left(q^{10}; q^{10}\right)\la -q^3; q^{10}\ra \la q^{16}; q^{20}\ra
    .
\end{aligned}
\]
After rearranging the products on the right, it remains to show that
\begin{equation} \label{eq:claim}
    \sum_{n \ge 0} \frac{q^{n^2}}{\left(q^4; q^4\right)_n} 
    \stackrel{?}{=}
    \frac{1}{\left(-q^2; q^2\right) \la q; q^5 \ra}
    ,\qquad\qquad
    \sum_{n \ge 0} \frac{q^{n^2+2n}}{\left(q^4; q^4\right)_n}
    \stackrel{?}{=}
    \frac{1}{\left(-q^2; q^2\right) \la q^2; q^5 \ra}.
\end{equation}
But by taking $x = 1$, respectively $x = q$ in the second equality from \cref{prop:rogers} (and using the Rogers--Ramanujan identities \cref{eq:rog-ram}), one obtains that
\begin{equation}\label{eq:known}
    \frac{1}{\la q; q^5 \ra} = \sum_{n \ge 0} \frac{q^{n^2}}{\left(q^2; q^2\right)_n} \left(-q^{2n+2}; q^2\right),
    \qquad\qquad
    \frac{1}{\la q^2; q^5 \ra} = \sum_{n \ge 0} \frac{q^{n^2+n}}{\left(q^2; q^2\right)_n} \left(-q^{2n+3}; q^2\right).
\end{equation}
The first equality in \cref{eq:known} settles the first equality in \cref{eq:claim} since $\left(q^4; q^4\right)_n = \left(q^2; q^2\right)_n \left(-q^2; q^2\right)_n$. 

For the second equality in \cref{eq:claim}, write $F(z) := \sum_{n \ge 0} \frac{q^{n^2}}{\left(q^2; q^2\right)_n}x^n \left(-q^{2n+2}x; q^2\right)$, and recall that \cref{prop:rogers} implies that $F \in \mT_\C(1, qx)$. In particular we have $F(\tau) = F(0) - q F(2\tau)$, and thus
\[
\begin{aligned}
    \sum_{n \ge 0} \frac{q^{n^2+n}}{\left(q^2; q^2\right)_n} \left(-q^{2n+3}; q^2\right)
    &=
    \sum_{n \ge 0} \frac{q^{n^2}}{\left(q^2; q^2\right)_n} \left(-q^{2n+2}; q^2\right)
    -
    \sum_{n \ge 0} \frac{q^{(n+1)^2}}{\left(q^2; q^2\right)_n} \left(-q^{2(n+1)+2}; q^2\right)
    \\
    &=
    \sum_{n \ge 0} \frac{q^{n^2}}{\left(q^2; q^2\right)_n} \left(-q^{2n+2}; q^2\right)
    -
    \sum_{n \ge 0} \frac{q^{n^2}\left(1-q^{2n}\right)}{\left(q^2; q^2\right)_n} \left(-q^{2n+2}; q^2\right)
    \\
    &= 
    \sum_{n \ge 0}
    \frac{q^{n^2+2n}}{\left(q^2; q^2\right)_n} \left(-q^{2n+2}; q^2\right)
    \qquad
    =
    \left(-q^2; q^2\right) \sum_{n \ge 0}
    \frac{q^{n^2+2n}}{\left(q^4; q^4\right)_n}
    ,
\end{aligned}
\]
which settles the second equality in \cref{eq:claim} in light of \cref{eq:known}. This completes our proof.
\end{proof}
\begin{remark}
As indicated in \cref{fig:layout}, the proof above also shows that \cref{thm:twisted-rog-ram} implies Slater's identities from \cref{eq:slater}. So while \cref{thm:twisted-rog-ram} generalizes the classical Rogers--Ramanujan identities from \cref{eq:rog-ram} (and Watson's identities from \cref{eq:watson}) by value identification, it also generalizes \cref{eq:slater} by Fourier identification.
\end{remark}

Similarly, we give another proof of \cref{thm:twisted-rog-ram-var} based on \cref{lem:t-identities}; in fact, an analogous argument applies to the more general second equality from \cref{prop:exact-formulae-kind1}.
\begin{proof}[Second proof of \cref{thm:twisted-rog-ram-var}]
Taking $a = d = 2$, $\alpha = q$, $s = 1$ and $y \in \left\{-qx^{-2}, -q^2x^{-2}\right\}$ in \cref{lem:t-identities}, one obtains 
\[
    \begin{pmatrix}
    \sum_{n \in \Z} q^{n^2} x^{2n} \left(-q^{2n+1}; q^2\right)
    \vspace{0.1cm} \\
    \sum_{n \in \Z} q^{n^2+n} x^{2n+1} \left(-q^{2n+2}; q^2\right)
    \end{pmatrix}
    =
    \begin{pmatrix}
    \sum_{n \ge 0} \frac{q^n}{\left(q^2; q^2\right)_n} x^{-2n} 
    & 0
    \vspace{0.1cm} \\
    0 & 
    \sum_{n \ge 0} \frac{q^n}{\left(q^2; q^2\right)_n} x^{-2n}
    \end{pmatrix}
    \begin{pmatrix}
    \vect{qx^2}{0} 
    \vspace{0.1cm} \\
    \vect{qx^2}{1}
    \end{pmatrix}.
\]
But \cref{eq:basic2} implies that $\sum_{n \ge 0} \frac{q^n}{\left(q^2; q^2\right)_n} x^{-2n} = \left(x^{-2}; q^2\right)^{-1}$, completing our proof.
\end{proof}

We now leave \cref{thm:bases-proportional,thm:bases-proportional-2} behind and look into more general applications of our work. Note that for $s = -1$, \cref{lem:t-identities} can be rephrased in terms of column vectors as
\begin{equation} \label{eq:twist-change-canonical}
    \underbrace{\vphantom{\sum_n} 
    \begin{pmatrix}
    \sum_{n \in \Z} \alpha^n q^{d\binom{n}{2}+kn} x^{dn+k} \left(q^{-an} x^{-a} y; q^a\right)
    \end{pmatrix}_{0 \le k < d}
    }_{\text{Twisted canonical basis of $\mT_\C\left(\alpha x^d\right)$}} 
    =
    \underbrace{\vphantom{\sum_n} 
    \begin{pmatrix}
    W_{k,\ell}(y, q)
    \end{pmatrix}_{0 \le k, \ell < d}
    }_{\text{Change-of-basis matrix}}
    \underbrace{\vphantom{\sum_n}
    \begin{pmatrix}
    \vect{\alpha x^d}{\ell}
    \end{pmatrix}_{0 \le l < d}
    }_{\text{Canonical basis}},
\end{equation}
for $\alpha, y, d, a$ and $W_{k,\ell}$ as before. As the underbraces above suggest, it is natural to ask if twisting the canonical basis of $\mT_\C(\alpha x^d)$ on the right results in another basis on the left, i.e.\ if the matrix $\left(
    W_{k,\ell}(y, q)
    \right)_{0 \le k, \ell < d}$ 
is invertible. The answer is yes: we can explicitly compute the determinant of this matrix, and this leads to a large family of two-variable identities generalizing \cref{prop:2x2-determinant}, given in \cref{thm:change-of-basis}. A helpful idea will be to consider a third basis of $\mT_\C\left(\alpha x^d\right)$, which is proportional to the canonical basis of $\mT_\Hminus\left(\alpha x^d\left(1 - (qx)^{-a}y\right)\right)$ (using the bijection in \cref{eq:bijection}): 
\[
    \left(\left(x^{-a} y; q^a\right) \vect{\alpha x^d\left(1 - (qx)^{-a}y\right)}{k} \right)_{0 \le k < d} 
    \ \ \in\ 
    \mT_\C\big(\alpha x^d\big)^d.
\]
We first recall that $\left(W_{k,\ell}(y, q)\right)_{0 \le k, \ell < d}$ only contains nonzero entries when $k \equiv \ell \pmod{a}$ by \cref{eq:w-coeffs-value}, and that identities of such $d\times d$ matrices split into $a$ identities of $b \times b$ matrices where $b = d/a$ (similarly, such $d \times d$ determinants split as a product of $b \times b$ determinants). It suffices, thus, to treat the case $a = 1$, and all of our further work in this subsection generalizes to the case $a > 1$ by intercalating $b$ suitable matrices. Fixing $a = 1$, our main trick is to interpolate an infinite sequence of bases of $\mT_\C\left(\alpha x^d\right)$ between the twisted canonical basis from \cref{eq:twist-change-canonical} and the untwisted one:

\begin{lemma}[$y$-Interpolation] \label{lem:y-interpolation}
Let $V(x, y) := \big(
    \sum_{n \in \Z} \alpha^n q^{d\binom{n}{2}+kn} x^{dn+k} \left(q^{-n} x^{-1} y; q\big)
    \right)_{0 \le k < d}$ 
denote the column vector in the left-hand side of \cref{eq:twist-change-canonical}, for $1 = a \le d$ and $\alpha, y$ as in \cref{lem:t-identities}. Then
\[
    V(x, y)
    =
    M(y)
    \cdot
    V(x, qy),
\]
where $M(y) = \left(1 + \alpha y q^{-1}\right)^{-1}$ if $d = 1$, $M(y) = \begin{psmall}
    1 + \alpha y^2q^{-1} & -\alpha yq^{-1} \\
    -y & 1
    \end{psmall}$
if $d = 2$, and
\[
    M(y)
    =
    \begin{psmall}
    1 & 0 & 0 & \cdots & 0 & \alpha y^2q^{-1} & -\alpha yq^{-1} \\
    -y & 1 & 0 & \cdots & 0 & 0 & 0 \\
    0 & -y & 1 & \cdots & 0 & 0 & 0 \\
    & & & & \ddots & & \\
    0 & 0 & 0 & \cdots & 0 & -y & 1
    \end{psmall}
    \in 
    \C^{d \times d},
    \qquad 
    \text{if } d \ge 3.
\]
So while $V(x, y) \in \mT_\C\left(\alpha x^d\right)^d$, by swapping variables we have $V(y, x) \in \mT_\C\left(M(x)\right)$ for $d \ge 2$.
\end{lemma}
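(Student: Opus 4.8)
The plan is to reduce everything to a single Pochhammer peeling identity and then read off the rows of $M(y)$ from two recurrences. Writing $V_k(x,y)$ for the $k$-th component of $V(x,y)$ (so $0 \le k < d$), the starting point is the elementary factorization
\[
    \left(q^{-n}x^{-1}y; q\right) = \left(1 - q^{-n}x^{-1}y\right)\left(q^{-n}x^{-1}(qy); q\right),
\]
obtained by peeling off the first factor of the infinite product and absorbing one power of $q$ into $y$. Substituting this into the series defining $V_k(x,y)$ and distributing the factor $\left(1 - q^{-n}x^{-1}y\right)$ splits each component as $V_k(x,y) = V_k(x,qy) - y\,S_k$, where $S_k := \sum_{n \in \Z} \alpha^n q^{d\binom{n}{2}+kn-n} x^{dn+k-1}\left(q^{-n}x^{-1}(qy); q\right)$; all rearrangements are justified by the absolute convergence guaranteed under the hypotheses of \cref{lem:t-identities}.

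The crux is identifying $S_k$. First I would treat $1 \le k \le d-1$: here the exponent of $x$ is $dn + (k-1)$ with $k-1 \ge 0$, and comparing $q$-exponents shows immediately that $S_k = V_{k-1}(x,qy)$, which yields the bidiagonal rows $V_k(x,y) = V_k(x,qy) - y\,V_{k-1}(x,qy)$ --- exactly the $(-y,1)$ pattern in rows $1, \ldots, d-1$ of $M(y)$. The remaining case $k = 0$ is the delicate one: the exponent of $x$ in $S_0$ is $dn - 1 = d(n-1) + (d-1)$, so I would reindex $n \mapsto n+1$. The key simplification is that $q^{-(n+1)}x^{-1}(qy) = q^{-n}x^{-1}y$, which turns the shifted Pochhammer back into a twist at parameter $y$ (not $qy$); a short computation then gives $S_0 = \alpha q^{-1}\,V_{d-1}(x,y)$, hence the self-referential relation $V_0(x,y) = V_0(x,qy) - \alpha y q^{-1}\,V_{d-1}(x,y)$.

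To finish, I would close up this system. For $d = 1$ the last relation reads $V_0(x,y) = V_0(x,qy) - \alpha y q^{-1}\,V_0(x,y)$, which I solve algebraically for $V_0(x,y) = \left(1 + \alpha y q^{-1}\right)^{-1} V_0(x,qy)$, matching the scalar $M(y)$. For $d \ge 2$ I substitute the already-derived $k = d-1$ recurrence $V_{d-1}(x,y) = V_{d-1}(x,qy) - y\,V_{d-2}(x,qy)$ into the $k=0$ relation to eliminate $V_{d-1}(x,y)$, producing $V_0(x,y) = V_0(x,qy) + \alpha y^2 q^{-1}\,V_{d-2}(x,qy) - \alpha y q^{-1}\,V_{d-1}(x,qy)$; for $d \ge 3$ this is precisely row $0$ of the stated matrix, while for $d = 2$ the columns $d-2$ and $0$ coincide and the coefficients combine into $1 + \alpha y^2 q^{-1}$, recovering the $2 \times 2$ matrix. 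I expect the main obstacle to be exactly this $k = 0$ bookkeeping: recognizing that $S_0$ returns a twist at $y$ rather than $qy$, so that the clean factorization $V(x,y) = M(y)V(x,qy)$ emerges only after substituting the $(d-1)$-st row, while keeping the boundary cases $d = 1, 2$ straight.

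For the closing assertion, once $V(x,y) = M(y)V(x,qy)$ is established, I would simply relabel the two variables: reading $V(y,x)$ with $x = e^{2\pi i z}$ as the theta variable and $y$ as a parameter, the recurrence becomes $V(y,x) = M(x)\,V(y,qx)$, where $x \mapsto qx$ is the action of $T_\tau$ on $z$. This is verbatim the defining relation $\vec{F} = M \cdot T_\tau \vec{F}$ of $\mT_\C(M(x))$ from \cref{def:T-spaces}, so $V(y,x) \in \mT_\C(M(x))$. To confirm that this space is well-defined (i.e.\ $\det M \not\equiv 0$) I would note that a permutation expansion of $M(y)$ gives $\det M(y) = 1$ for every $d \ge 2$: apart from the identity, the only nonzero permutations are the two cyclic ones passing through row $0$ (sending $\sigma(0) = d-1$ and $\sigma(0) = d-2$), whose contributions $\mp \alpha y^d q^{-1}$ cancel, leaving $1$.
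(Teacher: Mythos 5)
Your proof is correct and follows essentially the same route as the paper's: the same Pochhammer peeling $\left(q^{-n}x^{-1}y; q\right) = \left(1 - q^{-n}x^{-1}y\right)\left(q^{-n}x^{-1}(qy); q\right)$, the bidiagonal recurrences $V_k(x,y) = V_k(x,qy) - y\,V_{k-1}(x,qy)$ for $1 \le k < d$, the reindexed $k = 0$ relation $V_0(x,y) = V_0(x,qy) - \alpha y q^{-1}\,V_{d-1}(x,y)$ (your observation that $q^{-(n+1)}x^{-1}(qy) = q^{-n}x^{-1}y$ returns the twist to parameter $y$ is exactly the point the paper leaves implicit), and the elimination of $V_{d-1}(x,y)$ via the $k = d-1$ row, with the same handling of the boundary cases $d \in \{1, 2\}$. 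Your additional verification that $\det M(y) = 1$ is also correct and a worthwhile supplement, since it is needed for $\mT_\C(M(x))$ to be well-defined and the paper only asserts it later, without proof, inside \cref{thm:change-of-basis}.
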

\begin{proof}
Letting $V_k(x, y)$ be the $k$th coordinate of $V(x, y)$, use the functional equation $\left(q^{-n} x^{-1} y; q\right) = \left(1 - q^{-n} x^{-1} y\right) \left(q^{-n} x^{-1} (qy); q\right)$ to obtain
\[
    V_k(x, y) = V_k(x, qy) - yV_{k-1}(x, qy),
\]
for $1 \le k < d$. For $k = 0$, the analogous expansion yields $V_0(x, y) = V_0(x, qy) - \alpha y q^{-1} V_{d-1}(x, y)$; if $d = 1$ this gives $V_0(x, y) = (1 + \alpha y q^{-1})^{-1} V_0(x, qy)$, and if $d \ge 2$ we get $V_0(x, y) = V_0(x, qy) + \alpha y^2 q^{-1} V_{d-2}(qy) - \alpha y q^{-1} V_{d-1}(qy)$ by applying the relation above for $k-1 = d$. Putting these together produces the matrices $M(y)$ above.
\end{proof}
\begin{remark}
Working in $\mT_\C\left(M(x)\right)$, our observation in \cref{ex:two-var-rog-ram} suggests iterating the functional equation in \cref{lem:y-interpolation} to obtain an infinite product of matrices; this leads to the following theorem.
\end{remark}

\begin{theorem}[Change-of-basis identities] \label{thm:change-of-basis}
Let $\alpha, y \in \C$ with $\alpha \neq 0$, and $d \in \Z$ with $d \ge 2$. Define the $d \times d$ matrices $U = (U_{k, \ell})_{0 \le k, \ell < d}$ and $W = (W_{k, \ell})_{0 \le k, \ell < d}$ by
\[
    U_{k, \ell} := \sum_{n \ge \one_{\ell > k}} \alpha^n \frac{q^{d\binom{n}{2} + kn}}{(q; q)_{dn+k-\ell}} y^{dn+k-\ell}, 
    \qquad 
    W_{k, \ell} := \sum_{n \ge \one_{\ell > k}} \alpha^n \frac{q^{\binom{d}{2}n^2 + (d(k-\ell-1)+\ell)n + \binom{k - \ell}{2}}}{(q; q)_{dn+k-\ell}} (-y)^{dn+k-\ell}.
\]
Then one has $\det U = \det W = 1$. Moreover,
\[
    W U = 
    \begin{psmall} 
    1 & 0 & \cdots & 0 & -\alpha y q^{-1}
    \\
    0 & 1 & \cdots & 0 & 0 
    \\
    & & & \ddots & \\
    0 & 0 & \cdots & 0 & 1
    \end{psmall}
    \qquad 
    \text{and}
    \qquad 
    W = \prod_{n \ge 0} M\left(q^n y\right),
\]
where $M$ is as in \cref{lem:y-interpolation} and the product of matrices is evaluated from left to right.
\end{theorem}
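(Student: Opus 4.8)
The plan is to read all three assertions off the single identity $V(x,y) = W\cdot(\vect{\alpha x^d}{\ell})_{\ell}$, where $V(x,y)$ is the twisted canonical basis from \cref{lem:y-interpolation}. Writing $\Theta_\ell := \vect{\alpha x^d}{\ell}$ and $V_k := V_k(x,y)$ for the $k$th coordinate of $V$, \cref{lem:t-identities} with $s=-1$ and $a=1$ produces exactly $V_k = \sum_{\ell=0}^{d-1} W_{k,\ell}\,\Theta_\ell$ with the $W_{k,\ell}$ of the statement; since $\hat{\Theta}_\ell(\ell') = \one_{\ell=\ell'}$ for $0\le\ell'<d$, this says $W_{k,\ell} = \hat{V}_k(\ell)$ and, as a function identity, $V_k = \sum_{\ell} \hat{V}_k(\ell)\,\Theta_\ell$. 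Everything below is a consequence of this.

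First I would prove the product formula. Iterating $V(x,y) = M(y)V(x,qy)$ gives $V(x,y) = \big(\prod_{n=0}^{N-1}M(q^n y)\big)V(x,q^N y)$ for every $N$; letting $N\to\infty$, the twist factors $(q^{-n}x^{-1}q^Ny;q)$ tend to $1$ locally uniformly, so $V(x,q^Ny)\to(\Theta_\ell)_\ell$, while $M(q^ny) = I + O(q^n)$ makes the matrix product converge. Hence $V(x,y) = \big(\prod_{n\ge0}M(q^ny)\big)(\Theta_\ell)_\ell$; comparing with $V = W(\Theta_\ell)_\ell$ and using that $(\Theta_\ell)_\ell$ is a basis gives $W = \prod_{n\ge0}M(q^ny)$. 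To deduce $\det W = 1$ it then suffices to check $\det M(y)=1$: for $d=2$ this is a direct $2\times2$ computation, and for $d\ge3$ only three permutations contribute to the determinant expansion of $M(y)$, namely the identity (contributing $1$) and the two cyclic permutations passing through the corner entries $M_{0,d-2}=\alpha y^2q^{-1}$ and $M_{0,d-1}=-\alpha yq^{-1}$, whose signed contributions are $+\alpha q^{-1}y^d$ and $-\alpha q^{-1}y^d$ and cancel. Thus $\det W = \prod_n\det M(q^ny)=1$.

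The core step is $WU=P$, where $P$ denotes the right-hand matrix of the statement. I would first observe that $U_{\ell,m}$ is precisely the coefficient of $x^m$ (for $0\le m<d$) in $\Theta_\ell/(yx^{-1};q)$: expanding $1/(yx^{-1};q) = \sum_{i\ge0}y^i x^{-i}/(q;q)_i$ by the $q$-binomial theorem \cref{eq:basic2} and collecting the coefficient of $x^m$ in $\sum_n \alpha^n q^{d\binom n2+\ell n}x^{dn+\ell}\,(yx^{-1};q)^{-1}$ reproduces exactly the series defining $U_{\ell,m}$, with the bound $n\ge\one_{m>\ell}$ matching $dn+\ell-m\ge0$. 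Dividing $V_k=\sum_\ell \hat{V}_k(\ell)\Theta_\ell$ by $(yx^{-1};q)$ and extracting the coefficient of $x^m$ then collapses the whole matrix product to a single coefficient extraction:
\[
(WU)_{k,m} = \sum_{\ell}\hat{V}_k(\ell)\,U_{\ell,m} = \big[\text{coefficient of }x^m\text{ in } V_k/(yx^{-1};q)\big].
\]
To evaluate this I would use $(q^{-n}x^{-1}y;q)/(x^{-1}y;q) = (q^{-n}x^{-1}y;q)_n$ for $n\ge0$ and $=1/(x^{-1}y;q)_{-n}$ for $n<0$, so that $V_k/(yx^{-1};q) = \sum_n \alpha^n q^{d\binom n2+kn}x^{dn+k}\,(q^{-n}x^{-1}y;q)/(x^{-1}y;q)$. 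For $n<0$ every power of $x$ occurring is $\le x^{dn+k}\le x^{k-d}<x^0$, so such terms never meet $\{x^0,\dots,x^{d-1}\}$; for $n\ge0$ the middle factor is a degree-$n$ polynomial in $x^{-1}$, so the term spans $x^{(d-1)n+k}$ through $x^{dn+k}$, and for $d\ge2$ this meets $\{x^0,\dots,x^{d-1}\}$ only when $n=0$ (giving the constant term $\delta_{k,m}$) or when $n=1,\,k=0,\,m=d-1$ (giving the coefficient $-\alpha q^{-1}y$ of $x^{d-1}$ in $\alpha x^d(1-q^{-1}x^{-1}y)$). Hence the extracted coefficient is $\delta_{k,m}-\alpha q^{-1}y\,\one_{k=0,m=d-1}=P_{k,m}$, so $WU=P$; and $\det U=1$ then follows from $\det W\,\det U=\det P=1$ together with $\det W=1$.

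The hard part is conceptual rather than computational: seeing that $(WU)_{k,m}$ can be rewritten as the coefficient of $x^m$ in the single function $V_k/(yx^{-1};q)$, which converts a double $q$-series matrix product into one bounded-support coefficient extraction. Once that is in place, identifying $U_{\ell,m}$ as a Fourier coefficient of $\Theta_\ell/(yx^{-1};q)$ and running the $n\in\{0,1\}$ case analysis is routine bookkeeping with $q$-Pochhammer symbols, valid for $|y|$ small (or as an identity of formal power series in $y$, hence for all $y$ by continuation). The only analytic points to verify are the convergence of $\prod_{n\ge0}M(q^ny)$ and the locally uniform limit $V(x,q^Ny)\to(\Theta_\ell)_\ell$, both immediate from $|q|<1$.
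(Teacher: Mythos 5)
Your proof is correct, and its skeleton matches the paper's: you obtain $W$ from \cref{lem:t-identities} with $s=-1$, $a=1$, obtain $U$ from the $q$-binomial expansion \cref{eq:basic2} of $1/(x^{-1}y;q)$, and get $W=\prod_{n\ge0}M(q^ny)$ by iterating \cref{lem:y-interpolation} and letting the twist die out. The one step you execute genuinely differently is $WU=P$. The paper computes $(WU)^{-1}$ by recognizing the third basis $\bigl((x^{-1}y;q)\vect{\alpha x^d(1-q^{-1}x^{-1}y)}{k}\bigr)_k$ via the closed forms of \cref{prop:exact-formulae-kind2} (canonical vectors of the second kind, with $g(-z)=1-q^{-1}x^{-1}y$), so that the third basis differs from the twisted basis $V$ only in the $k=0$ coordinate, by $V_0(x,qy)=V_0(x,y)+\alpha yq^{-1}V_{d-1}(x,y)$; this immediately gives $(WU)^{-1}=I+\alpha yq^{-1}E_{0,d-1}$. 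You instead compute $WU$ head-on: since $U_{\ell,m}$ is the coefficient of $x^m$ in $\vect{\alpha x^d}{\ell}/(x^{-1}y;q)$, linearity collapses $(WU)_{k,m}$ to the coefficient of $x^m$ in $V_k/(x^{-1}y;q)$, and your case analysis ($n<0$ terms stay below $x^0$; $n\ge2$ terms start at $x^{(d-1)n+k}\ge x^d$; only $n=0$ and the single in-range power of the $n=1$, $k=0$ term survive) is exactly the bounded-overlap argument underlying \cref{prop:exact-formulae-kind2}, here carried out by hand without invoking the $\mT_\Hminus$ canonical-basis machinery or \cref{prop:canonical-basis-vectors}. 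What your route buys is self-containedness for this step and a direct computation of $WU$ rather than its inverse; what the paper's route buys is conceptual clarity (the identity is literally a change of basis among three explicit bases of $\mT_\C(\alpha x^d)$, reusable elsewhere). You also supply a detail the paper leaves implicit: the cofactor-cancellation verification that $\det M(y)=1$ for $d\ge3$ (the two corner entries contribute $+\alpha q^{-1}y^d$ and $-\alpha q^{-1}y^d$), which is needed before concluding $\det W=1$ from the infinite product. Your handling of convergence (all series converge for every $y$ when $d\ge2$ since $sa=-1\ne-d$; alternatively work with $|y|$ small and continue analytically, as entries of $U$ and $W$ are entire in $y$) is sound and consistent with the paper's WLOG $|y|\le1$.
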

\begin{remark}
One can compute $U^{-1}$ and $W^{-1}$ explicitly based on \cref{thm:change-of-basis}, and thus all minors of $U$ and $W$ as well. Also, the case $d = 1$ is not included in \cref{thm:change-of-basis} since it is essentially equivalent to \cref{cor:d=1}, and since then we have $\det U = (\det W)^{-1} = \left(-\alpha y q^{-1}; q\right)$ (for  $|\alpha y| < |q|$).
\end{remark}
\begin{proof}
Assume without loss of generality that $|y| \le 1$, by the uniqueness of analytic continuation; then we have $1 - q^{-1} x^{-1} y \neq 0$ in $\{\Im z < -\Im \tau\}$, and thus the canonical basis vectors of $\mT_\Hminus \left(\alpha x^d \left(1 - q^{-1} x^{-1} y\right) \right)$ are well-defined by \cref{prop:canonical-basis-vectors}. Now consider the following three elements of $\mT_\C\left(\alpha x^d\right)^d$:
\begin{center}
\small
\begin{tikzcd} [column sep = -0.5cm]
    & 
    \begin{pmatrix} 
    \vect{\alpha x^d}{k}
    \end{pmatrix}_{0 \le k < d}
    \arrow{rd}{=\ U \ \cdot}
    \\
    \begin{pmatrix}
    \sum_{n \in \Z} \alpha^n q^{d\binom{n}{2}+kn} x^{dn+k} \left(q^{-n} x^{-1} y; q\right)
    \end{pmatrix}_{0 \le k < d}
    \arrow{rr}{=\ WU\ \cdot}
    \arrow{ru}{=\ W\ \cdot}
    &&
    \begin{pmatrix} 
    \left(x^{-1}y; q\right)\vect{\alpha x^d\left(1 - q^{-1}x^{-1}y\right)}{k}
    \end{pmatrix}_{0 \le k < d}
\end{tikzcd}
\end{center}
where the arrows indicate claimed relationships. The ``$= W \cdot\ $'' arrow is precisely the relation from \cref{eq:twist-change-canonical} for $a = 1$. The ``$= U \cdot\ $'' relation follows from \cref{eq:basic2} by expanding
\[
    \frac{1}{\left(x^{-1}y; q\right)} \vect{\alpha x^d}{k} 
    =
    \sum_{m \ge 0} \frac{\left(-x^{-1}y\right)^m}{(q; q)_m} \sum_{n \in \Z} \alpha^n q^{d\binom{n}{2}+kn} x^{dn+k},
\]
and identifying Fourier coefficients in the canonical basis of $\mT_\Hminus \left(\alpha x^d \left(1 - q^{-1} x^{-1} y\right) \right)$. The ``$= WU\ \cdot$'' relation follows from the previous two, and will be helpful in computing $WU$.

Next, the fact that $W = \prod_{n \ge 0} M\left(q^n y\right)$ follows immediately by iterating \cref{lem:y-interpolation}, using that
\[
    \lim_{p \to \infty} \sum_{n \in \Z} \alpha^n q^{d\binom{n}{2}+kn} x^{dn+k} \left(q^{-n} x^{-1} (q^p y); q\right)
    =
    \vect{\alpha x^d}{k}.
\]
In particular, this shows that $\det W = 1$ (since each $\det M\left(q^n y\right) = 1$), and in particular that $\left(\vect{\alpha x^d}{k}\right)_{0 \le k < d}$ is invertible is indeed a basis of $\mT_\C\left(\alpha x^d\right)$ as well. Hence we can compute $(WU)^{-1}$ as the unique change-of-basis matrix satisfying
\[
    \begin{pmatrix} 
    \left(x^{-1}y; q\right)\vect{\alpha x^d\left(1 - q^{-1}x^{-1}y\right)}{k}
    \end{pmatrix}_{0 \le k < d}
    =
    (WU)^{-1} 
    \begin{pmatrix}
    \sum_{n \in \Z} \alpha^n q^{d\binom{n}{2}+kn} x^{dn+k} \left(q^{-n} x^{-1} y; q\right)
    \end{pmatrix}_{0 \le k < d}.
\]
But by \cref{prop:exact-formulae-kind2} for $g(-z) = 1 - q^{-1}x^{-1}y$, we have
\[
    \left(x^{-1}y; q\right)\vect{\alpha x^d\left(1 - q^{-1}x^{-1}y\right)}{k}
    =
    \begin{cases}
    \sum_{n \in \Z} q^{d\binom{n}{2}} x^{dn} \left(q^{-n+1}x^{-1}y; q\right), 
    & k = 0,
    \\
    \sum_{n \in \Z} q^{d\binom{n}{2}+kn} x^{dn+k} \left(q^{-n}x^{-1}y; q\right),
    & k > 0.
    \end{cases}
\]
Using the relationship $V_0(x, qy) = V_0(x, y) + \alpha y q^{-1}$ from the proof of \cref{lem:y-interpolation}, this proves that
\[
    (WU)^{-1} = \begin{psmall} 
    1 & 0 & \cdots & 0 & \alpha y q^{-1}
    \\
    0 & 1 & \cdots & 0 & 0 
    \\
    & & & \ddots & \\
    0 & 0 & \cdots & 0 & 1
    \end{psmall}
    \qquad \Rightarrow 
    \qquad 
    WU = \begin{psmall} 
    1 & 0 & \cdots & 0 & -\alpha y q^{-1}
    \\
    0 & 1 & \cdots & 0 & 0 
    \\
    & & & \ddots & \\
    0 & 0 & \cdots & 0 & 1
    \end{psmall}.
\]
Finally, from $\det W = 1$ and $\det (WU) = 1$ we also deduce that $\det U = 1$.
\end{proof}

\begin{corollary} \label{cor:3-var-2x2-matrix-ctd}
The $2\times 2$ matrix from \cref{cor:3-var-2x2-matrix} has constant determinant $1$. In fact, it can be written as an infinite product (evaluated left-to-right)
\[
\begin{pmatrix}
\sum_{n \ge 0} \frac{q^{n(n+1)}}{(q; q)_{2n}} y^{2n}
& 
-\sum_{n \ge 0} \frac{q^{(n+1)^2}}{(q; q)_{2n+1}} y^{2n+1}
\vspace{0.3cm} 
\\
-\sum_{n \ge 0} \frac{q^{n(n+1)}}{(q; q)_{2n+1}} y^{2n+1}
&
\sum_{n \ge 0} \frac{q^{n^2}}{(q; q)_{2n}} y^{2n}
\end{pmatrix}
=
\prod_{n \ge 0}
\begin{pmatrix}
1 & 0
\\
-q^n y  & 1
\end{pmatrix}
\begin{pmatrix}
1 & -q^{n+1} y
\\
0 & 1
\end{pmatrix}.
\]
In particular, this establishes \cref{prop:2x2-determinant} by taking $y = -x$.
\end{corollary}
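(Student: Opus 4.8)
The plan is to identify the matrix in \cref{cor:3-var-2x2-matrix} as the change-of-basis matrix $\mathcal{W}(y)$ carrying the twisted column vector
\[
    V(x, y) := \begin{pmatrix} \sum_{n \in \Z} q^{n^2} x^{2n}\left(q^{n+1}xy; q\right) \\ \sum_{n \in \Z} q^{n^2+n} x^{2n+1}\left(q^{n+1}xy; q\right) \end{pmatrix}
\]
(the middle expression in \cref{cor:3-var-2x2-matrix}) to the canonical basis $\vect{qx^2}{0}, \vect{qx^2}{1}$ of $\mT_\C\left(qx^2\right)$, and then to manufacture the infinite product by iterating a recursion in the twist parameter $y$, exactly in the spirit of \cref{lem:y-interpolation,thm:change-of-basis}. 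Writing $V_0, V_1$ for the two coordinates of $V$, I would first apply the Pochhammer recurrence $\left(q^{n+1}xy; q\right) = \left(1 - q^{n+1}xy\right)\left(q^{n+2}xy; q\right)$ to each sum and reindex, which expresses the two coordinates of $V(x,y)$ through those of $V(x, qy)$.

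The key step, and the main obstacle, is that this naive expansion produces a \emph{mixed} system rather than a clean recursion: the top coordinate yields $V_0(x, y) = V_0(x, qy) - qy\, V_1(x, qy)$, but the bottom one (after shifting $n \mapsto n-1$ in the tail) yields $V_1(x, y) = V_1(x, qy) - y\, V_0(x, y)$, which reintroduces $V_0(x,y)$. Solving this $2 \times 2$ linear system for $\bigl(V_0(x,qy), V_1(x,qy)\bigr)$ in terms of $\bigl(V_0(x,y), V_1(x,y)\bigr)$ and inverting gives the single-step relation
\[
    V(x, y) = N(y)\, V(x, qy), \qquad N(y) = \begin{pmatrix} 1 & -qy \\ -y & 1 + qy^2 \end{pmatrix}.
\]
A short computation then checks the crucial unipotent factorization
\[
    N(y) = \begin{pmatrix} 1 & 0 \\ -y & 1 \end{pmatrix}\begin{pmatrix} 1 & -qy \\ 0 & 1 \end{pmatrix},
\]
so that $N(q^n y)$ is exactly the $n$th factor appearing in the claimed product.

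Finally I would iterate, writing $V(x, y) = \bigl(\prod_{n=0}^{p} N(q^n y)\bigr) V(x, q^{p+1}y)$, and let $p \to \infty$. Since the twist factor $\left(q^{n+1}x \cdot q^{p+1}y; q\right) \to 1$ locally uniformly, one has $V(x, q^{p+1}y) \to \bigl(\vect{qx^2}{0}, \vect{qx^2}{1}\bigr)$, while the partial products converge because $N(q^n y) \to I$ geometrically; comparing with \cref{cor:3-var-2x2-matrix} and using that the canonical basis is a basis identifies $\mathcal{W}(y) = \prod_{n \ge 0} N(q^n y)$, which is the asserted product. Each $N(q^n y)$ has determinant $1$, so every partial product has determinant $1$; by continuity of the determinant under the convergent limit, $\det \mathcal{W}(y) = 1$. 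Specializing $y = -x$ (which flips the signs of the odd-power off-diagonal entries and fixes the even-power diagonal entries) recovers precisely the matrix of \cref{prop:2x2-determinant}, completing the proof.
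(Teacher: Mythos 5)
Your proof is correct, and it takes a genuinely different (though related) route from the paper's. The paper proves \cref{cor:3-var-2x2-matrix-ctd} as a short specialization of the general machinery: taking $d=2$, $\alpha=q$ in \cref{thm:change-of-basis}, the matrix of \cref{cor:3-var-2x2-matrix} is identified as $U^{-1}=\begin{psmall}1 & y\\ 0 & 1\end{psmall}W$ with $W=\prod_{n\ge0}M(q^ny)$, where $M(y)=\begin{psmall}1+y^2 & -y\\ -y & 1\end{psmall}=\begin{psmall}1 & -y\\ 0 & 1\end{psmall}\begin{psmall}1 & 0\\ -y & 1\end{psmall}$ comes from \cref{lem:y-interpolation} — a $y\mapsto qy$ recursion for the \emph{oppositely} twisted vectors with factor $\left(q^{-n}x^{-1}y;q\right)$ — and the stated product only emerges after cancelling the prefactor $\begin{psmall}1 & y\\ 0 & 1\end{psmall}$ against the first upper-triangular factor and regrouping. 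You instead run the interpolation idea directly on the $\left(q^{n+1}xy;q\right)$-twisted vector that actually appears in \cref{cor:3-var-2x2-matrix} (the two twists are exchanged by $x\mapsto x^{-1}$, legitimate since $\mT_\C\left(qx^2\right)\subset\mS_\C(1)$). Your mixed system is computed correctly — the reindexing $n\mapsto n-1$ in the bottom coordinate is exactly what reintroduces $V_0(x,y)$ rather than $V_0(x,qy)$ — and substitution gives $N(y)=\begin{psmall}1 & -qy\\ -y & 1+qy^2\end{psmall}=\begin{psmall}1 & 0\\ -y & 1\end{psmall}\begin{psmall}1 & -qy\\ 0 & 1\end{psmall}$, whose iterates land verbatim on the stated product $\prod_{n\ge0}\begin{psmall}1 & 0\\ -q^ny & 1\end{psmall}\begin{psmall}1 & -q^{n+1}y\\ 0 & 1\end{psmall}$ with no prefactor bookkeeping; the identification with the matrix of \cref{cor:3-var-2x2-matrix} via linear independence of $\vect{qx^2}{0},\vect{qx^2}{1}$, the determinant argument, and the specialization $y=-x$ recovering \cref{prop:2x2-determinant} are all sound. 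What each approach buys: the paper's is nearly immediate given that \cref{thm:change-of-basis} is already proved for all $d$; yours is self-contained modulo \cref{cor:3-var-2x2-matrix}, bypasses $U$, $WU$ and the regrouping entirely, and your lower-then-upper factorization (with the upper entry already at $q^{n+1}y$) explains structurally why the shifted exponent appears in the product's upper-triangular factors. Two minor polish points, at the same level of brevity the paper itself allows: justify the absolute convergence underlying your reindexings (as in \cref{lem:t-identities}), and note that the hypothesis $|y|\le1$ inherited from \cref{cor:3-var-2x2-matrix} is removed at the end by analytic continuation in $y$, since both sides of the product identity are analytic in $y$.
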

\begin{proof}
Taking $d = 2$ and $\alpha = q$ in \cref{thm:change-of-basis}, the matrix in \cref{cor:3-var-2x2-matrix} is precisely $U^{-1}$. But
\[
\begin{aligned}
    U^{-1} = \begin{pmatrix}
    1 & y \\
    0 & 1
    \end{pmatrix}
    W
    &=
    \begin{pmatrix}
    1 & y \\
    0 & 1
    \end{pmatrix}
    \prod_{n \ge 0} M\left(q^n y\right)
    \\
    &=
    \begin{pmatrix}
    1 & y \\
    0 & 1
    \end{pmatrix}
    \prod_{n \ge 0}
    \begin{pmatrix}
    1 + q^{2n}y^2 & 1 - q^n y \\
    -q^n y & 1
    \end{pmatrix}
    =
    \begin{pmatrix}
    1 & y \\
    0 & 1
    \end{pmatrix}
    \prod_{n \ge 0}
    \begin{pmatrix}
    1 & -q^n y \\
    0 & 1
    \end{pmatrix}
    \begin{pmatrix}
    1 & 0 \\
    -q^n y & 1
    \end{pmatrix},
\end{aligned}
\]
which completes our proof by noting that $\begin{psmall} 1 & y \\ 0 & 1 \end{psmall} \begin{psmall} 1 & -y \\ 0 & 1 \end{psmall} = 1$.
\end{proof}

\begin{remark}
Denoting
\[
    F(z) := 
    \sum_{n \ge 0} \frac{q^{n^2}}{\left(q^4; q^4\right)_n} x^{2n}
    \qquad
    \Rightarrow 
    \qquad 
    \frac{1}{2}
    \begin{pmatrix}
    F(z) + F(z+1/4) \vspace{0.1cm} \\
    F(z) - F(z+1/4)
    \end{pmatrix}
    =
    \begin{pmatrix}
    \sum_{n \ge 0} \frac{q^{4n^2}}{\left(q^4; q^4\right)_{2n}} x^{4n}
    \vspace{0.1cm} \\
    q\sum_{n \ge 0} \frac{q^{4(n^2+n)}}{\left(q^4; q^4\right)_{2n+1}} x^{4n+2}
    \end{pmatrix},
\]
the fact that the determinant in \cref{cor:3-var-2x2-matrix-ctd} is $1$ can be rephrased as (taking $q \mapsto q^4$)
\[
    \frac{F(z) + F\left(z + \frac{1}{4}\right)}{2}
    \frac{F\left(z + \tau\right) + F\left(z + \tau + \frac{1}{4}\right)}{2}
    -
    \frac{F(z) - F\left(z + \frac{1}{4}\right)}{2}
    \frac{F\left(z + \tau\right) - F\left(z + \tau + \frac{1}{4}\right)}{2}
    = 
    1,
\]
which further simplifies to $F(z)F(z + \tau + 1/4) + F(z + 1/4)F(z + \tau) = 2$. This follows in turn from the fact that $F \in \mT_\C\left(qx^2, 1\right)$ (which is easily checked); indeed, a short computation then shows that $F(z)F(z + \tau + 1/4) + F(z + 1/4)F(z + \tau) \in \mT_\C(1)$, so it must be a constant.
\end{remark}

\begin{question} \label{qtn:rogers-type-sums}
Note that taking $q \mapsto q^4$ in \cref{cor:3-var-2x2-matrix} connects Rogers' $\sum_{n \ge 0} \frac{q^{n^2}}{(q; q)_n} x^n$ from \cref{prop:rogers} to our function $F(z)$ from the previous remark. Moreover, the relations immediately preceding \cref{eq:claim} yield that 
\[
    F(0) = \frac{1}{\left(q^4; q^4\right)} \sum_{n \in \Z}
    (-1)^n q^{30\binom{n}{2}}\left(q^{13n} + q^{7n+1}\right), 
    \qquad
    F(\tau) =
    \frac{1}{\left(q^4; q^4\right)}\sum_{n \in \Z}
    (-1)^n q^{30\binom{n}{2}}\left(q^{11n} + q^{n+3}\right).
\]
Combining these clues with the fact that $F \in \mT_\C\left(qx^2, 1\right)$, one is inclined to believe that there is a Rogers-type identity (similar to the first equality in \cref{prop:rogers}) for $F(z)$, in which the right-hand side contains dominant powers of $q^{30\binom{n}{2}}$ rather than $q^{5\binom{n}{2}}$. This would imply similar identities for the four sums in the matrix from \cref{prop:2x2-determinant} (thus generalizing \cref{eq:slater}), and due to \cref{cor:rogers-type-sums}, it would likely recover \cref{prop:rogers} as well. Can the reader guess such an identity for $F(z)$, and prove it by identifying Fourier coefficients in $\mT_\C\left(qx^2, 1\right)$? 
\end{question}

\subsection{Applications to mock theta functions} \label{subsec:mock-theta}
We start by proving \cref{cor:mock-theta-fifth} from \cref{sec:intro}:

\begin{proof}[Proof of \cref{cor:mock-theta-fifth}]
Consider \cref{eq:first-thm-reformulation}, which is just a reformulation of \cref{thm:bases-proportional} (or \cref{thm:twisted-rog-ram}). Dividing both sides by $(q; q)\left(1 - x^2\right)$ yields
\[
    \left(qx^2; q\right)\left(qx^{-2}; q\right)
    \begin{pmatrix}
    \sum_{n \in \Z} q^{n^2} x^{2n}  \left( q^{n+1}x; q\right) \vspace{0.1cm} \\ 
    \sum_{n \in \Z} q^{n^2 + n}x^{2n+1}  \left( q^{n+1}x; q\right)
    \end{pmatrix}
    = 
    -\la qx; q \ra
    \begin{pmatrix}
    \sum_{n \in \Z} (-1)^n q^{5\binom{n}{2} + 2n} \frac{x^{5n+1} - x^{-5n+2}}{1-x^2}
    \vspace{0.1cm} \\
    \sum_{n \in \Z} (-1)^n q^{5\binom{n}{2} + n} \frac{x^{5n} - x^{-5n+3}}{1-x^2}
    \end{pmatrix}
\]
where we used that $\vect{-qx}{0} = (q; q) \la qx; q \ra$.
Now take $x \to -1$, and divide by $(-q; q)$ to obtain
\[
    (q; q)^2 \left(q; q^2\right)
    \begin{pmatrix}
    \sum_{n \in \Z} \frac{q^{n^2}}{(-q; q)_n} \vspace{0.1cm} \\ 
    -\sum_{n \in \Z} \frac{q^{n^2 + n}}{(-q; q)_n}
    \end{pmatrix}
    =
    \begin{pmatrix}
    -\sum_{n \in \Z} q^{5\binom{n}{2} + 2n} (10n-1)
    \vspace{0.1cm} \\
    \sum_{n \in \Z} q^{5\binom{n}{2} + n} (10n-3)
    \end{pmatrix}.
\]
In the left-hand side, break the sums into $n \ge 0$ and $n < 0$, reindex and use our notations in \cref{eq:finite-product-negative} and \cref{eq:mock-theta}. In the right-hand side, substitute $n \mapsto -n$; this recovers \cref{cor:mock-theta-fifth}.
\end{proof}

\begin{remark}
Both \cref{cor:mock-theta-fifth} and \cref{eq:watson} give identities for the sums of fifth-order mock theta functions $f_j(q) + 2\psi_j(q)$ (for $j \in \{0, 1\}$), using the notation from \cref{eq:mock-theta}. To list other such identities, we need:
\end{remark}

\begin{notation}[Mock theta functions] \label{not:mock-theta}
Following standard notation \cite{andrews1989ramanujan,chen2012partition,andrews1986fifth,andersen2016vector}, define the third-order mock theta functions introduced by Ramanujan \cite[p.~220]{ramanujan1995ramanujan}
\[
    \phi(q) := \sum_{n \ge 0} \frac{q^{n^2}}{\left(-q^2; q^2\right)_n},
    \qquad\qquad 
    \psi(q) := \sum_{n \ge 1} \frac{q^{n^2}}{\left(q; q^2\right)_n},
\]
and the fifth-order mock theta functions (in addition to those from \cref{eq:mock-theta})
\[
\begin{aligned}
    F_0(q) := \sum_{n \ge 0} \frac{q^{2n^2}}{\left(q; q^2\right)_n},&
    \qquad\qquad 
    F_1(q) := \sum_{n \ge 1} \frac{q^{2n^2 - 2n}}{\left(q; q^2\right)_n},
    \\
    \phi_0(q) := \sum_{n \ge 0} q^{n^2} \left(-q; q^2\right)_n,&
    \qquad\qquad 
    \phi_1(q) := \sum_{n \ge 1} q^{n^2} \left(-q; q^2\right)_{n-1}.
\end{aligned}
\]
Specializing 
\cref{thm:bases-proportional} and \cref{thm:twisted-rog-ram} will lead to identities of fifth-order mock theta functions, while \cref{thm:bases-proportional-2} and \cref{thm:twisted-rog-ram-var} correspond to third-order mock theta functions. For further readings on mock theta functions and mock modular forms, see \cite{gordon2012survey,zagier2009ramanujan,bringmann2017harmonic}. 
\end{notation}

\begin{proposition}[Sums of mock theta functions] \label{prop:mock-theta-sums}
Let $\tau \in \H^+$. Concerning third-order mock theta functions and the Dedekind eta function, one has
\[
\begin{aligned}
    \phi(q)
    + 2\sum_{n \ge 1} q^n \left(-q^2; q^2\right)_{n-1}
    = \sum_{n \in \Z} \frac{q^{n^2}}{\left(-q^2; q^2\right)_n} &= 
    q^{\frac{1}{24}} \frac{\eta(2\tau)^7}{\eta(\tau)^3 \eta(4\tau)^3},
    \\
    1 + 2\psi(q) + q\sum_{n \ge 0} \left(-q^2\right)^n\left(q; q^2\right)_n = \sum_{n \in \Z} \frac{q^{n^2}}{\left(q; q^2\right)_{n+1}} &= 
    q^{\frac{1}{24}}\frac{\eta(2\tau)^7}{\eta(\tau)^3\eta(4\tau)^3},
\end{aligned}
\] 
where the two right-hand sides happen to be equal. Concerning fifth-order mock theta functions and the generalized eta functions $\E_g$ of level $N = 5$ from \cref{eq:generalized-eta}, one has

\[
\begin{aligned}
    f_0(q) + 2\psi_0(q) = \sum_{n \in \Z} \frac{q^{n^2}}{(-q; q)_n} &=
    q^{\frac{1}{60}} \left(\frac{\eta(\tau)^2}{\eta(2\tau)\E_1(\tau)} + 4\frac{\eta(4\tau)^2}{\eta(2\tau) \E_2(4\tau)} \right),
    \\
    f_1(q) + 2\psi_1(q) = \sum_{n \in \Z} \frac{q^{n^2+n}}{(-q; q)_n} &= 
    q^{-\frac{11}{60}} \left(-\frac{\eta(\tau)^2}{\eta(2\tau)\E_2(\tau)} + 4\frac{\eta(4\tau)^2}{\eta(\tau) \E_1(4\tau)} \right),
    \\
    F_0(q) + \phi_0(-q) - 1 = 
    \sum_{n \in \Z} \frac{q^{2n^2}}{(q; q^2)_n} &= 
    q^{\frac{1}{120}}
    \Bigg( 
    \frac{\eta(4\tau)^6}
    {\eta(\tau) \eta(2\tau)^2 \eta(q^8)^2 \E_1(8\tau)}
    \\
    &\qquad
    +
    2\frac{\eta(2\tau)^4 \eta(8\tau)^4}
    {\eta(\tau) \eta(4\tau)^6 \E_2(2\tau)}
    -
    4\frac{\eta(2\tau)^2 \eta(8\tau)^6}
    {\eta(\tau) \eta(4\tau)^6 \E_1(8\tau)}
    \Bigg),
    \\
    F_1(q) + q^{-1} \phi_1(-q) = 
    \sum_{n \in \Z} \frac{q^{2n^2-2n}}{(q; q^2)_n}
    &=
    q^{-\frac{71}{120}}
    \Bigg(-\frac{\eta(4\tau)^6}
    {\eta(\tau) \eta(2\tau)^2 \eta(q^8)^2 \E_2(8\tau)}
    \\
    &\qquad
    +
    2\frac{\eta(2\tau)^4 \eta(8\tau)^4}
    {\eta(\tau) \eta(4\tau)^6 \E_1(2\tau)}
    +
    4\frac{\eta(2\tau)^2 \eta(8\tau)^6}
    {\eta(\tau) \eta(4\tau)^6 \E_2(8\tau)}
    \Bigg).
\end{aligned}
\]
(The advantage of writing the right-hand sides above in terms of $\eta$ and $\E_g$ is knowing which powers of $q$ are required to make the left-hand sides behave nicely under modular transformations.)
\end{proposition}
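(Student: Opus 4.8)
The plan is to prove each displayed identity as a chain of two equalities: the \emph{left} equality (mock-theta decomposition $=$ bilateral sum) and the \emph{right} equality (bilateral sum $=$ eta quotient). The left equalities are in every case a bookkeeping exercise: splitting $\sum_{n\in\Z}$ into its $n\ge0$ and $n<0$ ranges, applying the negative-index convention \cref{eq:finite-product-negative} to the latter, and reindexing $n\mapsto -n$. For instance, in $\sum_{n\in\Z}q^{n^2}/(-q;q)_n$ the tail $n=-m<0$ contributes $1/(-q;q)_{-m} = (-q^{1-m};q)_m = 2q^{-\binom{m}{2}}(-q;q)_{m-1}$, so that $q^{m^2}/(-q;q)_{-m} = 2q^{\binom{m+1}{2}}(-q;q)_{m-1}$ and the whole tail becomes $2\psi_0(q)$ in the notation of \cref{eq:mock-theta}, while the $n\ge0$ part is $f_0(q)$ by definition. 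The same reindexing, carried out with the conventions of \cref{not:mock-theta}, produces the remaining left equalities (and shows why the two third-order series, which have equal meromorphic continuations, share one right-hand side).

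For the \emph{right} equalities I would evaluate the bilateral sums as specializations of canonical basis vectors. The two third-order sums are values at $x=1$ of first-kind vectors: by \cref{cor:kind1}, $\sum_{n\in\Z}q^{n^2}/(-q^2;q^2)_n = \vect{qx^2-1}{0}\big|_{x=1}$ and $\sum_{n\in\Z}q^{n^2}/(q;q^2)_{n+1} = (1-q)^{-1}\vect{qx^2+q}{0}\big|_{x=1}$ (the $\mT_\Hminus(qx^2-q^{-1})$-type vectors attached to \cref{thm:bases-proportional-2}). Using the closed form in \cref{cor:kind1} together with \cref{eq:canonical-formula}/\cref{eq:canonical-root} to evaluate $\vect{qx^2}{0}(1) = (q^2;q^2)\la -q;q^2\ra$, both reduce to $(q^2;q^2)\la-q;q^2\ra/\big((-q^2;q^2)(q;q^2)\big)$; rewriting via $(-a;q)=(a^2;q^2)/(a;q)$ and the definition \cref{eq:generalized-eta} of $\eta$ collapses this to $q^{1/24}\eta(2\tau)^7/(\eta(\tau)^3\eta(4\tau)^3)$, which also explains the coincidence of the two right-hand sides.

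For the fifth-order $f_j,\psi_j$ I would start from \cref{cor:mock-theta-fifth} --- equivalently from the identity \cref{eq:watson} established while proving \cref{thm:twisted-rog-ram} --- which already writes $f_j+2\psi_j$ as a Rogers--Ramanujan product plus a mock-completion theta sum. The completion sum is handled by the triple product identity \cref{eq:triple}: e.g. $\sum_{n\ge0}q^{n^2+n} = (q^4;q^4)^2/(q^2;q^2)$, while $(q;q)/(-q;q)\cdot\la q;q^5\ra^{-1} = (q;q)(q;q^2)/\la q;q^5\ra$; recalling $\E_1(\tau)=q^{1/60}\la q;q^5\ra$ and $\E_2(\tau)=q^{-11/60}\la q^2;q^5\ra$ from \cref{eq:generalized-eta} then repackages the two summands into $q^{1/60}\big(\eta(\tau)^2/(\eta(2\tau)\E_1(\tau))+4\eta(4\tau)^2/(\eta(2\tau)\E_2(4\tau))\big)$, and symmetrically for $j=1$. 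The remaining identities for $F_0,F_1,\phi_0,\phi_1$ arise from the same theorem under $q\mapsto q^2$ followed by $x=q^{-1}$ (an interior point of $\H^-$) in \cref{thm:twisted-rog-ram}: this places $\sum_{n\in\Z}q^{2n^2}/(q;q^2)_n$ and $\sum_{n\in\Z}q^{2n^2-2n}/(q;q^2)_n$ on the left, while the right-hand side becomes a quintuple-product combination of $\vect{q^2x^2}{k}(q^{-1})$ that I would expand by the triple and quintuple identities and regroup into the stated three-term level-$5$ form.

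The main obstacle is the passage from the arising theta-type sums (the weight-$3/2$ derivative sums of \cref{cor:mock-theta-fifth} and the mock-completion sums above) to genuine products: this is precisely where Watson's relations \cref{eq:watson} and repeated use of the triple and quintuple identities do the real work, and for $F_0,\phi_0$ it forces a single theta sum to split into two products, producing the three-term shape. Two smaller but necessary points are (i) justifying evaluation at the boundary values $x=\pm1\in\partial\H^-$ by Abel continuity, since the bilateral series converge there and agree with the continuations from $\H^-$, and (ii) tracking the fractional powers of $q$ (the $B(g/N)$ exponents in \cref{eq:generalized-eta}) so that each right-hand side carries the stated prefactor $q^{1/24},q^{1/60},q^{1/120},\dots$. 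Pinning down the exact $x$-specialization yielding the $F_j$ identities, and verifying the level-$5$ $\E_g$ bookkeeping at the arguments $2\tau$ and $8\tau$, is the most delicate computation.
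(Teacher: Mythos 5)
Your proposal is correct, and for the fifth-order identities it is essentially the paper's own proof: the paper likewise treats the left equalities as immediate from \cref{not:mock-theta} and the convention \cref{eq:finite-product-negative}, obtains the $f_j+2\psi_j$ identities as a repackaging of Watson's relations \cref{eq:watson}, and gets the $F_j,\phi_j$ identities by taking $q\mapsto q^2$ and $x=q^{-1}$ in \cref{thm:twisted-rog-ram} --- with the caveat that the ``regrouping'' you describe is not achievable by triple/quintuple manipulations of \cref{eq:explicit-matrix} alone: the appearance of the level-$10$ products $\la q^2;q^{10}\ra^{-1},\la q^4;q^{10}\ra^{-1}$ (hence of $\E_1(2\tau),\E_2(2\tau)$) requires re-expressing $A,C$ via the Rogers--Ramanujan evaluations encoded in \cref{eq:5-and-10}, which is exactly the Watson/RR input you correctly flag as doing ``the real work,'' and it is this re-expression that produces the three-term shape (two terms sharing $\E_1(8\tau)$ or $\E_2(8\tau)$).

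The one place you genuinely diverge is the third-order pair. The paper specializes \cref{thm:twisted-rog-ram-var} at the \emph{interior} points $x\in\{q^{-1/2}, iq^{-1/2}\}$ (note $|q^{-1/2}|>1$, so these lie in $\H^-$), or equivalently makes the obvious substitutions in \cref{cor:d=1} (e.g.\ $q\mapsto q^2$, $x=q$, $y=-q^2$ resp.\ $y=q^3$), whereas you evaluate the first-kind vectors of \cref{cor:kind1} at the boundary point $x=1$ of $\H^-$ and must then supply the Abel-continuity step you mention (which does go through here, since the bilateral series have geometrically decaying tails on $|x|=1$). Both routes land on the same product $(q^2;q^2)\la -q;q^2\ra/\bigl((-q^2;q^2)(q;q^2)\bigr)$; the paper's choice of interior specialization simply sidesteps the boundary justification, at the cost of a slightly less transparent match between the specialized series and the two mock-theta sums. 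Your closed-form computations (including the identification $\sum_{n\in\Z}q^{n^2}/(q;q^2)_{n+1}=(1-q)^{-1}\vect{qx^2+q}{0}\big\vert_{x=1}$ and the collapse to $q^{1/24}\eta(2\tau)^7/(\eta(\tau)^3\eta(4\tau)^3)$) check out, so the proposal is a complete proof.
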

\begin{remark}
There are a few things to notice about \cref{prop:mock-theta-sums}:
\begin{itemize}
    \item[(i).] The first two identities might lead to formulae for $\phi(q) \pm 2\psi(q)$, as in \cite[(1.4) and (1.5)]{chen2012partition}. 
    \item[(ii).] The next two identities are just a convenient reformulation of \cref{eq:watson}, which can be derived from equations $(1_0)$ to $(3_1)$ from Watson's paper \cite{watson1937mock}.
    \item[(iii).] Taking $q \mapsto q^2$, the left-hand sides of the last two identities can be given similar but not trivially equivalent formulae, using the same equations $(1_0)$ to $(3_1)$ of Watson \cite{watson1937mock}.
    \item[(iv).] One can regard the left-hand sides as ``completions'' of mock theta functions (not in the sense of harmonic Maass forms), since they correspond to sums over $n \in \Z$ rather than $n \ge 0$ or $n \ge 1$; thus the fifth-order mock theta functions ``complete'' each other in this sense.
    \item[(v).] \cref{prop:mock-theta-sums} contains three pairs of (almost) twin identities, where each pair interchanges $\E_1 \leftrightarrow \E_2$. This supports the parallelism between the two classes of fifth-order mock theta functions defined by Ramanujan (i.e., those indexed by $0$ and those indexed by $1$).
\end{itemize}
\end{remark}

\begin{proof}
The first equality in each of the identities from \cref{prop:mock-theta-sums} follows immediately from \cref{not:mock-theta} and  \cref{eq:finite-product-negative}, so we focus on the second equalities. We also leave the trivial conversion from the notations $\left(q^N; q^N\right)$ and $\la q^g; q^N \ra$ to (generalized) eta functions to the reader.

The third-order identities follow by taking $x \in \{iq^{-1/2}, q^{-1/2}\}$ in \cref{thm:twisted-rog-ram-var}, or from the obvious substitutions in \cref{cor:d=1}. The first two fifth-order identities are equivalent to \cref{eq:watson} (which, as we have seen, follows by taking $x = 1$ and $x = -1$ in \cref{thm:twisted-rog-ram}). For the last two fifth-order identities, take $q \mapsto q^2$ and $x = q^{-1}$ in \cref{thm:twisted-rog-ram}, and use the facts that
\[
\begin{aligned}
    A(q) = 
    \frac{1}{\la -q; q^2\ra}
    \left(\frac{(q; q)}{\left(q^2; q^2\right)\la q; q^5 \ra} 
    - 
    2\frac{\left(q^4; q^4\right)^2}{\left(q^2; q^2\right)^2} B(q)
    \right),&
    \qquad\qquad
    B(q) = \frac{-q\left(q^2; q^2\right)}{(q; q)\la q^8; q^{20} \ra },
    \\
    C(q) = 
    \frac{1}{\la -q; q^2\ra}
    \left(\frac{(q; q)}{\left(q^2; q^2\right)\la q^2; q^5 \ra} 
    - 
    2\frac{\left(q^4; q^4\right)^2}{\left(q^2; q^2\right)^2} D(q)
    \right),&
    \qquad\qquad 
    D(q) = \frac{\left(q^2; q^2\right)}{(q; q)\la q^4; q^{20} \ra},
\end{aligned}
\]
which follow from \cref{eq:5-and-10} and \cref{eq:explicit-matrix}.
\end{proof}

In order to isolate individual mock theta functions rather than sums of them, the natural approach in our framework is to separate the positive and negative powers of $x$ in the canonical basis vectors $\vect{qx^2-x}{k}$ (for the fifth-order functions), respectively $\vect{qx^2-q^{-1}x}{k}$ (for the third-order functions). Indeed, writing $F_{\ge j}(z)$ for $\sum_{n \ge j} \hat{F}(n) x^n$ in $\H^-$, \cref{cor:kind1,cor:kind2} imply that
\begin{equation} \label{eq:truncations}
\begin{aligned}
    &\vect{qx^2-q^{-1}}{0}_{\ge 0} = 
    \sum_{n \ge 0} \frac{q^{n^2}}{\left(-q; q^2\right)_n} x^{2n}
    ,
    \qquad\qquad 
    \vect{qx^2-q^{-1}}{1}_{\ge 0} = 
    \sum_{n \ge 0} \frac{q^{n^2+n}}{\left(-q^2; q^2\right)_n} x^{2n+1}
    ,
    \\
    &\qquad \vect{qx^2-x}{0}_{\ge 1} = 
    \sum_{n \ge 1} q^{n^2} x^{2n} \left(q^{-n+1} x^{-1}; q\right)_{n-1}
    \ =\
    \sum_{n \ge 1} q^{\binom{n+1}{2}} (-x)^{n+1} (qx; q)_{n-1},
    \\
    &\qquad \vect{qx^2-x}{1}_{\ge 1} =
    \sum_{n \ge 0} q^{n^2+n} x^{2n+1} \left(q^{-n} x^{-1}; q\right)_n
    \ =\
    -\sum_{n \ge 0} q^{\binom{n+1}{2}} (-x)^{n+1} (qx; q)_n,
\end{aligned}
\end{equation}
which generalize the summations of $1 + \psi(-q)$, $\phi(q)$, $\psi_0(q)$, respectively $-\psi_1(q)$ from \cref{not:mock-theta} and \cref{eq:mock-theta} (by taking $x = i$, $x = 1$ and $x = -1$). These observations lead to \cref{prop:mock-theta-individual}.


\begin{proposition}[Individual mock theta functions as double sums] \label{prop:mock-theta-individual}
Let $\tau \in \H^+$. Using \cref{not:mock-theta} and \cref{eq:explicit-matrix}, one has
\begin{gather*}
    \begin{pmatrix}
    \psi_0(q) \\
    \psi_1(q) 
    \end{pmatrix}
    =
    \begin{pmatrix}
    A(q) & B(q) \\
    C(q) & D(q)
    \end{pmatrix}
    \begin{pmatrix}
    \sum_{2n > m \ge 0} \frac{q^{n^2}(-1)^m}{(q; q)_m} \vspace{0.1cm}\\
    \sum_{2n \ge m \ge 0} \frac{q^{n^2+n}(-1)^m}{(q; q)_m}
    \end{pmatrix},
    \\
    \begin{pmatrix}
    \phi_1(q) \\
    \phi_0(q) 
    \end{pmatrix}
    =
    \begin{pmatrix}
    A\left(q^2\right) & B\left(q^2\right) \\
    C\left(q^2\right) & D\left(q^2\right)
    \end{pmatrix}
    \begin{pmatrix}
    q\sum_{2n > m \ge 0} \frac{q^{2n^2-2n} (-q)^m}{\left(q^2; q^2\right)_m} 
    \vspace{0.1cm}\\
    \sum_{2n \ge m \ge 0} \frac{q^{2n^2} (-q)^m}{\left(q^2; q^2\right)_m} 
    \end{pmatrix}.
\end{gather*}
Similar but simpler identities can be given for the third-order mock theta functions $\psi(q)$ and $\phi(q)$.
\end{proposition}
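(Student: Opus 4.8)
The plan is to realize each mock theta function as a positive-power truncation, at a special value of $x$, of a canonical basis vector of the form $\vect{qx^2-x}{k}$ or $\vect{q^2x^2-x}{k}$, and then to re-expand that basis vector through \cref{thm:twisted-rog-ram}. Writing $F_{\ge p}$ for the sum of the terms $\hat F(j)x^j$ with $j\ge p$ in the $\H^-$ Fourier expansion, \cref{eq:truncations} already records that
\[
    \psi_0(q) = \vect{qx^2-x}{0}_{\ge1}\big|_{x=-1}, \qquad -\psi_1(q) = \vect{qx^2-x}{1}_{\ge1}\big|_{x=-1},
\]
so it suffices to compute these two truncations in closed form. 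For the fifth-order functions $\phi_0,\phi_1$ I would use the analogous truncations of $\vect{q^2x^2-x}{k}$ (i.e.\ \cref{eq:truncations} after $q\mapsto q^2$), evaluated at $x=-q^{-1}$; for the third-order functions $\psi,\phi$ I would use the truncations $\vect{qx^2-q^{-1}}{k}_{\ge0}$ from \cref{eq:truncations}, evaluated at the appropriate values after $q\mapsto q^2$.

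The core step feeds in the twisted-sum identity. Since the twisted sums of \cref{thm:twisted-rog-ram} coincide with $(x^{-1};q)\vect{qx^2-x}{k}$ (as established in the equivalence proof of \cref{thm:twisted-rog-ram} and \cref{thm:bases-proportional}), one has $(x^{-1};q)\vect{qx^2-x}{k}=\sum_{\ell}M_{k\ell}\vect{qx^2}{\ell}$ with $M=\left(\begin{smallmatrix}A&B\\C&D\end{smallmatrix}\right)$ as in \cref{eq:explicit-matrix}. Dividing by $(x^{-1};q)$ and expanding $\tfrac1{(x^{-1};q)}=\sum_{m\ge0}x^{-m}/(q;q)_m$ (by \cref{eq:basic2}, valid in $\H^-$), together with $\vect{qx^2}{0}=\sum_k q^{k^2}x^{2k}$ and $\vect{qx^2}{1}=\sum_k q^{k^2+k}x^{2k+1}$, I would write
\[
    \vect{qx^2-x}{k}=\sum_{m\ge0,\ \ell\in\{0,1\}}\frac{M_{k\ell}}{(q;q)_m}\sum_{n\in\Z}q^{n^2+\ell n}x^{2n+\ell-m}.
\]
Projecting onto $x$-powers $\ge1$ forces $2n+\ell-m\ge1$, which reads $2n>m$ when $\ell=0$ and $2n\ge m$ when $\ell=1$; evaluating at $x=-1$ then replaces $x^{2n+\ell-m}$ by $(-1)^{m+\ell}$. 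This is exactly how the two ranges of summation and the factors $(-1)^m$ in the statement arise, producing the claimed $2\times2$ matrix applied to the column of double sums.

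I would then repeat the computation verbatim after $q\mapsto q^2$ and at $x=-q^{-1}$ to obtain the $\phi_0,\phi_1$ identity: there $x^{-m}$ contributes $(-q)^m$ and $\vect{q^2x^2}{\ell}$ contributes $q^{2k^2+2k\ell}$, which after combining yield the sums with base $q^2$ and the leading factor of $q$ recorded in the statement. The third-order case is genuinely simpler because \cref{thm:twisted-rog-ram-var} (equivalently \cref{thm:bases-proportional-2}) involves no mixing of even and odd powers of $x$: the relevant change-of-basis factor is the single series $(x^{-2};q^2)^{-1}$, so only one double sum appears for each of $\psi$ and $\phi$, with no off-diagonal matrix entries.

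The main obstacle is purely bookkeeping. The truncation operator $F\mapsto F_{\ge p}$ does not commute with multiplication, so I must expand the product $\tfrac1{(x^{-1};q)}\vect{qx^2}{\ell}$ fully \emph{before} projecting onto positive powers, and I must track the signs coming from evaluating $x^{2n+\ell-m}$ at the chosen root of unity or at $x=-q^{-1}$ — the parity of $\ell$ is precisely what distinguishes the contributions from the two columns of $M$ and fixes the exact form of the matrix. Convergence of the doubly-infinite rearrangement in $\H^-$ is guaranteed by the quadratic decay $q^{n^2}$ exactly as in the proof of \cref{prop:exact-formulae-kind1}, so no analytic subtlety remains once the index ranges and signs are pinned down.
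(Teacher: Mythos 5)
Your proposal is correct and follows essentially the same route as the paper's own proof: both start from the reformulation $\left(x^{-1};q\right)\vect{qx^2-x}{k}=\sum_{\ell}M_{k\ell}\vect{qx^2}{\ell}$ coming from \cref{thm:twisted-rog-ram}, expand $\left(x^{-1};q\right)^{-1}=\sum_{m\ge0}x^{-m}/(q;q)_m$ via \cref{eq:basic2}, project onto $x$-powers $\ge 1$ to get the conditions $2n>m$ and $2n\ge m$, and then specialize at $x=-1$ (respectively take $q\mapsto q^2$ and $x=-q^{-1}$), identifying the truncations through \cref{eq:truncations}, with the third-order case handled through \cref{thm:twisted-rog-ram-var} exactly as the paper indicates. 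If anything, your explicit tracking of the factor $(-1)^{m+\ell}$ is slightly more careful than the paper's write-up, which leaves the $\ell=1$ sign bookkeeping (the simultaneous flips of $-\psi_1$ on the left and of the second column entry on the right, i.e.\ conjugation of the matrix by $\mathrm{diag}(1,-1)$) implicit.
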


\begin{proof}
From \cref{thm:twisted-rog-ram-var} and \cref{cor:kind2}, one obtains
\[
    \begin{pmatrix}
    \vect{qx^2-x}{0} \vspace{0.1cm}\\
    \vect{qx^2-x}{1}
    \end{pmatrix}
    =
    \begin{pmatrix}
    A(q) & B(q) \\
    C(q) & D(q)
    \end{pmatrix}
    \begin{pmatrix}
    \left(x^{-1}; q\right)^{-1} \vect{qx^2}{0}
    \vspace{0.1cm}\\
    \left(x^{-1}; q\right)^{-1} \vect{qx^2}{1}
    \end{pmatrix}.
\]
Writing $\left(x^{-1}; q\right)^{-1} = \sum_{m \ge 0} (q; q)^{-m} x^{-m}$ and restricting to powers $x^n$ with $n \ge 0$ yields
\[
    \begin{pmatrix}
    \vect{qx^2-x}{0}_{\ge 1} \vspace{0.1cm}\\
    \vect{qx^2-x}{1}_{\ge 1}
    \end{pmatrix}
    =
    \begin{pmatrix}
    A(q) & B(q) \\
    C(q) & D(q)
    \end{pmatrix}
    \begin{pmatrix}
    \sum_{2n > m \ge 0} \frac{q^{n^2}}{(q; q)_m} x^{2n-m} 
    \vspace{0.1cm} \\ 
    \sum_{2n\ge m \ge 0} \frac{q^{n^2+n}}{(q; q)_m} x^{2n+1-m} 
    \end{pmatrix}.
\]
Taking $x = -1$ recovers the first claimed equality in light of \cref{eq:truncations}. The second claimed equality follows analogously by taking $q \mapsto q^2$ above, and then $x = -q^{-1}$.
\end{proof}

\begin{remark}
The mock theta conjectures (given different proofs in \cite{hickerson1988proof,folsom2008short}) relate the fifth-order mock theta functions to special values of the widely studied function \cite{ramanujan1988lost,dyson1944some,bringmann2010dyson,andrews2018four} 
\begin{equation}\label{eq:generalized-mock-theta}
    G(y, q) := \sum_{n \ge 0} \frac{q^{n^2}}{(qy; q)_n (q/y; q)_n},
\end{equation}
for $y \in \C$. The natural approach to this function in the context of this section is to consider the generalization
\[
    G(x, y, q) := \sum_{n \ge 0} \frac{q^{n^2}x^{2n}}{(qxy; q)_n (qx/y; q)_n}
    =
    \left(\frac{1}{(qxy; q)(qx/y; q)} \sum_{n \in \Z} q^{n^2}x^{2n} \left(q^{n+1}xy; q\right) \left(q^{n+1}x/y; q\right) \right)_{\ge 0},
\]
where $F_{\ge 0}$ indicates the truncation to nonnegative powers of $x$ as before. The sum over $n \in \Z$ above is a doubly-twisted version of the series $\sum_{n \in \Z} q^{n^2} x^{2n} \in \mT_\C\left(qx^2\right)$, in the sense of \cref{eq:w-coeffs}; hence it also belongs to $\mT_\C\left(qx^2\right)$, and can be expanded using a similar argument to \cref{lem:t-identities}. This leads to the following statement, which is ultimately a consequence of \cref{prop:fractional}.
\end{remark}

\begin{proposition}[Double twists] \label{prop:double-twist}
Let $y \in \C^\times$. In $\mT_\C\left(qx^2\right)$, one has
\[
\begin{aligned}
    (q; q)^2 \sum_{n \in \Z}
    q^{n^2} x^{2n} \left(q^{n+1} xy; q \right) \left(q^{n+1} x/y; q \right)
    &=
    \vect{qx^2}{0}\sum_{\substack{d \in \Z \\ d \textnormal{ even}}} \Big(q^{\frac{d^2 + 2|d|}{4}} \sum_{n \ge 0} (-1)^n q^{\binom{n+1}{2} + |d|n}\Big) y^d
    \\
    &- 
    \vect{qx^2}{1} \sum_{\substack{d \in \Z \\ d \textnormal{ odd}}} \Big(q^{\frac{(|d|+1)^2}{4}}  \sum_{n \ge 0} (-1)^n q^{\binom{n+1}{2} + |d|n}\Big) y^d.
\end{aligned}
\]
\end{proposition}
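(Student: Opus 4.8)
The plan is to recognize the left-hand side as a \emph{twist} of the series $\vect{qx^2}{0} = \sum_{n \in \Z} q^{n^2} x^{2n}$ in the sense of \cref{eq:w-coeffs}, and to extract its Fourier coefficients. Since $x \mapsto q^n x$ under $z \mapsto z + n\tau$, the period-$1$ function $w(z) := (qxy; q)(qx/y; q)$ satisfies $w(z + n\tau) = (q^{n+1}xy; q)(q^{n+1}x/y; q)$, so the left-hand side equals $(q;q)^2\,\Phi(z)$ with
\[
    \Phi(z) := \sum_{n \in \Z} q^{n^2} x^{2n}\, w(z + n\tau).
\]
A computation analogous to the proof of \cref{prop:exact-formulae-kind1} shows this series converges absolutely and locally uniformly on $\C$: as $n \to -\infty$ the twist factors grow at most like $|q|^{-n(n+1)}$, so that the full terms decay like $|q|^{-n} \to 0$. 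Hence $\Phi$ is entire, and exactly as in \cref{lem:t-identities} twisting preserves quasiperiodicity, giving $\Phi = qx^2 \cdot T_\tau \Phi$, i.e.\ $\Phi \in \mT_\C\left(qx^2\right)$. By \cref{prop:monomial-spaces} this space is two-dimensional with canonical basis $\{\vect{qx^2}{0}, \vect{qx^2}{1}\}$, so $\Phi = \hat\Phi(0)\vect{qx^2}{0} + \hat\Phi(1)\vect{qx^2}{1}$, and it remains to compute $\hat\Phi(0)$ and $\hat\Phi(1)$ as Laurent series in $y$.

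To do so, I would expand both factors $(q^{n+1}xy; q)$ and $(q^{n+1}x/y; q)$ via \cref{eq:basic1} and collect the coefficient of $x^\ell y^d$. Parametrizing the two expansion indices by their sum $s$ and their difference $d$ (the exponent of $y$), writing $s = |d| + 2j$ with $j \ge 0$, and using that the power of $x$ forces $n = (\ell - s)/2$, the terms quadratic in $j$ cancel and one is left with
\[
    \hat\Phi(\ell)\big|_{y^d} = (-1)^d\, q^{P_{\ell,d}} \sum_{j \ge 0} \frac{q^j}{(q; q)_{j + |d|}\,(q; q)_j}, \qquad d \equiv \ell \ (\mathrm{mod}\ 2),
\]
where $P_{\ell,d}$ is an explicit quadratic in $d$ satisfying $P_{0,d} = (d^2 + 2|d|)/4$ and $P_{1,d} = (|d| + 1)^2/4$; these match the prefactors in the statement, and the sign $(-1)^d$ accounts for the overall minus in front of $\vect{qx^2}{1}$.

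The crux, and the main obstacle, is to evaluate the inner sum, which is exactly where \cref{prop:fractional} enters. Expanding $(t; q)^{-1} = \sum_{a \ge 0} t^a/(q;q)_a$ and $(q/t; q)^{-1} = \sum_{b \ge 0} q^b t^{-b}/(q;q)_b$ by \cref{eq:basic2} and multiplying, one sees for $D := |d| \ge 0$ that $\sum_{j \ge 0} q^j / \big((q;q)_{j+D}(q;q)_j\big)$ is the coefficient of $t^D$ in the Laurent expansion of $\la t; q\ra^{-1}$ on $|q| < |t| < 1$. Extracting the same coefficient from \cref{eq:inverse} yields
\[
    (q; q)^2 \sum_{j \ge 0} \frac{q^j}{(q; q)_{j + D}\,(q; q)_j} = \sum_{n \ge 0}\left(q^{2n^2 + (2D+1)n} - q^{2n^2 + (2D+3)n + D + 1}\right),
\]
and splitting $\sum_{n \ge 0} (-1)^n q^{\binom{n+1}{2} + Dn}$ according to the parity of $n$ reproduces precisely this difference of two theta-type sums. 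Substituting this back into the formula for $\hat\Phi(\ell)\big|_{y^d}$ and summing over $d$ of the appropriate parity then gives the claimed identity. I expect the coefficient bookkeeping in the middle step (the cancellation of the $j^2$ terms and the verification of $P_{\ell,d}$) to be the most error-prone part, while the conceptual key is the recognition of the inner sum as a Laurent coefficient of $\la t; q\ra^{-1}$, which is what makes \cref{eq:inverse} directly applicable.
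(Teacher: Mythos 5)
Your proposal is correct and follows essentially the same route as the paper's own proof: establish membership in $\mT_\C\left(qx^2\right)$ via the doubly-twisted-series argument and reduce to the coefficients of $x^0$ and $x^1$, expand both twist factors with \cref{eq:basic1} and collect powers of $y^d$ (your exponents $P_{0,d} = (d^2+2|d|)/4$ and $P_{1,d} = (|d|+1)^2/4$ and the sign $(-1)^d$ all check out against the paper's computation), and identify the inner sum $\sum_{j \ge 0} q^j/\big((q;q)_{j+|d|}(q;q)_j\big)$ as the coefficient of $t^{|d|}$ in $(q;q)^2\la t; q\ra^{-1}$ on $|q| < |t| < 1$ via \cref{eq:basic2}, evaluated by \cref{eq:inverse}. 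Your parity split of $\sum_{n \ge 0}(-1)^n q^{\binom{n+1}{2}+|d|n}$ into the two theta-type sums is exactly how the paper closes the argument, and your convergence estimate (twist factors growing like $|q|^{-n(n+1)}$ as $n \to -\infty$) is correct, indeed slightly more explicit than the paper's.
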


\begin{proof}
In light of the discussion above and \cref{lem:proofs-by-fourier}, it suffices to identify coefficients of $x^0$ and $x^1$ on both sides. Denoting the left-hand side by $L(z)$, \cref{eq:basic1} gives
\[
    L(z)
    =
    (q; q)^2
    \sum_{\substack{n \in \Z \\ m, p \ge 0}} \frac{q^{n^2 + \binom{m}{2} + \binom{p}{2} + (n+1)(m+p)}}{(q; q)_m (q; q)_p} (-y)^{m-p} x^{2n+m+p}.
\]
Collecting coefficients of $y^d$ in $\hat{L}(0)$ and $\hat{L}(1)$ for $d \in \Z$, a short computation yields
\[
    \hat{L}(0) 
    = 
    \sum_{d \in \Z, \text{ even}} q^{\frac{d^2 + 2|d|}{4}} S(|d|)\ y^{2d},
    \qquad\qquad\qquad
    \hat{L}(1)
    =
    -
    \sum_{d \in \Z, \text{ odd}} q^{\frac{\left(|d| + 1\right)^2}{4}} S(|d|)\ y^d,
\]
where $S(|d|) = (q; q)^2 \sum_{p \ge 0} \frac{q^p}{(q; q)_{p+|d|} (q; q)_p}$. But by \cref{eq:basic2}, $S(|d|)$ is precisely the coefficient of $x^{|d|}$ in $(q; q)^2 (x; q)^{-1} (q/x; q)^{-1}$ (for $|q| < |x| < 1$), which is expanded in \cref{prop:fractional}. Collecting terms in the right-hand side of \cref{eq:inverse} gives $S(|d|) = \sum_{n \ge 0} (-1)^n q^{\binom{n+1}{2} + |d|n}$, as we wanted.
\end{proof}

\begin{question} \label{qtn:double-twist}
Can one use \cref{prop:double-twist} to:
\begin{itemize}
    \item[(i).] Obtain a \cref{prop:mock-theta-sums}-type formula for the sum $\sum_{n \in \Z} q^{n^2} (-q; q)_n^{-2}$ (by specialization at $xy = x/y = -1$), and a \cref{prop:mock-theta-individual}-type formula for the third-order mock theta function $f(q) := \sum_{n \ge 0} q^{n^2} (-q; q)_n^{-2}$ (see, e.g., \cite{chen2012partition})?
    \item[(ii).] Obtain relevant identities for the sum $G(y, q)$ from \cref{eq:generalized-mock-theta}, via its generalization $G(x, y, q)$?
\end{itemize}
\end{question}

\section*{Acknowledgements}

The author wishes to thank Professor Terence Tao for his helpful guidance, suggestions and support for this project. The author is also deeply grateful to Professors Ole Warnaar and William Duke for insightful discussions and feedback.

\newpage

\bibliographystyle{plain}
\bibliography{main}

\begin{thebibliography}{10}

\bibitem{adiga1985ramanujan}
Chandrashekar Adiga, Bruce~C Berndt, S~Bhargava, and George~Neville Watson.
\newblock {\em {Chapter 16 of Ramanujan's second notebook: Theta-functions and
  $q$-series}}, volume 315.
\newblock American Mathematical Soc., 1985.

\bibitem{adiga2014identities}
Chandrashekar Adiga and NSA Bulkhali.
\newblock {Identities for certain products of theta functions with applications
  to modular relations}.
\newblock {\em Journal of Analysis \& Number Theory}, 2(1):1--15, 2014.

\bibitem{andersen2016vector}
Nickolas Andersen.
\newblock {Vector-valued modular forms and the mock theta conjectures}.
\newblock {\em Research in Number Theory}, 2(1):32, 2016.

\bibitem{andrews1999a2}
George Andrews, Anne Schilling, and S~Ole Warnaar.
\newblock {An $A_2$ Bailey lemma and Rogers--Ramanujan-type identities}.
\newblock {\em Journal of the American Mathematical Society}, 12(3):677--702,
  1999.

\bibitem{andrews1974applications}
George~E Andrews.
\newblock {Applications of basic hypergeometric functions}.
\newblock {\em SIAM review}, 16(4):441--484, 1974.

\bibitem{andrews1986fifth}
George~E Andrews.
\newblock {The fifth and seventh order mock theta functions}.
\newblock {\em Transactions of the American Mathematical Society},
  293(1):113--134, 1986.

\bibitem{andrews1998theory}
George~E Andrews.
\newblock {\em {The theory of partitions}}.
\newblock Cambridge university press, 1998.

\bibitem{andrews2018four}
George~E Andrews, Bruce~C Berndt, Song~Heng Chan, Sun Kim, and Amita Malik.
\newblock {Four identities for third order mock theta functions}.
\newblock {\em Nagoya Mathematical Journal}, pages 1--32, 2018.

\bibitem{andrews1989ramanujan}
George~E Andrews and FG~Garvan.
\newblock {Ramanujan's “lost” notebook VI: The mock theta conjectures}.
\newblock {\em Advances in Mathematics}, 73(2):242--255, 1989.

\bibitem{beauville2013theta}
Arnaud Beauville.
\newblock {Theta functions, old and new}.
\newblock {\em Open Problems and surveys of contemporary Mathematics},
  6:99--131, 2013.

\bibitem{berkovich2001variants}
Alexander Berkovich and Peter Paule.
\newblock {Variants of the Andrews--Gordon identities}.
\newblock {\em The Ramanujan Journal}, 5(4):391--404, 2001.

\bibitem{berndt1973generalized}
Bruce~C Berndt.
\newblock {Generalized Dedekind eta-functions and generalized Dedekind sums}.
\newblock {\em Transactions of the American Mathematical Society},
  178:495--508, 1973.

\bibitem{berndt2006number}
Bruce~C Berndt.
\newblock {\em {Number theory in the spirit of Ramanujan}}, volume~34.
\newblock American Mathematical Soc., 2006.

\bibitem{bressoud1983easy}
David~M Bressoud.
\newblock {An easy proof of the Rogers--Ramanujan identities}.
\newblock {\em Journal of Number Theory}, 16(2):235--241, 1983.

\bibitem{bringmann2017harmonic}
Kathrin Bringmann, Amanda Folsom, Ken Ono, and Larry Rolen.
\newblock {\em {Harmonic Maass forms and mock modular forms: theory and
  applications}}, volume~64.
\newblock American Mathematical Soc., 2017.

\bibitem{bringmann2010dyson}
Kathrin Bringmann and Ken Ono.
\newblock {Dyson's ranks and Maass forms}.
\newblock {\em Annals of Mathematics}, pages 419--449, 2010.

\bibitem{cao2012applications}
Zhu Cao.
\newblock {On Applications of Roots of Unity to Product Identities}.
\newblock In {\em Partitions, q-Series, and Modular Forms}, pages 47--52.
  Springer, 2012.

\bibitem{cauchy1893second}
A~Cauchy.
\newblock {Second m{\'e}moire sur les fonctions dont plusieurs valeurs sont
  li{\'e}es entre par une {\'e}quation lin{\'e}aire}.
\newblock {\em Oeuvres, Ire Serie}, 8:50--55, 1893.

\bibitem{chen2012partition}
William~YC Chen, Kathy~Q Ji, and Eric~H Liu.
\newblock {Partition identities for Ramanujan's third-order mock theta
  functions}.
\newblock {\em Quarterly journal of mathematics}, 63(2):353--365, 2012.

\bibitem{chu2007unification}
Wenchang Chu and Qinglun Yan.
\newblock {Unification of the quintuple and septuple product identities}.
\newblock {\em The Electronic Journal of Combinatorics}, 14(N7), 2007.

\bibitem{corteel2019varvec}
Sylvie Corteel and Trevor Welsh.
\newblock {The $A_2$ Rogers--Ramanujan identities revisited}.
\newblock {\em Annals of Combinatorics}, 23(3):683--694, 2019.

\bibitem{dyson1944some}
Freeman~J Dyson.
\newblock {Some guesses in the theory of partitions}.
\newblock {\em Eureka (Cambridge)}, 8(10):10--15, 1944.

\bibitem{ewell1982octuple}
John~A Ewell.
\newblock {On an octuple-product identity}.
\newblock {\em The Rocky Mountain Journal of Mathematics}, pages 279--282,
  1982.

\bibitem{foata2001triple}
Dominique Foata and Guo-Niu Han.
\newblock {The triple, quintuple and septuple product identities revisited}.
\newblock In {\em The Andrews Festschrift}, pages 323--334. Springer, 2001.

\bibitem{folsom2008short}
Amanda Folsom.
\newblock {A short proof of the mock theta conjectures using Maass forms}.
\newblock {\em Proceedings of the American Mathematical Society},
  136(12):4143--4149, 2008.

\bibitem{garrett1999variants}
Kristina Garrett, Mourad~EH Ismail, and Dennis Stanton.
\newblock {Variants of the Rogers--Ramanujan identities}.
\newblock {\em Advances in Applied Mathematics}, 23(3):274--299, 1999.

\bibitem{garvan2001generalization}
Frank~G Garvan.
\newblock {A generalization of the Hirschhorn-Farkas-Kra septagonal numbers
  identity}.
\newblock {\em Discrete Mathematics}, 232(1):113--118, 2001.

\bibitem{gasper2004basic}
{Gasper, George and Rahman, Mizan}.
\newblock {\em Basic hypergeometric series}, volume~96.
\newblock Cambridge university press, 2004.

\bibitem{gordon1993multiplicative}
Basil Gordon and Kim Hughes.
\newblock {Multiplicative properties of eta-products II}.
\newblock {\em Contemporary Mathematics}, 143:415--415, 1993.

\bibitem{gordon2012survey}
Basil Gordon and Richard~J McIntosh.
\newblock {A survey of classical mock theta functions}.
\newblock In {\em Partitions, q-series, and modular forms}, pages 95--144.
  Springer, 2012.

\bibitem{gunning1982generalized}
Robert~C Gunning.
\newblock {On generalized theta functions}.
\newblock {\em American Journal of Mathematics}, 104(1):183--208, 1982.

\bibitem{hardy1979introduction}
Godfrey~Harold Hardy, Edward~Maitland Wright, et~al.
\newblock {\em {An introduction to the theory of numbers}}.
\newblock Oxford university press, 1979.

\bibitem{hickerson1988proof}
Dean Hickerson.
\newblock {A proof of the mock theta conjectures}.
\newblock {\em Inventiones mathematicae}, 94(3):639--660, 1988.

\bibitem{hickerson1988seventh}
Dean Hickerson.
\newblock {On the seventh order mock theta functions}.
\newblock {\em Inventiones mathematicae}, 94(3):661--677, 1988.

\bibitem{hirschhorn1983simple}
MD~Hirschhorn.
\newblock {A simple proof of an identity of Ramanujan}.
\newblock {\em Journal of the Australian Mathematical Society}, 34(1):31--35,
  1983.

\bibitem{jacobi1829fundamenta}
Carl Gustav~Jacob Jacobi.
\newblock {\em {Fundamenta nova theoriae functionum ellipticarum. Auctore D.
  Carolo Gustavo Iacobo Iacobi...}}
\newblock sumtibus fratrum Borntr{\ae}ger, 1829.

\bibitem{martin1997eta}
Yves Martin and Ken Ono.
\newblock {Eta-quotients and elliptic curves}.
\newblock {\em Proceedings of the American Mathematical Society},
  125(11):3169--3176, 1997.

\bibitem{ramanujan1988lost}
Srinivasa Ramanujan.
\newblock {The lost notebook and other unpublished papers}.
\newblock {\em Bulletin of the American Mathematical Society}, 19:558--560,
  1988.

\bibitem{ramanujan1995ramanujan}
Srinivasa Ramanujan, Srinivasa~Ramanujan Aiyangar, Bruce~C Berndt, and
  Robert~Alexander Rankin.
\newblock {\em {Ramanujan: letters and commentary}}, volume~9.
\newblock American Mathematical Soc., 1995.

\bibitem{rogers1893second}
Leonard~J Rogers.
\newblock {Second memoir on the expansion of certain infinite products}.
\newblock {\em Proceedings of the London Mathematical Society}, 1(1):318--343,
  1893.

\bibitem{sills2019finite}
Andrew~V Sills.
\newblock {Finite Rogers--Ramanujan type identities}.
\newblock {\em The Electronic Journal of Combinatorics}, 10(R13):122, 2003.

\bibitem{sills2007identities}
Andrew~V Sills.
\newblock {Identities of the Rogers--Ramanujan--Slater type}.
\newblock {\em International Journal of Number Theory}, 3(02):293--323, 2007.

\bibitem{slater1952further}
Lucy~Joan Slater.
\newblock {Further Identities of the Rogers--Ramanujan Type}.
\newblock {\em Proceedings of the London Mathematical Society}, 2(1):147--167,
  1952.

\bibitem{tannery1898elements}
Jules Tannery and Jules Molk.
\newblock {\em {{\'E}l{\'e}ments de la th{\'e}orie des fonctions elliptiques}},
  volume~3.
\newblock Gauthier-Villars, 1898.

\bibitem{warnaar2013ramanujan}
Ole Warnaar.
\newblock {Ramanujan's 1psi1 summation}.
\newblock {\em Notices of the American Mathematical Society}, 60:10–22, 01
  2013.

\bibitem{warnaar2006hall}
S~Ole Warnaar.
\newblock {Hall--Littlewood functions and the $A_2$ Rogers--Ramanujan
  identities}.
\newblock {\em Advances in Mathematics}, 200(2):403--434, 2006.

\bibitem{watson1937mock}
George~Neville Watson.
\newblock {The mock theta functions (2)}.
\newblock {\em Proceedings of the London Mathematical Society}, 2(1):274--304,
  1937.

\bibitem{watson1938ramanujans}
GN~Watson.
\newblock {Ramanujans Vermutung {\"u}ber Zerf{\"a}llungszahlen.}
\newblock {\em Journal f{\"u}r die reine und angewandte Mathematik},
  1938(179):97--128, 1938.

\bibitem{whittaker2020course}
Edmund~Taylor Whittaker and George~Neville Watson.
\newblock {\em {A course of modern analysis}}.
\newblock Dover Publications, 2020.

\bibitem{yan2009several}
Qinglun Yan.
\newblock {Several identities for certain products of theta functions}.
\newblock {\em The Ramanujan Journal}, 19(1):79--94, 2009.

\bibitem{yang2004transformation}
Yifan Yang.
\newblock {Transformation formulas for generalized Dedekind eta functions}.
\newblock {\em Bulletin of the London Mathematical Society}, 36(5):671--682,
  2004.

\bibitem{zagier2009ramanujan}
D.~Zagier.
\newblock Ramanujan's mock theta functions and their applications [d'après
  zwegers and ono-bringmann].
\newblock {\em Asterisque}, pages 143--164, 09 2009.

\bibitem{zhu2011sextuple}
Jun-Ming Zhu.
\newblock {A sextuple product identity with applications}.
\newblock {\em Mathematical Problems in Engineering}, 2011.

\end{thebibliography}

\end{document}